\theoremstyle{plain}
\newtheorem{thm}{Theorem}[section]
\newtheorem{prop}[thm]{Proposition}
\newtheorem{lem}[thm]{Lemma}
\newtheorem{cor}[thm]{Corollary}
\newtheorem{theorem*}{Theorem}
\theoremstyle{definition}
\newtheorem{defn}[thm]{Definition}
\theoremstyle{remark}
\newtheorem{claim}[thm]{Claim}
\newtheorem{rem}[thm]{Remark}
\newtheorem{fact}[thm]{Fact}
\newtheorem{notation}[thm]{Notation}
\newtheorem{example}[thm]{Example}
\newtheorem{question}{Question}
\newcommand{\dom}{\mathrm{dom}}
\newcommand{\V}{\tau_{\mathcal{V}}}
\newcommand{\upV}{\tau_{up\mathcal{V}}}
\newcommand{\upVV}{\upV^{2}}
\newcommand{\lowV}{\tau_{low\mathcal{V}}}
\newcommand{\cyl}[1]{\mathrm{Cyl}(#1)}
\newcommand{\M}[1]{\mathrm{mesh}(#1)}
\newcommand{\cH}{\check{H}}
\newcommand{\VV}{\V^{2}}
\renewcommand{\int}[2]{\mathrm{int}_{#1}(#2)}
\newcommand{\diam}[1]{\mathrm{diam}(#1)}
\newcommand{\ur}[1]{\text{\ensuremath{\uparrow}}#1}
\newcommand{\Ne}[2]{\mathcal{N}_{#1}(#2)}
\renewcommand{\d}[3]{\text{d}_{#1}(#2,#3)}
\newcommand{\E}{\mathsf{E}}
\newcommand{\I}{\mathsf{I}}
\renewcommand{\H}{\mathsf{H}}
\newcommand{\Max}{\mathsf{Max}}
\newcommand{\con}{\mathcal{C}}
\newcommand{\inj}{\mathcal{I}}
\newcommand{\D}{\mathcal{D}}
\newcommand{\F}{\mathcal{F}}
\newcommand{\K}{\mathcal{K}}
\newcommand{\J}{\mathcal{J}}
\renewcommand{\L}{\mathcal{L}}
\newcommand{\R}{\mathbb{R}}
\newcommand{\N}{\mathbb{N}}
\renewcommand{\P}{\mathcal{P}}
\newcommand{\Q}{\mathcal{Q}}
\newcommand{\U}{\mathcal{U}}
\newcommand{\Z}{\mathbb{Z}}
\newcommand{\cB}{\overline{B}}
\newcommand{\B}{\mathbb{B}}
\renewcommand{\SS}{\mathbb{S}}
\newcommand{\SCT}{\mathsf{SCT}}
\newcommand{\C}{\mathsf{C}}
\newcommand{\cone}[1]{\mathrm{Cone}(#1)}
\newcommand{\id}{\mathrm{id}}
\DeclareMathAccent{\wtilde}{\mathord}{largesymbols}{"65}
\newcommand{\SSigma}{\underaccent{\wtilde}{{\boldsymbol{\Sigma}}}}
\newcommand{\ceil}[1]{\lceil #1 \rceil}
\title{Strong computable type\footnote{This version is an update from July 2023, in which Section \ref{sec_infinite} was added.}}
\author{Djamel Eddine Amir and Mathieu Hoyrup\\ \\
{\small Universit\'e de Lorraine, CNRS, Inria, LORIA, F-54000 Nancy, France}\\
{\small\texttt{djamel-eddine.amir@loria.fr, mathieu.hoyrup@inria.fr}}}
\date{}
\begin{document}
\maketitle

\begin{abstract}
A compact set has computable type if any homeomorphic copy of the set which is semicomputable is actually computable. Miller proved that finite-dimensional spheres have computable type, Iljazovi\'c and other authors established the property for many other sets, such as manifolds. In this article we propose a theoretical study of the notion of computable type, in order to improve our general understanding of this notion and to provide tools to prove or disprove this property.

We first show that the definitions of computable type that were distinguished in the literature, involving metric spaces and Hausdorff spaces respectively, are actually equivalent. We argue that the stronger, relativized version of computable type, is better behaved and prone to topological analysis. We obtain characterizations of strong computable type, related to the descriptive complexity of topological invariants, as well as purely topological criteria. We study two families of topological invariants of low descriptive complexity, expressing the extensibility and the null-homotopy of continuous functions. We apply the theory to revisit previous results and obtain new ones.
\end{abstract}

\paragraph{Keywords.} Computable type, Descriptive complexity, Topological invariant.

\section{Introduction}

Computable analysis provides several notions of computability for compact subsets of Euclidean spaces and more general topological spaces. The most important ones are the notions of \emph{computable} and \emph{semicomputable} set. Intuitively, a subset of the Euclidean plane is computable if there exists a program that can draw the set on a screen with arbitrary resolution; it is semicomputable if there exists a program that can reject points that are outside the set. A famous example is given by the Mandelbrot set, which can easily be seen to be semicomputable from its very definition, but whose computability is an open problem, related to a conjecture in complex dynamics \cite{Hertling05}.

It turns out that for certain sets, semicomputability is actually equivalent to computability. It was discovered by Miller \cite{Miller02} that finite-dimensional spheres embedded in Euclidean spaces enjoy this property. This result later led Iljazovi\'c to systematically study this computabilty-theoretic property, for spheres embedded in computable metric spaces \cite{Ilja11}, manifolds \cite{Ilja13} and many other sets in a series of articles with several authors \cite{Ilja09,Ilja11,Ilja13,BurnikI14,2018manifolds,CickovicIV19,2020pseudocubes,2020graphs,CelarI21,celar21}. We recently identified which finite simplicial complexes have computable type in \cite{AH22}. Most of the results in the literature are not only about sets, but about pairs. A pair should be informally thought as a set together with its ``boundary'',  the typical example being the~$(n+1)$-dimensional ball and its bounding~$n$-dimensional sphere. A pair has computable type if for any copy~$(X,A)$, if~$X$ and~$A$ are semicomputable then~$X$ is computable.

The purpose of the present article is not to provide new examples, but to develop a structural understanding of the notion of computable type, with several goals in mind. The first goal is practical. Establishing that a space has computable type can be difficult and very technical, so there is a need to provide unifying and simplifying arguments for previous results as well as general tools that can be applied to establish new results with less effort. 

Our general goal is to highlight the interaction between topology and computability underlying the notion of computable type. The definition of this notion combines topology and computability theory, and the arguments always gather ideas from these two fields. It would be clarifying to split the arguments into two parts, a purely topological one and a computability-theoretic one.

Having computable type is by definition a topological invariant, and its precise relationship with other topological invariants should be explored. A more explicit relationship with topology would enable one to leverage the venerable field of topology, notably algebraic topology, in the study of a computability-theoretic problem. Indeed, the main results about computable type rely on classical topological results: Miller used the homology of the complement of the sphere in \cite{Miller02}, Iljazovi{\'c} used some form of Brouwer's fixed-point theorem in \cite{Ilja11,Ilja13}.

Finally, a better theoretical understanding also has many consequences: it can give more information on previous results, give new results with little effort, clarify the role of certain assumptions in the results.

\subsection{Summary of the main results}
Let us present the main results of the article, stated informally.

Mainly two notions of computable type have been introduced in the literature, that consider copies of the set embedded in different classes of topological spaces, namely computable metric spaces and computably Hausdorff spaces respectively. Several results have been established on computable metric spaces \cite{Ilja09,Ilja13} and then extended to computably Hausdorff spaces \cite{CickovicIV19,2018manifolds}. We first show that these two notions are actually equivalent.
\begin{theorem*}[Theorem \ref{thm_hilbert}]
Computable type can be equivalently defined on the Hilbert cube, on computable metric spaces and on computably Hausdorff spaces.
\end{theorem*}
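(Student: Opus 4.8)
The plan is to establish a cycle of implications. The Hilbert cube $\mathcal{H}$ is a computable metric space, and every computable metric space is a computably Hausdorff (computable topological) space; since having computable type with respect to a larger class of ambient spaces is a formally stronger requirement, the implications ``computable type on computably Hausdorff spaces $\Rightarrow$ on computable metric spaces $\Rightarrow$ on $\mathcal{H}$'' are immediate. The whole content is therefore to show that computable type on $\mathcal{H}$ implies computable type on every computably Hausdorff space.

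Fix a set $X$ (or a pair $(X,A)$ with $A \subseteq X$) having computable type on $\mathcal{H}$, a computably Hausdorff space $\mathcal{X}$ with basis $(B_n)_n$, and a copy $(X',A')$ of $(X,A)$ in $\mathcal{X}$ with $X'$ and $A'$ semicomputable. The first step is to observe that, in its subspace structure, $X'$ is a computably compact, computably Hausdorff computable topological space: computable compactness is exactly the content of the semicomputability of $X'$ (from any c.e.\ cover of $X'$ by sets $B_n$ one effectively extracts a finite subcover), computable Hausdorffness is inherited, and the traces $B_n \cap X'$ form a computable basis; the same applies to $A'$, which is moreover semicomputable inside $X'$. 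Next I use the effective Urysohn metrization theorem in the compact case: $X'$ admits a computable sequence $(\varphi_k)_k$ of functions $X' \to [0,1]$ that separates points and --- this refinement is what matters later --- such that for each $n$ one can compute a c.e.\ set $S_n$ with $\varphi_k^{-1}((0,1]) \subseteq B_n \cap X'$ for $k \in S_n$ and $\bigcup_{k \in S_n} \varphi_k^{-1}((0,1]) = B_n \cap X'$ (the $\varphi_k$, $k \in S_n$, are Urysohn bumps supported in the trace $B_n \cap X'$; countably many cover it by compactness and second countability). Then $f := (\varphi_k)_k$ is a computable topological embedding of $X'$ into $\mathcal{H}$.

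Now $(f(X'), f(A'))$ is a copy of $(X,A)$ in $\mathcal{H}$, and both $f(X')$ and $f(A')$ are semicomputable there, because the image of a computably compact space under a computable map is semicomputable: $f(X') \subseteq W_1 \cup \dots \cup W_\ell$ holds iff $X' \subseteq f^{-1}(W_1) \cup \dots \cup f^{-1}(W_\ell)$, and the right-hand side is a c.e.\ open cover of the computably compact set $X'$. Applying the computable type hypothesis on $\mathcal{H}$ to this copy, $f(X')$ is computable, hence overt: the basic open sets of $\mathcal{H}$ meeting $f(X')$ can be enumerated. Finally I transfer overtness back along $f$: for a basic open $B_n$ of $\mathcal{X}$, one has $B_n \cap X' \neq \emptyset$ iff $\varphi_k^{-1}((0,1]) \cap X' \neq \emptyset$ for some $k \in S_n$, iff the subbasic open set $\{z \in \mathcal{H} : z_k > 0\}$ meets $f(X')$ for some $k \in S_n$, which is a c.e.\ condition. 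Thus $X'$ is overt in $\mathcal{X}$; being also semicomputable, it is computable, which is what we had to prove.

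The main obstacle is the construction in the second paragraph: one must show that a semicomputable compact subset of a computably Hausdorff space carries, effectively, the structure of a compact metric space, together with a computable Urysohn family whose positivity sets refine the traces of the ambient basis. This is an effective version of Urysohn metrization in the (comparatively easy) compact setting, and the adaptation of the Urysohn family to the ambient basis is precisely what makes the closing overtness transfer work. The remaining ingredients --- the trivial direction, semicomputability of images under computable maps, and the application of the hypothesis --- are routine.
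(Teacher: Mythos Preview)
Your proposal is correct and follows the same overall strategy as the paper: the trivial implications are noted, and for the substantive direction one equips the semicomputable copy $X'$ with its subspace structure (which is effectively compact and computably Hausdorff, hence computably regular), uses an effective Urysohn-type construction to produce a computable embedding into the Hilbert cube, pushes the pair forward to a semicomputable copy there, applies the hypothesis, and pulls the result back.

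The one genuine difference is in the last step. You build the embedding $f=(\varphi_k)_k$ with extra bookkeeping (the c.e.\ sets $S_n$ tying the Urysohn bumps to the ambient basic opens) so that overtness of $f(X')$ can be transferred back to $X'$ by hand. The paper avoids this entirely by invoking a general lemma: any computable injection from an effectively compact set into a computably Hausdorff space has a computable inverse (because $f(B_i\cap K)=f(K)\setminus f(K\setminus B_i)$ and the right-hand side is effectively open). With $h^{-1}$ computable, the argument finishes in one line: $f(K)=h^{-1}(h(f(K)))$ is the image of a computable set under a computable map, hence computable. This is shorter and more modular, but your direct overtness transfer is perfectly valid and makes the mechanism more explicit; it is essentially a specialized unwinding of the computable-inverse lemma.
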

The proof is based on Schr\"oder's effective metrization theorem \cite{1998Schroder} which implies that compact Hausdorff spaces are metrizable in an effective way.

One cannot expect interesting general characterizations of the computable type property: most spaces have computable type simply because they do not have any semicomputable copy. This issue can be fixed by introducing the notion of strong computable type: a compact space~$X$ has strong computable type if for every oracle~$O\subseteq\N$ and every copy~$Y$ of~$X$, if~$Y$ is semicomputable relative to~$O$ then~$Y$ is computable relative to~$O$ (and similarly for pairs). We carry out a thorough analysis of strong computable type. We show that it is equivalent to being minimal for some property of low descriptive complexity (Theorems \ref{thm_minimal} and \ref{thm_sigma2_equiv}). In particular, the next result gives a relatively simple recipe to prove that a space (or pair) has strong computable type.
\begin{theorem*}[Theorem \ref{thm_sigma2}]\label{thm_star_sigma2}
If a pair~$(X,A)$ is minimal satisfying some~$\Sigma^0_2$ topological invariant, then~$(X,A)$ has strong computable type.
\end{theorem*}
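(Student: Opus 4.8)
The plan is to verify the defining property of strong computable type head‑on. Fix an oracle $O$ and a homeomorphic copy $(Y,B)$ of $(X,A)$ that is semicomputable relative to $O$; the goal is to show that $Y$ is computable relative to $O$, and I would argue by contradiction, assuming it is not. Using (the relativized form of) Theorem~\ref{thm_hilbert} I may assume $(Y,B)$ lives in the Hilbert cube, so that $Y$ and $B$ are points of the effectively compact computable metric space $\mathcal{K}$ of compact subsets of the Hilbert cube, equipped with the decreasing approximations provided by their $O$-semicomputations. Note that nothing is assumed computable about $(X,A)$ itself.

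The first step is purely topological: minimality transports along homeomorphisms. Pick a homeomorphism of pairs $h$ from $(Y,B)$ onto $(X,A)$. Since $P$ is a topological invariant of pairs, $(Y,B)$ satisfies $P$; and if $(Y',B')$ were a proper sub‑pair of $(Y,B)$ satisfying $P$, then $(h(Y'),h(B'))$ would be a proper sub‑pair of $(X,A)$ satisfying $P$, contradicting minimality of $(X,A)$. This uses no effectivity of $h$ whatsoever. Hence $(Y,B)$ is itself minimal for $P$, that is, the set $\mathcal{Q}$ of all sub‑pairs $(Y',B')$ of $(Y,B)$ satisfying $P$ is exactly the singleton $\{(Y,B)\}$.

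The second step is descriptive‑complexity bookkeeping. Viewing sub‑pairs as points $(Y',B')$ of $\mathcal{K}\times\mathcal{K}$, the constraints ``$Y'\subseteq Y$'' and ``$B'\subseteq B$'' are $\Pi^0_1$ relative to $O$ (one confirms their negations by finding a rational open ball that meets $Y'$, resp.\ $B'$, whose closure has been enumerated into the complement of $Y$, resp.\ $B$, by the $O$-semicomputation, which works since in the Hilbert cube such closures are compact), the constraint ``$B'\subseteq Y'$'' is $\Pi^0_1$, and ``$(Y',B')$ satisfies $P$'' is $\Sigma^0_2$ because $P$ is a $\Sigma^0_2$ invariant. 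Intersecting, $\mathcal{Q}$ is a $\Sigma^0_2$ subset of $\mathcal{K}\times\mathcal{K}$ relative to $O$; write $\mathcal{Q}=\bigcup_{n}F_n$ with $(F_n)$ uniformly effectively closed relative to $O$.

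The last step concludes: since $\mathcal{Q}=\{(Y,B)\}$ is nonempty, some $F_n$ is nonempty, and being contained in $\{(Y,B)\}$ it equals $\{(Y,B)\}$; hence $\{(Y,B)\}$ is effectively closed relative to $O$, and an effectively closed singleton in an effectively compact computable metric space is a computable point, so $Y$ (and $B$) is computable relative to $O$ — contradiction. I expect the delicate part to be the second step: one must track which representations are in play (the Vietoris representation of $\mathcal{K}$ versus the semicomputable and computable representations of its points), and one needs exactly the formulation of ``sub‑pair'' under which both members are allowed to shrink, so that the constraints cutting out $\mathcal{Q}$ stay at the claimed level. This interface between the $\Sigma^0_2$-ness of a topological invariant and an effective‑closedness statement is what is set up in the preceding sections, and indeed this theorem should be read as extracting from Theorems~\ref{thm_minimal} and~\ref{thm_sigma2_equiv} a ready‑to‑use sufficient condition; the only other external ingredients are the topological fact that minimality is a homeomorphism invariant and the computability‑theoretic fact that $\Pi^0_1$ singletons are computable points.
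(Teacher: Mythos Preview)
Your strategy is sound and genuinely different from the paper's, but there is a real gap at the step where you assert that $\mathcal{Q}=\{(Y,B)\}$. In the paper's conventions a \emph{proper} subpair of $(Y,B)$ is one with strictly smaller \emph{first} component; a pair $(Y,B')$ with $B'\subsetneq B$ is a subpair but not a proper one, so $\P$-minimality says nothing about it. Concretely, take $\P=\{(X,A):X\in\H_1\}$ (a $\Sigma^0_2$ invariant, independent of $A$): then $(\SS_1,\SS_1)$ is $\P$-minimal, yet $\mathcal{Q}=\{(\SS_1,B'):B'\subseteq\SS_1\}$ is far from a singleton. The fix is easy and stays within your framework: minimality does guarantee that every element of $\mathcal{Q}$ has first component equal to $Y$, so pick $n$ with $(Y,B)\in F_n$ and project $F_n$ onto the first factor. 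Since $(\K(Q),\V)$ is effectively compact, this projection is effectively compact (hence effectively closed) relative to $O$; it is nonempty and contained in $\{Y\}$, so it equals $\{Y\}$, and you recover $Y$ as a $\Pi^0_1$ singleton. (Your announced contradiction is never used and can be dropped.)

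For comparison, the paper's argument never passes to the metric Vietoris topology. It writes $\P=\bigcup_n\P_n\setminus\Q_n$ with $\P_n,\Q_n$ uniformly $\Pi^0_1$ in $\upVV$, picks $n$ with the copy $(Y,B)\in\P_n\setminus\Q_n$, and observes that since $\Q_n$ is $\upVV$-closed it is an \emph{upper set}: no subpair of $(Y,B)$ lies in $\Q_n$, so any proper subpair, being outside $\P$ by minimality, must already be outside $\P_n$. Thus each copy is $\P_n$-minimal for some $\Pi^0_1$ set in $\upVV$, and Theorem~\ref{thm_minimal} applies. Your route trades this order-theoretic use of $\upVV$ for the fact that in the computable metric space $(\K^2(Q),\VV)$ every $\Sigma^0_2$ set is an effective $F_\sigma$; both arguments end up isolating a $\Pi^0_1$ set witnessing computability of $Y$, but the paper's makes the connection to Theorem~\ref{thm_minimal} explicit and stays in $\upVV$, which is what the rest of Section~\ref{sec_sct} builds on.
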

The descriptive complexity is measured in the hyperspace of compact sets with the Vietoris or upper Vietoris topology. It is an open question whether Theorem \ref{thm_star_sigma2} is an equivalence, although we do not expect so. As far as we know, being minimal for some~$\Sigma^0_2$ invariant is therefore a distinguished way of having strong computable type, from which particular consequences can be derived, for instance about the level of uniformity of the computation that is involved in strong computable type. If the pair is minimal for some~$\Sigma^0_2$ invariant, then the non-uniformity level of the computation is precisely what is called non-deterministic computability, captured by the Weihrauch degree of choice over~$\N$ (Theorem \ref{thm_choice}).

These structural results have many interesting consequences. We give a purely topological necessary condition, which is sufficient up to some oracle (Corollaries \ref{cor_necessary} and \ref{cor_sct_relative}). We show that for finite simplicial complexes, computable type is equivalent to strong computable type (Corollary \ref{cor_simplicial}). We explain why for some spaces it is difficult to produce a semicomputable copy which is not computable (Theorem \ref{thm_nontrivial}), contrasting with obvious examples such as the line segment or the~$n$-dimensional ball. 

When~$Y$ is a fixed compact ANR and~$X$ is a varying compact space, we show how the set~$[X;Y]$ of continuous functions from~$X$ to~$Y$ quotiented by homotopy can be computed from~$X$, with no computability assumptions about~$Y$ (Theorem \ref{thm_numbering}). A particularly interesting case is when~$Y$ is the~$n$-dimensional sphere~$\SS_n$. This analysis immediately induces topological invariants of low descriptive complexity, which are very classical in topology. 
\begin{theorem*}
The following topological invariants are~$\Sigma^0_2$ in the upper Vietoris topology:
\begin{align*}
(X,A)\in \E_n&\iff  \text{there is a continuous function }f:A\to\SS_n\text{ having no continuous}\\&\qquad\qquad\text{extension }F:X\to \SS_n,\\
X\in\H_n&\iff \text{there is  a continuous function }f:X\to \SS_n\text{ which is not null-homotopic}. 
\end{align*}
\end{theorem*}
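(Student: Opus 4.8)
The plan is to reduce both predicates to Theorem~\ref{thm_numbering}. That result yields, from a name of $X$ in the upper Vietoris topology, a computable presentation of $[X;\SS_n]$: an enumeration $\nu_X$ of (witnesses for) the homotopy classes of maps $X\to\SS_n$, the equality relation $\{(i,j):\nu_X(i)=\nu_X(j)\}$ being computably enumerable, together with a computable index for the null-homotopy class (the single constant class when $n\ge 1$, and the two constant classes when $n=0$); in the relative version one obtains in addition, from a name of the pair $(X,A)$, the restriction map $r\colon[X;\SS_n]\to[A;\SS_n]$ as a computable map of such presentations. Since taking preimages under a computable map preserves the level $\Sigma^0_2$, it then suffices to express $\H_n$ and $\E_n$ as $\Sigma^0_2$ conditions on these presentations.

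For $\H_n$ I would use that ``not null-homotopic'' is a homotopy invariant of the map, so $X\in\H_n$ iff $[X;\SS_n]$ contains a class which is not null-homotopic, i.e.\ iff $\exists i$ such that $\nu_X(i)$ differs from each of the (finitely many, computably indexed) null-homotopy classes. The inner predicate ``$\nu_X(i)$ is null-homotopic'' is a finite disjunction of instances of the computably enumerable equality relation, hence $\Sigma^0_1$, so its negation is $\Pi^0_1$ and the existential quantifier over $i$ produces a $\Sigma^0_2$ condition. (For $n=0$ this just re-derives, uniformly, that $X\in\H_0$ iff $X$ is disconnected.)

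For $\E_n$ I would first pass to homotopy classes. By Borsuk's homotopy extension theorem the closed pair $(X,A)$, with $X$ compact metric, has the homotopy extension property with respect to the ANR $\SS_n$; alternatively one argues directly, extending the relevant homotopy over a neighbourhood of $A\times[0,1]\cup X\times\{1\}$ in $X\times[0,1]$ (which is possible because $\SS_n$ is an ANR) and composing it with a continuous level function $X\to[0,1)$ that vanishes on $A$ and stays inside that neighbourhood, such a function existing by compactness of $X$. Consequently, whether $f\colon A\to\SS_n$ extends over $X$ depends only on $[f]\in[A;\SS_n]$, and $f$ extends iff $[f]$ lies in the image of $r$; hence $(X,A)\in\E_n$ iff $r$ is not surjective, i.e.\ $\exists c\in[A;\SS_n]\ \forall d\in[X;\SS_n]\colon r(d)\ne c$. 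On the presentations this reads $\exists i\ \forall j\colon r(\nu_X(j))\ne\nu_A(i)$; here ``$r(\nu_X(j))\ne\nu_A(i)$'' is $\Pi^0_1$, being the complement of the computably enumerable equality relation of $[A;\SS_n]$ with $r$ evaluated on indices, the universal quantifier over $j$ leaves it $\Pi^0_1$, and the outer existential quantifier over $i$ yields $\Sigma^0_2$.

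The hard part is not this bookkeeping but the two substantive inputs. The first is Theorem~\ref{thm_numbering}, and in particular the fact that the equality relation of the presentation of $[X;\SS_n]$ is only computably enumerable rather than decidable: this is exactly what keeps ``not null-homotopic'' at the level $\Pi^0_1$ and pins the final complexity at $\Sigma^0_2$ (one could not expect better, since deciding null-homotopy of maps from the finite approximants of $X$ into $\SS_n$ would require, across all levels, deciding equalities in homotopy sets of spheres). It is also where the restriction to the \emph{upper} Vietoris topology is essential: that an over-approximation of $X$ already captures, up to homotopy, every continuous map $X\to\SS_n$ relies on $\SS_n$ being a \emph{compact} ANR. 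The second input is the homotopy extension lemma above, which is what turns $\E_n$ into a property of homotopy classes and so makes the presentation of $[X;\SS_n]$ applicable; granted these, the $\Sigma^0_2$ bounds come out as described.
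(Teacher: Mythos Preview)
Your proposal is correct and follows essentially the same route as the paper: both reduce to Theorem~\ref{thm_numbering}, invoke Borsuk's homotopy extension theorem to pass to homotopy classes, and then read off the $\Sigma^0_2$ bound from an $\exists i\,\forall j$ formula over the c.e.\ equality/extension relations. Your phrasing via the restriction map $r:[X;\SS_n]\to[A;\SS_n]$ is exactly the paper's ``extension relation'' (there $[F]_X$ extends $[f]_A$ iff $[F|_A]_A=[f]_A$), and in the paper's numbering the restriction is even the identity on indices, so your computability claim for $r$ is immediate. Your extra care with the two constant classes when $n=0$ is a refinement the paper glosses over.
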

We thoroughly study these topological invariants and revisit several results of the literature, showing how they can be obtained by identifying a suitable~$\Sigma^0_2$ invariant for which the space or pair is minimal. These results show that the theory indeed applies, and they strengthen the previous results because they give more information and can be used to derive new results. For instance,
\begin{itemize}
\item The pair~$(\B_{n+1},\SS_n)$ is~$\E_n$-minimal,
\item Every closed~$n$-manifold is~$\H_n$-minimal.
\end{itemize}
We also consider other examples from the literature such as chainable and circularly chainable sets, pseudo-cubes, and as a side result give a few new examples.

Our framework often provides simpler proofs of the results, separating the argument into a computability-theoretic part (showing that a topological invariant is~$\Sigma^0_2$) and a purely topological one (showing that a space is minimal for that invariant), both parts relying heavily on classical topological theorems. 
It is informative to compare the original proof of a previous result with the argument obtained by unfolding the proof using our framework. It turns out that sometimes the underlying arguments are essentially the same -- it is the case for chainable sets and pseudo-cubes. However in other cases, the underlying argument is very different, e.g.~for manifolds. In this case the new argument gives more information, providing for instance a precise measure of the non-uniformity of the computation, and also implying a new result for free, namely that cones of manifolds have (strong) computable type. Note that there is a price to pay: the proof heavily relies on results in algebraic topology about homology and cohomology of manifolds, while the proof given in \cite{Ilja13} is locally and only uses the properties of balls and spheres, notably some form of Brouwer's fixed point theorem.

Let us finally discuss the scope of our approach. Several results in the literature establish a computable type property for non-compact spaces, for instance in \cite{BurnikI14} or \cite{2020graphs}; in this article, we consider compact spaces only. The compactness assumption is central and it is not clear how the theory would extend beyond compact spaces. Some of the previous results used slight variations on the notion of computable type, e.g.~in \cite{Ilja11}, notably defining semicomputability as effective closedness rather than effective compactness, at the price of requiring extra assumptions on the ambient space, such as effective local compactness. In this article, we use the notion of effective compactness which behaves more smoothly.

The article is organized as follows. In Section \ref{sec_background}, we present the standard concepts of computability over topological spaces. In Section \ref{sec_ct} we recall the definition of computable type and prove that some of the variations of this introduced in the literature are actually equivalent. In Section \ref{sec_sct} we introduce the notion of strong computable type and develop the theory. We obtain characterizations using descriptive set theory on the hyperspace of compact subsets of the Hilbert cube. In Section \ref{sec_further} we exploit these characterizations and obtain several results that improve our understanding of strong computable type. In Section \ref{sec_ANR}, we show how the classical notion of Absolute Neighborhood Retract (ANR) behaves well in terms of computability, and allows to define topological invariants of low descriptive complexity, expressing extensibility and null-homotopy of continuous functions to ANRs. In Section \ref{sec_invariants} we study these invariants in more details. We finally apply the whole theory in Section \ref{sec_examples} to revisit many previous results about computable type.

\section{Background on computable topology}\label{sec_background}

We give some background about computable aspects of topological spaces: computable~$T_0$-spaces, computable separation axioms, computability of subsets, the Hilbert cube.

First of all, let us recall enumeration reducibility, which enables one to define a notion of computable reduction between points of countably-based topological spaces. 

\subsection{Enumeration reducibility}
We will mainly use the following notion from computability theory:
a set~$A\subseteq\N$ is \textbf{computably enumerable (c.e.)}
if there exists a Turing machine that, on input~$n\in\N$,
halts if and only if~$n\in A$. An \textbf{enumeration} of~$A$ is any function~$f:\N\to\N$ such that~$A=\{n\in\N:\exists p\in\N, f(p)=n+1\}$. A set is c.e.~if and only if it has a computable enumeration. These notion immediately extends to
subsets of countable sets, whose elements can be encoded by natural
numbers. 

As we will see soon, a point of a countably-based topological space can be identified with a set of natural numbers, namely the set indices of its basic neighborhoods. Therefore, relative computability between points will be conveniently expressed using enumeration reducibility, which we recall now.
\begin{defn}\label{def_enum_red}Let~$A,B\subseteq\N$. We say that~$A$
is \textbf{enumeration reducible} to~$B$, written~$A\leq_{e}B$,
if one of the following equivalent statements holds:
\begin{enumerate}
\item There is an effective procedure producing an enumeration of~$A$
from any enumeration of~$B$,
\item There exists a c.e.~set~$W\subseteq\N$ such that, for all~$x\in\N$,
\begin{equation*}
x\in A\iff\text{there exists a finite set }D\subseteq B\text{ such that }\langle x,D\rangle\in W,
\end{equation*}
\item For every oracle~$O\subseteq\N$, if~$B$ is c.e.~relative to~$O$ then~$A$
is c.e.~relative to~$O$.
\end{enumerate}
\end{defn}
In condition 2.,~$\langle x,D\rangle$ denotes the encoding of the pair of finite objects~$x$ and~$D$ into a natural number. Equivalence between 2.~and 3.~is due to Selman \cite{Selman71}.

\subsection{Computable \texorpdfstring{$T_0$}{T0}-spaces}
\begin{defn}
A \textbf{computable~$T_{0}$-space} is a tuple~$(X,\tau,(B_{i})_{i\in\N})$
where $(X,\tau)$ is a countably-based~$T_{0}$ topological space and~$(B_{i})_{i\in\N}$
is a numbered basis of~$\tau$ such that there exists a c.e.~set~$\mathcal{E}\subseteq\N^{3}$
satisfying~$B_{i}\cap B_{j}=\bigcup_{k:(i,j,k)\in\mathcal{E}}B_{k}.$
\end{defn}

We will often denote a computable~$T_0$-space by~$(X,\tau)$, the numbered basis being implicit. When several topological spaces are involved, we write~$B_{i}^{X}$ for the basis of the space~$X$. A subset~$Y$ of a computable~$T_0$-space~$(X,\tau)$ is also a computable~$T_0$-space, by taking the subspace topology and the numbered basis~$B^Y_i=Y\cap B^X_i$. The product of two computable~$T_0$-spaces is naturally a computable~$T_0$-space.

In a computable~$T_0$-space~$(X,\tau)$, a point can be identified with the set of its basic neighborhoods, therefore the computability properties of a point of such a space is entirely captured by the enumeration degree of its neighborhood basis. This idea is explored in depth by Kihara and Pauly in \cite{KP22}. In particular, a computable reduction between points can be defined using enumeration reducibility as follows.
\begin{defn}\label{def_comp_rel}
Let~$(X,\tau_X)$ and~$(Y,\tau_Y)$ be computable~$T_0$-spaces.
A point~$x$ in~$(X,\tau_X)$ is \textbf{computable relative} to a point~$y$
in~$(Y,\tau_Y)$ if 
\begin{equation*}
\left\{ i\in\N:x\in B_{i}^{X}\right\} \leq_{e}\left\{ i\in\N:y\in B_i^Y\right\} .
\end{equation*}
\end{defn}

A function~$f:X\to Y$ between computable~$T_0$-spaces is \textbf{computable} if and only if for every~$x\in X$,~$f(x)$ is computable relative to~$x$ in a uniform way, i.e.~using the same machine in the reduction. Equivalently,~$f:X\rightarrow Y$ is computable if the sets~$f^{-1}(B_{i}^{Y})$ are effectively open, uniformly in~$i$.

 Let us recall some notions of computability of sets.
\begin{defn}
A set~$A$ in a computable~$T_{0}$-space~$(X,\tau,(B_{i})_{i\in\N})$ is:
\begin{enumerate}
\item \textbf{Effectively compact}, or \textbf{semicomputable}, if it is compact and the set 
\[
\left\{ (i_{1},\ldots,i_{n})\in\N^{\ast}:A\subseteq B_{i_{1}}\cup\ldots\cup B_{i_{n}}\right\} 
\]
 is c.e.,
\item \textbf{Computably overt }if the set~$\left\{ i\in\N:A\cap B_{i}\neq\emptyset\right\} $
is c.e.,
\item \textbf{Computable} if it is effectively compact and computably overt,
\item \textbf{Effectively open} or~$\Sigma^0_1$ if there exists a c.e.~set~$E\subseteq\N$
such that~$A=\bigcup_{i\in E}B_{i}$,
\item \textbf{Effectively closed} or~$\Pi^0_1$ if its complement is effectively open.
\end{enumerate}
A pair~$(A,L)$ is \textbf{semicomputable} if~$A$ and~$L$ are semicomputable.
\end{defn}
We will use the word \emph{semicomputable} when talking about a subset of a space, and \emph{effectively compact} when talking about the space itself.

The image of a (semi)computable set under a computable function is a (semi)computable set.

The next result is simple but very powerful and central in many arguments: closed sets are preserved by taking existential quantification over a compact set, and it holds effectively. Equivalently, open sets are preserved by taking universal quantification over a compact set, effectively so. It is a standard folklore result in computable analysis that can drastically simplify many arguments. It can be found in \cite{Pauly16} for instance, but we include a proof for completeness.
\begin{prop}[Quantifying over a compact space]\label{prop_quant}
Let~$X,Y$ be computable~$T_0$-spaces such that~$Y$ is effectively compact. \begin{itemize}
\item If~$R\subseteq X\times Y$ is effectively closed, then its existential quantification
\begin{equation*}
R^\exists:=\{x\in X:\exists y\in Y,(x,y)\in R\}
\end{equation*}
is effectively closed as well,
\item If~$R\subseteq X\times Y$ is effectively open, then its universal quantification
\begin{equation*}
R^\forall:=\{x\in X:\forall y\in Y,(x,y)\in R\}
\end{equation*}
is effectively open as well.
\end{itemize}
\end{prop}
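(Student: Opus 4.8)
The plan is to prove the second statement directly and then deduce the first one by taking complements. If $R\subseteq X\times Y$ is effectively closed, then $R^{c}:=(X\times Y)\setminus R$ is effectively open, and $X\setminus R^{\exists}=\{x\in X:\forall y\in Y,\ (x,y)\in R^{c}\}=(R^{c})^{\forall}$; so once universal quantification over $Y$ is known to preserve effective openness, $(R^{c})^{\forall}$ is effectively open and hence $R^{\exists}$ is effectively closed. It therefore suffices to treat the open case.

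So let $R\subseteq X\times Y$ be effectively open. Using the product basis $(B_{i}^{X}\times B_{j}^{Y})_{i,j}$ of $X\times Y$, fix a c.e.\ set $E\subseteq\N^{2}$ with $R=\bigcup_{(i,j)\in E}B_{i}^{X}\times B_{j}^{Y}$. The key point is the following characterization: for $x\in X$, one has $x\in R^{\forall}$ if and only if there is a finite set $F\subseteq E$ with $x\in\bigcap_{(i,j)\in F}B_{i}^{X}$ and $Y\subseteq\bigcup_{(i,j)\in F}B_{j}^{Y}$. For the forward implication, assume $\{x\}\times Y\subseteq R$: for each $y\in Y$ there is $(i,j)\in E$ with $x\in B_{i}^{X}$ and $y\in B_{j}^{Y}$, so the family $\{B_{j}^{Y}:(i,j)\in E,\ x\in B_{i}^{X}\}$ is an open cover of the compact space $Y$, and a finite subcover provides the desired $F$. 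The converse is immediate: given such an $F$, any $y\in Y$ lies in some $B_{j}^{Y}$ with $(i,j)\in F\subseteq E$ and $x\in B_{i}^{X}$, hence $(x,y)\in B_{i}^{X}\times B_{j}^{Y}\subseteq R$.

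By this characterization, $R^{\forall}=\bigcup_{F}\bigcap_{(i,j)\in F}B_{i}^{X}$, where $F$ ranges over the finite subsets of $E$ such that $Y\subseteq\bigcup_{(i,j)\in F}B_{j}^{Y}$. To conclude that $R^{\forall}$ is effectively open, I would observe: finite subsets of the c.e.\ set $E$ can be effectively enumerated; for each candidate $F$, the condition $Y\subseteq\bigcup_{(i,j)\in F}B_{j}^{Y}$ is semidecidable since $Y$ is effectively compact; and for each $F$ the finite intersection $\bigcap_{(i,j)\in F}B_{i}^{X}$ is effectively open, uniformly in $F$, by iterating the c.e.\ relation $\mathcal{E}$ witnessing that $X$ is a computable $T_{0}$-space. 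Enumerating the valid sets $F$ together with the basic open sets making up the corresponding intersections exhibits a c.e.\ family of basic open sets whose union is $R^{\forall}$.

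There is no deep obstacle in this argument; it is mostly bookkeeping. The one step that genuinely needs care is the compactness argument in the characterization: one must extract the finite subcover of $Y$ from the sub-family of boxes whose $X$-components already contain $x$, so that the resulting intersection $\bigcap_{(i,j)\in F}B_{i}^{X}$ is a neighborhood of $x$ and the equality $R^{\forall}=\bigcup_{F}\bigcap_{(i,j)\in F}B_{i}^{X}$ holds.
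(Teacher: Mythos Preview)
Your proof is correct and follows essentially the same route as the paper: reduce the closed case to the open one by complementation, write~$R$ as a c.e.\ union of basic boxes, and use compactness of~$Y$ to express~$R^\forall$ as the union, over finite~$F\subseteq E$ with~$Y\subseteq\bigcup_{(i,j)\in F}B_j^Y$, of the sets~$\bigcap_{(i,j)\in F}B_i^X$. You are in fact slightly more explicit than the paper about the pruning step (restricting to boxes whose~$X$-component contains~$x$), which the paper leaves implicit.
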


Of course the two items are equivalent by taking complements, but we make both of them explicit as they are equally useful.
\begin{proof}
We prove the second item, the first one is obtained by taking complements. As~$R$ is effectively open, there exists a c.e.~set~$E\subseteq\N^2$ such that~$R=\bigcup_{(i,j)\in E}B^X_i\times B^Y_j$.  One has
\begin{align}
x\in R^\forall&\iff \{x\}\times Y\subseteq R\\
&\iff \exists \text{ finite set }L\subseteq E, \{x\}\times Y\subseteq \bigcup_{(i,j)\in L}B^X_i\times B^Y_j\label{eq_xY}
\end{align}
because~$\{x\}\times Y$ is compact. Let~$\mathcal{F}$ be the collection of (indices of) finite sets~$L\subseteq E$ such that~$Y\subseteq \bigcup_{(i,j)\in L} B^Y_j$. As~$Y$ is effectively compact,~$\mathcal{F}$ is a c.e.~set. For each~$L\in \mathcal{F}$, let~$U_L=\bigcap_{(i,j)\in L}B^X_i$. The equivalence \eqref{eq_xY} implies that~$R^\forall=\bigcup_{L\in \mathcal{F}}U_L$, which is an effective open set.
\end{proof}

Descriptive set theory and its effective version provide notions of complexity for subsets of topological spaces (\cite{Moscho09}). We have already seen the classes~$\Sigma^0_1$ and~$\Pi^0_1$. In this article, we will also need another class.

\begin{defn}
A set~$A$ in a computable~$T_0$-space~$(X,\tau,(B_i)_{i\in\N})$ is~$\Sigma^0_2$ if it can be expressed as
\begin{equation*}
A=\bigcup_{n\in\N} A_n\setminus B_n
\end{equation*}
where~$A_n,B_n$ are uniformly effective closed sets (i.e.~$\Pi^0_1$).
\end{defn}

Observe that one can equivalently require~$A_n,B_n$ to be effectively \emph{open}, as~$A_n\setminus B_n=(X\setminus B_n)\setminus (X\setminus A_n)$. This class is traditionally defined on computable metric spaces (those spaces are defined in Section \ref{sec_spaces} below), where it is equivalently defined without the sets~$B_n$. The definition given here was proposed for non-Hausdorff spaces  by Scott \cite{Scott76} and Selivanov \cite{Selivanov06}, to make sure that the classes~$\Sigma^0_1$ and~$\Pi^0_1$ are contained in the class~$\Sigma^0_2$.

\subsection{Classes of computable \texorpdfstring{$T_0$}{T0} spaces}\label{sec_spaces}
Each topological separation axiom has a computable version, we recall some of them. These definitions can be found in \cite{1998Schroder} for instance.
\begin{defn}
\label{def: computable topo}
\begin{enumerate}
\item A \textbf{computable metric space} is a tuple~$(X,d,\alpha)$, where~$(X,d)$
is a metric space and~$\alpha:\N\rightarrow X$ is a dense
sequence in~$(X,d)$ such that the real numbers~$d(\alpha_{i},\alpha_{j})$ are uniformly computable,
\item A computable~$T_{0}$-space~$(X,\tau,(B_{i})_{i\in\N})$ is \textbf{computably Hausdorff} if the diagonal~$\Delta=\{(x,x):x\in X\}$ is an effectively
closed subset of~$X\times X$, i.e.~if there exists some c.e.~set~$\D\subseteq\N^{2}$
satisfying
\begin{equation*}
(X\times X)\setminus\Delta=\left\{ (x,y)\in X\times X:x\neq y\right\} =\bigcup_{(i,j)\in\D}B_{i}\times B_{j}.
\end{equation*}
\end{enumerate}
\end{defn}

\begin{rem}
The following facts are standard and can be found in \cite{Pauly16} for instance:
\begin{itemize}
\item A computable metric space is a computable~$T_0$-space, by taking the basis of metric balls~$B_d(\alpha_i,q)$ with~$q>0$ rational; these balls are called the \textbf{rational balls},
\item A computable metric space is computably Hausdorff,
\item In a computably Hausdorff space which is effectively compact, a set is semicomputable (i.e., effectively compact) if and only if it is effectively closed.
\end{itemize}
\end{rem}

The next fact is another powerful feature of effective compactness: in certain situations, if a computable function is injective, then its inverse is automatically computable.
\begin{prop}\label{prop_effective_inverse}Let~$X$ be a computable~$T_{0}$-space,~$Y$
a computably Hausdorff space and~$K\subseteq X$ a semicomputable set. If~$f:K\rightarrow Y$ is a computable injective function, then its inverse~$f^{-1}:f(K)\rightarrow K$ is computable.
\end{prop}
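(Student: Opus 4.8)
The plan is to verify the definition of computability for $f^{-1}:f(K)\to K$ directly: for each index $i$ I must show that $(f^{-1})^{-1}(B^K_i)$ is effectively open in $f(K)$, uniformly in $i$, where $B^K_i=K\cap B^X_i$ is the induced basis of $K$. Unwinding definitions and using that $f^{-1}(y)\in K$ always, we have $(f^{-1})^{-1}(B^K_i)=\{y\in f(K):f^{-1}(y)\in B^X_i\}$, so everything rests on the following elementary observation, which is exactly where injectivity of $f$ enters: for $y\in f(K)$ with $x=f^{-1}(y)$, one has $x\in B^X_i$ if and only if $y\notin f(K\setminus B^X_i)$. Indeed, if $x\in B^X_i$ then $x\notin K\setminus B^X_i$, and since $x$ is the \emph{only} preimage of $y$, $y\notin f(K\setminus B^X_i)$; conversely, if $x\notin B^X_i$ then $x\in K\setminus B^X_i$, hence $y=f(x)\in f(K\setminus B^X_i)$.

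Next I would check the effectivity of $C_i:=f(K\setminus B^X_i)$. The complement $X\setminus B^X_i$ is effectively closed (trivially, uniformly in $i$), and the intersection of an effectively compact set with an effectively closed set is again effectively compact (a standard fact: a finite basic cover of $K\cap C$ is the same as a finite basic cover of $K$ together with the effectively open $X\setminus C$, which is c.e.\ since $K$ is effectively compact). Hence $K\setminus B^X_i=K\cap(X\setminus B^X_i)$ is effectively compact uniformly in $i$, and since $f$ is computable and images of semicomputable sets under computable maps are semicomputable, $C_i=f(K\setminus B^X_i)$ is effectively compact in $Y$, uniformly in $i$.

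It remains to pass from the effectively compact sets $C_i$ to effectively open complements. Since $Y$ is computably Hausdorff, every effectively compact subset of $Y$ is effectively closed; concretely, for $C$ effectively compact, $Y\setminus C=\{y:\forall z\in C,\ (y,z)\in(Y\times Y)\setminus\Delta\}$, and this is effectively open by Proposition~\ref{prop_quant} applied to the effectively compact subspace $C$ and the effectively open set $(Y\times Y)\setminus\Delta$ — note this does not require $Y$ itself to be effectively compact. Applying this uniformly to the family $C_i$ gives that $Y\setminus C_i$, hence its trace $f(K)\setminus C_i$ on the subspace $f(K)$, is effectively open uniformly in $i$. By the observation of the first paragraph this trace is precisely $(f^{-1})^{-1}(B^K_i)$, so $f^{-1}$ is computable.

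I do not expect a genuine obstacle here: the argument is just the effective counterpart of the classical fact that a continuous injection from a compact space into a Hausdorff space is a homeomorphism onto its image. The only points that need care are (i) invoking injectivity correctly in the key observation — it is used to go from $x\in B^X_i$ to $y\notin f(K\setminus B^X_i)$, since $f^{-1}$ must be single-valued — and (ii) keeping every step uniform in $i$. The one place where one might be tempted to cut a corner is the claim that effective compactness implies effective closedness in a computably Hausdorff space without assuming the ambient space effectively compact, which is why I would spell out the appeal to Proposition~\ref{prop_quant} rather than quote the remark in the text.
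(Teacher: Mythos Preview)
Your proof is correct and follows essentially the same route as the paper's: both use injectivity to identify $(f^{-1})^{-1}(B^K_i)$ with $f(K)\setminus f(K\setminus B^X_i)$, then argue that $f(K\setminus B^X_i)$ is semicomputable and hence effectively closed in the computably Hausdorff space $Y$. You simply expand the steps the paper leaves implicit, in particular the justification that effective compactness implies effective closedness without assuming $Y$ itself is effectively compact.
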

\begin{proof}
It is a folklore result but we include a proof for completeness. We need to show that for a basic open set~$B_i$ of~$X$, its preimage under~$f^{-1}$ is effectively open in~$f(K)$, uniformly in~$i$. The preimage is precisely~$f(B_i\cap K)$. As~$f$ is injective,~$f(B_i\cap K)=f(K)\setminus f(K\setminus B_i)$. As~$K\setminus B_i$ is semicomputable, its image~$f(K\setminus B_i)$ is semicomputable as well. As~$Y$ is computably Hausdorff,~$f(K\setminus B_i)$ is effectively closed, so its complement is effectively open, as wanted.
\end{proof}

\subsubsection{The Hilbert cube}
The Hilbert cube will play a central role in this article, because every computable metric space computably embeds into it, so one can work in this space without loss of generality.
\begin{defn}
The \textbf{Hilbert cube} is the space~$Q=[0,1]^{\N}$ endowed
with the complete metric
\begin{equation*}
d_Q(x,y)=\sum_{i}2^{-i}|x_{i}-y_{i}|.
\end{equation*}
\end{defn}

Here are some important facts about the Hilbert cube.
\begin{fact}
\label{Fact Hilbert} The Hilbert cube~$Q$ satisfies the following properties:
\begin{enumerate}
\item Let~$(\alpha_i)_{i\in\N}$ be a computable enumeration of the points of~$Q$ having rational coordinates, finitely many of them being non-zero. It makes~$(Q,d_Q,\alpha)$ a computable metric space.
\item The Hilbert cube is effectively compact,
\item Therefore, a set~$X\subseteq Q$ is semicomputable if and only if its complement is an effective open set,
\item Every computable metric space embeds effectively into the Hilbert
cube. More precisely, for every computable metric space~$(X,d,\alpha)$ there
exists a computable embedding~$f:X\to Q$ such that~$f^{-1}$
is computable, defined as~$f(x)=(1/(1+d(x,\alpha_i)))_{i\in\N}$.
\end{enumerate}
\end{fact}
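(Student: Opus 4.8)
The plan is to prove the four items separately; only the last two carry genuine content, since item~1 is a routine verification and item~3 follows at once from items~1 and~2 together with the last bullet of the Remark above. For item~1, I would first check density: given $x\in Q$ and $n\in\N$, keep the first $n$ coordinates of $x$, replace each by a rational within $2^{-n}$, and pad the rest with zeros; the resulting point of the list $\alpha$ is within $2\cdot 2^{-n}$ of $x$ for $d_Q$. Then I would note that $d_Q(\alpha_i,\alpha_j)=\sum_k 2^{-k}|\alpha_{i,k}-\alpha_{j,k}|$ is a finite sum of rationals, hence a rational number computable uniformly from $i$ and $j$; so the distances $d_Q(\alpha_i,\alpha_j)$ are uniformly computable reals and $(Q,d_Q,\alpha)$ is a computable metric space.

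For item~2, I would exhibit a computable surjection $g:\{0,1\}^{\N}\to Q$, e.g.\ $g(\omega)_i=\sum_{k\ge 1}\omega_{\langle i,k\rangle}\,2^{-k}$, which is onto because every real in $[0,1]$ has a binary expansion, and computable because it is a map into $[0,1]^{\N}$ all of whose coordinate functions are uniformly computable (a map into a countable product is computable as soon as its coordinates are uniformly computable). Since $\{0,1\}^{\N}$ is effectively compact --- whether finitely many cylinders cover it is decided by inspecting strings of bounded length --- and the image of a semicomputable set under a computable map is semicomputable, $Q=g(\{0,1\}^{\N})$ is a semicomputable subset of itself, i.e.\ effectively compact. (Alternatively one argues directly: points agreeing on their first $m$ coordinates are within $2^{-m}$ in $d_Q$, which reduces the coverage of $Q$ to the coverage of a finite grid and the effective compactness of $[0,1]$.) Then item~3 is immediate: by item~1, $Q$ is a computable metric space, hence computably Hausdorff; by item~2 it is effectively compact; so the last bullet of the Remark gives that a subset of $Q$ is semicomputable iff it is effectively closed, which by definition means that its complement is effectively open.

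For item~4, set $f(x)=(1/(1+d(x,\alpha_i)))_{i\in\N}$. Each coordinate lies in $(0,1]\subseteq[0,1]$, so $f$ maps into $Q$; and $f$ is computable because $x\mapsto d(x,\alpha_i)$ is computable uniformly in $i$ (from $|d(x,\alpha_i)-d(\alpha_j,\alpha_i)|\le d(x,\alpha_j)$ and the uniform computability of the $d(\alpha_j,\alpha_i)$), hence so is $x\mapsto 1/(1+d(x,\alpha_i))$, uniformly in $i$. For injectivity, if $x\neq y$ let $\delta=d(x,y)>0$ and pick $\alpha_i$ with $d(x,\alpha_i)<\delta/3$; then $d(y,\alpha_i)>2\delta/3$, so $f(x)$ and $f(y)$ differ in their $i$-th coordinate. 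Finally, $f^{-1}$ is computable: from a name of $f(x)$ and any rational ball $B(\alpha_j,q)$ of $X$, one can semi-decide whether $x\in B(\alpha_j,q)$, since $d(x,\alpha_j)<q$ holds iff the $j$-th coordinate of $f(x)$ exceeds $1/(1+q)$, a $\Sigma^0_1$ condition on $f(x)$; hence the neighborhood filter of $x$ is c.e.\ relative to $f(x)$, uniformly, which is exactly the computability of $f^{-1}$.

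The main point to watch is precisely this last step: a computable metric space need not be compact, so Proposition~\ref{prop_effective_inverse} does not apply to $f$ and the computability of $f^{-1}$ must be obtained by the direct argument above. Apart from that, item~2 is the most infrastructural step but is entirely standard, and the remainder is bookkeeping.
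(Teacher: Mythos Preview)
Your proof is correct. The paper states this result as a \emph{Fact} without proof, treating it as standard material, so there is no proof in the paper to compare against; your write-up supplies the routine verifications cleanly, and your observation that Proposition~\ref{prop_effective_inverse} cannot be invoked for item~4 (since the computable metric space need not be compact) is exactly the right point to flag.
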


In particular, the fourth item implies that the semicomputable subsets of arbitrary computable metric spaces are no more general than the semicomputable subsets of~$Q$.

\section{Computable type and the Hilbert cube}\label{sec_ct}

The notion of computable type takes its origins in an article by Miller \cite{Miller02}, was studied by Iljazovi{\'c} et al. in \cite{Ilja09,Ilja11,Ilja13,BurnikI14,2018manifolds,CickovicIV19,2020pseudocubes,2020graphs,CelarI21,celar21} and by the authors in \cite{AH22}. The first formal definition as given in \cite{2018manifolds}.

Let us present the language needed to formulate definition of computable type.
\begin{defn}
\label{def:A-pair}A \textbf{pair}~$(X,A)$ consists of a compact
metrizable space~$X$ and a compact subset~$A\subseteq X$. A \textbf{copy}
of a pair~$(X,A)$ in a topological space~$Z$ is a pair~$(Y,B)$
such that~$Y\subseteq Z$ is homeomorphic to~$X$ and~$A$ is sent
to~$B$ by the homeomorphism.
\end{defn}

If~$(X,A)$ and~$(Y,B)$ are two pairs embedded in a topological space~$Z$, then we say that~$(X,A)$ is a \textbf{subpair} of~$(Y,B)$, or is \textbf{contained} in~$(Y,B)$ if~$X\subseteq Y$ and~$A\subseteq B$. We then write~$(X,A)\subseteq (Y,B)$. We say that~$(X,A)$ is \textbf{strictly contained} in~$(Y,B)$, or is a \textbf{proper subpair} of~$(Y,B)$, if in addition~$X\neq Y$ (note that~$A$ may equal~$B$), and we write~$(X,A)\subsetneq (Y,B)$.

\begin{defn}
If~$(X,A)$ and~$(Y,B)$ are two pairs, then a \textbf{function between pairs}~$f:(X,A)\to (Y,B)$ is a function~$f:X\to Y$ such that~$f(A)\subseteq B$.
\end{defn}

\begin{defn}\label{def_ct}
A compact metrizable space~$X$ has \textbf{computable type} if for every copy~$Y$ in the Hilbert cube, if~$Y$ is semicomputable then~$Y$ is computable.

A compact pair~$(X,A)$ has \textbf{computable type} if for every copy~$(Y,B)$ of~$(X,A)$ in the Hilbert cube, if~$(Y,B)$ is semicomputable then~$Y$ is computable.
\end{defn}

Note that the notion for pairs subsumes the notion for single sets, by considering the pair~$(X,\emptyset)$.

Originally, two notions of computable type were studied in \cite{Ilja09} and \cite{CickovicIV19} respectively, using other spaces than the Hilbert cube. Precisely,
\begin{itemize}
\item A pair~$(X,A)$ has \textbf{computable type on computable metric spaces} if for every copy~$(Y,B)$ of the pair in any computable metric space~$Z$, if~$(Y,B)$ is semicomputable then~$Y$ is computable,
\item A pair~$(X,A)$ has \textbf{computable type on computably Hausdorff spaces} if for every copy~$(Y,B)$ of the pair in any computably Hausdorff space~$Z$, if~$(Y,B)$ is semicomputable then~$Y$ is computable.
\end{itemize}

In this section, we prove that the distinction between computable type on computable metric spaces and computably Hausdorff spaces is unnecessary, because they are actually equivalent. Moreover, it is sufficient to consider the Hilbert cube only, as in Definition \ref{def_ct}. Therefore there is no more need to distinguish between these definitions, and the results about computable type on computable metric spaces immediately extend to computably Hausdorff spaces (for instance, the results in \cite{Ilja09,Ilja13} imply the results in \cite{CickovicIV19,2018manifolds}).

\begin{thm}\label{thm_hilbert}
For a pair~$(X,A)$, the following statements are equivalent:
\begin{enumerate}
\item $(X,A)$ has computable type (on the Hilbert cube),
\item $(X,A)$ has computable type on computable metric spaces,
\item $(X,A)$ has computable type on computably Hausdorff spaces.
\end{enumerate}
\end{thm}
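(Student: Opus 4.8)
The plan is to prove the chain of implications $3 \Rightarrow 2 \Rightarrow 1 \Rightarrow 3$. The implications $3 \Rightarrow 2$ and $2 \Rightarrow 1$ are trivial, since every computable metric space is a computably Hausdorff space (by the Remark following Definition \ref{def: computable topo}), and the Hilbert cube is itself a computable metric space; so any copy witnessing a failure in the smaller class is automatically a copy witnessing a failure in the larger class. All the content is in $1 \Rightarrow 3$: assuming $(X,A)$ has computable type on the Hilbert cube, we must show it has computable type on an arbitrary computably Hausdorff space $Z$. So let $(Y,B)$ be a semicomputable copy of $(X,A)$ inside such a $Z$; we want to conclude that $Y$ is computable.

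The key idea, as the authors announce, is Schr\"oder's effective metrization theorem: a compact computably Hausdorff space is effectively metrizable, hence admits a computable embedding into the Hilbert cube $Q$. The subtlety is that $Z$ itself need not be compact, so I would first restrict attention to a suitable compact effectively-Hausdorff subspace containing $Y$. Since $Y$ is semicomputable in $Z$, it is in particular compact; being a closed subspace of the computably Hausdorff space $Z$, the space $Y$ (with the subspace computable $T_0$ structure, $B_i^Y = Y \cap B_i^Z$) is itself compact and computably Hausdorff, and by semicomputability it is effectively compact. I would then invoke Schr\"oder's theorem to obtain a computable embedding $\varphi : Y \to Q$ whose inverse is computable — concretely this follows from the general principle that a computable injection from an effectively compact space into a computably Hausdorff space (here $Q$) has computable inverse (Proposition \ref{prop_effective_inverse}), once $\varphi$ itself is shown computable via effective metrization.

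Next I would push everything forward along $\varphi$. The image $Y' := \varphi(Y) \subseteq Q$ is the image of a semicomputable set under a computable map, hence semicomputable in $Q$; and it is a homeomorphic copy of $X$, with $B' := \varphi(B)$ being the image of the semicomputable set $B$, hence semicomputable, and corresponding to $A$ under the homeomorphism. Thus $(Y',B')$ is a semicomputable copy of $(X,A)$ in the Hilbert cube. By hypothesis $1$, $Y'$ is computable. Finally I would pull back: $\varphi^{-1} : Y' \to Y$ is computable, so $Y = \varphi^{-1}(Y')$ is the image of a computable set under a computable function between the relevant spaces, hence computable in $Z$. (One should check that computability of $Y$ as a subset of $Y$ with its own topology transfers to computability as a subset of $Z$; this holds because overtness and effective compactness are inherited correctly between $Y \cap B_i^Z$ and the basis of $Z$, the inclusion $Y \hookrightarrow Z$ being computable with effective compactness preserved.)

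The main obstacle is the effective metrization step: one must verify carefully that Schr\"oder's theorem applies to $Y$ with the inherited computable $T_0$ structure and yields a genuinely \emph{computable} embedding into $Q$ (not merely a topological one), and that effective compactness of $Y$ is what makes the inverse computable. A secondary, more bureaucratic difficulty is bookkeeping the transfer of semicomputability and computability between a space and its subspaces — making sure that "semicomputable as a subset of $Z$" and "effectively compact as a space" coincide here (which is exactly the content of the Remark after Definition \ref{def: computable topo}, using that $Z$ is computably Hausdorff), and that nothing is lost when passing to $Y'\subseteq Q$ and back. Once these points are pinned down, the argument is a clean "transport along a computable homeomorphism" and the Hilbert cube formulation of Definition \ref{def_ct} does all the remaining work.
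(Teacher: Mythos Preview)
Your proposal is correct and follows essentially the same route as the paper: restrict to the semicomputable copy $Y$ as an effectively compact computably Hausdorff subspace, use Schr\"oder's effective metrization (via the intermediate step that such a space is computably regular, Proposition~\ref{prop copy regular}) to get a computable embedding into $Q$ with computable inverse by Proposition~\ref{prop_effective_inverse}, then push forward, apply hypothesis~1, and pull back. The only detail you leave implicit is the passage through computable regularity, which is exactly what the paper's Proposition~\ref{prop copy regular} and Lemma~\ref{lem:effective lemma} supply.
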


We now proceed with the proof of Theorem \ref{thm_hilbert}. The implications~$3.\Rightarrow 2.\Rightarrow 1.$ are straightforward, we only need to prove~$1.\Rightarrow 3$.  The idea is that a compact subspace of a Hausdorff space is metrizable, so embedding a compact set in a Hausdorff space implicitly induces an embedding in a metric space. We need to make this argument effective. We will use Schr\"oder's effective version of Urysohn metrization theorem \cite{1998Schroder}, so we need to introduce the computable version of regular spaces.
\begin{defn}
A computable~$T_0$-space~$(X,\tau)$ is \textbf{computably regular} if there is a computable procedure associating to each basic open set~$B_{n}$ a sequence of effective open sets~$(U_k)_{k\in\N}$ and a sequence of effective closed sets~$(F_k)_{k\in\N}$ such that~$U_{k}\subseteq F_{k}\subseteq B_{n}$ and~$B_{n}=\bigcup_{k}U_{k}$.
\end{defn}

\begin{prop}
\label{prop copy regular}Let~$(X,\tau)$ be a computable~$T_0$-space. If~$(X,\tau)$ is effectively compact and computably Hausdorff, then it is computably regular.
\end{prop}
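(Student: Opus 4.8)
The plan is to prove that an effectively compact, computably Hausdorff space $(X,\tau)$ is computably regular by adapting the classical argument that compact Hausdorff spaces are regular (indeed normal), and tracking effectivity throughout. The classical statement ``compact Hausdorff $\Rightarrow$ regular'' relies on the fact that a point and a disjoint compact set can be separated by disjoint open sets; here the compact set is a closed subset of $X$, hence compact. The key tool for making this effective is Proposition \ref{prop_quant} (quantifying over a compact space), applied to the effectively closed diagonal complement coming from computable Hausdorffness.

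First I would unwind what is needed: given a basic open set $B_n$, I must effectively produce sequences of effective open sets $(U_k)$ and effective closed sets $(F_k)$ with $U_k\subseteq F_k\subseteq B_n$ and $\bigcup_k U_k=B_n$. The natural candidates are finite unions of basic open sets whose closures sit inside $B_n$. So the central claim to establish effectively is: the relation
\begin{equation*}
\{(x,i): x\in B_i \text{ and } \overline{B_i}\subseteq B_n\}
\end{equation*}
behaves well, i.e.\ the set $\{i : \overline{B_i}\subseteq B_n\}$ is c.e.\ (uniformly in $n$), and every point of $B_n$ lies in some such $B_i$. To see $\overline{B_i}\subseteq B_n$ effectively, note $\overline{B_i}\subseteq B_n$ iff for every $y\in X\setminus B_n$ there is a basic open set $B_j$ with $y\in B_j$ and $B_i\cap B_j=\emptyset$. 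Using computable Hausdorffness I can detect disjointness of basic open sets: by the $T_0$ intersection condition $B_i\cap B_j=\bigcup_{k:(i,j,k)\in\mathcal E}B_k$, so $B_i\cap B_j=\emptyset$ whenever no $B_k$ appears, but that is not directly c.e. Instead I would use the effectively closed diagonal: $B_i\cap B_j=\emptyset$ follows if $B_i\times B_j\subseteq (X\times X)\setminus\Delta$, and since $(X\times X)\setminus\Delta=\bigcup_{(i',j')\in\mathcal D}B_{i'}\times B_{j'}$ with $\mathcal D$ c.e., this inclusion is a $\Sigma^0_1$ condition on the pair $(i,j)$ — and conversely when $B_i,B_j$ are disjoint this witnesses it because $B_i\times B_j$ misses $\Delta$. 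Thus "certified disjointness" of basic opens is c.e.

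Next, using effective compactness of $X$: the complement $X\setminus B_n$ is effectively closed, hence (being a closed subset of an effectively compact space) effectively compact. So $\overline{B_i}\subseteq B_n$ iff $X\setminus B_n\subseteq \bigcup\{B_j : B_j \text{ certified disjoint from } B_i\}$, and by effective compactness of $X\setminus B_n$ combined with the c.e.\ family of certified-disjoint $B_j$'s, the set of such $i$ is c.e., uniformly in $n$. This is essentially Proposition \ref{prop_quant} applied to the effectively open relation "$B_j$ certified disjoint from $B_i$" quantified universally over the effectively compact space $X\setminus B_n$. Setting $F_i=\overline{B_i}$ and $U_i=B_i$ for the (c.e.-enumerated) indices $i$ with certified $\overline{B_i}\subseteq B_n$ gives the effective open $U_i$ and effective closed $F_i$ (note $\overline{B_i}$ is closed and we produce it along with a certificate; to see $\overline{B_i}$ is *effectively* closed one uses that $X\setminus\overline{B_i}=\bigcup\{B_j: B_j \text{ certified disjoint from } B_i\}$ is effectively open). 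The final point is that $\bigcup_i U_i = B_n$: this is the classical regularity of compact Hausdorff spaces — for $x\in B_n$, separate $x$ from the compact set $X\setminus B_n$ — and one checks the separating open set around $x$ can be shrunk to a basic open $B_i$ with $\overline{B_i}\subseteq B_n$, so $x\in U_i$.

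The main obstacle I anticipate is the bookkeeping around "certified disjointness" and showing $\overline{B_i}$ is genuinely effectively closed (not merely closed): one must check that the certificate enumerated for $\overline{B_i}\subseteq B_n$ can be upgraded, uniformly, to an enumeration of a basic-open cover of $X\setminus\overline{B_i}$, which again goes through the c.e.\ family $\mathcal D$ witnessing the diagonal complement. A secondary subtlety is uniformity: all these enumerations must be produced uniformly in $n$, which follows since $\mathcal D$, $\mathcal E$, and the effective compactness data of $X$ are all fixed c.e.\ sets and the complement $X\setminus B_n$ is effectively closed uniformly in $n$. Once these effectivity points are in place, the topological content is exactly the standard proof that compact Hausdorff spaces are regular, so no new topology is needed.
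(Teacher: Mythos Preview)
Your overall strategy is the right one, and close to the paper's, but there are two genuine gaps.

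First, the ``certified disjointness'' step does not go through as stated. You claim that the inclusion $B_i\times B_j\subseteq (X\times X)\setminus\Delta=\bigcup_{(i',j')\in\D}B_{i'}\times B_{j'}$ is a $\Sigma^0_1$ condition on $(i,j)$. But $B_i\times B_j$ is an open set, not a compact one, so there is no reason this inclusion is semi-decidable: Proposition~\ref{prop_quant} lets you quantify universally over a compact set, not over an open set. The easy fix is to forget about certifying \emph{all} disjoint pairs and simply use the c.e.\ set $\D$ itself as the supply of certified disjoint pairs; pairs $(i,j)\in\D$ automatically satisfy $B_i\cap B_j=\emptyset$, and $\D$ is already rich enough to cover the complement of the diagonal.

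Second, and more seriously, the final ``shrink to a basic open $B_i$ with $\overline{B_i}\subseteq B_n$'' step does not work. After separating $x\in B_n$ from the compact set $K=X\setminus B_n$ using finitely many rectangles $B_{i_1}\times B_{j_1},\ldots,B_{i_k}\times B_{j_k}$ from $\D$, you get $x\in\bigcap_l B_{i_l}$ and $K\subseteq\bigcup_l B_{j_l}$. You can indeed find a basic $B_m$ with $x\in B_m\subseteq\bigcap_l B_{i_l}$, but there is no reason $(m,j_l)\in\D$ for any $l$, nor that $K\subseteq\bigcup_{j:(m,j)\in\D}B_j$. The set $\D$ need not be closed under refinement in the first coordinate. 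So you cannot in general produce a \emph{single} basic open set that does the job, and hence $F_i=\overline{B_i}$ is not the right choice.

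The paper's proof avoids both issues by working directly with finite subsets $L\subseteq\D$: enumerate all finite $L\subseteq\D$ with $K\subseteq\bigcup_{(i,j)\in L}B_j$ (semi-decidable by effective compactness of $K$), and for each such $L$ set $U_L=\bigcap_{(i,j)\in L}B_i$ and $F_L=X\setminus\bigcup_{(i,j)\in L}B_j$. Then $U_L$ is effectively open, $F_L$ is effectively closed, $U_L\subseteq F_L\subseteq B_n$ immediately from the definition of $\D$, and the separation argument above shows every $x\in B_n$ lies in some $U_L$. The point is that taking $U_L$ to be a finite \emph{intersection} of basic opens, rather than a single basic open, is exactly what makes the covering go through.
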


\begin{proof}
Let~$\D\subseteq\N^2$ witness that the space is computably Hausdorff. Let~$n\in\N$. The set~$K=X\setminus B_{n}$ is effectively
compact, so one can compute an enumeration~$(L_{k})_{k\in\N}$
of all the finite sets~$L\subseteq \D$ such that~$K\subseteq\bigcup_{(i,j)\in L}B_{j}$.
To each such~$L_{k}$ associate the pair~$(U_{k},F_{k})$ defined
by
\begin{align*}
U_{k} & =\bigcap_{(i,j)\in L_{k}}B_{i}\\
F_{k} & =X\setminus\bigcup_{(i,j)\in L_{k}}B_{j}.
\end{align*}
By construction,~$U_{k}$ and~$F_{k}$ are uniformly effectively
open and closed respectively.

Next,~$U_{k}$ is contained in~$F_{k}$ because~$B_{i}$ is disjoint
from~$B_{j}$ for all~$(i,j)\in L_{k}\subseteq \D$. As~$X\setminus B_{n}\subseteq\bigcup_{(i,j)\in L_{k}}B_{j}$,
one has~$F_{k}=X\setminus\bigcup_{(i,j)\in L_{k}}B_{j}$$\subseteq B_{n}$.

Finally, we show that~$B_{n}=\bigcup_{k}U_{k}$. Let~$x\in B_{n}$.
As~$x\notin K$, the compact set~$\{x\}\times K$ is contained in~$\bigcup_{(i,j)\in D}B_{i}\times B_{j}$,
so there exists a finite set~$L\subseteq \D$ such that~$\{x\}\times K\subseteq\bigcup_{(i,j)\in L}B_{i}\times B_{j}$.
This inclusion is still satisfied if one removes from~$L$ the pairs~$(i,j)$
such that~$x\notin B_{i}$. Let~$L^{\prime}\subseteq L$ be the
result of this operation. There exists~$k$ such that~$L^{\prime}=L_{k}$,
and~$x\in U_{k}$.
\end{proof}

Schr\"oder proved the following effective Urysohn metrization theorem (Theorem 6.1 in \cite{1998Schroder}) : every computably regular space~$X$ admits a computable metric, i.e.~a computable function~$d:X\times X\to \R$ which is a metric that induces the topology of the space (note that it does not mean that~$X$ is a computable metric space, because it may not contain a dense computable sequence). The proof of that result implies the next lemma.
\begin{lem}
\label{lem:effective lemma}If~$(X,\tau)$ is a computably regular space, then there exists a computable injection~$h:X\rightarrow Q$.
\end{lem}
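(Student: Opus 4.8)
The plan is to reprove the embedding part of Schr\"oder's effective Urysohn theorem directly, using the structure supplied by computable regularity. Recall that in the classical proof one fixes a countable family of pairs $(V, W)$ of basic open sets with $\overline{V}\subseteq W$, uses Urysohn's lemma to produce for each pair a continuous function $X\to[0,1]$ that is $0$ on $\overline{V}$ and $1$ off $W$, and then assembles these functions into an embedding into the Hilbert cube. The content of the effective version is that all of this can be done uniformly computably. So first I would extract from the definition of computable regularity, for each basic open set $B_n$, the sequences $(U_k)_k$ of effective open sets and $(F_k)_k$ of effective closed sets with $U_k\subseteq F_k\subseteq B_n$ and $B_n=\bigcup_k U_k$. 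This gives, uniformly in $n$ and $k$, a ``nested pair'' $U_k\subseteq F_k$ with $F_k$ closed inside the open set $B_n$, which is exactly the raw material an effective Urysohn lemma consumes.

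The key step is an effective Urysohn lemma: given an effective open set $U$ and an effective closed set $F$ with $U\subseteq F$, and an effective open set $B$ with $F\subseteq B$ — equivalently, $F\subseteq B$ and $X\setminus B$ is a disjoint effective closed set from $U$ — I want a computable function $g:X\to[0,1]$ with $g=0$ on $U$ and $g=1$ on $X\setminus B$. Rather than rebuilding Urysohn's lemma by hand (iterated interpolation of opens indexed by dyadic rationals, which one can do effectively because computable regularity lets us interpolate: between any effective closed set and effective open set containing it we can insert another such pair), I would prefer to invoke Schr\"oder's theorem itself: a computably regular space carries a computable metric $d$ inducing its topology. Given such a $d$, one defines $h:X\to Q$ by $h(x)=\big(\min(1, d(x,y_i))\big)_{i}$ for a suitable countable family, but there is no dense computable sequence available. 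The clean fix is the standard one: enumerate all pairs $(m,n)$ such that $F\subseteq B_n$ for some effective closed $F$ with nonempty-interior witness coming from the regularity data for $B_m$ — more precisely, enumerate pairs of basic opens $(B_m, B_n)$ together with an index $k$ certifying $F_k\subseteq B_n$ in the regularity data for $B_m$, so that $\overline{B_m}$-type separation holds — and for the $j$-th such triple let $g_j:X\to[0,1]$ be the computable Urysohn function separating the closed set $F_k^{(m)}$ from $X\setminus B_n$, with value $0$ on $U_k^{(m)}$. Then set $h(x)=(2^{-j}g_j(x))_j\in Q$. Each coordinate is computable uniformly, so $h$ is computable; $h$ is injective because for distinct $x\ne x'$, computable regularity (together with $T_0$, and in our application $X$ is Hausdorff) produces a triple separating them, making some $g_j(x)\ne g_j(x')$; and injectivity of $h$ is all the lemma asks for (continuity of $h$ is automatic from computability, and one does not even need $h$ to be an embedding, only injective).

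Concretely I would structure the write-up as: (i) record the effective Urysohn lemma — in a computably regular space, given uniformly an effective open $U$, effective closed $F$, effective open $B$ with $U\subseteq F\subseteq B$, one computes a computable $g:X\to[0,1]$ with $g|_U\equiv 0$, $g|_{X\setminus B}\equiv 1$ — proved by the usual dyadic interpolation, where the interpolation step ``insert $U'\subseteq F'$ with $F\subseteq U'$ and $F'\subseteq B$'' is supplied by computable regularity applied to the basic opens covering $B$; (ii) enumerate the countable family of separating data from the regularity witnesses of all basic opens; (iii) define $h(x)=(2^{-j}g_j(x))_j$ and check computability coordinatewise and injectivity via $T_0$/Hausdorffness plus the separation property. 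The main obstacle is step (i): making the classical Urysohn construction fully uniform, i.e.\ checking that the recursion producing the dyadic-indexed open sets can be performed by a single algorithm, with the ``betweenness'' insertions coming from the computable-regularity data rather than from an abstract normality argument. This is routine but fiddly, and it is essentially the heart of Schr\"oder's Theorem 6.1; since the excerpt allows us to cite that theorem, an acceptable shortcut is to cite it for the existence of the computable metric $d$ and then bypass (ii)–(iii) entirely by noting that $d$ together with a computable enumeration of a countable topological base already yields a computable injection $x\mapsto\big(\min(1, \operatorname{dist}_d(x, X\setminus B_n))\big)_n$ into $Q$, whose injectivity follows because the $B_n$ form a base and $d$ induces the topology.
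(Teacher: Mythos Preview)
Your three-step plan (i)--(iii) is correct and is essentially what the paper does, only more verbosely: the paper simply observes that Schr\"oder's proof of his Theorem~6.1 already manufactures a computable sequence of point-separating functions $g_i:X\to[0,1]$ (this is exactly your effective Urysohn construction applied to the regularity data), and then sets $h(x)=(g_i(x))_{i\in\N}$. Your scaling by $2^{-j}$ is harmless but unnecessary since each $g_j$ already lands in $[0,1]$, and $T_0$ alone (no Hausdorffness needed) suffices for injectivity.

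However, your proposed ``shortcut'' at the end has a gap. Citing Schr\"oder gives you a computable metric $d:X\times X\to\R$, but the paper explicitly warns that this does \emph{not} make $X$ a computable metric space: there need not be any dense computable sequence in $X$. Without such a sequence (or some substitute like effective compactness), the function $x\mapsto\mathrm{dist}_d(x,X\setminus B_n)=\inf_{y\notin B_n}d(x,y)$ is not obviously computable --- you have no way to approximate the infimum from above (no points of $X\setminus B_n$ to test), and approximating it from below amounts to certifying $B_d(x,r)\subseteq B_n$, which is not semidecidable from the data at hand. So the shortcut does not work as stated; stick with your main approach, or do as the paper does and extract the separating $g_i$ directly from Schr\"oder's proof.
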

\begin{proof}
The proof of Theorem 6.1 in \cite{1998Schroder} consists in building a computable sequence of functions~$g_i:X\to [0,1]$ that separates points, i.e.~such that for all~$x,y\in X$ with~$x\neq y$,~$g_i(x)\neq g_i(y)$ for some~$i\in\N$. The metric~$d$ is then defined as~$d(x,y)=\sum_i 2^{-i}|g_i(x)-g_i(y)|$. In our case, we simply define~$h(x)=(g_i(x))_{i\in\N}$.
\end{proof}

\begin{cor}
\label{cor embedding}If~$K$ is a semicomputable set in a computably Hausdorff space~$(X,\tau)$, then there exists a computable embedding~$h:K\to Q$ whose inverse is computable.
\end{cor}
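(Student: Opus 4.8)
The plan is to combine the previous two results: Proposition \ref{prop copy regular} tells us that a semicomputable subset $K$ of a computably Hausdorff space $(X,\tau)$ is itself computably regular (as a subspace), and Lemma \ref{lem:effective lemma} then yields a computable injection $h:K\to Q$. The only remaining work is to upgrade ``computable injection'' to ``computable embedding with computable inverse'', and for this the natural tool is Proposition \ref{prop_effective_inverse}.

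First I would observe that $K$, being an effectively compact subset of a computably Hausdorff space, is itself a computable $T_0$-space (with the subspace basis $B_i^K = K\cap B_i^X$), and it is effectively compact and computably Hausdorff in its own right: effective compactness of $K$ is exactly its semicomputability, and the computably Hausdorff property is inherited by subspaces since the diagonal of $K\times K$ is just the intersection of $K\times K$ with the diagonal of $X\times X$, hence effectively closed. By Proposition \ref{prop copy regular}, $K$ is therefore computably regular, so Lemma \ref{lem:effective lemma} applies and gives a computable injection $h:K\to Q$.

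Next I would feed this into Proposition \ref{prop_effective_inverse}. Take the computable $T_0$-space to be $K$ itself (which is semicomputable as a subset of itself, since it is effectively compact), take $Y=Q$ (which is computably Hausdorff, being a computable metric space), and take the computable injective function to be $h$. Proposition \ref{prop_effective_inverse} then immediately gives that $h^{-1}:h(K)\to K$ is computable. Since $h$ is a continuous injection on a compact space into a Hausdorff space, it is automatically a topological embedding, so $h$ together with the computability of $h^{-1}$ is exactly the ``computable embedding whose inverse is computable'' that we want.

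I do not expect any serious obstacle here; the corollary is essentially just the composition of machinery already set up. The only point requiring a small amount of care is the verification that the subspace $K$ genuinely satisfies the hypotheses of the two cited results — in particular that effective compactness and the computably Hausdorff property pass to $K$ as a standalone computable $T_0$-space — but both are routine, using only that $B_i^K = K\cap B_i^X$ and that semicomputability of $K$ in $X$ is by definition the effective compactness of $K$. One should also note that applying Proposition \ref{prop_effective_inverse} with $X=K$ is legitimate precisely because every effectively compact space is a semicomputable subset of itself.
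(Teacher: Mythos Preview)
Your proposal is correct and follows essentially the same route as the paper's own proof: pass to the subspace $K$, note it is effectively compact and computably Hausdorff, apply Proposition~\ref{prop copy regular} to get computable regularity, apply Lemma~\ref{lem:effective lemma} to obtain a computable injection $h:K\to Q$, and conclude with Proposition~\ref{prop_effective_inverse} that $h^{-1}$ is computable. Your additional remarks (that the subspace inherits the computably Hausdorff property, and that a continuous injection from a compact space to a Hausdorff space is automatically an embedding) are correct and simply make explicit what the paper leaves implicit.
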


\begin{proof}
Consider the subspace~$(K,\tau_K)$ with the induced topology~$\tau_K=\{U\cap K:U\in\tau\}$. Because it is a subspace of a computably Hausdorff space, it is computably Hausdorff as well. As~$K$ is semicomputable, as a set, it is effectively compact as a space. Therefore, Proposition \ref{prop copy regular} implies that~$(K,\tau_K)$ is computably regular. By Lemma \ref{lem:effective lemma}, there exists a computable injection~$h:K\to Q$. By Proposition \ref{prop_effective_inverse}, its inverse is computable.
\end{proof}
Now, we prove our theorem.
\begin{proof}[Proof of Theorem \ref{thm_hilbert}]
We prove~$1.\Rightarrow 3$. If~$(K,L)$ is a pair that has computable type (on the Hilbert cube),~$(X,\tau)$ is a computably Hausdorff space and~$f:K\to X$ is an
embedding such that~$f(K)$ and~$f(L)$ are semicomputable,
then by Corollary \ref{cor embedding}, there exists a computable embedding~$h:f(K)\to Q$
such that~$h^{-1}$ is computable. Hence,~$h\circ f:K\to Q$
is an embedding such that~$h\circ f(K)$ and~$h\circ f(L)$ are
semicomputable. As~$(K,L)$ has computable type, the set~$h\circ f(K)$ is computable. The set~$f(K)$ is the image of the computable set~$h\circ f(K)$ by the computable function~$h^{-1}$, so~$f(K)$ is a computable set. As a result,~$(K,L)$ has computable type on computably Hausdorff spaces.
\end{proof}
%

\section{Strong computable type}\label{sec_sct}

The notion of computable type suffers from a severe drawback: if a compact metrizable space has no semicomputable copy in the Hilbert cube, then it vacuously has computable type. As there are only countably many semicomputable sets, most compact metrizable spaces have computable type for no good reason. In particular, there is no hope to obtain interesting characterizations of this notion.

This observation leads us to define a stronger and more robust notion: having computable type relative to any oracle. This definition solves the previous issue and has the advantage of  lending itself to topological analysis, notably in terms of the topologies on the hyperspace~$\K(Q)$ of compact subsets of~$Q$, which enables us to obtain topological characterizations.

\subsection{Definition}
We first define the notion of strong computable type, which is a simple relativization of Definition \ref{def_ct}.
\begin{defn}
A compact metrizable space~$X$ has \textbf{strong computable type} if for every oracle~$O$ and every copy~$Y$ of~$X$ in the Hilbert cube, if~$Y$ is semicomputable relative to~$O$, then~$Y$ is computable relative to~$O$.

A pair~$(X,A)$ has \textbf{strong computable type} if for every oracle~$O$ and every copy~$(Y,B)$ of~$(X,A)$ in~$Q$, if~$(Y,B)$ is semicomputable relative to~$O$, then~$Y$ is computable relative to~$O$.
\end{defn}
%

\begin{rem}
All the spaces which were proved to have computable type in the literature \cite{Miller02, Ilja09, Ilja11, Ilja13,BurnikI14,2018manifolds,CickovicIV19,2020pseudocubes,2020graphs,CelarI21,celar21,AH22} actually have strong computable type because the proofs hold relative to any oracle.

Note that the proof that the definition of computable type reduces to the Hilbert cube (Theorem \ref{thm_hilbert}) also extends to strong computable type, so defining strong computable type on computable metric spaces or computably Hausdorff spaces would yield equivalent notions.
\end{rem}

We will see in Section \ref{sec_surjection} that for finite simplicial complexes, strong computable type is equivalent to computable type (Corollary \ref{cor_simplicial}), refining the results in \cite{AH22}. More generally, we expect that for natural spaces, the notion of strong computable type is actually no stronger than the notion of computable type.


Intuitively, a pair has strong computable type iff for every copy~$(Y,B)$ in the Hilbert cube, the set~$Y$ can be fully computed if we are only given the compact information about~$Y$ and~$B$. We make it precise by seeing~$Y$ and~$B$ as points of the hyperspace of compact subsets of the Hilbert cube with suitable topologies, as follows.
\begin{defn}
Let~$\K(Q)$ be the \textbf{hyperspace} of~$Q$, i.e.~the space
of compact subsets of~$Q$. It can be equipped with:
\begin{enumerate}
\item The \textbf{upper Vietoris} topology\textbf{~$\upV$} generated by the sets of the form~$\{K\in\K(Q):K\subseteq U\}$, where~$U$ ranges over the open subsets of~$Q$,
\item The \textbf{lower Vietoris} topology\textbf{~$\lowV$ }generated by the sets of the form~$\{K\in\K(Q):K\cap U\neq\emptyset\}$, where~$U$ ranges over the open subsets of~$Q$,
\item The \textbf{Vietoris} topology~$\V$ generated by~$\upV$ and~$\lowV$.
\end{enumerate}
\end{defn}

These three topological spaces are computable~$T_0$-spaces. A countable basis of~$\upV$ is obtained by taking~$U$ among the finite unions of rational balls of~$Q$. A countable subbasis of~$\lowV$ is obtained by taking~$U$ among the rational balls of~$Q$, and adding~$\K(Q)$ to the subbasis. The space~$(\K(Q),\V)$ is also a computable metric space, by taking the Hausdorff metric, and the dense sequence of finite sets of rational points of~$Q$ with finitely many non-zero coordinates. Moreover, these three spaces are effectively compact (see Section~$5$ in~\cite{IK20} for instance; it is sufficient to prove it for the stronger topology~$\V$).

Each computability notion of compact set can then be seen as the notion of computable point in one of these topologies. In particular, for a compact set~$K\subseteq Q$,
\begin{align*}
K \text{ is semicomputable }\iff K\text{ is a computable element of }(\K(Q),\upV),\\
K\text{ is computable }\iff K\text{ is a computable element of }(\K(Q),\lowV).
\end{align*}
The topology~$\lowV$ induces the notion of computably overt set, which will not be discussed in this article. It is easy to get confused with the many different uses of the word ``computable'', but we will always make it clear what meaning is used. 

The property of having strong computable type can then be rephrased as relative computability between elements of~$\K(Q)$ with various topologies, using the notion of relative computability given by Definition \ref{def_comp_rel}. As relative computability is expressed in terms of enumeration reducibility, Definition \ref{def_enum_red} then provides equivalent formulations of this notion, which we will implicitly use in the rest of the article. Let us state such a formulation for clarity.

\begin{prop}
For a pair~$(X,A)$, the following statements are equivalent:
\begin{enumerate}
\item $(X,A)$ has strong computable type,
\item For every copy~$(Y,B)$ of~$(X,A)$ in~$Q$, the element~$Y$ of the space~$(\K(Q),\V)$ is computable relative to the element~$(Y,B)$ of the product space~$(\K^{2}(Q),\upVV)$.
\end{enumerate}
\end{prop}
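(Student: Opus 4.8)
The plan is to unfold both notions into the language of enumeration reducibility and observe that they coincide almost by definition. Recall that by Definition~\ref{def_comp_rel}, the point $Y$ of $(\K(Q),\V)$ is computable relative to the point $(Y,B)$ of $(\K^2(Q),\upVV)$ precisely when
\[
\{i : Y\in B_i^{(\K(Q),\V)}\}\ \leq_e\ \{i : (Y,B)\in B_i^{(\K^2(Q),\upVV)}\}.
\]
By Definition~\ref{def_enum_red}(3), this enumeration reducibility holds if and only if for every oracle $O$, whenever the right-hand set is c.e.\ relative to $O$, the left-hand set is c.e.\ relative to $O$. So the first step is to identify, for an arbitrary oracle $O$: having the right-hand set c.e.\ relative to $O$ means exactly that $(Y,B)$ is a computable point of $(\K^2(Q),\upVV)$ relative to $O$, which by the correspondence recalled just before the proposition (semicomputable $\iff$ computable in $\upV$) means exactly that the pair $(Y,B)$ is semicomputable relative to $O$; and having the left-hand set c.e.\ relative to $O$ means exactly that $Y$ is a computable point of $(\K(Q),\V)$ relative to $O$, i.e.\ that $Y$ is computable relative to $O$.

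Given that dictionary, the equivalence is immediate: statement~1 says ``for every $O$ and every copy $(Y,B)$, if $(Y,B)$ is semicomputable relative to $O$ then $Y$ is computable relative to $O$'', while statement~2 with the reformulation above says ``for every copy $(Y,B)$: for every $O$, if $(Y,B)$ is semicomputable relative to $O$ then $Y$ is computable relative to $O$''. These are the same statement after swapping the order of the two universal quantifiers over $O$ and over copies. So the second step is just to make this quantifier swap explicit and conclude.

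The one point that needs a little care — and which I expect to be the main (minor) obstacle — is checking that the correspondence ``computable point of $(\K(Q),\upV)$ $\iff$ semicomputable set'' relativizes cleanly, and likewise for $\V$ and computability of sets, and that the product topology $\upVV$ on $\K^2(Q)$ really does encode the joint compact information of the pair $(Y,B)$ rather than something weaker. For the product: a point $(Y,B)$ is computable in $(\K^2(Q),\upVV)$ relative to $O$ iff both coordinates are, by the standard behaviour of products of computable $T_0$-spaces, which is exactly ``$Y$ semicomputable relative to $O$ and $B$ semicomputable relative to $O$'', i.e.\ the pair $(Y,B)$ is semicomputable relative to $O$ in the sense of Definition~\ref{def_ct} relativized. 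For the relativization of the set/point correspondence, one simply notes that the defining c.e.\ conditions (the set of finite covers, resp.\ the set of basic balls meeting the set) are the ones used to identify the point with its neighbourhood filter, so relativizing to $O$ on both sides preserves the equivalence. Once these bookkeeping points are in place, the proof is a two-line quantifier manipulation and there is no topological content beyond what is already recorded in the excerpt.
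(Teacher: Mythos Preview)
Your proposal is correct and follows essentially the same approach as the paper: both arguments reduce statement~2 to formulation~(3) of enumeration reducibility (Selman's theorem) and observe that this is exactly the relativized definition of strong computable type. The paper's proof is a two-sentence version of your argument, omitting the bookkeeping about products and relativization that you spell out.
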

\begin{proof}
The second statement is about relative computability, which is defined as enumeration reducibility between neighborhood bases of~$Y$ and~$(Y,B)$. The definition of strong computable type is precisely formulation (3) of enumeration reducibility (Definition \ref{def_enum_red}).
\end{proof}

It will have interesting consequences, because it enables one to use topology and descriptive set theory on~$\K(Q)$ to analyze a computability-theoretic property.

\subsection{A first characterization}\label{sec_first_char}
We give a first characterization of strong computable type. This result is related to the characterization obtained by Jeandel in \cite{Jeandel17} of the sets~$A\subseteq\N$ that are \emph{total}, i.e.~whose complement~$\N\setminus A$ is enumeration reducible to~$A$.

We first need two definitions.
\begin{defn}
A \textbf{property of pairs} is a subset~$\P$ of~$\K(Q)\times\K(Q)$. It is a \textbf{topological invariant}, or shortly an \textbf{invariant}, if for every~$(X,A)\in \P$,
every copy~$(Y,B)$ of~$(X,A)$ is in~$\P$.
\end{defn}
We recall that~$(Y,B)$ is a proper subpair of~$(X,A)$, written~$(Y,B)\subsetneq (X,A)$, if~$Y\subsetneq X$ and~$B\subseteq A$.

\begin{defn}
Let~$\P$ be a property of pairs. A pair~$(X,A)\subseteq Q$ is \textbf{$\P$-minimal} if~$(X,A)\in \P$ and for every proper subpair~$(Y,B)\subsetneq (X,A)$, one has~$(Y,B)\notin \P$.
\end{defn}

We can now state our first characterization of strong computable type.
\begin{thm}
\label{thm_minimal}For a pair~$(X,A)$, the following statements are equivalent:
\begin{enumerate}
\item $(X,A)$ has strong computable type,
\item For every copy~$(Y,B)$ of~$(X,A)$ in~$Q$, there exists a property~$\P$ of pairs which is~$\Pi^0_1$ in the topology~$\upVV$ and such that~$(Y,B)$ is~$\P$-minimal.
\end{enumerate}
\end{thm}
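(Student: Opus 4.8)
The plan is to prove the two implications separately, making essential use of the reformulation of strong computable type as an enumeration reducibility (Definition \ref{def_enum_red}, formulation 3) together with the effective compactness of the hyperspaces. For the direction $2.\Rightarrow 1.$, fix a copy $(Y,B)$ of $(X,A)$ and a $\Pi^0_1$ property $\P$ in the topology $\upVV$ for which $(Y,B)$ is $\P$-minimal. I want to show $Y$ is computable in $(\K(Q),\V)$ relative to $(Y,B)$ in $(\K^2(Q),\upVV)$; by the proposition preceding the theorem this gives strong computable type. The point is that semicomputability of $(Y,B)$ gives us, relative to the oracle, an enumeration of a $\upV$-neighborhood basis of $Y$ (the finite unions of rational balls containing $Y$), hence an enumeration of all finite open covers of $Y$. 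To also get computability of $Y$ we need an enumeration of the rational balls meeting $Y$, i.e.\ the $\lowV$-information. Here minimality enters: a rational ball $R$ meets $Y$ if and only if the pair $(Y\setminus R, B\setminus R)$ — more precisely, no compact subpair of $(Y,B)$ avoiding $R$ — fails to be in $\P$. Since $\P$ is $\Pi^0_1$ in $\upVV$, the condition ``some proper subpair avoiding $R$ lies in $\P$'' is a $\Sigma^0_1$ condition that can be semidecided using the compact (upper Vietoris) information on $(Y,B)$: one searches over finite rational-ball covers of $Y$ and $B$ that, together with $R$ peeled off appropriately, still leave a compact pair satisfying the $\Pi^0_1$ condition. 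When $R\cap Y=\emptyset$ this search succeeds (take $(Y,B)$ itself against a cover excluding $R$); when $R\cap Y\neq\emptyset$, minimality guarantees it never succeeds, because any compact pair strictly inside $(Y,B)$ is outside $\P$, and any candidate witnessing the search would produce such a proper subpair still in $\P$. This gives a relative enumeration of $\{R : R\cap Y\neq\emptyset\}$, hence $Y$ is computable relative to $(Y,B)$.

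For the direction $1.\Rightarrow 2.$, suppose $(X,A)$ has strong computable type and fix a copy $(Y,B)$ in $Q$. By the relativized statement, there is an enumeration operator (a c.e.\ set $W$ as in Definition \ref{def_enum_red}(2)) witnessing that the $\lowV$-basic neighborhoods of $Y$ are enumeration reducible to the $\upV$-basic neighborhoods of $(Y,B)$; this operator is ``universal'' — it works for every oracle. I then define the desired property $\P$ directly from this operator: let $\P$ be the set of pairs $(Z,C)\in\K^2(Q)$ such that the enumeration operator, applied to the (canonical enumeration of the) $\upV$-neighborhood filter of $(Z,C)$, outputs an enumeration of a set of rational balls whose union is dense in $Z$ — equivalently, such that every rational ball meeting $Z$ is eventually output. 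The content to check is twofold. First, $\P$ is $\Pi^0_1$ in $\upVV$: membership of $(Z,C)$ in the complement means there is a rational ball $R$ with $R\cap Z\neq\emptyset$ that is never output by the operator on the $\upV$-information of $(Z,C)$, and using effective compactness of $\K(Q)$ and continuity/monotonicity of enumeration operators one arranges this as an effective union of basic $\upVV$-open sets — a ball meeting $Z$ is an open condition on $Z$, and ``$R$ not output'' is a closed condition that, combined via quantification over the compact hyperspace (Proposition \ref{prop_quant}), stays at the right level; one must be a little careful, peeling the operator's behavior into finite stages and using that ``the operator outputs $R$ within $n$ steps'' depends only on finitely much of the input enumeration, which is itself an open ($\upV$) condition on $(Z,C)$. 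Second, $(Y,B)$ is $\P$-minimal: $(Y,B)\in\P$ by the choice of the operator (strong computable type tells us the operator correctly enumerates the balls meeting $Y$); and if $(Z,C)\subsetneq (Y,B)$ were also in $\P$, then because the $\upV$-neighborhood filter of $(Z,C)$ contains that of $(Y,B)$ (a larger set is in fewer ``$\subseteq U$'' conditions, so actually one has containment the other way — this is the delicate monotonicity point), the operator run on $(Z,C)$'s information would still output every ball meeting $Y$, in particular some ball in $Y\setminus Z$, contradicting that its output balls are all contained in the region covered near $Z$.

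I expect the main obstacle to be the monotonicity/direction-of-inclusion bookkeeping in the $1.\Rightarrow 2.$ direction: the upper Vietoris neighborhood filter is \emph{anti}-monotone in the set (smaller compact set $\Rightarrow$ more open supersets $\Rightarrow$ richer $\upV$-information), so a proper subpair $(Z,C)\subsetneq(Y,B)$ feeds the enumeration operator \emph{more} than $(Y,B)$ does, and I need the operator's output to remain ``honest'' — i.e.\ to not output balls disjoint from $Z$ — which is exactly what should fail and thus yield the contradiction, but phrasing $\P$ so that this is literally a $\Pi^0_1$ condition (rather than something $\Pi^0_2$) requires care. Concretely, one should define $\P$ via ``for every rational ball $R$ meeting $Z$, $R$ is output'' \emph{only}, without the converse honesty clause, and then recover minimality from the fact that the operator, being fixed and correct on $(Y,B)$, cannot simultaneously be correct on a strictly smaller $Z$ while receiving a superset of the input — this needs the observation that correctness on all oracles forces the operator to be, in a suitable sense, the ``least'' such operator, or else one replaces the raw operator by a canonically minimized version. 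The other, more routine, obstacle is making the compactness arguments in $2.\Rightarrow 1.$ fully effective: expressing ``some compact proper subpair avoiding $R$ satisfies the $\Pi^0_1$ property $\P$'' as a $\Sigma^0_1$ predicate of the $\upV$-data requires quantifying existentially over the compact hyperspace, which is legitimate by effective compactness of $\K(Q)$ and Proposition \ref{prop_quant}, but one must check the subpair can be taken to be, e.g., a finite union of rational closed balls, so that the existential quantifier ranges over a c.e.\ set.
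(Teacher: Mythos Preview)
Your plan has genuine gaps in both directions, and in each case the paper's fix is a specific trick you are circling but not landing on.

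\medskip

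\textbf{Direction $2.\Rightarrow 1.$} You want to enumerate the rational balls $R$ meeting $Y$. Your proposed test is ``some compact subpair of $(Y,B)$ avoiding $R$ lies in $\P$'', which you claim is $\Sigma^0_1$. It is not: membership in $\P$ is $\Pi^0_1$, and an existential quantifier over the compact hyperspace of a \emph{closed} predicate yields a closed predicate (Proposition~\ref{prop_quant}), so your condition is $\Pi^0_1$, not $\Sigma^0_1$. Worse, your condition holds exactly when $R$ does \emph{not} meet $Y$ (witnessed by $(Y,B)$ itself), so even if it were semidecidable you would be enumerating the wrong set. The paper avoids the search over subpairs entirely: from $(Y,B)$ in $\upVV$ one directly computes the single pair $(Y\setminus R,\,B\setminus R)$ in $\upVV$ (since $Y\setminus R\subseteq V\iff Y\subseteq R\cup V$), and then semidecides $(Y\setminus R,B\setminus R)\notin\P$, which \emph{is} $\Sigma^0_1$ because it is the complement of a $\Pi^0_1$ set. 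By $\P$-minimality this holds iff $R$ meets $Y$.

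\medskip

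\textbf{Direction $1.\Rightarrow 2.$} Your $\P$ is ``every rational ball meeting $Z$ is eventually output by the operator''. The clause ``$R$ meets $Z$'' is open in $\lowV$, not in $\upV$, so the complement of your $\P$ involves a $\lowV$-open conjunct and is not $\Sigma^0_1$ in $\upVV$; your $\P$ is not $\Pi^0_1$ in the required topology. You correctly sense this is the delicate point, but your proposed repair (keep the ``correctness'' clause, drop the ``honesty'' clause) is exactly backwards. The paper keeps the honesty side: define $\P$ as the set of $(Z,C)$ on which the machine does \emph{not fail}, where ``fail'' means it outputs some ball $U$ with $Z\cap\overline{U}=\emptyset$. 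The use of the \emph{closed} ball is the whole trick: $Z\cap\overline{U}=\emptyset$ is equivalent to $Z\subseteq Q\setminus\overline{U}$, which is $\upV$-open, and ``$U$ is output after reading some finite sequence of $\upVV$-neighborhoods of $(Z,C)$'' is also $\upVV$-open, so failure is $\Sigma^0_1$ in $\upVV$ and $\P$ is $\Pi^0_1$ there. Minimality then follows cleanly along the lines you sketched: for a proper subpair $(Z,C)\subsetneq(Y,B)$ pick a ball $U$ meeting $Y$ with $\overline U$ disjoint from $Z$; the machine, being correct on $(Y,B)$, outputs $U$ after reading finitely many $\upVV$-neighborhoods of $(Y,B)$; those are also $\upVV$-neighborhoods of $(Z,C)$ by anti-monotonicity; hence the machine fails on $(Z,C)$, so $(Z,C)\notin\P$.
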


\begin{proof}
We fix an arbitrary copy of~$(X,A)$ in~$Q$ (that we still call~$(X,A)$) and prove that the following equivalence holds for that copy: the set of basic~$\V$ neighborhoods of~$X$ is enumeration reducible to the set of basic~$\upVV$ neighborhoods of~$(X,A)$ iff there exists a~$\Pi^0_1$ property~$\P$ such that~$(X,A)$ is~$\P$-minimal.

$2.\Rightarrow 1$. Assume that~$\P$ is~$\Pi^0_1$ in~$\upVV$ and that~$(X,A)$ is~$\P$-minimal. Given the~$\upVV$ neighborhoods of~$(X,A)$, we need to enumerate the rational balls~$U$ intersecting~$X$. Note that~$U$ intersects~$X$ iff~$X\setminus U$ is a proper subset of~$X$; as~$(X,A)$ is~$\P$-minimal, it is equivalent to~$(X\setminus U,A\setminus U)\notin \P$. Given~$(X,A)$ in the topology~$\upVV$, one can compute~$(X\setminus U,A\setminus U)$ in the topology~$\upVV$ (indeed,~$X\setminus U\subseteq V\iff X\subseteq U\cup V$, and similarly for~$A\setminus U$) so one can semi-decide whether~$(X\setminus U,A\setminus U)\notin\P$, i.e.~whether~$U$ intersects~$X$.

$1.\Rightarrow 2$. Consider a machine~$M$ which takes any enumeration of the basic~$\upVV$-neighborhoods of~$(X,A)$ and enumerates the open balls intersecting~$X$. Let~$\U$ be the set of pairs~$(X',A')$ on which~$M$ fails in the following sense: after reading a finite sequence of~$\upVV$ neighborhoods of the pair~$(X',A')$, the machine enumerates an open ball~$U$ such that~$X'\cap\overline{U}=\emptyset$, where~$\overline{U}$ is the corresponding closed ball. Let~$\P$ be the complement of~$\U$. Note that~$(X,A)\in\P$ because the machine does not fail on~$(X,A)$.

We first show that~$\U$ is effectively open in~$\upVV$. If~$\sigma=\sigma_0\ldots\sigma_k$ is a finite sequence of basic~$\upVV$-open sets and~$U$ is a basic open subset of~$Q$, then let
\begin{equation*}
\U_{(\sigma,U)}=\{(X',A'):\forall i\leq k, (X',A')\in\sigma_i\text{ and }X'\subseteq Q\setminus\overline{U}\}.
\end{equation*}
It is an effective~$\upVV$-open set, and~$\U$ is the union of all the~$\U_{(\sigma,U)}$ such that~$M$ outputs~$U$ after reading~$\sigma$, so it is a c.e.~union. Therefore~$\U$ is an effective~$\upVV$-open set and~$\P$ is~$\Pi^0_1$ in~$\upVV$.

We now show that~$(X,A)$ is~$\P$-minimal. If~$(X',A')$ is a proper subpair of~$(X,A)$, then there exists a ball~$U$ intersecting~$X$ such that~$\overline{U}$ is disjoint from~$X'$. On an arbitrary enumeration of the~$\upVV$-neighborhoods of~$(X,A)$, the machine eventually outputs~$U$ after reading a finite sequence~$\sigma$. As~$(X',A')\subseteq (X,A)$, the~$\upVV$-neighborhoods of~$(X,A)$ are also~$\upVV$-neighborhoods of~$(X',A')$ so the machine fails on~$(X',A')$. Therefore,~$(X',A')\in \U$, i.e.~$(X',A')\notin\P$. We have shown that~$(X,A)$ is~$\P$-minimal.
\end{proof}

In particular, this characterization immediately implies a simple sufficient condition for having strong computable type.
\begin{thm}\label{thm_sigma2}Let~$\P$ be a topological invariant which is~$\Sigma_{2}^{0}$
in~$\upVV$. Every minimal element of~$\P$ has strong computable type.
\end{thm}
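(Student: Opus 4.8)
The plan is to derive Theorem \ref{thm_sigma2} from the first characterization, Theorem \ref{thm_minimal}. By that theorem, a pair $(X,A)$ has strong computable type if and only if for \emph{every} copy $(Y,B)$ of $(X,A)$ in $Q$ there exists a $\Pi^0_1$ property (in $\upVV$) for which $(Y,B)$ is minimal. So, starting from a $\Sigma^0_2$ topological invariant $\P$ and a $\P$-minimal pair $(X,A)$, I need to produce, for an arbitrary copy $(Y,B)$ of $(X,A)$, a suitable $\Pi^0_1$ property $\P'$ witnessing minimality of $(Y,B)$. The key observation is that since $\P$ is a topological invariant and $(X,A)$ is $\P$-minimal, every copy $(Y,B)$ of $(X,A)$ is again $\P$-minimal: membership in $\P$ is copy-invariant, and any proper subpair of $(Y,B)$ would, after transporting along the homeomorphism, give a proper subpair of $(X,A)$ (careful: one must check the ``proper subpair'' relation is preserved, which it is, since homeomorphisms preserve strict inclusion of the total spaces and inclusion of the subspaces). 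Hence it suffices to treat $(X,A)$ itself and show that $\P$-minimality for a $\Sigma^0_2$ invariant implies $\Pi^0_1$-minimality for some derived property.

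The main step is therefore: given that $(X,A)$ is $\P$-minimal with $\P$ being $\Sigma^0_2$ in $\upVV$, construct a $\Pi^0_1$ property $\P'$ (in $\upVV$) such that $(X,A)$ is $\P'$-minimal. Write $\P = \bigcup_n C_n \setminus D_n$ with $C_n, D_n$ uniformly $\Pi^0_1$ in $\upVV$ (on the product $\K^2(Q)$). Since $(X,A)\in\P$, there is some index $n_0$ with $(X,A)\in C_{n_0}\setminus D_{n_0}$; in particular $(X,A)\in C_{n_0}$ and $(X,A)\notin D_{n_0}$, the latter meaning $(X,A)$ lies in the $\Sigma^0_1$ complement of $D_{n_0}$, hence in some basic $\upVV$-open set $W$ with $W\cap D_{n_0}=\emptyset$. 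I propose to take $\P' = C_{n_0}\cap W' $ where $W'$ is chosen as a suitable $\Pi^0_1$ shrinking — more precisely, since basic $\upVV$-open sets have the form $\{K: K\subseteq V\}$ for $V$ a finite union of rational balls, one can pick $V$ with $X\subseteq V$ and a slightly larger $\overline{V}\subseteq$ another open set disjoint from $D_{n_0}$, so that $\{K: K\subseteq \overline{V}\}$ is $\Pi^0_1$, contains $(X,A)$ (extended to the product: impose this on the first coordinate), and is contained in the complement of $D_{n_0}$. Then $\P' := C_{n_0} \cap \{(K,L): K\subseteq \overline{V}\}$ is $\Pi^0_1$ in $\upVV$ and satisfies $\P'\subseteq \P$ near $(X,A)$. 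The content to verify is that $(X,A)$ is $\P'$-minimal: clearly $(X,A)\in\P'$; and if $(Y,B)\subsetneq(X,A)$ were in $\P'$, then $(Y,B)\in C_{n_0}$ and $Y\subseteq\overline{V}$ so $(Y,B)\notin D_{n_0}$, hence $(Y,B)\in C_{n_0}\setminus D_{n_0}\subseteq\P$, contradicting $\P$-minimality of $(X,A)$. So $(X,A)$ is $\P'$-minimal with $\P'$ a $\Pi^0_1$ property, and Theorem \ref{thm_minimal} gives strong computable type.

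A subtlety to handle carefully is that the ``enlargement'' $\overline{V}$ with $X\subseteq V\subseteq\overline{V}$ and $\overline{V}$ still disjoint from the relevant $D_{n_0}$-bad region must actually exist: here one uses that $(X,A)\notin D_{n_0}$, $D_{n_0}$ is $\upV$-closed, so its complement is $\upV$-open, and $\upV$-open sets around $(X,A)$ are generated by sets $\{K\subseteq V\}$; compactness of $X\subseteq Q$ together with $Q$ being a (computable, effectively compact) metric space lets one interpose a closed neighborhood. One should double-check that $\{(K,L): K\subseteq \overline{V}\}$ is genuinely $\Pi^0_1$ (not merely closed) in $\upVV$ — this holds because $\overline{V}$, being a finite union of closed rational balls, has effectively open complement in $Q$, and $K\subseteq\overline{V}\iff K\cap(Q\setminus\overline{V})=\emptyset$; more directly, $\{K: K\subseteq\overline{V}\}$ is the complement of the effectively $\lowV$-type set $\{K: K\cap (Q\setminus \overline{V})\neq\emptyset\}$... but one must be in $\upV$, so instead argue $\{K: K\subseteq \overline{V}\}= \bigcap$ over a c.e.\ family is not right either; the clean route is to note it equals $\K(Q)\setminus\bigcup\{\,\{K: K\subseteq V'\}^c\,\}$ — concretely, a standard fact (and the definition in the excerpt allows $\Sigma^0_2$, hence also $\Pi^0_1$, to be built from closed sets that need not be basic) is simply that a ``superset-closed'' condition determined by an effectively closed set is $\Pi^0_1$ in $\upV$. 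I would cite or quickly verify this; it is the one place where the non-Hausdorff definition of $\Pi^0_1$ matters, and it is the step I expect to require the most care, though it is not deep.
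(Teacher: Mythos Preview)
Your overall plan---reduce to Theorem~\ref{thm_minimal} by exhibiting, for each copy, a~$\Pi^0_1$ property for which the copy is minimal---is exactly the paper's strategy. The first paragraph (copies of a $\P$-minimal pair are $\P$-minimal) is fine. The gap is in your construction of~$\P'$.

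The set $\{(K,L):K\subseteq\overline{V}\}$ is \emph{not} $\Pi^0_1$ in~$\upVV$, and the ``standard fact'' you invoke is false. Closed sets in~$\upV$ are \emph{upper} sets (if $K\in C$ and $K\subseteq K'$ then $K'\in C$), because the basic opens $\{K:K\subseteq U\}$ are lower sets. But $\{K:K\subseteq\overline{V}\}$ is a lower set, not an upper set; its complement $\{K:K\not\subseteq\overline{V}\}$ fails to be $\upV$-open since every basic $\upV$-neighborhood $\{K':K'\subseteq U\}$ contains, e.g., $\emptyset\subseteq\overline{V}$. So your intersection $C_{n_0}\cap\{(K,L):K\subseteq\overline{V}\}$ is not $\Pi^0_1$ in~$\upVV$.

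The fix is much simpler than your detour and is what the paper does: take $\P'=C_{n_0}$ itself. The point you are missing is that $D_{n_0}$, being $\Pi^0_1$ in~$\upVV$, is an upper set. Since $(X,A)\notin D_{n_0}$, every subpair $(Y,B)\subseteq(X,A)$ satisfies $(Y,B)\notin D_{n_0}$ automatically---no neighborhood is needed. Now if $(Y,B)\subsetneq(X,A)$ were in $C_{n_0}$, then $(Y,B)\in C_{n_0}\setminus D_{n_0}\subseteq\P$, contradicting $\P$-minimality of $(X,A)$. Hence $(X,A)$ is $C_{n_0}$-minimal and Theorem~\ref{thm_minimal} applies. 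You were implicitly rediscovering this when you noted that the basic open $W$ in the complement of $D_{n_0}$ is a lower set containing all subpairs of $(X,A)$; that observation alone finishes the proof without any~$\overline{V}$.
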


\begin{proof}
Let~$\P=\bigcup_{n}\P_n\setminus \Q_n$ where each~$\P_n,\Q_n$ is~$\Pi_{1}^{0}$ in~$\upVV$. Assume that~$(X,A)$ is~$\P$-minimal and let~$(Y,B)$ be a copy of~$(X,A)$ in~$Q$. This copy belongs to some~$\P_n\setminus \Q_n$. Let us show that~$(Y,B)$ must be~$\P_n$-minimal. Let~$(Y',B')$ be a proper subpair of~$(Y,B)$. The set~$\Q_n$ is~$\Pi^0_1$ in~$\upVV$ so it is an upper set. As~$(Y,B)\notin\Q_n$,~$(Y',B')\notin \Q_n$ as well. As~$(Y,B)$ is~$\P$-minimal,~$(Y',B')\notin \P$, so~$(Y',B')\notin \P_n$. Therefore,~$(Y,B)$ is~$\P_n$-minimal.

We have shown that each copy~$(Y,B)$ of~$(X,A)$ is~$\P_n$-minimal for some~$n$, so we can apply Theorem \ref{thm_minimal}, implying that~$(X,A)$ has strong computable type.
\end{proof}

We will see applications of this result in the sequel. Theorem \ref{thm_sigma2} is only an implication and we will see later that the converse implication does not hold in general. However, in the next section we obtain a characterization of strong computable type using~$\Sigma^0_2$ invariants and a weak form of minimality.

\paragraph{Discussion about minimality}

When applying Theorem \ref{thm_minimal} and Theorem \ref{thm_sigma2} to prove that a pair has strong computable type, one needs to show that a pair~$(X,A)$ is~$\P$-minimal for some~$\P$. The definition of minimality involves all the proper subpairs~$(Y,B)\subsetneq (X,A)$. However, because~$\P$ is~$\Pi^0_1$ in~$\upVV$, it is not necessary to consider all of them.

The reason is that the lower and upper Vietoris topologies interact nicely with the inclusion ordering on compact sets:
\begin{itemize}
\item If a set~$\P\subseteq\K(Q)$ is closed in~$\upV$ or open in~$\lowV$, then it is an \textbf{upper set}: if~$K\subseteq K'\subseteq Q$ and~$K\in\P$, then~$K'\in\P$.
\item Dually, if~$\P$ is open in the topology~$\upV$ or closed in the topology~$\lowV$, then it is a \textbf{lower set}: if~$K\subseteq K'\subseteq Q$ and~$K'\in\P$, then~$K\in\P$.
\end{itemize}

It gives a shortcut to prove that a pair~$(X,A)$ is~$\P$-minimal, when~$\P$ is an upper set, for instance when~$\P$ is~$\Pi^0_1$ or more generally closed in the topology~$\upVV$.

\begin{defn}\label{def_cofinal}
Let~$(X,A)$ be a pair and~$\F$ a family of proper subpairs of~$(X,A)$. We say that~$\F$ is \textbf{cofinal} if every proper subpair of~$(X,A)$ is contained in some pair in~$\F$.
\end{defn}

The next result is elementary but very useful.
\begin{lem}\label{lem_cofinal}
Let~$\P$ be a property of pairs which is an upper set. Let~$(X,A)\in\P$ and~$\F$ be cofinal. 
If no pair of~$\F$ is in~$\P$ then~$(X,A)$ is~$\P$-minimal.
\end{lem}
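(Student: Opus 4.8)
The plan is to prove Lemma \ref{lem_cofinal} directly by unwinding the definition of $\P$-minimality and using the cofinality hypothesis together with the upper-set property of $\P$. We are given that $(X,A)\in\P$, so the only thing left to verify is that every proper subpair $(Y,B)\subsetneq (X,A)$ fails to be in $\P$.

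First I would fix an arbitrary proper subpair $(Y,B)\subsetneq(X,A)$. By the cofinality of $\F$ (Definition \ref{def_cofinal}), there is some pair $(Y',B')\in\F$ with $(Y,B)\subseteq(Y',B')$; in particular $Y\subseteq Y'$ and $B\subseteq B'$. By hypothesis $(Y',B')\notin\P$. Now I invoke the fact that $\P$ is an upper set for the componentwise inclusion ordering on $\K(Q)\times\K(Q)$: since $(Y,B)\subseteq(Y',B')$, if we had $(Y,B)\in\P$ then we would get $(Y',B')\in\P$, a contradiction. Hence $(Y,B)\notin\P$. As $(Y,B)$ was an arbitrary proper subpair, this shows $(X,A)$ is $\P$-minimal.

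The one small point worth making explicit is that "upper set" here should be understood componentwise on pairs: $\P\subseteq\K(Q)\times\K(Q)$ is an upper set if $(K,L)\subseteq(K',L')$ (meaning $K\subseteq K'$ and $L\subseteq L'$) and $(K,L)\in\P$ imply $(K',L')\in\P$. This is the natural product notion, and it is exactly what holds when $\P$ is closed in $\upVV=\upV^2$ (or $\Pi^0_1$ in $\upVV$), since a product of upper sets is an upper set and, as noted in the discussion preceding the lemma, each $\upV$-closed set is an upper set. So in the intended applications the hypothesis is automatically satisfied.

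There is essentially no obstacle here: the proof is a two-line chase through the definitions, and the only thing to be careful about is stating the componentwise ordering correctly so that the word "cofinal" and the word "upper set" refer to the same ordering. I would write it as follows.

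\begin{proof}
By assumption $(X,A)\in\P$, so it suffices to check that no proper subpair of $(X,A)$ belongs to~$\P$. Let $(Y,B)\subsetneq(X,A)$ be a proper subpair. Since~$\F$ is cofinal, there is a pair $(Y',B')\in\F$ with $(Y,B)\subseteq(Y',B')$, and by hypothesis $(Y',B')\notin\P$. Because~$\P$ is an upper set and $(Y,B)\subseteq(Y',B')$, the relation $(Y,B)\in\P$ would force $(Y',B')\in\P$, a contradiction; hence $(Y,B)\notin\P$. As $(Y,B)$ was an arbitrary proper subpair of $(X,A)$, the pair $(X,A)$ is $\P$-minimal.
\end{proof}
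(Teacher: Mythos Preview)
Your proof is correct and follows exactly the same approach as the paper's: pick a proper subpair, use cofinality to dominate it by a pair in~$\F$, then use that~$\P$ is an upper set to conclude the subpair is not in~$\P$. Your explicit remark about the componentwise ordering on pairs is a helpful clarification but does not diverge from the paper's argument.
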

\begin{proof}
If~$(Y,B)\subsetneq (X,A)$, then~$(Y,B)\subseteq (Z,C)\subsetneq (X,A)$ for some~$(Z,C)\in\F$, by cofinality. As~$(Z,C)\notin \P$ by assumption and~$\P$ is an upper set,~$(Y,B)\notin\P$.
\end{proof}

We will implicitly apply this observation, notably with the following families:
\begin{itemize}
\item The family~$\{(Y,Y\cap A):Y\subsetneq X\}$ is cofinal,
\item If~$\mathcal{B}$ is a basis of the topology of~$X$ consisting of non-empty open sets, then the family~$\{(X\setminus B,A\setminus B):B\in\mathcal{B}\}$ is cofinal,
\item If~$A$ has empty interior in~$X$, then the family~$\{(Y,A):A\subseteq Y\subsetneq X\}$ is cofinal.
\end{itemize}

%
%
%
%
%

\subsection{A characterization using invariants}
In order to prove that a pair has strong computable type using Theorem \ref{thm_minimal}, one needs to find a~$\Pi^0_1$ property for each copy. In this section we improve the result by making the property less dependent on the copy. It is possible because there are countably many~$\Pi^0_1$ properties, but uncountably many copies of a pair, so some property must work for ``many'' copies. The word ``many'' can be made precise by using Baire category on the space of continuous functions of the Hilbert cube to itself.


\begin{thm}\label{thm_sigma2_equiv}For a pair~$(X,A)$, the following statements are equivalent:
\begin{enumerate}
\item $(X,A)$ has strong computable type,
\item There exists a~$\Sigma_{2}^{0}$ invariant~$\bigcup_{n\in\N}\P_n$, where the sets~$\P_n$ are uniformly~$\Pi^0_1$ in~$\upVV$, such that every copy of~$(X,A)$ in~$Q$ is~$\P_n$-minimal for some~$n$.
\end{enumerate}
\end{thm}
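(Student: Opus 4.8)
The implication $2.\Rightarrow 1.$ is immediate from Theorem~\ref{thm_minimal}: if every copy of $(X,A)$ is $\P_n$-minimal for some $n$, then applying the argument in the proof of Theorem~\ref{thm_sigma2} (using that each $\P_n$ is $\Pi^0_1$ in $\upVV$, hence an upper set, and the whole union $\P=\bigcup_n\P_n$ is an invariant) shows each copy is $\P_n$-minimal for a $\Pi^0_1$ property, so $(X,A)$ has strong computable type. So the content is in $1.\Rightarrow 2$.

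The plan for $1.\Rightarrow 2.$ is to take the family of $\Pi^0_1$ properties produced by Theorem~\ref{thm_minimal} — one for each copy — and assemble them into a single $\Sigma^0_2$ invariant using a Baire category argument on the function space. Concretely: fix the given copy $(X,A)\subseteq Q$ and let $\con$ be the Polish space of continuous functions $Q\to Q$ (with the sup metric). For each $f\in\con$ that restricts to an embedding of $X$ into $Q$, the pair $(f(X),f(A))$ is a copy, and by (the proof of) Theorem~\ref{thm_minimal} there is a machine $M$ — depending a priori on the copy — for which the associated $\Pi^0_1$ property $\P_M$ makes that copy $\P_M$-minimal. Since there are only countably many machines $M_0,M_1,\ldots$, for each $m$ let $C_m\subseteq\con$ be the set of $f$ that are embeddings on $X$ and for which $(f(X),f(A))$ is $\P_{M_m}$-minimal. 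Every embedding lies in some $C_m$, so the $C_m$ cover the (nonempty, since the inclusion works) set $\inj$ of $f$ restricting to embeddings of $X$. The key point is that $\inj$ is a $G_\delta$ (hence Polish in the subspace topology), so by the Baire category theorem some $C_m$ is non-meager, indeed comeager on some open subset of $\inj$; this forces $M_m$ to be "robust" enough that $\P_{M_m}$ works for \emph{every} copy after possibly re-indexing. More carefully: one defines $\P_n$ to be the $\Pi^0_1$ property built from $M_n$ exactly as in the proof of Theorem~\ref{thm_minimal} (the set $\con$ of pairs on which $M_n$ does not "fail"), and one shows that the union $\bigcup_n\P_n$ is an invariant — each $\P_n$ need not be an invariant individually, but membership of a copy in \emph{some} $\P_n$ is preserved under homeomorphism by a density/genericity argument: if $(Y,B)$ is a copy and $g$ is a homeomorphism $Q\to Q$ sending it to another copy $(Y',B')$, a generic small perturbation relates the behaviors of the machines, and non-meagerness of the relevant $C_m$ transfers $\P_{M_m}$-minimality from the comeager set to $(Y',B')$.

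The main obstacle — and the place the real work lies — is showing that the $\Sigma^0_2$ set $\bigcup_n\P_n$ is a topological \emph{invariant}, i.e.\ that if one copy of $(X,A)$ is $\P_n$-minimal for some $n$ then \emph{every} copy is. The individual $\P_n$ are not invariants, so this must come from the Baire-category structure: one needs that the set of $f\in\inj$ whose image copy is $\P_n$-minimal for some $n$ is comeager in $\inj$ (from the countable cover of a Polish space by the $C_m$, at least one is non-meager, and a homogeneity argument — using that the homeomorphism group of $Q$ acts transitively enough on embeddings, or a wreath/shift trick composing with self-homeomorphisms of $Q$ — upgrades "non-meager" to "comeager and closed under the group action"), and then that \emph{every} copy is the image of such a generic $f$. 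The last point uses that any two embeddings of $X$ into $Q$ are related by a self-homeomorphism of $Q$ (a standard fact about the Hilbert cube, Keller's theorem / homogeneity of $Q$), together with the observation that composing with a self-homeomorphism $h$ of $Q$ only relabels basic open sets and hence conjugates the machine $M_n$ to another machine $M_{n'}$ whose $\Pi^0_1$ property $\P_{n'}$ does the job. So the skeleton is: (i) recall the per-copy $\Pi^0_1$ property from Theorem~\ref{thm_minimal}; (ii) enumerate machines and cover $\inj$; (iii) Baire category gives a non-meager, hence (by homogeneity of $Q$) comeager-up-to-the-action piece; (iv) homogeneity of $Q$ shows every copy is hit; (v) conclude $\bigcup_n\P_n$ is a $\Sigma^0_2$ invariant with every copy $\P_n$-minimal, then invoke Theorem~\ref{thm_minimal} for the converse direction already done above.
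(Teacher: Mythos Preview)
Your overall architecture for $1.\Rightarrow 2.$ is right: start from the per-copy $\Pi^0_1$ properties of Theorem~\ref{thm_minimal}, use that there are only countably many of them, and apply Baire category on a Polish space of functions to find one that works on an open set of copies. That is exactly how the paper begins. The gap is in the last step, where you try to upgrade ``works on a non-meager set of copies'' to ``works on every copy'' via homogeneity of~$Q$.

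Two problems. First, the claim that any two embeddings of~$X$ into~$Q$ are related by a self-homeomorphism of~$Q$ is false (and Keller's theorem has nothing to do with this). Take~$X=Q$: the identity embedding has image~$Q$, while~$Q$ also embeds as a proper $Z$-set such as~$Q\times\{0\}\subseteq Q\times[0,1]\cong Q$; no self-homeomorphism of~$Q$ takes a proper subset onto~$Q$. More generally, ambient equivalence of embeddings into~$Q$ requires $Z$-set hypotheses that arbitrary copies need not satisfy. Second, even if such a self-homeomorphism~$h$ existed, it would typically be non-computable, so conjugating the machine~$M_n$ by~$h$ does \emph{not} produce another machine~$M_{n'}$ from your enumeration; the conjugated procedure needs~$h$ as an oracle, and you lose the $\Pi^0_1$ complexity.

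The paper avoids both issues by a Vaught-transform construction rather than an ambient-homeomorphism argument. Having found (by Baire category, as you do) a $\Pi^0_1$ property~$\P$ and a copy~$(X_0,A_0)$ which is $\P$-minimal and such that every $\epsilon$-deformation of~$(X_0,A_0)$ lies in~$\P$, one defines~$\P_{i,\epsilon}=\{(Y,B):\phi_i(Y,B)\in\P_\epsilon\}$, where~$(\phi_i)$ is a fixed \emph{computable} dense sequence in~$\con(Q)$ and~$\P_\epsilon$ is the (still $\Pi^0_1$) set of pairs all of whose $\epsilon$-deformations lie in~$\P$. The point is that for any copy~$(X',A')$ one can choose~$i$ so that~$\phi_i(X',A')$ is $\epsilon/2$-homeomorphic to~$(X_0,A_0)$; this transfers $\P$-minimality to $\P_{i,\epsilon/2}$-minimality of~$(X',A')$. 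The resulting invariant is~$\P^*=\bigcup_{i,\epsilon}\P_{i,\epsilon}$. So the role you wanted self-homeomorphisms of~$Q$ to play is instead played by the dense computable family~$\phi_i$ acting on the \emph{source} side, which keeps everything $\Pi^0_1$ and makes the whole construction an invariant by design.
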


Note that the invariant is more than a $\Sigma^0_2$ property, because it is a union of closed sets rather than a union of \emph{differences} of closed sets. The rest of the section is devoted to the proof of this result.

The idea of the proof is that one of the~$\Pi^0_1$ properties from Theorem \ref{thm_minimal} works for ``many'' copies of~$(X,A)$, in the sense of Baire category. We then transform this~$\Pi^0_1$ property into a~$\Sigma^0_2$ invariant. This transformation is an instance of Vaught's transform \cite{Vaught74}. Let us briefly explain this transform. There are several ways of defining an invariant out of a property~$\P$. For instance, the set of pairs having at least one copy in~$\P$ is an invariant; the set of pairs whose copies are all in~$\P$ is another invariant. However, these two invariants can have very high descriptive complexities (typically~$\Sigma^1_1$ and~$\Pi^1_1$ respectively). Vaught's transform is an intermediate way of converting a property into an invariant which almost preserves the descriptive complexity: the invariant is the set of pairs  having ``many'' copies in~$\P$, where ``many'' is expressed using Baire category. We now give the details.

We endow the space~$\con(Q)$ of continuous functions from~$Q$ to itself with the complete separable metric
\begin{equation*}
\rho(f,g)=\max_{x\in Q}d_Q(f(x),g(x)).
\end{equation*}

If functions~$f,g$ are defined on~$X\subseteq Q$ only, then we also define~$\rho_X(f,g)=\max_{x\in X}d_Q(f(x),g(x))$.
\begin{rem}We will often use the following inequalities:
\begin{align}
\rho(f\circ h,g\circ h)&\leq \rho(f,g),\label{ineq1_rho}\\
\rho(f\circ g,\id_Q)&\leq \rho(f,\id_Q)+\rho(g,\id_Q).\label{ineq_rho}
\end{align}
The second inequality holds because~$\rho(f\circ g,\id_Q)\leq \rho(f\circ g,g)+\rho(g,\id_Q)\leq \rho(f,\id_Q)+\rho(g,\id_Q)$.
\end{rem}

We can make~$(\con(Q),\rho)$ a computable metric space.
\begin{lem}\label{lem_dense}
There exists a computable sequence of injective continuous functions~$\phi_i:Q\to Q$ that is dense in the metric~$\rho$.
\end{lem}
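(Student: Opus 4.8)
There exists a computable sequence of injective continuous functions $\phi_i:Q\to Q$ that is dense in the metric $\rho$.

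---

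The plan is to build the $\phi_i$ by perturbing a standard computable dense sequence of (not necessarily injective) continuous maps so as to make each one injective while moving it by an arbitrarily small amount in the $\rho$-metric. Concretely, I would proceed in three steps.

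First, I would recall or construct a computable sequence $(f_i)_{i\in\N}$ of continuous functions $Q\to Q$ that is dense in $(\con(Q),\rho)$ but with no injectivity requirement. This is routine: one can take $f_i$ to depend on only finitely many input coordinates and to have rational "polygonal" behaviour, e.g.\ built from finitely many rational affine pieces on a dyadic subdivision of a finite-dimensional face $[0,1]^n$, composed with the projection $Q\to[0,1]^n$ and the inclusion $[0,1]^n\hookrightarrow Q$. Density of such maps in $\rho$ follows from uniform continuity of any $f\in\con(Q)$ together with compactness of $Q$, and the real parameters are rational, hence the sequence is computable.

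Second — the heart of the argument — I would show that any such $f_i$ can be perturbed to an injective continuous map $\phi$ with $\rho(f_i,\phi)$ as small as desired, in a uniformly computable way. The key idea: compose with a "separating" injection of $Q$ into a single extra coordinate. Since $Q=[0,1]^\N$, pick a coordinate index $k$ that $f_i$ ignores (such a $k$ exists because $f_i$ factors through finitely many coordinates). Define $\phi(x)$ to agree with $f_i(x)$ on all coordinates except that the $k$-th coordinate of $\phi(x)$ is set to $\varepsilon\cdot\theta(x)$, where $\theta:Q\to[0,1]$ is a fixed computable \emph{injective} continuous function (for instance $\theta(x)=\sum_j 3^{-j-1}x_j$, which is injective since it is essentially a ternary-type expansion with digits in $[0,1]$ — I'd verify injectivity directly) and $\varepsilon$ is chosen so small that the $2^{-k}\varepsilon$ contribution to $d_Q$ is below the target precision. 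Then $\phi$ is injective: if $\phi(x)=\phi(x')$, the $k$-th coordinate gives $\theta(x)=\theta(x')$, hence $x=x'$. Continuity and computability of $\phi$ from $f_i,\theta,\varepsilon$ are immediate. Finally $\rho(f_i,\phi)\le 2^{-k}\varepsilon$ by construction. Doing this with $\varepsilon_m\to 0$ (say $\varepsilon$ chosen from the rationals depending on an extra index $m$) and re-indexing the doubly-indexed family $(\phi_{i,m})$ into a single computable sequence $(\phi_\ell)_\ell$ gives a computable sequence of injective continuous maps which is still dense, since the original $(f_i)$ were dense and each $f_i$ is approximated arbitrarily well by the $\phi_{i,m}$.

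The main obstacle is the second step, specifically pinning down a genuinely \emph{injective} computable continuous $\theta:Q\to[0,1]$ and checking that the coordinate-replacement trick both preserves injectivity and keeps the perturbation small uniformly; once a clean choice of $\theta$ is fixed (and one must double-check that the candidate formula is truly injective on all of $[0,1]^\N$, not just on $\{0,1\}^\N$), the rest is bookkeeping: verifying the $\rho$-estimate, the uniform computability of the construction in $(i,m)$, and the re-indexing into a single sequence. I would present $\theta(x)=\sum_{j\ge 1} 4^{-j} x_j$ or similar with base chosen large enough that distinct $x\neq x'$ cannot collide, giving a short direct injectivity proof.
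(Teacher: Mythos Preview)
Your first step (building a dense computable sequence of not-necessarily-injective maps) and the overall strategy (perturb to inject) match the paper's approach. The fatal problem is in your second step: there is \emph{no} continuous injective function~$\theta:Q\to[0,1]$. Indeed,~$Q$ is compact Hausdorff, so a continuous injection would be a homeomorphism onto its image, forcing~$Q$ to embed in~$[0,1]$; but~$Q$ is infinite-dimensional. Your candidate~$\theta(x)=\sum_j 3^{-j-1}x_j$ already fails concretely: with~$x=(1/2,0,0,\ldots)$ and~$x'=(0,1,1,1,\ldots)$ one computes~$\theta(x)=\tfrac{1}{6}=\theta(x')$. No choice of base repairs this, because the coordinates~$x_j$ range over the full interval~$[0,1]$, not a finite alphabet. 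You correctly flagged this as the main obstacle; it is not a bookkeeping issue but an actual obstruction.

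The fix, which is what the paper does, is to stop trying to squeeze~$x$ into a single coordinate and instead use infinitely many output coordinates to record~$x$. Given~$f:Q\to Q$ and~$n\in\N$, define~$g_n(x)$ to be the sequence whose first~$n$ terms are those of~$f(x)$ and whose tail is~$x$ itself, i.e.\ $g_n(x)=(f(x)_0,\ldots,f(x)_{n-1},x_0,x_1,\ldots)$. This is visibly injective (the tail recovers~$x$), continuous, computable from~$f$, and satisfies~$\rho(f,g_n)\le\sum_{i\ge n}2^{-i}=2^{-n+1}$. Applying this to your dense sequence~$(f_i)$ yields the desired family.
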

\begin{proof}
We first build a dense computable sequence of functions~$f_i:Q\to Q$ that are not necessarily injective. Say that an element~$x\in Q$ is \emph{dyadic of order}~$n$ if it has the form~$x=(x_0,\ldots,x_{n-1},0,0,0,\ldots)$ where each~$x_i$ is a multiple of~$2^{-n}$. For each~$n\in\N$, the finite set of dyadic elements of order~$n$ forms a regular grid. One can then define a piecewise affine map by assigning a dyadic element to each dyadic element of order~$n$ and interpolating affinely in between. All the possible such assignments provide a dense computable sequence of functions from~$Q$ to itself.

Every continuous function~$f:Q\to Q$ can be approximated by injective continuous functions~$g_n:Q\to Q$ defined as follows: let~$g_n(x)$ be the sequence starting with the first~$n$ terms of~$f(x)$, appended with~$x$. If~$f$ is computable then the functions~$g_n$ are computable, uniformly. Therefore injective approximations of the functions~$f_i$ give a computable dense sequence of injective functions.
\end{proof}

\begin{defn}
Let~$\epsilon>0$. An~\textbf{$\epsilon$-function} is a continuous function~$f:Q\to Q$ such that~$\rho(f,\id_Q)<\epsilon$. An~\textbf{$\epsilon$-deformation} of a pair~$(X,A)\subseteq Q$ is the image~$f(X,A):=(f(X),f(A))$ of~$(X,A)$ under an~$\epsilon$-function~$f$.
\end{defn}
Note that if~$f$ is an~$\epsilon$-function and~$g$ is a~$\delta$-function, then~$f\circ g$ is an~$(\epsilon+\delta)$-function. Therefore, an~$\epsilon$-deformation of a~$\delta$-deformation of~$(X,A)$ is a~$(\epsilon+\delta)$-deformation of~$(X,A)$.

\begin{rem}
The notion of~$\epsilon$-function can be defined for partial functions~$f:X\to Q$, where~$X\subseteq Q$, by requiring~$\rho_X(f,\id_X)<\epsilon$. The notion of~$\epsilon$-deformation of~$(X,A)$ does not change if we consider~$\epsilon$-functions define on~$Q$ or on~$X$ only, so we will freely use this flexibility in the sequel.

Indeed, if~$f:X\to Q$ is an~$\epsilon$-function, then~$f$ has a continuous extension~$\tilde{f}:Q\to Q$ which is an~$\epsilon$-function. The function~$\tilde{f}$ can be defined as follows: let~$f_0:Q\to Q$ be a continuous extension of~$f$ obtained by the Tietze extension theorem; let~$\delta>0$ and~$\tilde{f}(x)=(1-\lambda_x)f_0(x)+\lambda_x x$, where~$\lambda_x=\min(1,d_Q(x,X)/\delta)$ (note that~$Q$ indeed allows convex combinations). $\tilde{f}$ is a continuous extension of~$f$ and if~$\delta$ is sufficiently small, then~$\rho(\tilde{f},\id_Q)<\epsilon$.
\end{rem}

If a homeomorphism~$f:X\to Y$ is an~$\epsilon$-function, then its inverse~$f^{-1}:Y\to X$ is an~$\epsilon$-function as well.

\begin{defn}\label{def_Pstar}
Let~$\P$ be a property of pairs. We define an invariant~$\P^*$ as follows:~$(X,A)\in \P^*$ iff there exists a copy~$(X',A')$ of~$(X,A)$ and an~$\epsilon>0$ such that every~$\epsilon$-deformation of~$(X',A')$ is in~$\P$. 
\end{defn}

The key observation about~$\P^*$ is that its descriptive complexity is not too high, compared to the descriptive complexity of~$\P$: if~$\P$ is~$\Pi^0_1$, then~$\P^*$ is~$\Sigma^0_2$.

\begin{rem}
When~$\P$ is~$\Pi^0_1$ (more generally closed) in~$\VV$,~$\P^*$ could be equivalently defined by considering \emph{injective}~$\epsilon$-functions only, in which case~$\epsilon$-deformations of~$(X',A')$ would all be copies of~$(X',A')$. Indeed, the injective continuous functions are dense in~$\con(Q)$, so every~$\epsilon$-deformation of~$(X',A')$ is a limit, in the Vietoris topology, of~$\epsilon$-deformations by injective~$\epsilon$-functions; if the latter are all in~$\P$, then every~$\epsilon$-deformation is also in~$\P$ as~$\P$ is closed in~$\VV$.
\end{rem}

\begin{proof}[Proof of Theorem \ref{thm_sigma2_equiv}]
For a rational~$\epsilon>0$, we define
\begin{equation*}
\P_\epsilon=\{(X,A)\subseteq Q:\text{every~$\epsilon$-deformation of~$(X,A)$ is in~$\P$}\}.
\end{equation*}
One has~$\P^*=\{(X,A)\subseteq Q:\exists \epsilon>0, (X,A)\text{ has a copy in }\P_\epsilon\}$.

\begin{claim}
Let~$\tau$ be either~$\VV$ or~$\upVV$. If~$\P$ is~$\Pi^0_1$ in~$\tau$ then $\P_\epsilon$ is~$\Pi^0_1$ in~$\tau$.
\end{claim}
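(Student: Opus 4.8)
The goal is to show that $\P_\epsilon$ inherits the $\Pi^0_1$ complexity of $\P$, in either the Vietoris or the upper Vietoris topology. The natural strategy is to recognize the definition of $\P_\epsilon$ as a universal quantification over the parameter (the $\epsilon$-function $f$) ranging in a compact space, and invoke Proposition \ref{prop_quant} on quantifying over a compact space. So the first thing I would do is rewrite
\[
(X,A)\in\P_\epsilon\iff \forall f\in\C_\epsilon,\ f(X,A)\in\P,
\]
where $\C_\epsilon=\{f\in\con(Q):\rho(f,\id_Q)\le\epsilon'\}$ for a suitable closed ``$\le$'' version. One subtlety: the definition uses the strict inequality $\rho(f,\id_Q)<\epsilon$, so I would first note that $\P_\epsilon$ can equivalently be written using a closed ball of $\epsilon$-functions of slightly smaller radius — e.g.\ $\P_\epsilon=\bigcap_{k}\{(X,A):\forall f,\ \rho(f,\id_Q)\le\epsilon-1/k\Rightarrow f(X,A)\in\P\}$, a countable intersection of the closed-ball versions — so it suffices to handle the closed ball $\C_{\delta}=\{f:\rho(f,\id_Q)\le\delta\}$ for rational $\delta$.

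**Key steps.** (1) Check that $\C_\delta$ is effectively compact: it is a closed subset of $\con(Q)$, which is not compact, so instead I would use that $\C_\delta$ is an effectively compact subspace because $Q$ is compact and the uniform metric makes the $\delta$-ball around $\id_Q$ in $\con(Q)$ a computably compact set (equicontinuity is automatic on a compact metric space together with the uniform bound — this is an effective Arzelà–Ascoli argument, or one can cite that closed balls of $\con(Q)$ are effectively compact since $Q$ is). This requires a small effective Arzelà–Ascoli / equicontinuity remark, which I would state as the technical input. (2) Verify that the evaluation-type map $(f,(X,A))\mapsto f(X,A)=(f(X),f(A))$, viewed as a map $\C_\delta\times\K^2(Q)\to\K^2(Q)$, is computable in the topology $\tau$ (either $\VV$ or $\upVV$): pushing forward a compact set by a continuous function is computable for the lower Vietoris topology (hit an open set iff the set hits the preimage) and for the upper Vietoris topology (contained in $U$ iff contained in $f^{-1}(U)$), hence for the Vietoris topology, and this is uniform in $f$ given as a point of $\con(Q)$. (3) Since $\P$ is $\Pi^0_1$ in $\tau$, the relation
\[
R=\{(f,(X,A))\in\C_\delta\times\K^2(Q): f(X,A)\notin\P\}
\]
is effectively open in the product topology (it is the preimage of the effectively open complement of $\P$ under the computable pushforward map). (4) Apply the second bullet of Proposition \ref{prop_quant} (universal quantification over the effectively compact space $\C_\delta$) to conclude that $\{(X,A):\forall f\in\C_\delta,\ f(X,A)\in\P\}=(X\setminus R)^{\forall}$ is effectively open — wait, more precisely, the first bullet: $\{(X,A):\exists f\in\C_\delta,\ f(X,A)\notin\P\}$ is effectively open, hence $\P_\delta$ (the closed-ball version) is $\Pi^0_1$. (5) Intersect over $k$ (countably) to recover $\P_\epsilon$ and note a countable intersection of $\Pi^0_1$ sets is $\Pi^0_1$ uniformly. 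The same argument works verbatim for $\tau=\upVV$, since pushforward is computable in the upper Vietoris topology too and Proposition \ref{prop_quant} is stated for arbitrary computable $T_0$-spaces.

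**Main obstacle.** The one genuinely non-routine point is step (1): establishing that the closed ball $\C_\delta\subseteq\con(Q)$ is effectively compact. The uniform bound alone is not enough for compactness of function spaces; one needs equicontinuity, which here comes for free because the common domain $Q$ is compact — but making this effective (producing, uniformly, a finite rational $\eta$-net of $\C_\delta$ in the $\rho$-metric) requires an effective Arzelà–Ascoli argument, using that $Q$ has a computable compactness modulus and that the piecewise-affine functions on dyadic grids (as in Lemma \ref{lem_dense}) form a computable dense family from which one can extract finite covers. I would either cite this as folklore (it is the effective statement that $\con(K)$ with $K$ effectively compact has effectively compact bounded equicontinuous subsets) or sketch the net construction. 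Everything else — the pushforward being computable, the preimage of an effectively open set being effectively open, the application of Proposition \ref{prop_quant} — is direct and uniform, and handles the $\VV$ and $\upVV$ cases simultaneously.
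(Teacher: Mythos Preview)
Your approach has a genuine gap at step (1): the closed ball $\C_\delta=\{f\in\con(Q):\rho(f,\id_Q)\le\delta\}$ is \emph{not} compact, so Proposition~\ref{prop_quant} does not apply. You write that ``equicontinuity is automatic on a compact metric space together with the uniform bound,'' but this confuses uniform continuity of each individual $f$ (which does follow from compactness of $Q$) with equicontinuity of the \emph{family} (which does not). A concrete counterexample: working on the first coordinate of $Q$, for each small $h>0$ let $f_h$ agree with $\id$ outside $[1/2,1/2+h]$ and be piecewise linear with $f_h(1/2+h/2)=1/2+\delta$. Then each $f_h\in\C_\delta$, but $|f_h(1/2+h/2)-f_h(1/2)|=\delta$ while the arguments are $h/2$ apart, so no common modulus of continuity exists. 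Arzel\`a--Ascoli therefore fails and $\C_\delta$ is non-compact; the ``effective Arzel\`a--Ascoli'' you invoke cannot produce finite $\eta$-nets because none exist.

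The paper sidesteps this entirely. Instead of quantifying over a compact parameter space, it uses the dense computable sequence $(\phi_k)_{k\in\N}$ from Lemma~\ref{lem_dense} and the fact that $\P$ is \emph{closed} in $\tau$: since $f\mapsto f(X,A)$ is continuous from $\con(Q)$ to $(\K^2(Q),\tau)$, the set $\{f:f(X,A)\in\P\}$ is closed in $\con(Q)$, so the universal quantification over all $\epsilon$-functions is equivalent to the universal quantification over the countably many $\phi_k$ with $\rho(\phi_k,\id_Q)<\epsilon$. One then checks directly that $(X,A)\notin\P_\epsilon\iff\exists k\,[\rho(\phi_k,\id_Q)<\epsilon\wedge\phi_k(X,A)\notin\P]$ is $\Sigma^0_1$ in $(X,A)$. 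No compactness of the function space is needed, and the strict/non-strict issue you worried about dissolves as well.
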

\begin{proof}
The function~$f\in\con(Q)\mapsto f(X,A)\in(\K^2(Q),\tau)$ is continuous,~$\P$ is closed in~$\tau$ and the functions~$\phi_k$ are dense in~$\con(Q)$. Therefore, the quantification over the~$\epsilon$-functions can be replaced by a quantification over the~$k$'s such that~$\phi_k$ is an~$\epsilon$-function:~$(X,A)\in\P_\epsilon$ iff~$\forall k\in\N$,~$\rho(\phi_k,\id_Q)<\epsilon$ implies~$\phi_k(X,A)\in\P$. It is a~$\Pi^0_1$ predicate.
\end{proof}

Say that two pairs~$(X,A)$ and~$(Y,B)$ in~$Q$ are~$\epsilon$-homeomorphic if there exists a homeomorphism~$f:(X,A)\to (Y,B)$ which is an~$\epsilon$-function (note that this relation is symmetric, because in this case~$f^{-1}$ is an~$\epsilon$-function as well).
\begin{claim}
If~$(X_0,A_0)\subseteq Q$ and~$(X_1,A_1)\subseteq Q$ are homeomorphic, then for every~$\epsilon>0$ there exists~$i$ such that~$\phi_i(X_0,A_0)$ and~$(X_1,A_1)$ are~$\epsilon$-homeomorphic.
\end{claim}
\begin{proof}
Let~$f:(X_0,A_0)\to (X_1,A_1)$ be a homeomorphism and let~$i$ be such that~$\rho_{X_0}(\phi_i,f)<\epsilon$. Let~$\psi=\phi_i\circ f^{-1}$. One has~$\phi_i(X_0,A_0)=\psi(X_1,A_1)$ and~$\rho_{X_1}(\psi,\id_Q)=\rho_{X_0}(\phi_i,f)<\epsilon$.
\end{proof}

\begin{claim}
Let~$\tau$ be either~$\VV$ or~$\upVV$. If~$\P$ is~$\Pi^0_1$ in~$\tau$ then $\P^*$ is~$\Sigma^0_2$ in~$\tau$.
\end{claim}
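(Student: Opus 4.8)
**Plan for the proof of the claim: if $\P$ is $\Pi^0_1$ in $\tau$ then $\P^*$ is $\Sigma^0_2$ in $\tau$.**

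The goal is to express $\P^* = \{(X,A)\subseteq Q : \exists\epsilon>0,\ (X,A)\text{ has a copy in }\P_\epsilon\}$ as a countable union of differences of closed sets. I would restrict the existential quantifier over $\epsilon$ to rationals, so $\P^* = \bigcup_{\epsilon\in\Q^{>0}} \{(X,A) : (X,A)\text{ has a copy in }\P_\epsilon\}$, a countable union. So it suffices to show that, for each fixed rational $\epsilon>0$, the set $\P^*_\epsilon := \{(X,A) : (X,A)\text{ has a copy in }\P_\epsilon\}$ is a difference of two $\Pi^0_1$ sets (or more simply, is itself $\Sigma^0_2$, since a countable union of $\Sigma^0_2$ sets is $\Sigma^0_2$).

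The key point is the following reformulation using the previous two claims. By the first claim, $\P_{\epsilon/2}$ is $\Pi^0_1$ in $\tau$. By the second claim, if $(X,A)$ has \emph{any} copy in $\P_\epsilon$, then (applying that claim with a homeomorphism witnessing the copy, and with $\epsilon/2$ in place of $\epsilon$) some $\phi_i(X,A)$ is $\epsilon/2$-homeomorphic to a copy lying in $\P_\epsilon$; and an $\epsilon/2$-deformation of an $\epsilon/2$-deformation being an $\epsilon$-deformation, one checks that $\phi_i(X,A)$ lies in $\P_{\epsilon/2}$. Conversely, if $\phi_i(X,A)\in\P_{\epsilon/2}$ then $\phi_i(X,A)$ is itself a copy of $(X,A)$ (the $\phi_i$ are injective by Lemma \ref{lem_dense}) lying in $\P_{\epsilon/2}\subseteq\P_\epsilon$ \emph{provided} $\phi_i(X,A)$ really is a copy; one must be slightly careful that $\phi_i$ restricted to $X$ is injective, which holds since $\phi_i$ is globally injective. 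Hence
\begin{equation*}
(X,A)\in\P^*_\epsilon \iff \exists i\in\N,\ \phi_i(X,A)\in\P_{\epsilon/2}.
\end{equation*}
Since $f\mapsto f(X,A)$ is continuous from $\con(Q)$ to $(\K^2(Q),\tau)$, the map $(X,A)\mapsto\phi_i(X,A)$ is continuous for each fixed $i$, so each set $\{(X,A):\phi_i(X,A)\in\P_{\epsilon/2}\}$ is the preimage of a $\Pi^0_1$ set under a computable map, hence $\Pi^0_1$ in $\tau$, uniformly in $i$. Therefore $\P^*_\epsilon$ is a countable union of $\Pi^0_1$ sets, in particular $\Sigma^0_2$, and $\P^* = \bigcup_{\epsilon\in\Q^{>0}}\P^*_\epsilon$ is $\Sigma^0_2$ in $\tau$ as well.

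The main obstacle I anticipate is bookkeeping around the two directions of the displayed equivalence and the factor-of-two juggling with $\epsilon$: one must verify that composing deformations behaves additively on the $\rho$-distances (which is exactly inequality \eqref{ineq_rho}), that $\epsilon/2$-homeomorphic pairs have the same $\epsilon$-deformations up to a further $\epsilon/2$-error, and that $\phi_i(X,A)$ is genuinely a homeomorphic copy of $(X,A)$ so that it is a legitimate witness for membership in $\P^*$. None of these is deep, but each needs to be stated cleanly; the density and injectivity of the $\phi_i$ from Lemma \ref{lem_dense} together with the continuity of $f\mapsto f(X,A)$ and the closedness of $\P$ (so that $\P_{\epsilon/2}$ is genuinely closed, per the first claim) are exactly the ingredients that make everything go through.
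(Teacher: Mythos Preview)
Your overall strategy matches the paper's: define~$\P_{i,\epsilon}=\{(X,A):\phi_i(X,A)\in\P_\epsilon\}$ and show~$\P^*=\bigcup_{i,\epsilon}\P_{i,\epsilon}$. However, your displayed equivalence for \emph{fixed}~$\epsilon$,
\[
(X,A)\in\P^*_\epsilon \iff \exists i,\ \phi_i(X,A)\in\P_{\epsilon/2},
\]
is false, because the inclusion you invoke in the converse direction is backwards. If~$\epsilon'<\epsilon$ then every~$\epsilon'$-deformation is an~$\epsilon$-deformation, so~$\P_\epsilon\subseteq\P_{\epsilon'}$; in particular~$\P_\epsilon\subseteq\P_{\epsilon/2}$, not~$\P_{\epsilon/2}\subseteq\P_\epsilon$. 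Thus from~$\phi_i(X,A)\in\P_{\epsilon/2}$ you only get that~$(X,A)$ has a copy in~$\P_{\epsilon/2}$, i.e.~$(X,A)\in\P^*_{\epsilon/2}$, not~$(X,A)\in\P^*_\epsilon$.

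This does not damage the final conclusion: the correct statement is simply~$\P^*=\bigcup_{i,\epsilon}\P_{i,\epsilon}$, with the forward inclusion given by your argument (a copy in~$\P_\epsilon$ yields some~$\phi_i(X,A)\in\P_{\epsilon/2}$, hence~$(X,A)\in\P_{i,\epsilon/2}$) and the reverse inclusion by the observation that~$\phi_i(X,A)$ is itself a copy in~$\P_\epsilon$. This is exactly how the paper organizes it: it never fixes~$\epsilon$ and claims an equivalence at that level, but works directly with the double union. Drop the intermediate~$\P^*_\epsilon$ and the spurious inclusion, and your proof is correct and essentially identical to the paper's.
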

\begin{proof}
Again, the idea is that we can replace the quantification over the copies, i.e.~over the injective continuous functions, by a quantification over the~$\phi_i$'s. Precisely, let
\begin{equation*}
\P_{i,\epsilon}=\{(X,A)\subseteq Q:\phi_i(X,A)\in\P_\epsilon\}.
\end{equation*}
We show that~$\P^*=\bigcup_{i,\epsilon}\P_{i,\epsilon}$, which is~$\Sigma^0_2$.

If~$(X,A)$ has a copy~$(X',A')$ in~$\P_\epsilon$, then there exists~$i$ such that~$\phi_i(X,A)\in\P_{\epsilon/2}$. Indeed, let~$i$ be such that~$\phi_i(X,A)$ is~$\epsilon/2$-homeomorphic to~$(X',A')$ by the previous claim. Every~$\epsilon/2$-deformation of~$\phi_i(X,A)$ is an~$\epsilon$-deformation of~$(X',A')$, so it belongs to~$\P$. Therefore~$\phi_i(X,A)\in\P_{\epsilon/2}$, hence~$(X,A)\in\P_{i,\epsilon/2}$.
\end{proof}

\begin{claim}\label{claim3}
Assume that~$(X,A)$ is~$\P$-minimal and that~$(X,A)\in \P_\epsilon$. For every copy~$(X',A')$ of~$(X,A)$, there exists~$i$ such that~$(X',A')$ is~$\P_{i,\epsilon/2}$-minimal.
\end{claim}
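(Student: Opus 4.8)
The plan is to select the index $i$ using the claim on $\epsilon$-homeomorphic pairs, and then to verify the two requirements of $\P_{i,\epsilon/2}$-minimality for the fixed copy $(X',A')$ separately, transporting everything back to $(X,A)$ through one controlled homeomorphism. Concretely, I would fix the copy $(X',A')$ and apply the second claim (homeomorphic pairs have $\phi_i$-images that are $\epsilon$-homeomorphic) with roles $(X_0,A_0)=(X',A')$, $(X_1,A_1)=(X,A)$ and tolerance $\epsilon/2$. This yields an index $i$ together with a homeomorphism $g\colon(X,A)\to\phi_i(X',A')$ that is an $\epsilon/2$-function; by the remark that the inverse of an $\epsilon$-function is an $\epsilon$-function, $g^{-1}\colon\phi_i(X',A')\to(X,A)$ is an $\epsilon/2$-function as well. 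This single $i$ will serve both requirements.

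For the membership $(X',A')\in\P_{i,\epsilon/2}$, which unfolds to ``every $\epsilon/2$-deformation of $\phi_i(X',A')$ lies in $\P$,'' I would take an arbitrary $\epsilon/2$-function $h$ and observe that $h(\phi_i(X',A'))=(h\circ g)(X,A)$. By the composition inequality \eqref{ineq_rho}, $h\circ g$ is an $\epsilon$-function on $X$, so $(h\circ g)(X,A)$ is an $\epsilon$-deformation of $(X,A)$; since the hypothesis $(X,A)\in\P_\epsilon$ says exactly that all such deformations belong to $\P$, we conclude $h(\phi_i(X',A'))\in\P$. Only the membership hypothesis is used here.

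For minimality I would invoke $\P$-minimality of $(X,A)$. Given a proper subpair $(Y,B)\subsetneq(X',A')$, injectivity of $\phi_i$ gives $\phi_i(Y,B)\subsetneq\phi_i(X',A')=g(X,A)$, whence $g^{-1}(\phi_i(Y,B))\subsetneq(X,A)$ is a proper subpair of $(X,A)$ and therefore $g^{-1}(\phi_i(Y,B))\notin\P$ by minimality. On the other hand, $g^{-1}$ restricted to $\phi_i(Y,B)$ is an $\epsilon/2$-function, so $g^{-1}(\phi_i(Y,B))$ is an $\epsilon/2$-deformation of $\phi_i(Y,B)$ that fails to lie in $\P$. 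Hence $\phi_i(Y,B)\notin\P_{\epsilon/2}$, i.e.\ $(Y,B)\notin\P_{i,\epsilon/2}$, establishing minimality of $(X',A')$.

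The main obstacle is entirely bookkeeping rather than depth: one must track the direction of $g$ (from $(X,A)$ to $\phi_i(X',A')$, not the reverse) and the precise domains on which $g$, $g^{-1}$ and the deformation map act, and apply repeatedly that inverses of $\epsilon$-functions are $\epsilon$-functions and that $\phi_i$ preserves proper containment. The content of the statement is that the \emph{same} index $i$ works simultaneously for membership of the copy and for all its proper subpairs, which is what makes the subsequent identification $\P^*=\bigcup_{i,\epsilon}\P_{i,\epsilon}$ compatible with minimality.
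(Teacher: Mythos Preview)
Your proof is correct and follows essentially the same approach as the paper's: choose $i$ so that $\phi_i(X',A')$ is $\epsilon/2$-homeomorphic to $(X,A)$, deduce membership by composing the $\epsilon/2$-homeomorphism with an arbitrary $\epsilon/2$-deformation to get an $\epsilon$-deformation of $(X,A)$, and deduce minimality by transporting a proper subpair through $\phi_i$ and then $g^{-1}$ to a proper subpair of $(X,A)$. The paper's version is slightly terser (it just says ``$\phi_i(X'',A'')$ is $\epsilon/2$-homeomorphic to a proper subpair of $(X,A)$'' without naming $g$), but the argument is the same; your explicit tracking of the homeomorphism $g$ and its inverse, and your appeal to injectivity of $\phi_i$ to preserve proper containment, make the bookkeeping cleaner.
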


\begin{proof}
Let~$(X',A')$ be a copy of~$(X,A)$. Let~$i$ be such that~$\phi_i(X',A')$ is~$\epsilon/2$-homeomorphic to~$(X,A)$. As in the previous claim,~$(X',A')\in \P_{i,\epsilon/2}$, and we show that~$(X',A')$ is~$\P_{i,\epsilon/2}$-minimal.

If~$(X'',A'')$ is a proper subpair of~$(X',A')$, then~$\phi_i(X'',A'')$ is~$\epsilon/2$-homeomorphic to a proper subpair of~$(X,A)$. The latter is not in~$\P$ as~$(X,A)$ was~$\P$-minimal, and is an~$\epsilon/2$-deformation of~$\phi_i(X'',A'')$, so~$\phi_i(X'',A'')\notin \P_{\epsilon/2}$. In other words.~$(X'',A'')\notin \P_{i,\epsilon/2}$. As a result,~$(X',A')$ is~$\P_{i,\epsilon/2}$-minimal.
\end{proof}

\begin{claim}\label{claim4}
If~$(X,A)$ has strong computable type, then there exists a~$\Pi^0_1$ property~$\P$, a copy~$(X_0,A_0)\subseteq Q$ and an~$\epsilon>0$ such that~$(X_0,A_0)$ is~$\P$-minimal and~$(X_0,A_0)\in\P_\epsilon$. 
\end{claim}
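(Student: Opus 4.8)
The plan is to combine Theorem~\ref{thm_minimal} with a Baire category argument on the space $(\con(Q),\rho)$. First I would fix once and for all an arbitrary copy of $(X,A)$ in $Q$, still denoted $(X,A)$, so that $X\subseteq Q$ is compact, and let $\inj_X\subseteq\con(Q)$ be the set of $f$ whose restriction $f|_X$ is injective; for such $f$ the image $f(X,A):=(f(X),f(A))$ is a copy of $(X,A)$, since a continuous injection on a compact set is an embedding. I would then record two routine facts. First, $\inj_X$ is a $G_\delta$ subset of $\con(Q)$: it equals $\bigcap_n U_n$ with $U_n=\{f:\exists\eta>0,\ \forall x,y\in X,\ d_Q(x,y)\geq 2^{-n}\Rightarrow d_Q(f(x),f(y))\geq\eta\}$, and each $U_n$ is open by a direct perturbation argument using compactness of $\{(x,y):d_Q(x,y)\geq 2^{-n}\}$. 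Second, $\inj_X$ is dense in $\con(Q)$, because it contains the dense family $(\phi_i)_i$ of Lemma~\ref{lem_dense}. Since $\con(Q)$ is completely metrizable, $\inj_X$ is then a nonempty Baire space.

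Next I would enumerate as $(\P^{(e)})_{e\in\N}$ all properties of pairs that are $\Pi^0_1$ in $\upVV$ (there are only countably many, one per c.e.\ set of basic $\upVV$-open sets); each $\P^{(e)}$ is closed in $\upVV$, hence closed in the finer topology $\VV$. Put $M_e=\{f\in\inj_X: f(X,A)\text{ is }\P^{(e)}\text{-minimal}\}$. Because $(X,A)$ has strong computable type, Theorem~\ref{thm_minimal} applied to the copy $f(X,A)$ gives, for every $f\in\inj_X$, some $e$ with $f\in M_e$; thus $\inj_X=\bigcup_e M_e$. By the Baire category theorem some $M_e$ is somewhere dense, so there is a nonempty set $V$ open in $\inj_X$ in which $M_e$ is dense. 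I would choose $h_0\in M_e\cap V$ and a rational $\delta>0$ with $B(h_0,2\delta)\cap\inj_X\subseteq V$, so that $M_e$ is dense in $B(h_0,2\delta)\cap\inj_X$; then set $\P:=\P^{(e)}$ and $(X_0,A_0):=h_0(X,A)$, which is $\P$-minimal since $h_0\in M_e$.

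It then remains to verify $(X_0,A_0)\in\P_\delta$, i.e.\ that $f(X_0,A_0)\in\P$ for every $f\in\con(Q)$ with $\rho(f,\id_Q)<\delta$. Here $f(X_0,A_0)=(f\circ h_0)(X,A)$ and, by inequality~\eqref{ineq1_rho}, $\rho(f\circ h_0,h_0)\leq\rho(f,\id_Q)<\delta$, so $f\circ h_0\in B(h_0,\delta)$. Using density of $\inj_X$ in $\con(Q)$, pick $g_n\in\inj_X\cap B(h_0,\delta)$ with $g_n\to f\circ h_0$; each $g_n$ lies in $B(h_0,2\delta)\cap\inj_X$, where $M_e$ is dense, so $f\circ h_0$ lies in the closure of $M_e$ in $\con(Q)$, yielding $\tilde g_n\in M_e$ with $\tilde g_n\to f\circ h_0$. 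The map $h\mapsto h(X,A)$ from $(\con(Q),\rho)$ to $(\K^2(Q),\VV)$ is continuous (uniform convergence forces convergence of the images in the Hausdorff metric, by compactness of $X$), hence $\tilde g_n(X,A)\to(f\circ h_0)(X,A)=f(X_0,A_0)$ in $\VV$. Each $\tilde g_n(X,A)$ lies in $\P$ (minimality implies membership) and $\P$ is closed in $\VV$, so $f(X_0,A_0)\in\P$. This proves $(X_0,A_0)\in\P_\delta$, and taking $\epsilon:=\delta$ finishes the argument.

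The step I expect to be most delicate is the passage in the Baire category argument from ``$M_e$ is somewhere dense in $\inj_X$'' to ``$M_e$ is dense in a concrete ball $B(h_0,2\delta)\cap\inj_X$'', and then the handling of a $\delta$-deformation $f\circ h_0$ which need not be injective: the key point is that $f\circ h_0$ still lies in the $\con(Q)$-closure of $M_e$, precisely because $\inj_X$ is dense in $\con(Q)$ and its approximants can be kept inside the ball where $M_e$ is dense. The remaining ingredients --- the $G_\delta$ and density properties of $\inj_X$, continuity of $h\mapsto h(X,A)$, and closedness of $\Pi^0_1$ sets in $\VV$ --- are all routine.
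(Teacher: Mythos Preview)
Your argument is correct and follows essentially the same Baire-category strategy as the paper: cover a Polish space of embedding maps by the countably many sets $M_e$, find one that is somewhere dense, and use closedness of $\P$ together with density of injective maps to pass from ``$\P$-minimal on a dense set'' to ``in $\P$ on a full ball of deformations''. The differences are cosmetic: you work in $\inj_X$ (functions injective on $X$) rather than the paper's $\inj(Q)$, and you pick $h_0\in M_e$ directly, which lets you avoid the paper's final correction step of replacing $f_0$ by some $f_1\in B(f_0,\epsilon/2)$ that actually lies in $M_e$.
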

\begin{proof}
Assume that~$(X,A)\subseteq Q$ has strong computable type. Let~$\inj(Q)\subseteq\con(Q)$ be the subspace of \emph{injective} continuous functions~$f:Q\to Q$. It is a~$G_\delta$-set:~$\inj(Q)=\bigcap_{n\in\N}\inj_n(Q)$ where
\begin{equation*}
\inj_n(Q)=\{f\in\con(Q):\forall x,y\in Q,d(x,y)\geq 2^{-n}\implies f(x)\neq f(y)\}
\end{equation*}
is open by Proposition \ref{prop_quant}, because it is obtained as a universal quantification of an open predicate over the compact space~$Q\times Q$ (see Lemma 1.3.10 in \cite{2001vanMill} for a more detailed proof). Being a~$G_\delta$-subset of the Polish space~$\con(Q)$, the space~$\inj(Q)$ is therefore Polish (Theorem 3.11 in \cite{Kechris95}).

Theorem \ref{thm_minimal} implies that for each~$f\in\inj(Q)$ there exists a~$\Pi^0_1$ property~$\P$ (in~$\upVV$) such that~$f(X,A)$ is~$\P$-minimal. There are countably many~$\Pi^0_1$ properties, so by Baire category on~$\inj(Q)$ there exists a~$\Pi^0_1$-property~$\P$, an~$f_0\in\inj(Q)$ and an~$\epsilon>0$ such that for ``almost every'' $f\in B(f_0,\epsilon)$,~$f(X,A)$ is~$\P$-minimal. By ``almost every'', we mean for a dense set of functions~$f\in B(f_0,\epsilon)$. It actually implies that for \emph{every} continuous~$f\in B(f_0,\epsilon)$ the pair~$f(X,A)$ is in~$\P$ because~$\P$ is closed and the injective functions are dense in the continuous ones by Lemma \ref{lem_dense}.

The copy~$f_0(X,A)$ belongs to~$\P_\epsilon$ but may not be~$\P$-minimal. However, we can replace~$f_0$ by any~$f_1\in B(f_0,\epsilon/2)$ such that~$f_1(X,A)$ is~$\P$-minimal, and~$\epsilon$ by~$\epsilon/2$. We then let~$(X_0,A_0)=f_1(X,A)$.
\end{proof}

Finally, we put everything together. Let~$(X_0,A_0)$,~$\P$ and~$\epsilon$ be begin by Claim \ref{claim4}. The~$\Sigma^0_2$ invariant we are looking for is~$\P^*=\bigcup_{i,\epsilon}\P_{i,\epsilon}$. Claim \ref{claim3} implies that every copy of~$(X,A)$ is~$\P_{i,\epsilon}$-minimal for some~$i,\epsilon$.
\end{proof}

\section{Strong computable type: further properties}\label{sec_further}
In this section, we exploit the analysis developed so far to obtain a better structural understanding of strong computable type.

\subsection{A topological necessary condition}
We start with a purely topological understanding of the notion of strong computable type by identifying a topological necessary condition.

\begin{cor}[Necessary condition]\label{cor_necessary}
If a pair~$(X,A)\subseteq Q$ has strong computable type, then there exists~$\epsilon>0$ such that no sequence of~$\epsilon$-deformations of~$(X,A)$ converges to a proper subpair of~$(X,A)$ in the topology~$\upVV$.
\end{cor}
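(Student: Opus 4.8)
The plan is to derive this as a contrapositive consequence of Theorem~\ref{thm_sigma2_equiv}, together with the elementary observation that the $\Pi^0_1$ (hence closed) sets appearing in that characterization are preserved under Vietoris limits. So suppose $(X,A)$ has strong computable type. By Theorem~\ref{thm_sigma2_equiv}, there is a $\Sigma^0_2$ invariant $\bigcup_n \P_n$, with the $\P_n$ uniformly $\Pi^0_1$ in $\upVV$, such that every copy of $(X,A)$ in $Q$ is $\P_n$-minimal for some $n$. In particular $(X,A)$ itself is $\P_n$-minimal for some fixed $n$; fix that $n$ and write $\P = \P_n$, which is closed in $\upVV$ and, being $\Pi^0_1$ in $\upVV$, is an upper set.

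First I would extract the value of $\epsilon$. Since $(X,A)\in\P$ and $\P$ is closed in $\upVV$, I claim there is some $\epsilon>0$ with $(X,A)\in\P_\epsilon$, i.e.\ every $\epsilon$-deformation of $(X,A)$ lies in $\P$. This is where I need a small argument: if no such $\epsilon$ worked, then for every $k$ there would be a $(1/k)$-deformation $f_k(X,A)\notin\P$; but $f_k(X,A)\to(X,A)$ in the Vietoris topology (since $\rho(f_k,\id_Q)\to 0$ forces the images to converge in Hausdorff distance, hence in $\VV$, hence in the coarser $\upVV$), and since $\P$ is closed in $\upVV$ this would force $(X,A)\notin\P$, a contradiction. (If the paper has already recorded the convergence statement $f_k(X,A)\to(X,A)$ implicitly in the proof of the Claim that $\P_\epsilon$ is $\Pi^0_1$, I would just cite that; otherwise it is a one-line Hausdorff-metric estimate.) So fix $\epsilon>0$ with $(X,A)\in\P_\epsilon$.

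Now I would prove the conclusion by contradiction. Suppose some sequence of $\epsilon$-deformations $(Y_k,B_k) = f_k(X,A)$, with $\rho(f_k,\id_Q)<\epsilon$, converges in $\upVV$ to a proper subpair $(X',A')\subsetneq(X,A)$. Each $(Y_k,B_k)$ is an $\epsilon$-deformation of $(X,A)\in\P_\epsilon$, hence $(Y_k,B_k)\in\P$. Since $\P$ is closed in $\upVV$ and $(Y_k,B_k)\to(X',A')$, we get $(X',A')\in\P$. But $(X',A')\subsetneq(X,A)$ and $(X,A)$ is $\P$-minimal, so $(X',A')\notin\P$ --- contradiction. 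Hence no such convergent sequence exists, proving the corollary.

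The main obstacle --- really the only non-bookkeeping point --- is pinning down that closedness of $\P$ in $\upVV$ genuinely gives both (i) the existence of the margin $\epsilon$ with $(X,A)\in\P_\epsilon$, and (ii) the final limit step, both of which rely on the same fact that $\epsilon$-deformations with $\epsilon\to 0$ converge to $(X,A)$ in $\upVV$. One has to be a little careful that convergence is in the \emph{upper} Vietoris topology specifically (the statement is about $\upVV$), but this is fine: Hausdorff-distance convergence implies convergence in the full Vietoris topology $\VV$, which is finer than $\upVV$, so $\upVV$-convergence follows a fortiori; and $\P$ being closed in $\upVV$ is exactly what we use. No genuinely new idea beyond Theorem~\ref{thm_sigma2_equiv} is required --- the corollary is essentially reading off the topological content already packaged in that theorem.
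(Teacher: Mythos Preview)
There is a genuine gap in your extraction of~$\epsilon$. You argue: if no~$\epsilon$ works, then there are~$(1/k)$-deformations~$f_k(X,A)\notin\P$ with~$f_k(X,A)\to(X,A)$, and since~$\P$ is closed this forces~$(X,A)\notin\P$. But this inference runs the wrong way. Closedness of~$\P$ says that limits of sequences \emph{in}~$\P$ stay in~$\P$; it says nothing about limits of sequences in the complement. A closed set need not contain any neighborhood of its points. Concretely,~$\P=\{(Q,Q)\}$ is closed in~$\upVV$ and contains~$(Q,Q)$, yet no nontrivial~$\epsilon$-deformation of~$(Q,Q)$ lies in~$\P$. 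So from the bare statement of Theorem~\ref{thm_sigma2_equiv} and closedness of~$\P_n$ alone, you cannot conclude that~$(X,A)\in(\P_n)_\epsilon$ for some~$\epsilon>0$.

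The paper does not obtain~$\epsilon$ this way. It reaches inside the \emph{proof} of Theorem~\ref{thm_sigma2_equiv}, specifically Claim~\ref{claim4}, which uses Baire category on~$\inj(Q)$ to produce simultaneously a~$\Pi^0_1$ property~$\P$, a copy~$(Y,B)$ that is~$\P$-minimal, \emph{and} an~$\epsilon>0$ with~$(Y,B)\in\P_\epsilon$. The closedness of~$\P$ is then used only for the easy direction you got right --- passing to~$\upVV$-limits of~$\epsilon$-deformations --- and the conclusion is transported back from~$(Y,B)$ to~$(X,A)$ by a homeomorphism. Your second half (the contradiction from a convergent sequence of~$\epsilon$-deformations) is fine once~$\epsilon$ is in hand; the problem is solely that your route to~$\epsilon$ does not go through.
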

\begin{proof}
From Theorem~\ref{thm_minimal} and the proof of Theorem~\ref{thm_sigma2_equiv},~$(X,A)$ has strong computable type iff there exist a property of pairs~$\P$ which is~$\Pi^0_1$ in~$\upVV$, a copy~$(Y,B)$ of~$(X,A)$ in~$Q$ which is~$\P$-minimal, and an~$\epsilon>0$ such that every~$\epsilon$-deformation of~$(Y,B)$ is in~$\P$.

As~$\P$ is closed in~$\upVV$, every limit of~$\epsilon$-deformations of~$(Y,B)$ in that topology also belongs to~$\P$. As~$(Y,B)$ is~$\P$-minimal, no such limit is a proper subpair of~$(Y,B)$. The same result can be transferred to~$(X,A)$ by using a homeomorphism~$\phi:(X,A)\to (Y,B)$. Indeed, $\phi$ sends proper subpairs of~$(X,A)$ to proper subpairs of~$(Y,B)$ and if~$\delta>0$ is sufficiently small, then~$\phi$ sends~$\delta$-deformations of~$(X,A)$ to~$\epsilon$-deformations of~$(Y,B)$.
\end{proof}

This necessary condition is almost sufficient and captures the purely topological aspects of strong computable type, in the sense that it is equivalent to the following relativization of strong computable type.
\begin{defn}
A pair~$(X,A)$ has \textbf{strong computable type relative to an oracle~$O$} if for every copy~$(Y,B)$ of~$(X,A)$ in~$Q$, there exists a machine with oracle~$O$ computing~$Y$ in~$\V$ from~$(Y,B)$ as an element of~$(\K^2(Q),\upVV)$.
\end{defn}

\begin{cor}\label{cor_sct_relative}
For a pair~$(X,A)$, the following statements are equivalent:
\begin{enumerate}
\item $(X,A)$ has strong computable type relative to some oracle,
\item There exists~$\epsilon>0$ such that no sequence of $\epsilon$-deformations of~$(X,A)$ converges to a proper subpair of~$(X,A)$ in the topology~$\upVV$.
\end{enumerate}
\end{cor}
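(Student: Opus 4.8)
\textbf{Proof proposal for Corollary \ref{cor_sct_relative}.}

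The plan is to prove $1.\Rightarrow 2.$ and $2.\Rightarrow 1.$ separately, reusing the machinery already in place. The implication $1.\Rightarrow 2.$ is essentially Corollary \ref{cor_necessary} relativized: if $(X,A)$ has strong computable type relative to an oracle $O$, then running the same argument as in Theorem \ref{thm_minimal} and Theorem \ref{thm_sigma2_equiv} with $O$ as an oracle throughout produces an $O$-$\Pi^0_1$ property $\P$, a copy $(Y,B)$ that is $\P$-minimal, and an $\epsilon>0$ with every $\epsilon$-deformation of $(Y,B)$ in $\P$. Since $\P$ is closed in $\upVV$ (being $O$-$\Pi^0_1$, hence topologically closed, the oracle not affecting closedness), every $\upVV$-limit of $\epsilon$-deformations of $(Y,B)$ lies in $\P$, so by $\P$-minimality none such limit is a proper subpair; transferring back to $(X,A)$ via a homeomorphism $\phi:(X,A)\to(Y,B)$ and shrinking $\epsilon$ to a suitable $\delta$ as in the proof of Corollary \ref{cor_necessary} gives statement 2.

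For $2.\Rightarrow 1.$, the idea is to build the property $\P$ directly from the topological hypothesis and then encode it as an oracle. Fix $\epsilon>0$ as in statement 2, and for each copy $(Y,B)$ of $(X,A)$ consider the set $\P_{(Y,B)}$ of all $\upVV$-limits of sequences of $\epsilon$-deformations of $(Y,B)$ (equivalently, the $\upVV$-closure of the set of $\epsilon$-deformations of $(Y,B)$). This is a closed, hence $\Pi^0_1$-relative-to-some-oracle, subset of $\K^2(Q)$, it contains $(Y,B)$, it is an upper set (as a $\upVV$-closed set), and by hypothesis it contains no proper subpair of $(Y,B)$; hence $(Y,B)$ is $\P_{(Y,B)}$-minimal. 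Actually one can do better and pick a single property uniformly: let $\P = \{(Z,C) : (Z,C) \text{ is a } \upVV\text{-limit of } \epsilon/2\text{-deformations of some copy of } (X,A)\}$, which is an invariant, is an upper set, and — by an argument parallel to the claims in the proof of Theorem \ref{thm_sigma2_equiv}, replacing the quantification over copies by a quantification over the dense injective functions $\phi_i$ — is $\Pi^0_1$ relative to some oracle $O$ (it is the closure of a set that is effectively-in-$O$ analytic, but closedness makes it $\Pi^0_1$-in-$O'$ for a suitable $O$ encoding enough information; more carefully, $\P$ is the $\upVV$-closure of $\bigcup_i\{(Z,C):\exists j,\ \phi_j(Z,C) \text{ is an }\epsilon/2\text{-deformation of }\phi_i(X,A)\}$, and a closed set in a computable metric space is $\Pi^0_1$ relative to any oracle listing the basic open sets it avoids). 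Every copy $(Y,B)$ is $\P$-minimal: it lies in $\P$, and any proper subpair would, by statement 2 applied after shrinking deformation parameters, fail to be a limit of $\epsilon/2$-deformations of any copy, hence lie outside $\P$. Then Theorem \ref{thm_minimal} relativized to $O$ yields strong computable type relative to $O$.

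The main obstacle is the bookkeeping in showing that the invariant $\P$ in the $2.\Rightarrow 1.$ direction is genuinely $\Pi^0_1$ relative to a \emph{single} oracle, uniformly over all copies — the naive description "limit of $\epsilon/2$-deformations of some copy" quantifies existentially over all copies and is a priori only $\Sigma^1_1$. The fix, as in Theorem \ref{thm_sigma2_equiv}, is that the copies can be reached by the countable dense family $\phi_i$, and the $\upVV$-closure operation is what collapses the complexity: the closure of any set is closed, hence $\Pi^0_1$ relative to an oracle that enumerates a basis for its complement, and such an oracle exists because $\K^2(Q)$ with $\upVV$ is a second-countable space. Once this point is handled, minimality of each copy follows from statement 2 by the same deformation-composition bookkeeping (an $\epsilon/2$-deformation of an $\epsilon/2$-deformation is an $\epsilon$-deformation) already used repeatedly above, and the conclusion follows from the relativized Theorem \ref{thm_minimal}.
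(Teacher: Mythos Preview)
Your $1.\Rightarrow 2.$ is correct and matches the paper. For $2.\Rightarrow 1.$, your first idea is essentially the paper's: take $\P$ to be the $\upVV$-closure of the $\epsilon$-deformations of the single fixed copy $(X,A)$. Any closed set is $\Pi^0_1$ relative to some oracle $O$; condition~(2) says precisely that $(X,A)$ is $\P$-minimal; and every $\epsilon$-deformation of $(X,A)$ lies in $\P$, so $(X,A)\in\P_\epsilon$. The relativized Claim~\ref{claim3} from the proof of Theorem~\ref{thm_sigma2_equiv} then shows that every copy $(Y,B)$ is $\P_{i,\epsilon/2}$-minimal for some $i$, with the $\P_{i,\epsilon/2}$ uniformly $O$-$\Pi^0_1$, and the relativized $2.\Rightarrow 1.$ of Theorem~\ref{thm_minimal} finishes. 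There is no need to make $\P$ an invariant.

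Your attempt to ``do better'' with an invariant $\P$ (the closure of $\epsilon/2$-deformations of \emph{all} copies) has a genuine gap in the minimality step. Condition~(2) only says that $\epsilon$-deformations of a fixed copy cannot $\upVV$-converge to proper subpairs of \emph{that same} copy; it says nothing about deformations of one copy converging to subpairs of another. The space in Section~\ref{sec_infinite} makes the failure concrete: it has strong computable type (hence satisfies~(2)) yet properly contains a copy of itself, and that subcopy lies in your $\P$ (it is a $0$-deformation of a copy), so no copy is $\P$-minimal for your invariant. You also misidentify the obstacle: getting a closed set to be $\Pi^0_1$ relative to some oracle is automatic; the real issue is getting minimality of every copy from a single oracle, and the paper handles this via the $\P_{i,\epsilon/2}$ mechanism rather than by insisting that $\P$ itself be invariant.
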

\begin{proof}
The proof of Corollary \ref{cor_necessary} holds relative to any oracle, which shows (1) $\Rightarrow$ (2). Now assume condition (2). Let~$\P$ be the closure, in~$\upVV$, of the set of~$\epsilon$-deformations of~$(X,A)$. Let~$O$ be an oracle relative to which~$\P$ is~$\Pi^0_1$ in~$\upVV$.  By definition,~$\P$ contains all the~$\epsilon$-deformations of~$(X,A)$ and~$(X,A)$ is~$\P$-minimal by assumption. Therefore, we can apply the argument in the proof of Theorem \ref{thm_minimal}, showing that~$(X,A)$ has strong computable type relative to~$O$.
\end{proof}

The countrapositive of Corollary \ref{cor_necessary} gives a sufficient condition implying that a pair does not have strong computable type. In \cite{AH23}, we prove an effective version which even shows that the pair does not have computable type, as follows.

\begin{thm}
Assume that~$(X,A)\subseteq Q$ is semicomputable and that given~$\epsilon>0$ one can compute~$\delta>0$ such that~$\epsilon$-deformations of~$(X,A)$ converge to a proper subpair~$(Y,B)\subsetneq (X,A)$ such that~$d_H(Y,X)>\delta$. Then~$(X,A)$ does not have computable type.
\end{thm}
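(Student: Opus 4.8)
The statement asks us to build, from a semicomputable copy $(X,A)$ satisfying the hypothesis, a semicomputable copy $(X',A')$ that is \emph{not} computable. The strategy is the standard ``sliding the set into a limit'' construction, made effective. First I would fix the computable function $\epsilon\mapsto\delta(\epsilon)$ and the family of $\epsilon$-deformations converging to a proper subpair with $d_H(Y,X)>\delta$. The idea is to diagonalize against all Turing machines $M_e$ that attempt to compute $X'$ in the lower Vietoris topology, i.e.\ to enumerate the rational balls meeting $X'$. We construct $X'$ as a limit of $\epsilon_n$-deformations of $X$ with $\sum\epsilon_n<\infty$, where at stage $n$ we either do nothing (keeping the current approximation) or, if machine $M_e$ (for the current requirement) has committed to a ball $U$ being disjoint from the set so far, we apply a deformation pushing the set off $U$ permanently — using the hypothesis to guarantee the deformation moves a definite amount $\delta$ so that the limit genuinely avoids $U$, making $M_e$ wrong. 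Because the deformations converge, the limit $X'$ exists and is a genuine copy of $X$; keeping $\sum\epsilon_n$ bounded controls where $X'$ sits.

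The key points to verify are: (i) the limit set $X'$ is still a copy of $(X,A)$ — this follows because each deformation is realized by a homeomorphism and the composition/limit of the $\epsilon_n$-functions (with summable $\epsilon_n$) converges to an injective continuous map whose image is $X'$; one must check injectivity survives in the limit, which is where the $d_H(Y,X)>\delta$ ``definite displacement'' bound is essential — it prevents the limiting map from collapsing. (ii) $X'$ is semicomputable relative to the oracle coding the construction; since the construction is effective given $(X,A)$ semicomputable and the function $\delta(\cdot)$, the sequence of deformations is computable, hence $X'$, being an effective limit in $\upV$ of semicomputable sets with a computable modulus, is semicomputable. Here one has to be a little careful: we need the $\upV$-approximation to converge \emph{effectively}, which the summability of $\epsilon_n$ gives. (iii) $X'$ is not computable: if some $M_e$ with the construction's oracle correctly enumerated the balls meeting $X'$, then the requirement for $e$ would eventually have forced a deformation away from a ball $M_e$ claimed to meet $X'$, contradiction. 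Since every candidate machine is diagonalized against, $X'$ is not computable relative to that oracle; relativizing the whole argument and absorbing the oracle shows $(X,A)$ does not have computable type.

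I would organize the construction in stages indexed by requirements $R_e$: ``$M_e$ does not compute $X'$.'' At each stage we maintain a current $\epsilon$-function $f_n$ (composition of small deformations), a ``committed'' finite list of balls the set must avoid, and a budget of remaining $\epsilon$ allowed. We run $M_e$ on the current finite $\upV$-approximation; if and when $M_e$ outputs a ball $U$ with $\overline U$ currently disjoint from $f_n(X)$, we fire a deformation of size at most the current budget that, by hypothesis applied with a suitable $\epsilon$, moves the set so that $d_H$ to the old set exceeds the corresponding $\delta$, and we permanently forbid $\overline U$; we then pass to the next requirement. If $M_e$ never outputs such a $U$, then $M_e$ fails to be an enumeration of the balls meeting $X'$ (it misses $U$, or it is simply not total/correct), so $R_e$ is satisfied vacuously. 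The summability $\sum\epsilon_n<\infty$ is arranged by giving requirement $R_e$ a budget like $2^{-e}\epsilon_0$.

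\textbf{Main obstacle.} The delicate step is (i)–(iii) working \emph{together}: we need the displacement at each firing to be large enough (at least the $\delta$ from the hypothesis, computed from the $\epsilon$ we spend) that the forbidden ball $\overline U$ is avoided not just immediately but by the infinite limit $X'$, while simultaneously the total deformation stays summable so the limit is a bona fide homeomorphic copy and the $\upV$-data converges effectively. Reconciling ``each move is $\delta$-large in $d_H$'' with ``moves are $\epsilon_n$-small and summable'' is exactly what the hypothesis is designed for — it promises $\delta>0$ outputs for arbitrarily small $\epsilon$ inputs — but making the bookkeeping precise (which ball is forbidden when, and that a later small deformation cannot undo an earlier avoidance) is the technical heart of the argument, and is presumably where the cited paper \cite{AH23} does the real work.
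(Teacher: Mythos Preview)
First, note that the paper itself does \emph{not} prove this theorem: it only states it and defers the proof to the companion paper \cite{AH23}. So there is no in-paper proof to compare against. That said, your sketch contains genuine conceptual errors, not merely missing bookkeeping.

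\textbf{The role of the hypothesis and of $\delta$ is inverted.} The hypothesis does \emph{not} hand you individual $\epsilon$-moves with a guaranteed ``definite displacement'' $\delta$. It says that for each $\epsilon$ there is a \emph{sequence} of $\epsilon$-deformations of $(X,A)$ whose $\upVV$-limit is a \emph{proper} subpair $(Y,B)\subsetneq(X,A)$ with $d_H(Y,X)>\delta$. The number $\delta$ measures how much of $X$ is \emph{missing} from the limit $Y$ (there is a point of $X$ at distance $>\delta$ from $Y$, so some $\delta$-ball of $X$ is avoided by $Y$). It has nothing to do with preventing the limit map from collapsing. In fact the hypothesis is precisely that the deformations \emph{do} converge to something that is not a copy of $X$. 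So your point (i), that the limit $X'$ of the composed deformations is still a copy of $(X,A)$ because ``$\delta$ prevents collapse'', is backwards.

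\textbf{The deformations are not assumed computable.} You write ``the sequence of deformations is computable, hence $X'$ \ldots\ is semicomputable''. The paper explicitly stresses after the theorem that neither the $\epsilon$-deformations nor the limit $Y$ need be computable; the only computable datum is the map $\epsilon\mapsto\delta(\epsilon)$. Any correct proof must produce the $\upV$-information of the constructed copy without ever having access to specific deformations. Your plan does not address this.

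\textbf{The diagonalization step is garbled.} You wait for $M_e$ to output a ball $U$ with $\overline U$ already disjoint from the current set, and then ``fire a deformation'' and ``forbid $\overline U$''. But if $M_e$ outputs such a $U$ it is \emph{already} wrong about the current set; no move is needed, and nothing forces $M_e$ ever to output such a ball. The actual leverage of the hypothesis is different: because there is a $\delta$-ball $B$ meeting $X$ but disjoint from the limit $Y$, a \emph{correct} $M_e$ must eventually enumerate $B$; meanwhile one can feed $M_e$ $\upV$-neighborhoods that are simultaneously valid for $X$ and for the $\epsilon$-deformations approaching $Y$. If $M_e$ commits to $B$, one then lands on an $\epsilon$-close injective copy that avoids $B$; if $M_e$ never commits, it fails on $X$ itself. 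Making \emph{that} precise---in particular producing the $\upV$-information effectively from $(X,A)$ and $\delta(\cdot)$ alone, and ensuring the final object is a genuine copy---is what \cite{AH23} carries out, and it is structurally different from the ``compose summable small moves and hope the limit is injective'' picture you outline.
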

Note that neither the~$\epsilon$-deformations nor the limit~$Y$ are required to be computable. The computability assumption is only about the function sending~$\epsilon$ to~$\delta$.

This result therefore identifies a general situation when computable type is equivalent to strong computable type.

\subsection{The \texorpdfstring{$\epsilon$}{epsilon}-surjection property}\label{sec_surjection}
The following special case of Corollary \ref{cor_necessary} has a particularly simple formulation.
\begin{cor}\label{cor_surjection}
If~$(X,A)\subseteq Q$ has strong computable type, then there exists~$\epsilon>0$ such that every continuous function of pairs~$f:(X,A)\to (X,A)$ satisfying~$\rho_X(f,\id_X)<\epsilon$ must be surjective.
\end{cor}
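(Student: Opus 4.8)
The plan is to derive Corollary \ref{cor_surjection} as a direct consequence of Corollary \ref{cor_necessary}. Suppose, for contradiction, that for every $\epsilon>0$ there is a continuous function of pairs $f_\epsilon:(X,A)\to(X,A)$ with $\rho_X(f_\epsilon,\id_X)<\epsilon$ that is not surjective. Fix the $\epsilon>0$ provided by Corollary \ref{cor_necessary} (the one for which no sequence of $\epsilon$-deformations of $(X,A)$ converges to a proper subpair). I will produce a sequence of $\epsilon$-deformations of $(X,A)$ converging in $\upVV$ to a proper subpair, contradicting this choice.

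The key construction is to iterate non-surjective deformations and pass to the limit. Apply the hypothesis successively with $\epsilon/2, \epsilon/4, \epsilon/8,\dots$ to get continuous functions of pairs $g_k:(X,A)\to(X,A)$ with $\rho_X(g_k,\id_X)<\epsilon 2^{-k}$, each non-surjective. Let $h_n=g_n\circ g_{n-1}\circ\cdots\circ g_1:(X,A)\to(X,A)$. Using inequality \eqref{ineq1_rho} and the triangle inequality for $\rho_X$, one gets $\rho_X(h_n,\id_X)\le\sum_{k=1}^n\rho_X(g_k,\id_X)<\epsilon$, so each $h_n$ is an $\epsilon$-function on $X$ (and the same bound shows the sequence $(h_n)$ is uniformly Cauchy, hence converges uniformly to some continuous $h:X\to X$ which is also an $\epsilon$-function on $X$ and a function of pairs). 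By the remark following Definition \ref{def_Pstar}, extending $h_n$ to an $\epsilon$-function on $Q$, each pair $h_n(X,A)=(h_n(X),h_n(A))$ is an $\epsilon$-deformation of $(X,A)$. Since $Q$ is compact and the convergence $h_n\to h$ is uniform on $X$, the compact sets $h_n(X)$ converge to $h(X)$ and $h_n(A)$ converge to $h(A)$ in the Vietoris topology (hence in $\upVV$); so the sequence of $\epsilon$-deformations $h_n(X,A)$ converges in $\upVV$ to $h(X,A)$.

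It remains to check that $h(X,A)$ is a \emph{proper} subpair of $(X,A)$, i.e.\ $h(X)\subsetneq X$ with $h(A)\subseteq A$. The inclusion $h(A)\subseteq A$ is immediate since each $h_n$ is a function of pairs and $A$ is closed. For properness: each $h_n$ is non-surjective because it factors through the non-surjective map $g_n$, so $h_n(X)$ is a proper compact subset of $X$. One must rule out that the limit $h(X)$ is all of $X$. Here the speeds must be chosen carefully: for each $k$ pick a point $x_k\in X\setminus g_k(X)$ together with an open ball $B_k$ around $x_k$ in $X$, disjoint from the compact set $g_k(X)$; by choosing $\rho_X(g_j,\id_X)$ small enough for $j>k$ (smaller than, say, half the distance from $x_k$ to $g_k(X)$, and summable), one ensures $h_n(X)$ stays uniformly bounded away from a fixed point $x_{k_0}$ for all large $n$, hence $x_{k_0}\notin h(X)$, so $h(X)\subsetneq X$.

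The main obstacle is precisely this last step: a uniform limit of non-surjective maps can be surjective (for instance, on $[0,1]$ the maps $x\mapsto x(1-2^{-n})$ are all non-surjective but converge to the identity). So one cannot simply take any sequence of non-surjective $\epsilon$-deformations; the quantitative bookkeeping that keeps a single fixed point (and a fixed neighborhood of it) permanently outside all the iterates $h_n(X)$ is essential. Concretely, once $x_{1}\in X\setminus g_1(X)$ and $r=d(x_1,g_1(X))>0$ are fixed, choosing $\rho_X(g_k,\id_X)<\epsilon 2^{-k}$ with also $\sum_{k\ge 2}\epsilon 2^{-k}<r/2$ (which we may assume by shrinking $\epsilon$ at the outset, or just by a reindexing) forces $d(h_n(x),x_1)\ge d(g_1(x),x_1)-\sum_{k\ge 2}\rho_X(g_k,\id_X)>r/2$ for every $x\in X$ and every $n$, so $x_1\notin\overline{\bigcup_n h_n(X)}\supseteq h(X)$. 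This yields the required proper subpair and contradicts Corollary \ref{cor_necessary}, completing the proof.
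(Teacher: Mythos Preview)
Your argument is correct, but it is far more elaborate than necessary. The paper's proof is a single observation: take the $\epsilon$ from Corollary~\ref{cor_necessary}; if $f:(X,A)\to(X,A)$ is an $\epsilon$-function that is not surjective, then $f(X,A)$ is simultaneously an $\epsilon$-deformation of $(X,A)$ and a proper subpair of $(X,A)$, so the \emph{constant} sequence $f(X,A),f(X,A),\ldots$ already contradicts Corollary~\ref{cor_necessary}. You do not need to iterate, pass to a uniform limit, or keep track of a point that stays outside all the images --- the very first non-surjective map does all the work, because its image is itself the proper subpair you are looking for.

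Your detour through the composition $h_n=g_n\circ\cdots\circ g_1$ and the limit $h$ works, but note that the bookkeeping you flag as ``the main obstacle'' (ensuring the limit map stays non-surjective) is an obstacle of your own making. Also, the step where you need $\sum_{k\ge 2}\epsilon 2^{-k}<r/2$ has a mild circularity (you fix $\epsilon$ first, then $r$ depends on $g_1$ which depends on $\epsilon$); it is repairable as you indicate, by choosing $g_k$ for $k\ge2$ with $\rho_X(g_k,\id_X)<\min(\epsilon 2^{-k},r2^{-k})$ after $g_1$ and $r$ are fixed, but again this complication evaporates once you notice the constant sequence suffices.
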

\begin{proof}
Take~$\epsilon$ from Corollary \ref{cor_necessary}. If~$f:(X,A)\to (X,A)$ is an~$\epsilon$-function, then~$f(X,A)$ is an~$\epsilon$-deformation of~$(X,A)$ which is a subpair of~$(X,A)$. By Corollary \ref{cor_necessary} it cannot be a proper subpair, which means that~$f$ must be surjective.
\end{proof}

We do not know whether converse of Corollary \ref{cor_surjection} holds, although we do not expect so. As we discuss now, it holds for particular pairs.

The property from Corollary \ref{cor_surjection} is a slight variation, actually a strengthening, of the following property introduced in \cite{AH22}. A pair~$(X,A)\subseteq Q$ has the $\epsilon$\textbf{-surjection property} if every continuous function~$f:X\to X$ satisfying~$\rho_X(f,\id_X)<\epsilon$ and~$f|_A=\id_A$ is surjective. One of the main results from \cite{AH22} is that for a \textbf{simplicial pair}, i.e.~a pair~$(X,A)$ consisting of a finite simplicial complex~$X$ and a subcomplex~$A$,~$(X,A)$ has computable type if and only if it has the~$\epsilon$-surjection property for some~$\epsilon>0$.

Putting all the results together, we obtain the following characterization for simplicial pairs.
\begin{cor}\label{cor_simplicial}
For a simplicial pair~$(X,A)$, the following statements are equivalent:
\begin{enumerate}
\item $(X,A)$ has strong computable type,
\item $(X,A)$ has computable type,
\item There exists~$\epsilon>0$ such that every continuous function~$f:(X,A)\to (X,A)$ satisfying~$\rho_X(f,\id_X)<\epsilon$ is surjective,
\item There exists~$\epsilon>0$ such that every continuous function~$f:(X,A)\to (X,A)$ satisfying~$\rho_X(f,\id_X)<\epsilon$ and~$f|_A=\id_A$ is surjective.
\end{enumerate}
\end{cor}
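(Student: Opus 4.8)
The plan is to close a cycle of implications among the four statements, using results already established in the excerpt together with the characterization from \cite{AH22}. The implications $1.\Rightarrow 2.$ and $3.\Rightarrow 4.$ are immediate: strong computable type trivially implies computable type (the oracle-$\emptyset$ instance), and a function $f$ with $f|_A=\id_A$ is in particular a function of pairs $f:(X,A)\to(X,A)$ with $\rho_X(f,\id_X)<\epsilon$, so the surjectivity required in $3.$ applies to a subclass in $4.$. Thus $3.\Rightarrow 4.$ holds with the same $\epsilon$. The substantive links are $1.\Rightarrow 3.$, $4.\Rightarrow 2.$, and $2.\Rightarrow 1.$ (or some rearrangement thereof).

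For $1.\Rightarrow 3.$, I would invoke Corollary \ref{cor_surjection} verbatim: it states precisely that strong computable type yields an $\epsilon>0$ for which every continuous function of pairs $f:(X,A)\to(X,A)$ with $\rho_X(f,\id_X)<\epsilon$ is surjective. No extra work is needed here. For $4.\Rightarrow 2.$, I would quote the main theorem of \cite{AH22}: for a simplicial pair, the $\epsilon$-surjection property (which is exactly statement $4.$) is equivalent to having computable type. So $4.\Rightarrow 2.$ is one direction of that equivalence.

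The remaining and only genuinely new link is $2.\Rightarrow 1.$, i.e.\ that for simplicial pairs computable type already implies \emph{strong} computable type. The natural route is: from $2.$ and the \cite{AH22} equivalence, get the $\epsilon$-surjection property, i.e.\ statement $4.$; I still need to upgrade $4.$ to $1.$. One option is to show $4.\Rightarrow 3.$ for simplicial pairs (promoting the weaker surjection property, where only $f|_A=\id_A$-maps are constrained, to the stronger one allowing arbitrary maps of pairs close to the identity), and then to run the argument of Corollary \ref{cor_sct_relative}: statement $3.$ is a restatement, for the special case $f(X,A)\subseteq(X,A)$, of ``no sequence of $\epsilon$-deformations converges to a proper subpair,'' which by Corollary \ref{cor_sct_relative} gives strong computable type relative to \emph{some} oracle; then one must further argue that for simplicial pairs this oracle can be dispensed with — e.g.\ because the relevant closed property $\P$ (the closure of the set of $\epsilon$-deformations) is effectively closed without any oracle for a simplicial pair, the combinatorial structure making the relevant topological invariant $\Sigma^0_2$ outright. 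Alternatively, and perhaps more cleanly, one can go directly: the $\epsilon$-surjection property of a simplicial pair is itself witnessed by a $\Sigma^0_2$ topological invariant for which $(X,A)$ is minimal (extracted from the simplicial-approximation arguments of \cite{AH22}), and then Theorem \ref{thm_sigma2} delivers strong computable type.

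The main obstacle I anticipate is exactly this last point: bridging the gap between the topological hypothesis ``$\epsilon$-surjection property'' and the effective ``strong computable type'' without paying an oracle. For a general pair, Corollary \ref{cor_necessary}/\ref{cor_sct_relative} only gives strong computable type relative to an oracle; making it oracle-free uses that a simplicial pair carries a finite combinatorial description, so the $\Sigma^0_2$ invariant in question (the one expressing, roughly, that every $\epsilon$-deformation must be surjective, phrased via simplicial approximations and mesh refinements) is genuinely $\Sigma^0_2$ — not just $\Sigma^0_2$ relative to an oracle — and $(X,A)$ is minimal for it. Once that is in place, Theorem \ref{thm_sigma2} closes the loop. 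I would also need, as a minor point, to check that the $\epsilon$ produced by the \cite{AH22} equivalence, the $\epsilon$ in statement $3.$, and the $\epsilon$ in Corollary \ref{cor_surjection} can all be taken uniform, or at least that each implication can independently fix its own $\epsilon$, which is harmless since all four statements are existential in $\epsilon$.
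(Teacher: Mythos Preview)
Your easy implications are exactly those in the paper: $(1)\Rightarrow(2)$ and $(3)\Rightarrow(4)$ are trivial, $(1)\Rightarrow(3)$ is Corollary~\ref{cor_surjection}, and $(2)\Leftrightarrow(4)$ is the main result of \cite{AH22}. You also correctly isolate the only real gap, namely $(4)\Rightarrow(1)$.

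However, both routes you propose for $(4)\Rightarrow(1)$ are overcomplicated, and the first has a genuine hole. In Option~1 you want to pass through Corollary~\ref{cor_sct_relative}, but statement~(3) is strictly weaker than the hypothesis of that corollary: (3) only controls $\epsilon$-functions whose image lands back in $(X,A)$, whereas Corollary~\ref{cor_sct_relative} requires control of \emph{arbitrary} $\epsilon$-deformations into $Q$ (and their $\upVV$-limits). Bridging that gap would need an additional argument you have not supplied, and even then you would still owe the oracle-elimination step. Option~2 is closer in spirit to the machinery of this paper, but as written it is a promissory note: you assert that the $\epsilon$-surjection property can be packaged as minimality for an explicit $\Sigma^0_2$ invariant extractable from \cite{AH22}, without exhibiting one.

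The paper's route is much shorter and avoids both detours: the proof in \cite{AH22} of $(4)\Rightarrow(2)$ is an explicit algorithm (given a semicomputable copy, compute it using the $\epsilon$-surjection property), and that algorithm relativizes verbatim to any oracle. Hence $(4)\Rightarrow(1)$ directly. This is the standard ``the proof relativizes'' move in computability theory, and it is all that is needed here---no $\Sigma^0_2$ invariant, no Corollary~\ref{cor_sct_relative}, no separate $(4)\Rightarrow(3)$ step.
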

\begin{proof}
The following implications hold for any pair:  (1) $\Rightarrow$ (2) and (3) $\Rightarrow$ (4) are straightforward, and (1) $\Rightarrow$ (3) is Corollary \ref{cor_surjection}. The main result from \cite{AH22} is (2) $\Leftrightarrow$ (4). As usual, the proof of the implication (4) $\Rightarrow$ (2) holds relative to any oracle so it actually shows (4) $\Rightarrow$ (1). Therefore we have all the implications.
\end{proof}

\begin{example}\label{ex_surjection}
As already mentioned, we do not know whether the converse of Corollary~\ref{cor_surjection} holds. However, there is a simple example showing that the~$\epsilon$-surjection property does not imply computable type in general. Consider the pair~$(X,A)$ in~$\R^2$ defined as follows:~$X=\bigcup_{i\in\N}X_{i}$
where~$X_{0}=[0,1]\times\{0\}$ and for~$i\geq 1$,~$X_{i}=\{2^{-i}\}\times[0,2^{-i}]$
and~$A=\{(2^{-i},2^{-i}):i\geq 1\}\cup\{(0,0)\}$ (see Figure \ref{fig_surjection}).

\begin{figure}[h]
\centering
\includegraphics{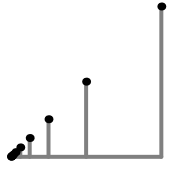}
\caption{The pair~$(X,A)$ in Example \ref{ex_surjection}: it has the surjection property but does not have computable type}\label{fig_surjection}
\end{figure}

Every continuous function~$f:X\to X$ such that~$f|_A=\id_A$ must be surjective, so~$(X,A)$ has the~$\epsilon$-surjection property for every~$\epsilon$. However it does not have computable type: if~$E\subseteq\N$ is a non-computable c.e.~set, then the pair~$(Y,B)$ defined by~$Y=X_0\cup\bigcup_{i\notin E} X_i$ and~$B=\{(2^{-i},2^{-i}):i\geq 1,i\notin E\}\cup\{(0,0)\}$ is a semicomputable copy of~$(X,A)$ but~$Y$ is not computable.
\end{example}

It is an open question whether there is a compact space with infinitely many connected components having strong computable type. The previous results give partial answers to this question. Essentially, such a space cannot have arbitrarily small connected components.

\begin{prop}
Let~$X$ be a compact metric space.

If for every~$\epsilon>0$ there exists a connected component of~$X$ of diameter~$<\epsilon$ which is not a singleton, then~$X$ does not have strong computable type.

If~$X$ contains infinitely many isolated points, then~$X$ does not have strong computable type.
\end{prop}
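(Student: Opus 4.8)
The plan is to use the contrapositive of Corollary~\ref{cor_necessary}: to show $X$ has no strong computable type, it suffices to produce, for every $\epsilon>0$, a sequence of $\epsilon$-deformations of the pair $(X,\emptyset)$ converging in $\upV$ to a proper subset $Y\subsetneq X$. (Here the pair has empty second coordinate, so $\upVV$ reduces to $\upV$ on the first coordinate and convergence of $\emptyset$ to $\emptyset$ is automatic.) Equivalently, I want to exhibit, for each $n$, a continuous map $f_n\colon X\to X$ (or into $Q$, viewing $X\subseteq Q$) with $\rho_X(f_n,\id_X)<\epsilon$ whose image omits some fixed portion of $X$ — more precisely, a single proper subset $Y\subsetneq X$ such that $f_n(X)\to Y$ in the upper Vietoris topology. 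Actually it is cleaner to directly exhibit one $\epsilon$-function $f$ with $f(X)\subsetneq X$: by Corollary~\ref{cor_surjection} (the special case of Corollary~\ref{cor_necessary}), if for every $\epsilon>0$ there is a continuous $f\colon X\to X$ with $\rho_X(f,\id_X)<\epsilon$ that is \emph{not} surjective, then $X$ does not have strong computable type. So the whole task reduces to constructing, for each $\epsilon>0$, a non-surjective self-map of $X$ that is $\epsilon$-close to the identity.

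For the first statement: fix $\epsilon>0$ and let $C$ be a connected component of $X$ with $0<\diam(C)<\epsilon$. Since $C$ is not a singleton it has at least two points; pick $c\in C$. Because $C$ is a connected component of a compact metric space, for any $\delta>0$ there is a clopen-in-$X$ neighborhood $V$ of $C$ with $\diam(V)<\epsilon$ (components of compact spaces are intersections of clopen sets, and one can shrink $V$ to have small diameter by quasicomponent/component coincidence in compact Hausdorff spaces). Define $f\colon X\to X$ to be the identity on $X\setminus V$ and the constant map $c$ on $V$; this is continuous because $V$ is clopen, it has image $X\setminus(V\setminus\{c\})\subsetneq X$ (omitting any point of $C$ other than $c$, of which there is at least one), and it moves no point by more than $\diam(V)<\epsilon$. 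Hence $f$ is a non-surjective $\epsilon$-self-map, and Corollary~\ref{cor_surjection} gives the conclusion.

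For the second statement: let $(x_n)_{n\in\N}$ be infinitely many distinct isolated points of $X$. Fix $\epsilon>0$. By compactness, only finitely many of the $x_n$ have the property that their distance to the rest of $X$ (or their ``isolation radius'') is $\geq\epsilon$; more to the point, the $x_n$ cannot all be far apart, so infinitely many of them lie within $\epsilon$ of some other point of $X$. Pick such an isolated point $p$ with another point $q\in X$, $q\neq p$, at distance $<\epsilon$; then $\{p\}$ is clopen, and the map $f$ that is the identity on $X\setminus\{p\}$ and sends $p\mapsto q$ is continuous, $\epsilon$-close to $\id_X$, and non-surjective (it omits $p$). Again Corollary~\ref{cor_surjection} finishes it. The main obstacle in both cases is the purely topological bookkeeping: ensuring the relevant set ($V$ in the first case, $\{p\}$ in the second) is genuinely clopen in $X$ so that the "identity off it, constant/moved on it" map is continuous, and that we can arrange the target point within distance $\epsilon$ — for the second statement this uses that infinitely many isolated points in a compact space must accumulate somewhere, so all but finitely many are $\epsilon$-close to another point of $X$. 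Everything else is immediate from Corollary~\ref{cor_surjection}.
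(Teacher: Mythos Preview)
Your proposal is correct and follows essentially the same approach as the paper: in both cases you invoke Corollary~\ref{cor_surjection} and build, for each~$\epsilon>0$, a non-surjective continuous self-map of~$X$ that is~$\epsilon$-close to the identity, by collapsing a small clopen set (a clopen neighborhood of a small non-singleton component in the first case, a single isolated point in the second) to a nearby point. The paper's proof is identical in structure and detail.
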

\begin{proof}
We show that in both situations one can build for each~$\epsilon>0$ a non-surjective continuous~$\epsilon$-function, implying that~$X$ does not have strong computable type by Corollary \ref{cor_surjection}.

Let~$A\subseteq X$ be a connected component of diameter~$<\epsilon$, which is not a singleton. $A$ is the intersection of a decreasing sequence of clopen sets, so by compactness it is contained in a clopen set~$C_\epsilon$ of diameter~$< \epsilon$. The function~$f:X\to X$ defined as the identity on~$X\setminus C_\epsilon$ and sending all~$C_\epsilon$ to some point~$x\in C_\epsilon$ is a continuous~$\epsilon$-function, which is not surjective because~$C_\epsilon$ is not a singleton.

If~$X$ contains infinitely many isolated points then by compactness, they have an accumulation point~$x\in X$. Given~$\epsilon>0$, let~$f:X\to X$ send some isolated point~$y\in B(x,\epsilon)$ to~$x$ and be the identity on~$X\setminus \{y\}$. $f$ is a non-surjective continuous~$\epsilon$-function.
\end{proof}

\subsection{Failing to have computable type in a non-trivial way}

When a set or a pair does not have computable type, it may or may not be easy to build a semicomputable copy which is not computable. For instance, it is straightforward to show that the line segment~$I$ does not have computable type (as a single set, i.e.~without its boundary): if~$r>0$ is a non-computable right-c.e.~real number, then~$[0,r]$ is a semicomputable copy of~$I$ which is not computable.

However, for other sets such as the dunce hat \cite{AH22} or the set shown in Figure \ref{croissant} below, there is no such obvious construction. We can formulate a precise statement expressing this idea, by using the results obtained so far, notably Theorem \ref{thm_minimal}.

\begin{defn}
Let~$X,Y$ be compact metric spaces and~$f:X\to Y$ be continuous. A \textbf{modulus of uniform continuity} for~$f$ is a function~$\mu:\N\to\N$ such that if~$d(x,x')< 2^{-\mu(n)}$ then~$d(f(x),f(x'))\leq 2^{-n}$.
\end{defn}
The choice of strict and non-strict inequalities is not important, but is convenient as it makes the set of functions having modulus~$\mu$ a closed subset of the space~$\con(X,Y)$ of continuous functions from~$X$ to~$Y$.

For instance, a Lipschitz function with Lipschitz constant~$L$ has a modulus of uniform continuity~$\mu(n)=n+\ceil{\log_2(L)}$.

\begin{defn}
Let~$X\subseteq Q$. A copy of~$X$ in~$Q$ is a \textbf{simple copy} of~$X$ if there exists a homeomorphism~$f:X\to Y$ such that both~$f$ and~$f^{-1}$ have a computable modulus of uniform continuity.
\end{defn}

\begin{thm}[Computability of simple copies]\label{thm_nontrivial}
Let~$X\subseteq Q$ be a computable set that does not properly contain a copy of itself. 
Let~$Y$ be a simple copy of~$X$. If~$Y$ is semicomputable then~$Y$ is computable.
\end{thm}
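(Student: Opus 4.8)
The plan is to reduce this statement to Theorem~\ref{thm_minimal} by exhibiting, for the given simple copy~$Y$, a property~$\P$ of pairs that is~$\Pi^0_1$ in~$\upV$ (here we work with single sets, i.e.~the pair~$(X,\emptyset)$, so~$\upVV$ collapses to~$\upV$) and for which~$Y$ is~$\P$-minimal. Unlike the proof of Theorem~\ref{thm_sigma2_equiv}, which used a Baire category argument to find a~$\Pi^0_1$ property working for ``many'' copies at once, here we are only asked to handle the fixed copy~$Y$, so we can build~$\P$ directly using the extra data that~$Y$ is a \emph{simple} copy. The rough idea: since~$X$ is computable and does not properly contain a copy of itself, and since the homeomorphism~$h:X\to Y$ and its inverse have computable moduli of uniform continuity, the set of ``candidate copies of~$X$ sitting inside~$Y$'' is a~$\Pi^0_1$-describable family, and minimality of~$Y$ in~$X$ forces it to be minimal in this family.

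\textbf{Key steps, in order.} First I would fix a computable modulus~$\mu$ for~$h$ and a computable modulus~$\nu$ for~$h^{-1}$. Let~$\con_\mu(X,Q)$ denote the set of continuous maps~$X\to Q$ admitting modulus~$\mu$; this is a closed, effectively compact subset of~$\con(X,Q)$ (by the remark after the definition of modulus, together with Arzel\`a--Ascoli made effective), and uniformly so, and the evaluation map~$f\mapsto f(X)\in(\K(Q),\upV)$ is computable on it. Second, I would define
\[
\P=\bigl\{\,Z\in\K(Q) : \exists\, f\in\con_\mu(X,Q),\ \exists\, g\in\con_\nu(Z,Q),\ f(X)\subseteq Z,\ g\circ f=\id_X,\ f\circ g|_Z=\id_Z \,\bigr\}.
\]
In words,~$Z\in\P$ iff~$Z$ contains a homeomorphic copy of~$X$, realized by a map with modulus~$\mu$ whose inverse has modulus~$\nu$; note that~$Y\in\P$ via~$h$ itself, and any copy~$Y'$ of~$Y$ in~$Q$ also lies in~$\P$ via the induced homeomorphism composed with~$h$ (which still carries the required moduli up to a harmless adjustment — one can take slightly larger moduli~$\mu',\nu'$ from the outset so that this is automatic, or equivalently work with copies that are themselves simple). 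Third, I would check that~$\P$ is~$\Pi^0_1$ in~$\upV$: the conditions ``$f$ has modulus~$\mu$'', ``$g$ has modulus~$\nu$'', ``$g\circ f=\id_X$'', ``$f\circ g=\id_Z$'' are all effectively closed conditions, the condition~$f(X)\subseteq Z$ is~$\Pi^0_1$ in~$Z\in(\K(Q),\upV)$ and closed in~$f$, and since~$f$ ranges over an effectively compact space and~$g$ over a space effectively compact uniformly in~$Z$, Proposition~\ref{prop_quant} (quantifying over a compact space preserves~$\Pi^0_1$) lets us eliminate both existential quantifiers while keeping the result~$\Pi^0_1$ in~$Z$. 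Fourth, and this is where the hypotheses on~$X$ are used, I would prove~$Y$ is~$\P$-minimal: if~$Y'\subsetneq Y$ and~$Y'\in\P$, then~$Y'$ contains a copy~$X'$ of~$X$; pulling back through~$h^{-1}$ gives a copy of~$X$ inside~$h^{-1}(Y')\subsetneq X$, contradicting the assumption that~$X$ does not properly contain a copy of itself — but I must be careful that~$h^{-1}(Y')$ is still a \emph{proper} subset of~$X$, which holds because~$h$ is a homeomorphism and~$Y'\subsetneq Y$. Finally, applying Theorem~\ref{thm_minimal} (or more directly the~$2.\Rightarrow 1.$ direction of its proof, which shows that minimality for a~$\Pi^0_1$ property yields the desired enumeration reduction) gives that~$Y$ is computable whenever it is semicomputable, as a computable element of~$(\K(Q),\lowV)$ reduces to~$Y$ as an element of~$(\K(Q),\upV)$.

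\textbf{Main obstacle.} The delicate point is verifying that~$\P$ is genuinely~$\Pi^0_1$ in~$\upV$ \emph{and} that it is a topological invariant in the strong sense needed — i.e.~that every copy of~$Y$ lies in~$\P$ — while keeping the moduli bookkeeping honest. A homeomorphic copy~$Y'$ of~$Y$ need not admit the \emph{same} moduli~$\mu,\nu$; the clean fix is to observe that it suffices for Theorem~\ref{thm_minimal} that a suitable~$\Pi^0_1$ property work for each copy separately, so one can enlarge the moduli per copy, or alternatively restrict attention to the invariant ``$Z$ is a copy of~$X$ admitting \emph{some} computable pair of moduli'' which is~$\Sigma^0_2$ and fits Theorem~\ref{thm_sigma2} after checking~$Y$ is minimal for the relevant~$\Pi^0_1$ stratum. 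A secondary subtlety is the effective Arzel\`a--Ascoli step: one must confirm that~$\{f:X\to Q \text{ with modulus }\mu\}$ is effectively compact, which follows from~$X$ being computable (hence effectively totally bounded) and~$Q$ being effectively compact, so that a finite~$2^{-n}$-net of~$X$ together with the modulus pins down~$f$ up to~$2^{-n+1}$ via finitely many values in the effectively compact space~$Q$.
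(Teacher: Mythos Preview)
Your overall strategy is the paper's: use the computable moduli to carve out an effectively compact family of embeddings of~$X$, take its image in~$\K(Q)$ as the~$\Pi^0_1$ property, derive minimality of~$Y$ from ``$X$ contains no proper copy of itself'', and invoke the~$2.\Rightarrow 1.$ argument of Theorem~\ref{thm_minimal}. Two points, however, separate your write-up from a complete proof.

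\emph{The quantifier over~$g$ is a genuine gap.} You assert that~$\con_\nu(Z,Q)$ is effectively compact ``uniformly in~$Z$'', but Lemma~\ref{lem_modulus} requires a computable dense sequence in the domain, and you only have~$Z$ in~$\upV$. There is also a typing problem: $g$ maps~$Z$ to~$Q$, so~$f\circ g$ is not defined unless you add~$g(Z)\subseteq X$; and once you impose~$f\circ g|_Z=\id_Z$ together with~$f(X)\subseteq Z$ you actually force~$Z=f(X)$, contradicting your verbal description (``$Z$ \emph{contains} a copy of~$X$''). The paper sidesteps all of this by eliminating~$g$ entirely: the condition ``$f^{-1}$ has modulus~$\nu$'' is rewritten as the closed condition on~$f$ alone
\[
\forall x,x'\in X,\ \forall n,\quad d_Q(f(x),f(x'))<2^{-\nu(n)}\ \Longrightarrow\ d_Q(x,x')\le 2^{-n},
\]
so that one quantifies only over the effectively compact set~$\J_\mu\subseteq\con(X,Q)$ of injective maps satisfying both modulus constraints. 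The property is then simply~$\P_\mu=\{f(X):f\in\J_\mu\}$, the computable image of an effectively compact set, hence~$\Pi^0_1$ in~$\V$; passing to~$\ur{\P_\mu}$ (or arguing directly via Lemma~\ref{lem_inclusion} and Proposition~\ref{prop_quant}) yields a~$\Pi^0_1$ set in~$\upV$ with the same minimal elements.

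\emph{The invariance detour is unnecessary.} The theorem concerns a single given simple copy~$Y$, not strong computable type; you only need one~$\Pi^0_1$ property for which~\emph{this}~$Y$ is minimal, so there is no reason to ask whether other copies of~$Y$ (with possibly uncontrolled moduli) lie in~$\P$, nor to escalate to a~$\Sigma^0_2$ invariant via Theorem~\ref{thm_sigma2}. Dropping that discussion removes what you flagged as the ``main obstacle''.
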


In particular, if~$X$ does not properly contain a copy of itself, then there is no geometrical transformation (scaling, rotation, translation) yielding a semicomputable copy of~$X$ which is not computable, and more generally there is no bilipschitz transformation yielding such a copy.

We need the following result which is folklore, but does not seem to appear in the literature.
\begin{lem}\label{lem_modulus}
Let~$X\subseteq Q$ be a computable compact set and let~$\mu:\N\to\N$ be computable. The set
\begin{equation*}
\K_\mu:=\{f\in\con(X,Q):\mu\text{ is a modulus of uniform continuity of }f\}
\end{equation*}
is effectively compact.
\end{lem}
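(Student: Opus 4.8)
The plan is to show directly that the set of finite open covers of $\K_\mu$ by basic open sets of $\con(X,Q)$ is c.e., which is the definition of effective compactness. First I would recall that $\con(X,Q)$ is a computable metric space under the sup-metric $\rho_X$, with a dense computable sequence of functions (analogous to Lemma \ref{lem_dense}, obtained e.g.\ by interpolating dyadic values on a fine computable net of $X$); a basic ball is $B(\phi_k, q)$ for $\phi_k$ in this sequence and $q$ a positive rational. The key structural fact is that $\K_\mu$ is \emph{closed} in $\con(X,Q)$: the condition ``$d(x,x')<2^{-\mu(n)}\implies d(f(x),f(x'))\le 2^{-n}$'' is a conjunction over $n$ of conditions each of which is closed in $f$, being a universal quantification over the compact set $\{(x,x')\in X\times X: d(x,x')\ge 2^{-\mu(n)}\}$... but here one must be slightly careful because the hypothesis region is not compact; instead I would phrase the constraint equivalently using the \emph{compact} region $\{(x,x'): d(x,x')\le 2^{-\mu(n)}\}$ together with a non-strict conclusion, or directly use that $f\mapsto \sup\{d(f(x),f(x')): d(x,x')\le 2^{-\mu(n)}\}$ is continuous (upper semicontinuity suffices) on $\con(X,Q)$, so $\{f : \text{this sup}\le 2^{-n}\}$ is closed, and $\K_\mu$ is the countable intersection of these.

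Next I would establish that $\K_\mu$ is \emph{totally bounded with a computable bound}, hence precompact effectively, and combine this with effective closedness to conclude effective compactness. For total boundedness: by the Arzelà–Ascoli theorem $\K_\mu$ is compact (it is closed, pointwise bounded since values lie in $Q$, and equicontinuous by the common modulus $\mu$). To make this effective, fix $n$; choose a finite $2^{-\mu(n+1)}$-net $\{x_1,\dots,x_m\}$ of $X$ (computable since $X$ is a computable compact set) and a finite $2^{-n-1}$-net of $Q$; any $f\in\K_\mu$ is determined up to $\rho_X$-error $2^{-n}$ by which net-point of $Q$ each $f(x_j)$ is near, and only finitely many such assignments are ``$\mu$-consistent''. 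From this data one computes, uniformly in $n$, a finite list of centers $\phi$ from the dense sequence and radius $2^{-n}$ whose balls cover $\K_\mu$. Having an effective total-boundedness witness plus effective closedness is exactly what is needed: given any candidate finite cover $B(\phi_{k_1},q_1)\cup\cdots\cup B(\phi_{k_r},q_r)$, its complement relative to $\K_\mu$ is an effectively open subset of the effectively compact ambient net-refinement, so one can semi-decide whether the proposed sets actually cover $\K_\mu$ by checking that a sufficiently fine precomputed finite net of $\K_\mu$ is contained in the union and that the union is open enough to absorb the net error — equivalently, apply Proposition \ref{prop_quant} style reasoning after realizing $\K_\mu$ as an effectively closed subset of a space already known to be effectively compact.

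The cleanest route, which I would actually write up, is therefore: (i) embed $\K_\mu$ into an auxiliary effectively compact space, and (ii) show $\K_\mu$ is effectively closed there. Concretely, let $\{x_j\}_{j\in\N}$ be a computable dense sequence in $X$ (available since $X$ is computable), and consider the evaluation map $\mathrm{ev}:\con(X,Q)\to Q^{\N}$, $f\mapsto (f(x_j))_j$, which is a computable embedding onto a closed subspace. The image of $\K_\mu$ is $\{(y_j)\in Q^{\N} : \forall n\,\forall i,j,\; d(x_i,x_j)<2^{-\mu(n)}\implies d(y_i,y_j)\le 2^{-n}\}$, which is effectively closed in the effectively compact space $Q^{\N}$: each clause is a closed condition on finitely many coordinates, uniformly enumerable, and the implication's hypothesis ``$d(x_i,x_j)<2^{-\mu(n)}$'' is decidable since the $x_j$ form a computable sequence and $\mu$ is computable (a right-c.e.\ vs.\ left-c.e.\ subtlety about the strict inequality is harmless: enumerate the clause whenever a rational lower bound for $2^{-\mu(n)}-d(x_i,x_j)$ appears). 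An effectively closed subset of an effectively compact space is effectively compact, so $\mathrm{ev}(\K_\mu)$, and hence $\K_\mu$ via the computable embedding with computable inverse on its image (Proposition \ref{prop_effective_inverse}, using that $\con(X,Q)$ is computably Hausdorff), is effectively compact. The main obstacle I anticipate is purely bookkeeping: verifying that restricting the modulus condition to the countable dense set $\{x_j\}$ really recovers $\K_\mu$ (it does, by continuity of $f$ and density, upgrading the non-strict conclusion from a dense set of pairs to all pairs), and handling the strict-vs-nonstrict inequalities so that the defining conditions come out genuinely $\Pi^0_1$ rather than merely arithmetical.
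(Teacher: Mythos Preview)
Your ``cleanest route'' is exactly the paper's proof: take a computable dense sequence $(x_i)$ in $X$, identify $\K_\mu$ with the set $\L_\mu=\{(z_i)\in Q^{\N}:\forall i,j,n,\ d(x_i,x_j)<2^{-\mu(n)}\Rightarrow d(z_i,z_j)\le 2^{-n}\}$, observe that $\L_\mu$ is a $\Pi^0_1$ subset of the effectively compact space $Q^{\N}$, and conclude. One small correction: your appeal to Proposition~\ref{prop_effective_inverse} to get computability of the inverse of $\mathrm{ev}$ is circular, since that proposition requires the domain to already be semicomputable, which is precisely what you are proving. The right move (and what the paper does) is to argue directly that the reconstruction map $\L_\mu\to\K_\mu$ is computable: given $(z_i)$ and any $x\in X$, use the computable modulus $\mu$ and the dense sequence to approximate $f(x)$ to any precision. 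Once that map is computable, $\K_\mu$ is the computable image of the effectively compact set $\L_\mu$, hence effectively compact---no need for Proposition~\ref{prop_effective_inverse} at all. Your anticipated ``main obstacle'' (that the dense-set condition really characterises $\K_\mu$) is exactly the content of this reconstruction step.
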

\begin{proof}
Let~$(x_i)_{i\in\N}$ be a dense computable sequence in~$X$. If~$\mu$ is a computable modulus of continuity of~$f$, then an access to a name of~$f\in\K_\mu$ is computably equivalent to an access to the values of~$f$ on this dense sequence, because using those values and the modulus of continuity, one can computably evaluate~$f$ at any point. In other words, the set~$\K_\mu$ is computably homeomorphic to the set
\begin{equation*}
\L_\mu:=\{(z_i)_{i\in\N}\in Q^\N:\forall i,j,n\in\N,d(x_i,x_j)<2^{-n}\implies d(z_i,z_j)\leq 2^{-n}\}
\end{equation*}
via the map sending~$f\in\K_\mu$ to the sequence~$(f(x_i))_{i\in\N}$. The space~$Q^\N$ is effectively compact and~$\L_\mu$ is a~$\Pi^0_1$-subset of~$Q^\N$, therefore~$\L_\mu$ is effectively compact, and so is~$\K_\mu$.
\end{proof}

We now prove the result.
\begin{proof}[Proof of Theorem \ref{thm_nontrivial}]
Let~$\mu:\N\to\N$ be a computable function. The set~$\J_\mu$ of injective continuous functions~$f:X\to Q$ such that~$\mu$ is a modulus of uniform continuity for both~$f$ and~$f^{-1}$ is an effectively compact subset of~$\con(X,Q)$. Indeed,~$\J_\mu$ is the intersection of~$\K_\mu$ from Lemma \ref{lem_modulus} with the set
\begin{equation*}
\{f\in\con(X,Q):\forall x,x'\in X, d_Q(f(x),f(x'))<2^{-\mu(n)}\implies d_Q(x,x')\leq 2^{-n}\}.
\end{equation*}
As the latter set is~$\Pi^0_1$, its intersection with~$K_\mu$ is effectively compact.

Let~$\P_\mu=\{f(X):f\in \J_\mu\}$. As~$X$ is computable, the function~$\phi:\con(X,Q)\to (\K(Q),\V)$ sending~$f$ to~$f(X)$ is computable. As~$\J_\mu$ is effectively compact, its image~$\P_\mu$ by~$\phi$ is effectively compact as well, so~$\P_\mu$ is a~$\Pi^0_1$-subset of~$\K(Q)$ in the topology~$\V$. As~$X$ does not properly contain a copy of itself, any copy of~$X$ in~$\P_\mu$ is~$\P_\mu$-minimal. Therefore, any semicomputable copy of~$X$ in~$\P_\mu$ is computable, by the same argument as in the proof of Theorem \ref{thm_minimal}. 
\end{proof}

\begin{example}\label{ex_croissant}
The set~$X$ shown in Figure \ref{croissant}, consisting of a disk attached to a pinched torus, does not have computable type. 

It can be proved using the results in \cite{AH22,AH23}. Indeed, the neighborhood of the pinched point is the cone of a graph consisting of two circles attached by a line segment; the line segment is not part of a cycle in the graph, implying that the set does not have computable type. Another way to prove that it does not have computable type is to use the fact that for every~$\epsilon>0$, there is a sequence of~$\epsilon$-deformations of~$X$ converging to a proper subset of~$X$ (see Figure \ref{croissants}). Therefore, it does not have strong computable type by Corollary \ref{cor_necessary}, hence it does not have computable type as it is a finite simplicial complex (Corollary \ref{cor_simplicial}).

As~$X$ does not contain a copy of itself, it has no simple copy which is semicomputable but not computable. It explains why the proof, given in \cite{AH22,AH23}, that such sets do not have computable type is not straightforward.

\begin{figure}[h]
\centering
\subcaptionbox{The set~$X$\label{croissant}}{\includegraphics[scale=.5]{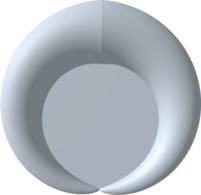}}
\hspace{1cm}
\subcaptionbox{$\epsilon$-deformations converging to a proper subset of~$X$\label{croissants}}{\includegraphics[scale=.5]{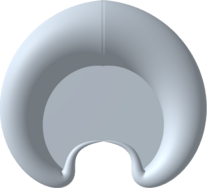}
\includegraphics[scale=.5]{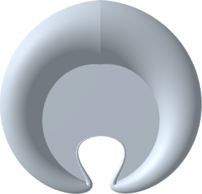}
\includegraphics[scale=.5]{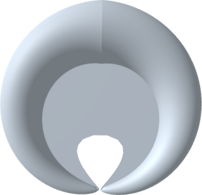}
}
\caption{The set from Example \ref{ex_croissant} and its~$\epsilon$-deformations}
\end{figure}
\end{example}

A similar analysis can be carried out for pairs.

\subsection{Uniformity}

Given a pair~$(X,A)$ which has strong computable type, a natural question
one may ask is whether it is possible to have a single effective procedure that takes any copy~$(Y,B)$ given in the topology~$\upVV$ and computes~$Y$ in the topology~$\V$. It turns out that the answer is almost always negative: uniformity is only possible when~$X$ is a singleton. Therefore, the natural problem is to measure the degree of non-uniformity of this problem, using Weihrauch degrees. It was studied by Pauly \cite{ArnoCCC21} in the case of the circle embedded in~$\R^2$. We give here more results about this question.

\begin{defn}
For a pair~$(X,A)$, let~$\SCT_{(X,A)}$ be the function taking a copy~$(Y,B)$ of~$(X,A)$ in~$\upVV$ and outputting~$Y$ in~$\VV$.
\end{defn}

The first result holds for every pair (which may or may not have strong computable type).
\begin{defn}
\textbf{Closed choice} over~$\N$ is the problem~$\C_\N$ of finding an element in a non-empty set~$A$ of natural numbers, given any enumeration of the complement of~$A$.
\end{defn}

Strong Weihrauch reducibility relative to some oracle is usually denoted by~$\leq_{sW}^t$ (where~$t$ stands for \emph{topological}).

\begin{thm}
Let~$(X,A)$ be a pair such that~$X$ is not a singleton. One has~$\C_\N\leq^t_{sW} \SCT_{(X,A)}$. If~$(X,A)$ has a semicomputable copy then~$\C_{\N}\leq_{sW} \SCT_{(X,A)}$.
\end{thm}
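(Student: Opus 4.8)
The plan is to exhibit a strong Weihrauch reduction from $\C_\N$ to $\SCT_{(X,A)}$ by encoding an instance of closed choice into a copy of $(X,A)$, and then recovering the answer from the output $Y$. First I would fix two distinct points $p,q\in X$ (possible since $X$ is not a singleton) and, by normality of the metrizable space $X$, fix disjoint closed neighborhoods of $p$ and $q$; more concretely it is convenient to fix an arc or at least a path of embeddings $\iota_n:X\to Q$ that ``shrink'' the $p$-side of $X$ to smaller and smaller scale as $n$ grows, in such a way that the embeddings $\iota_n$ converge, as $n\to\infty$, to a fixed embedding $\iota_\infty$ whose image is a genuine copy of $(X,A)$ as well. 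The key point is that from the $\upV^2$-data of the family $\{\iota_n(X,A)\}$ one can read off, by watching which small balls around $\iota_\infty(p)$ eventually stop being met, exactly ``how far'' $n$ has gone, while the $\V$-data (i.e. which balls are actually met) of $\iota_n(X)$ pins down $n$ on the nose.

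Concretely, given an instance of $\C_\N$, i.e. an enumeration of $\N\setminus S$ with $S\neq\emptyset$, I would build a copy $(Y,B)$ of $(X,A)$ as follows: place the ``$q$-part'' of $X$ rigidly at a fixed location, and place the ``$p$-part'' at position corresponding to the least element $n_0$ enumerated so far into $\N\setminus S$ that has \emph{not} been cancelled — more precisely, arrange a sequence of shrinking placeholder locations indexed by $\N$, and at stage $s$ put the $p$-part at the location indexed by $\min(\{0,1,\dots\}\setminus(\text{enumerated part of }\N\setminus S))$ restricted to the first $s$ candidates. Because $S\neq\emptyset$ this min stabilizes to some $n^\ast\in S$, giving a well-defined copy $(Y,B)$ of $(X,A)$ in $Q$; and the $\upV^2$-name of $(Y,B)$ is computable from the $\C_\N$-instance, using that each time a candidate is cancelled the $p$-part only moves to a strictly smaller scale so previously asserted ``$Y\subseteq U$'' facts remain true (this monotonicity is exactly what makes the construction work with upper-Vietoris input). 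From the output $Y$ in $\V$ — in particular from the lower-Vietoris information, i.e. which rational balls $Y$ actually meets — one reads off which placeholder location actually carries the $p$-part, hence recovers $n^\ast\in S$. This is a strong Weihrauch reduction (relative to an oracle coding the chosen embeddings and placeholder locations), giving $\C_\N\leq^t_{sW}\SCT_{(X,A)}$; and if $(X,A)$ has a semicomputable copy then all the data involved (the base embedding, the placeholder locations realizing the shrinking copies) can be chosen computably, so the oracle is unnecessary and $\C_\N\leq_{sW}\SCT_{(X,A)}$.

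The main obstacle I anticipate is the construction of the shrinking family $\iota_n$ with the right monotonicity: one needs a sequence of genuine \emph{topological embeddings} of the \emph{whole} pair $(X,A)$ into $Q$, pairwise with disjoint (or nested-into-smaller-scale) ``$p$-parts'', whose $\upV^2$-names form an increasing chain of basic open constraints so that a $\upV^2$-name of the limit copy is produced on the fly from the $\C_\N$-input. Separating $X$ along two points into a ``fixed part'' and a ``movable part'' requires a little care when $X$ is wild (e.g. not locally connected near $p$): the clean way is to use that $X$ metrizable compact embeds in $Q$, fix an embedding, take a tiny ball $B(p,r)$ whose closure misses $q$, and realize all the $\iota_n$ as the original embedding post-composed with a self-map of $Q$ that is the identity outside a neighborhood of $\iota(B(p,r))$ and contracts that neighborhood toward one of countably many target points $c_n\to c_\infty$; one must check these self-maps are injective on $\iota(X)$ (true if the contraction is a homeomorphism onto its image on the relevant region) and that the resulting images are pairwise related by the required scale-nesting. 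Handling the pair structure — making sure $A$ is carried along correctly and that $B$ stays a legitimate copy of $A$ throughout — adds bookkeeping but no new idea, since $f(A)$ is simply the image of $A$ under the same self-maps.
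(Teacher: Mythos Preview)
Your overall strategy—encode an instance into the geometry of a copy of $(X,A)$ and read the answer off from the $\V$-output—matches the paper's. But your execution has a real gap: you fix the family of embeddings $\iota_n$ \emph{in advance} (via self-maps of $Q$ contracting a neighborhood of $\iota(p)$) and rely on ``previously asserted $Y\subseteq U$ facts remain true'' when the current candidate increases. For the resulting enumeration to be a valid $\upVV$-name of the final copy $\iota_{n^*}(X,A)$, every basic $\upVV$-neighborhood $U$ of $\iota_{n^*}(X)$ must eventually be output; since you never know that $n_s=n^*$ is final, you can only safely output $U$'s containing $\iota_m(X)$ for \emph{all} $m\geq n_s$. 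After stabilisation this means you enumerate exactly the $U$'s containing $\bigcup_{m\geq n^*}\iota_m(X)$, which is a $\upVV$-name of $\iota_{n^*}(X)$ only if $\iota_m(X)\subseteq\iota_{n^*}(X)$ for every $m>n^*$. That is a strictly nested chain of homeomorphic copies of $X$, hence requires $X$ to properly contain a copy of itself—false for $X=\SS_1$, for any closed manifold, or for any finite set. Your local-contraction maps, if injective, produce genuine copies of $X$, so they cannot escape this constraint.

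The paper avoids the problem by choosing the placement \emph{dynamically} rather than in advance. It passes to the strongly Weihrauch equivalent problem $\Max$ (maximum of a nonempty finite $E\subseteq\N$ given by its characteristic function) and uses global affine maps of $Q$: at stage $i$ the copy is scaled by $2^{-m_i}$ where $m_i=\max(E\cap[0,i])$; whenever $m_i$ jumps (necessarily to $i+1$), the new copy, of diameter $<2^{-(i+1)}$, is translated into a ball of diameter $2^{-i}$ that was deliberately kept inside the intersection of all neighborhoods output so far. No nesting of copies is needed, only of the output neighborhoods. From the $\V$-output one computes $\diam{Y}\in(2^{-m-1},2^{-m})$ and recovers $m=\max E$, using nothing about $X$ beyond $\diam X>0$. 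The semicomputable-copy hypothesis is exactly what makes the scalings, translations and diameter bounds effective rather than oracle-relative.
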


\begin{proof}
Instead of~$\C_\N$, we use the strongly Weihrauch equivalent problem~$\Max$ which sends a non-empty finite subset of~$\N$, represented by its characteristic function, to its maximal element (Theorem 7.13 in \cite{BGP21}). We prove the result when~$(X,A)$ has a semicomputable copy, the general case is obtained by relativizing the argument to an oracle which semicomputes some copy.

Assume that~$(X,A)$ is already embedded as a semicomputable pair in~$Q$. We can assume that~$1/2<\diam X<1$, rescaling~$X$ if needed (and using the assumption that~$X$ is not a singleton).

Given a non-empty finite set~$E\subseteq\N$, let~$m=\max E$. We show how to produce a semicomputable copy~$(X_E,A_E)$ of~$(X,A)$ such that~$2^{-m-1}<\diam{X_E}<2^{-m}$. Given an access to~$X_E$ in the topology~$\V$, one can compute its diameter so one can compute~$m$, showing that~$\Max$ is strongly Weihrauch reducible to~$\SCT_{(X,A)}$.

We now define~$(X_E,A_E)$. It is obtained from~$(X,A)$ by a translation and a scaling by a factor~$2^{-m}$. Given~$E$, one can compute the sequence~$m_i=\max (E\cap[0,i])$ converging to~$m$. The idea is that at stage~$i$, we produce a copy of~$(X,A)$ that is translated and scaled by~$2^{-m_i}$. If at stage~$i+1$, one has~$m_{i+1}=m_i$ then we keep that copy. If~$m_{i+1}>m_i$, then we move to another copy. The new scaling factor is~$2^{-m_{i+1}}$ and we choose the translation so that the new copy is contained in the current~$\upVV$ neighborhood of the previous copy. It is possible because we can assume that at stage~$i$, the~$\upVV$ neighborhood contains a ball of diameter~$2^{-i}$, and~$2^{m_{i+1}}=2^{-i-1}$.
\end{proof}

In the other direction, we can prove that~$\SCT_{(X,A)}\leq_{W}\C_{\N}$ holds when~$(X,A)$ has strong computable type in a particular way. A strong Weihrauch reduction is impossible, because~$\C_\N$ has countably many possible outputs while~$\SCT_{(X,A)}$ has uncountably many.

\begin{thm}\label{thm_choice}Let~$\P$ be a~$\Sigma_{2}^{0}$ invariant. If a pair~$(X,A)$ is~$\P$-minimal then~$\SCT_{(X,A)}\leq_W \C_\N$.
\end{thm}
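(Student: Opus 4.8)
The plan is to show that the task $\SCT_{(X,A)}$ can be solved non-deterministically with $\C_\N$ providing the single point of non-uniformity, by guessing which $\Pi^0_1$ piece $\P_n$ of the $\Sigma^0_2$ invariant $\P=\bigcup_n\P_n\setminus\Q_n$ the given copy lands in. Write $\P=\bigcup_{n\in\N}\P_n\setminus\Q_n$ with $\P_n,\Q_n$ uniformly $\Pi^0_1$ in $\upVV$. Given a copy $(Y,B)$ of $(X,A)$ as an element of $(\K^2(Q),\upVV)$, we know $(Y,B)\in\P$, hence $(Y,B)\in\P_n\setminus\Q_n$ for some $n$; moreover, exactly as in the proof of Theorem~\ref{thm_sigma2}, since $(X,A)$ is $\P$-minimal and $\Q_n$ is an upper set not containing $(Y,B)$, the copy $(Y,B)$ is in fact $\P_n$-minimal. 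So $(Y,B)$ is $\P_n$-minimal for at least one $n$.

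First I would use $\C_\N$ to obtain such an $n$. The set $G\subseteq\N$ of $n$ that are \emph{not} admissible should be enumerable from $(Y,B)\in\upVV$: we should put $n$ into $G$ once we discover either that $(Y,B)\in\Q_n$ (this is a $\Sigma^0_1$ event in $\upVV$, since $\Q_n$ is $\Pi^0_1$, hence its complement is $\Sigma^0_1$ — wait, that is backwards), or that $(Y,B)$ fails $\P_n$-minimality, i.e.\ that some proper subpair $(Y',B')\subsetneq(Y,B)$ lies in $\P_n$. The minimality-failure event is $\Sigma^0_1$ in $\upVV$: running the machine $M$ of the proof of Theorem~\ref{thm_minimal} for the property $\P_n$ on the $\upVV$-name of $(Y,B)$, one detects a rational ball $U$ meeting $Y$ with $\overline U$ disjoint from the would-be minimal pair, exactly as there. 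The $(Y,B)\in\Q_n$ event, by contrast, is \emph{co-}$\Sigma^0_1$ (it is $\Pi^0_1$). This asymmetry is the crux: I cannot enumerate the non-admissible $n$ directly from the membership in $\Q_n$. The fix is to note that we do not need $n$ to be admissible in the sense ``$(Y,B)\in\P_n\setminus\Q_n$''; we only need $(Y,B)$ to be $\P_n$-minimal, and for the computation of $Y$ it suffices to additionally know $(Y,B)\in\P_n$. So we run $\C_\N$ on the set $G$ of $n$ such that, on the given name, the machine $M_{\P_n}$ ever outputs a ball $U$ with $\overline U$ witnessed disjoint from $Y$ — i.e.\ $n$ such that $M_{\P_n}$ fails. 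This $G$ is c.e.\ from the $\upVV$-name, and its complement contains every $n$ for which $(Y,B)$ is $\P_n$-minimal \emph{and} $(Y,B)\in\P_n$ (for then $M_{\P_n}$ never fails, by the argument of Theorem~\ref{thm_minimal}); we showed at least one such $n$ exists, so the complement of $G$ is nonempty, and $\C_\N$ returns some $n_0\notin G$.

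Next, given $n_0\notin G$, I would compute $Y$ in $\V$ from $(Y,B)$ in $\upVV$ exactly as in the proof of Theorem~\ref{thm_minimal}: a rational ball $U$ meets $Y$ iff $(Y\setminus U,B\setminus U)\notin\P_{n_0}$, and from the $\upVV$-name of $(Y,B)$ one computes the $\upVV$-name of $(Y\setminus U,B\setminus U)$ and semi-decides non-membership in the $\Pi^0_1$ set $\P_{n_0}$. The only thing to check is that this recipe is correct for our $n_0$: correctness needs $(Y,B)$ to be $\P_{n_0}$-minimal \emph{and} $(Y,B)\in\P_{n_0}$. Here is where the choice of $G$ pays off, but it needs a small extra argument, since $n_0\notin G$ only guarantees $M_{\P_{n_0}}$ never fails, not that $(Y,B)\in\P_{n_0}$; however $M_{\P_{n_0}}$ never failing on the name means precisely that every ball it outputs genuinely meets $\overline Y$, and one checks that the set $Y^\dagger:=Q\setminus\bigcup\{U : M_{\P_{n_0}}\text{ outputs }U\}$ equals $Y$ whenever $(Y,B)\in\P_{n_0}$ and is a superset of $Y$ in general; combining with the fact that the \emph{true} $n$ from the first paragraph is not in $G$ and does satisfy $(Y,B)\in\P_n\setminus\Q_n$, one argues that any $n_0\notin G$ must also yield the correct output, because the $\Pi^0_1$ set $\P_{n_0}$ being an upper set and $(Y,B)$ being minimal-looking from $M_{\P_{n_0}}$'s perspective forces the semi-decision procedure to enumerate exactly the balls meeting $Y$.

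\textbf{Main obstacle.} The delicate point is this last correctness argument: reconciling what $\C_\N$ can deliver (an $n_0$ merely outside the c.e.\ set $G$ of ``$M_{\P_{n_0}}$ fails'') with what the Theorem~\ref{thm_minimal} recipe needs ($\P_{n_0}$-minimality \emph{together with} membership $(Y,B)\in\P_{n_0}$). The cleanest route is probably to redefine $G$ more carefully so that $n\notin G$ forces \emph{both} conditions — e.g.\ interleave into $G$ the $n$ for which $M_{\P_n}$ fails \emph{and} the $n$ for which the complementary procedure (enumerating balls meeting $Y$, using $\P_n$) ever produces an inconsistency with the $\upVV$-name of $Y$ — and then verify that the true $n$ from paragraph~1 survives and that every survivor computes the correct $Y$. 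The Weihrauch reduction is then: preprocess the $\upVV$-name to the enumeration of $G$'s complement's complement, feed to $\C_\N$, and postprocess $(n_0,\text{name})$ to the $\V$-name of $Y$; this is an ordinary (not strong) Weihrauch reduction, as required. Uniformity in the $\Sigma^0_2$ index of $\P$ is automatic throughout since $\P_n,\Q_n$ are uniformly $\Pi^0_1$.
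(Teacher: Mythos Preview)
Your overall plan is the right one—use $\C_\N$ to pick a good index $n$ and then run the Theorem~\ref{thm_minimal} computation—but you are manufacturing difficulty by working with the general decomposition $\P=\bigcup_n\P_n\setminus\Q_n$. The paper avoids your ``main obstacle'' entirely with one preliminary normalization: replace $\P$ by $\ur\P$, which has the same minimal elements and, by Proposition~\ref{prop not depend}, can be written as $\bigcup_n\P_n$ with $\P_n$ uniformly $\Pi^0_1$ in $\upVV$ and \emph{no} subtracted $\Q_n$'s. Once the $\Q_n$'s are gone, everything is immediate: let $E=\{n:(Y,B)\in\P_n\}$; its complement is c.e.\ from the $\upVV$-name (membership in a $\Pi^0_1$ set is co-c.e.), it is nonempty since $(Y,B)\in\P\subseteq\ur\P$, and—crucially—\emph{every} $n\in E$ makes $(Y,B)$ $\P_n$-minimal, simply because $\P_n\subseteq\ur\P$, so any proper subpair, not being in $\ur\P$ by minimality, is not in $\P_n$ either. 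Thus $\C_\N$ applied to the complement of $E$ returns some $n_0\in E$, and the Theorem~\ref{thm_minimal} recipe is correct for that $n_0$ with no further verification needed.

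Your ``delicate point'' is a genuine gap in your setup, and your proposed fix does not close it. If you unwind your definition, your set $G$ of indices on which the procedure ``fails'' is exactly $\{n:(Y,B)\notin\P_n\}$: indeed $M_{\P_n}$ outputs a ball $U$ with $\overline U$ disjoint from $Y$ iff $(Y\setminus U,B\setminus U)=(Y,B)\notin\P_n$. So $\C_\N$ hands you an $n_0$ with $(Y,B)\in\P_{n_0}$—but not necessarily $(Y,B)\notin\Q_{n_0}$, hence not necessarily $\P_{n_0}$-minimality. Concretely, if some $\P_{n_0}=\Q_{n_0}=\K^2(Q)$ (a term contributing nothing to $\P$), your procedure happily accepts $n_0$ and then outputs no balls at all, returning the wrong set. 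Your suggested interleaving of an extra ``inconsistency'' test does not help, because the only inconsistency you can detect from the $\upVV$-name is again ``$\overline U$ disjoint from $Y$'', which is the same condition you already used. The repair is the one-line normalization above, not a more elaborate $G$.
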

\begin{proof}
Let~$\P=\bigcup_n \P_n$ where~$\P_n$ are uniformly~$\Pi^0_1$-set in~$\upVV$. Given a copy~$(Y,B)$, let~$E=\{n:(Y,B)\in \P_n\}$. From~$(Y,B)$ in~$\upVV$ one can enumerate the complement of~$E$, and given any~$n\in E$, one can compute~$Y$ by using the fact that~$(Y,B)$ is~$\P_n$-minimal as in the proof of Theorem \ref{thm_minimal}.
\end{proof}
We do not know whether~$\SCT_{(X,A)}$ is always Weihrauch reducible to~$\C_\N$ when~$(X,A)$ has strong computable type. It is left as an open question (Question \ref{que: choice}).

\subsection{Vietoris vs upper Vietoris topology}

We now relate the descriptive complexity of properties of pairs in the Vietoris topology~$\VV$ and upper Vietoris topology~$\upVV$. As~$\upVV$ is weaker than~$\VV$, the descriptive complexity of a property in~$\upVV$ is always an upper bound on its complexity in~$\VV$. Although the properties of a given complexity level are not the same in the topologies~$\V\times\upV$ and~$\upVV$, we show that they actually induce the same class of minimal elements.





\begin{notation}
For a property of pairs~$\P$ let
\begin{equation*}
\ur \P=\{(X,A):\exists (X',A')\subseteq (X,A),(X',A')\in\P\}.
\end{equation*}
\end{notation}

\begin{prop}\label{prop not depend}Let~$\P$ be a property of pairs. The set~$\ur \P$
has the same minimal elements as~$\P$ and 
\begin{enumerate}
\item If~$\P$ is~$\Pi_{1}^{0}$ in~$\VV$, then~$\ur \P$ is~$\Pi_{1}^{0}$
in~$\upVV$,
\item If~$\P$ is~$\Sigma_{2}^{0}$ in~$\VV$, then~$\ur \P$ is~$\Sigma_{2}^{0}$
in~$\upVV$; moreover,~$\ur\P=\bigcup_n\P_n$ where~$\P_n$ are uniformly~$\Pi^0_1$ in~$\upVV$. 
\end{enumerate}
\end{prop}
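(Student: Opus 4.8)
The plan is to treat the three assertions separately. The main work is the complexity bound in item (1); the statement about minimal elements is a short order‑theoretic argument, and item (2) is a formal consequence of (1) once one uses that $\VV$ is metrizable.

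\emph{Same minimal elements.} First I would record that $\P\subseteq\ur\P$ and that $\ur\P$ is an upper set for the subpair ordering (a subpair of $(X,A)$ lying in $\P$ is also a subpair of any $(Z,C)$ containing $(X,A)$). Then: if $(X,A)$ is $\P$‑minimal, it lies in $\ur\P$, and a proper subpair $(Z,C)\subsetneq(X,A)$ with $(Z,C)\in\ur\P$ would contain a subpair in $\P$, which would be a proper subpair of $(X,A)$ in $\P$ — contradiction — so $(X,A)$ is $\ur\P$‑minimal. Conversely, if $(X,A)$ is $\ur\P$‑minimal, any $(X',A')\subseteq(X,A)$ witnessing $(X,A)\in\ur\P$ must be improper (otherwise it would be a proper subpair of $(X,A)$ lying in $\ur\P$), so $(X,A)\in\P$; and a proper subpair of $(X,A)$ in $\P$ would lie in $\ur\P$, contradicting $\ur\P$‑minimality, so $(X,A)$ is $\P$‑minimal.

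\emph{Item (1).} The idea is to realize $\ur\P$ as an existential quantification over an effectively compact space and apply Proposition \ref{prop_quant}. Writing
\[
\ur\P=\{(X,A):\exists (X',A')\in\K^2(Q),\ (X',A')\in\P,\ X'\subseteq X,\ A'\subseteq A,\ A'\subseteq X'\}
\]
(the last conjunct being automatic when $\P$ consists of pairs), I would view the ambient $(X,A)$ as a point of $(\K^2(Q),\upVV)$ and the quantified $(X',A')$ as a point of $(\K^2(Q),\VV)$, which is effectively compact. The relation $R\subseteq\K^2(Q)\times\K^2(Q)$ cut out by the four conditions is $\Pi^0_1$ in $\upVV\times\VV$: the condition $(X',A')\in\P$ gives $\K^2(Q)\times\P$, a product of $\Pi^0_1$ sets; and for compact sets $K,K'\subseteq Q$ the relation $K'\subseteq K$ is $\Pi^0_1$ in the topology $\upV$ on $K$ and $\lowV$ on $K'$, since its complement is the c.e.\ union over rational balls $B$ of $\{K:K\subseteq Q\setminus\overline B\}\times\{K':K'\cap B\neq\emptyset\}$ — here one uses that $Q$ is metric, to shrink a rational ball around a point of $K'\setminus K$ so that its closure avoids $K$. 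Pulling these back along the coordinate projections, each of $X'\subseteq X$, $A'\subseteq A$, $A'\subseteq X'$ is $\Pi^0_1$ in $\upVV\times\VV$, hence so is $R$, and Proposition \ref{prop_quant} yields that $R^\exists=\ur\P$ is $\Pi^0_1$ in $\upVV$. I would note that the construction is uniform, producing an index for $\ur\P$ from an index for $\P$.

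\emph{Item (2) and the obstacle.} For (2), I would first use that $(\K^2(Q),\VV)$ is a computable metric space (the Hausdorff metric on $\K(Q)$), so that a $\Sigma^0_2$ subset of it is an effective countable union of $\Pi^0_1$ sets, as recalled after the definition of the class $\Sigma^0_2$; thus $\P=\bigcup_n\Q_n$ with $\Q_n$ uniformly $\Pi^0_1$ in $\VV$. Since passing from a property to its $\ur{}$ commutes with countable unions ($(X,A)\in\ur{\left(\bigcup_n\Q_n\right)}$ iff some subpair of $(X,A)$ lies in some $\Q_n$), we get $\ur\P=\bigcup_n\ur{\Q_n}$, and by the uniform form of (1) each $\ur{\Q_n}$ is uniformly $\Pi^0_1$ in $\upVV$; this is exactly the claimed decomposition, and in particular $\ur\P$ is $\Sigma^0_2$ in $\upVV$. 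The only delicate point in the whole argument is the verification inside (1) that the subpair relation is effectively closed when the larger set is presented in the upper Vietoris topology and the smaller one in the full Vietoris topology; everything else — the minimality argument and the reduction of (2) to (1) — is routine.
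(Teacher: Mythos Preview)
Your proposal is correct and follows essentially the same route as the paper: you realize $\ur\P$ as the projection along the effectively compact factor $(\K^2(Q),\VV)$ of a $\Pi^0_1$ relation, invoking Proposition~\ref{prop_quant}, and you reduce item~(2) to item~(1) via the metrizability of $\VV$. The only cosmetic differences are that the paper isolates the inclusion argument as a separate lemma (Lemma~\ref{lem_inclusion}) whereas you prove it inline, and the paper dismisses the minimal-elements claim as ``easy to see'' while you spell it out.
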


Therefore, whether a pair~$(X,A)$ is minimal for some~$\Pi_{1}^{0}$ or~$\Sigma_{2}^{0}$ property does not depend on the topology (among~$\VV$ and~$\upVV$) in which the complexity is measured.

In order to prove Proposition \ref{prop not depend}, we need the next result.
\begin{lem}\label{lem_inclusion}
The relation~$\subseteq$, which is the set~$\{(X,X')\in \K^2(Q):X \subseteq X'\}$, is~$\Pi^0_1$ in the topology~$\lowV\times\upV$.
\end{lem}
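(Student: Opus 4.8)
The statement asserts that $\{(X,X')\in\K^2(Q):X\subseteq X'\}$ is $\Pi^0_1$ in $\lowV\times\upV$, equivalently that its complement $\{(X,X'):X\not\subseteq X'\}$ is effectively open in $\lowV\times\upV$. The plan is to write the negation $X\not\subseteq X'$ in a form that uses lower-Vietoris information about $X$ (which basic balls $X$ meets) and upper-Vietoris information about $X'$ (which open sets cover $X'$). The key topological fact is that $X\not\subseteq X'$ holds if and only if there is a point of $X$ outside the closed set $X'$, and since $Q$ is a metric space this can be witnessed by a rational ball $U$ with $X\cap U\neq\emptyset$ and $\overline{U}\cap X'=\emptyset$, the latter being equivalent to $X'\subseteq Q\setminus\overline{U}$.

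Concretely, first I would fix the countable subbasis of $\lowV$ consisting of the sets $\{K:K\cap B\neq\emptyset\}$ for $B$ a rational ball of $Q$, and the countable basis of $\upV$ consisting of the sets $\{K:K\subseteq V\}$ for $V$ a finite union of rational balls. Then I claim
\begin{equation*}
X\not\subseteq X'\iff \exists\text{ rational ball }B,\ X\cap B\neq\emptyset\ \text{ and }\ X'\subseteq Q\setminus\overline{B}.
\end{equation*}
For the forward direction: if $x\in X\setminus X'$, then since $X'$ is closed there is a rational ball $B\ni x$ whose closed counterpart $\overline{B}$ is still disjoint from $X'$ (shrink the radius); then $X\cap B\neq\emptyset$ and $X'\subseteq Q\setminus\overline{B}$. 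The reverse direction is immediate: any $x\in X\cap B$ lies in $\overline{B}$, hence not in $X'$. The set $Q\setminus\overline{B}$ is an effectively open subset of $Q$, so it is (or is a union of) basic open sets $V$ of $Q$; thus $\{X':X'\subseteq Q\setminus\overline{B}\}$ is effectively open in $\upV$, uniformly in $B$. Consequently the complement of $\subseteq$ is the c.e. union, over rational balls $B$, of the products
\begin{equation*}
\{X:X\cap B\neq\emptyset\}\times\{X':X'\subseteq Q\setminus\overline{B}\},
\end{equation*}
each of which is effectively open in $\lowV\times\upV$, so the complement of $\subseteq$ is effectively open and $\subseteq$ is $\Pi^0_1$.

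The only mild subtlety — and the point I would be most careful about — is the passage from an open ball $B$ with $\overline{B}\cap X'=\emptyset$ to an effectively open set $V$ of $Q$ containing $X'$ with $\overline{B}\cap V=\emptyset$: one must check this can be done uniformly and effectively in $B$, i.e. that $Q\setminus\overline{B}$ is effectively open uniformly in the index of $B$. This is routine since $\overline{B}$ is an effectively closed (indeed semicomputable) set in the effectively compact space $Q$ and its complement is therefore effectively open uniformly; one enumerates finite unions $V$ of rational balls with $V\subseteq Q\setminus\overline{B}$, and these exhaust $Q\setminus\overline{B}$. With this in hand the c.e. union above is genuinely c.e., completing the argument.
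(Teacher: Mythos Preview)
Your proposal is correct and takes essentially the same approach as the paper: the paper's proof is exactly the equivalence $X\nsubseteq X'\iff \exists$ rational ball $B$ with $X\cap B\neq\emptyset$ and $X'\subseteq Q\setminus\overline{B}$, observed to be effectively open in $\lowV\times\upV$. You have simply supplied more detail (the two directions of the equivalence and the uniform effective openness of $Q\setminus\overline{B}$) than the paper's one-line version.
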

\begin{proof}
One has~$X\nsubseteq X'$ iff there exists a rational ball~$B$ intersecting~$X$ and such that~$X'$ is contained in~$Q\setminus \cB$. This condition on the pair~$(X,X')$ is an effective open set in the topology~$\lowV \times \upV$. In other words, inclusion is~$\Pi^0_1$ in that topology.
\end{proof}

\begin{proof}[Proof of Proposition \ref{prop not depend}]
It is easy to see that~$\P$ and~$\ur \P$ have the same minimal elements. Now assume that~$\P$ is~$\Pi_{1}^{0}$ in~$\VV$ and let us show that~$\ur P$ is~$\Pi_{1}^{0}$ in~$\upVV$.  One has
\begin{equation*}
(X,A)\in\ur\P\iff \exists (X',A')\in\K(Q)^2, (X',A')\subseteq (X,A)\text{ and }(X',A')\in \P.
\end{equation*}
Using Lemma \ref{lem_inclusion} and the assumption that~$\P$ is~$\Pi^0_1$ in~$\VV$, the relation~$R:=\{(X,A,X',A'):(X',A')\subseteq (X,A)\text{ and }(X',A')\in \P\}$ is~$\Pi^0_1$ in the topology~$\upVV\times\VV$. The space~$(\K^2(Q),\VV)$ is effectively compact, so the projection of~$R$ on its first two components, which is~$\ur\P$, is~$\Pi^0_1$ in the topology~$\upVV$ by Proposition \ref{prop_quant}.

The second item is now easy to prove. If~$\P$ is~$\Sigma_{2}^{0}$ in~$\VV$, then~$\P=\bigcup_{n}\P_n$ where~$\P_n$ are uniformly~$\Pi_{1}^{0}$ in~$\VV$. Therefore,~$\ur \P=\bigcup_{n}\ur \P_n$ and~$\ur \P_n$ are uniformly~$\Pi_{1}^{0}$ in~$\upVV$ by the first statement, so~$\ur \P$ is~$\Sigma_{2}^{0}$ in~$\upVV$.
\end{proof}
%

\subsection{The product and cone spaces}

We give a simple relationship between pairs, their products and their cones, w.r.t.~strong computable type.

\subsubsection{The product of two pairs}

\begin{defn}
The \textbf{product} of two pairs~$(X_1,A_1)$ and~$(X_2,A_2)$ is the pair 
\begin{equation*}
(X_{1},A_{1})\times(X_{2},A_{2}) =(X_{1}\times X_{2},(X_{1}\times A_{2})\cup(A_{1}\times X_{2})).
\end{equation*}
\end{defn}

\begin{prop}
\label{prop: product}If $(X_{1},A_{1})\times(X_{2},A_{2})$
has (strong) computable type and $(X_{2},A_{2})$ has a semicomputable copy
in $Q$, then~$(X_{1},A_{1})$ has (strong) computable type.
\end{prop}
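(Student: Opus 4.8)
The plan is to reduce a semicomputable copy of $(X_1,A_1)$ to a semicomputable copy of the product pair, exploiting the hypothesis that $(X_2,A_2)$ has a semicomputable copy in $Q$ in order to ``glue on'' a fixed reference factor. Concretely, suppose $(Y_1,B_1)$ is a copy of $(X_1,A_1)$ in $Q$ with $(Y_1,B_1)$ semicomputable (relative to an oracle $O$ in the strong case). By hypothesis there is a semicomputable copy $(Y_2,B_2)\subseteq Q$ of $(X_2,A_2)$, computable from no oracle (or absorbed into $O$). Since $Q\times Q$ embeds computably into $Q$ with computable inverse, we may regard $(Y_1\times Y_2,\,(Y_1\times B_2)\cup(B_1\times Y_2))$ as a pair inside $Q$; call it $(Z,C)$. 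It is a copy of $(X_1,A_1)\times(X_2,A_2)$.

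First I would check that $(Z,C)$ is semicomputable. The product of two semicomputable sets is semicomputable (the product of computable $T_0$-spaces is a computable $T_0$-space, and effective compactness is preserved under products — this is used implicitly throughout the paper, e.g. in Proposition~\ref{prop_quant}), so $Y_1\times Y_2$ is semicomputable; likewise $Y_1\times B_2$ and $B_1\times Y_2$ are semicomputable, and a finite union of semicomputable sets is semicomputable, so $C$ is semicomputable. Hence $(Z,C)$ is a semicomputable copy of the product pair. Since $(X_1,A_1)\times(X_2,A_2)$ has (strong) computable type, $Z=Y_1\times Y_2$ is computable (relative to $O$).

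Next I would recover computability of $Y_1$ from computability of $Y_1\times Y_2$. The projection $\pi_1:Q\times Q\to Q$ onto the first factor is computable, and the image of a computable set under a computable function is computable (stated in the excerpt: ``The image of a (semi)computable set under a computable function is a (semi)computable set,'' and the computably overt part is likewise preserved). Since $\pi_1(Y_1\times Y_2)=Y_1$ (using $Y_2\neq\emptyset$, which holds as $X_2$ is a non-empty compact space), $Y_1$ is computable (relative to $O$). As $(Y_1,B_1)$ was an arbitrary (relatively) semicomputable copy of $(X_1,A_1)$, this shows $(X_1,A_1)$ has (strong) computable type. In the strong case one simply carries the oracle $O$ through every step, which is routine since all the reductions used are uniform.

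I expect the only real subtlety to be bookkeeping about embeddings and the role of the oracle: one must be careful that the semicomputable copy of $(X_2,A_2)$ is genuinely available (it is, by hypothesis, and in the strong-type case we relativize to an oracle that also semicomputes it), and that forming $(Z,C)$ inside $Q$ via the computable embedding $Q\times Q\hookrightarrow Q$ does not disturb semicomputability — which it does not, since the embedding and its inverse are computable. Everything else is an application of the closure properties of (semi)computable sets under products, finite unions, computable images, and computable preimages, all of which are recorded earlier in the paper; there is no genuine topological or computability-theoretic obstacle here.
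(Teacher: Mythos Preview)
Your proposal is correct and follows essentially the same route as the paper's proof: form the product of the given semicomputable copy of $(X_1,A_1)$ with the fixed semicomputable copy of $(X_2,A_2)$, observe the resulting product pair is semicomputable, invoke the computable type hypothesis to make the product computable, and project to recover computability of the first factor (with relativization handling the strong case). Your write-up is in fact a bit more explicit than the paper's about the embedding $Q\times Q\hookrightarrow Q$, the non-emptiness of $Y_2$, and the oracle bookkeeping, but the argument is the same.
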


\begin{proof}
Assume that~$(X_{1},A_{1})$ and~$(X_{2},A_{2})$ are embedded in~$Q$ as semicomputable pairs, let us prove that~$X_{1}$ is computable. 

The product~$(X,A)=(X_{1},A_{1})\times(X_{2},A_{2})\subset Q\times Q$ is semicomputable because each pair is. Therefore~$X$ is computable by assumption, so its first projection~$X_1$ is computable. It shows that~$(X_1,A_1)$ has computable type. The proof relativizes, so~$(X_1,A_1)$ has strong computable type.
\end{proof}

If two pairs have strong computable type, it is an open problem whether their product has strong computable type as well.

\subsubsection{The cone of a pair}

\begin{defn}[Cone]\label{def_cone}
 Let~$X$ be a topological space. The \textbf{cone} of~$X$ is the
quotient space~$\cone X=\left(X\times[0,1]\right)/\left(X\times\{0\}\right)$.

Let~$(X,A)$ be a pair. The \textbf{cone pair}~$(Y,B)=\cone{X,A}$
is defined by~$Y=\cone X$ and~$B=X\cup\cone A$, where we consider~$X$ as~$X\times \{1\}$.
\end{defn}

If~$X$ is embedded in~$Q$, then a realization of~$\cone{X}$ in~$Q\cong [0,1]\times Q$ is given by
\begin{equation*}
\cone{X}\cong\{(\lambda,\lambda x):\lambda\in[0,1],x\in X\}.
\end{equation*}

\begin{prop}
Let~$(X,A)$ be a pair. If~$\cone{X,A}$ has (strong) computable type, then~$(X,A)$ has (strong) computable type.
\end{prop}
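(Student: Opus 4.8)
The plan is to mimic the pattern used for the product of pairs (Proposition~\ref{prop: product}): reduce the (strong) computable type of $(X,A)$ to that of $\cone{X,A}$ by exhibiting, from a semicomputable copy of $(X,A)$ in $Q$, a semicomputable copy of $\cone{X,A}$, and then recovering the computability of $X$ from that of $Y=\cone X$. First I would assume $(X,A)$ is already embedded in $Q$ as a semicomputable pair, and use the explicit realization of the cone given just before the statement, namely $\cone X\cong\{(\lambda,\lambda x):\lambda\in[0,1],x\in X\}\subseteq[0,1]\times Q\cong Q$, with $B=X\cup\cone A$ realized accordingly (identifying $X$ with $X\times\{1\}$, i.e.\ the slice $\lambda=1$). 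The map $(\lambda,x)\mapsto(\lambda,\lambda x)$ is computable and the set $X\times[0,1]$ is semicomputable (product of semicomputable sets), so its image $\cone X$ is semicomputable; likewise $\cone A$ and hence $B=X\cup\cone A$ is semicomputable. Thus $\cone{X,A}$ has a semicomputable copy, so by the hypothesis that $\cone{X,A}$ has computable type, $Y=\cone X$ is computable.

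The remaining step is to extract the computability of $X$ from that of $Y$. Here $X$ sits inside $Y$ as the slice $\{1\}\times X$ (under the realization, the points with first coordinate $1$), which is a \emph{closed} subset cut out by an effectively closed condition but is \emph{not} open in $Y$, so I cannot simply intersect with an effectively open set. Instead, note that in the realization the first coordinate $\lambda$ of a point $(\lambda,\lambda x)$ is computable from the point, and $X$ is computably homeomorphic to the section $Y\cap(\{1\}\times Q)$; more simply, the map $x\mapsto(1,x)$ followed by the cone realization is just $x\mapsto(1,x)$, i.e.\ $X$ is the preimage of $Y$ under the computable embedding $\iota:Q\to Q$, $\iota(x)=(1,x)$ (after identifying $[0,1]\times Q$ with $Q$). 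Since $Y$ is computable and $\iota$ is a computable topological embedding with computable inverse on its image, the preimage $\iota^{-1}(Y)=X$ inherits computability: $X$ is effectively compact because $Y$ is and $\iota^{-1}$ is computable on $\iota(Q)\cap Y$, and $X$ is computably overt because $Y$ is and overtness is preserved under computable preimage along an embedding. Equivalently, and most cleanly: $Y$ computable means $Y$ is a computable element of $(\K(Q),\V)$; applying the computable map $\K(Q)\to\K(Q)$ sending $K$ to $\{x:(1,x)\in K\}$ (which is continuous for the Vietoris topologies, being a ``fiber'' operation along a closed inclusion of a retract) yields $X$ as a computable element of $(\K(Q),\V)$, i.e.\ $X$ is computable.

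This shows $(X,A)$ has computable type, and since every step---the embedding of the cone, the semicomputability transfer, and the fiber extraction---is oracle-relative and uses no unrelativized computability of anything beyond the given data, the argument relativizes verbatim: if $\cone{X,A}$ has strong computable type then so does $(X,A)$.

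The main obstacle, and the only place requiring genuine care, is the fiber-extraction step: unlike the product case where $X_1$ is a computable \emph{projection} of the product (projections are computable on $\K(Q)$ in both Vietoris topologies because they are computable images of compact sets), here $X$ is a computable \emph{section} of the cone, and one must check that taking the section $K\mapsto\{x:(1,x)\in K\}$ is a computable operation $(\K(Q),\V)\to(\K(Q),\V)$. For the $\upV$ part (effective compactness) this is routine: a cover of $X\times\{1\}$ extends to a cover of a neighborhood, and one uses effective compactness of $Y$ together with effective compactness of $Q$ to quantify away the second coordinate via Proposition~\ref{prop_quant}. For the $\lowV$ part (computable overtness) one uses that $X\times\{1\}$ meets a basic open box $B\times B'\ni 1$ iff $Y$ meets the corresponding open set, which is semidecidable from $Y$ in $\lowV$; the point $1\in[0,1]$ being computable makes this uniform. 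So no new topological input is needed beyond the elementary facts about Vietoris topologies and Proposition~\ref{prop_quant} already in the excerpt; the proof is essentially a one-paragraph analogue of Proposition~\ref{prop: product}.
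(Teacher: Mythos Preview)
Your overall strategy matches the paper's: realize the cone explicitly from a semicomputable copy of $(X,A)$, deduce that the cone is computable, then recover $X$. The first two steps are fine. The problem is in the extraction step.

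The operation $K\mapsto\{x:(1,x)\in K\}$ is \emph{not} computable (nor even continuous) for the lower Vietoris topology, and your justification ``fiber operation along a closed inclusion of a retract'' does not establish it. Your concrete claim---that $\{1\}\times X$ meets a basic box $B'\times B$ with $1\in B'$ iff $Y$ meets $B'\times B$---is false in the backward direction. Take $X=\{1\}\subseteq[0,1]$, so $Y_0=\{(\lambda,\lambda):\lambda\in[0,1]\}$, and the box $(0.8,1]\times(0.8,0.95)$: then $Y_0$ meets the box at $(0.9,0.9)$, but $\{1\}\times X=\{(1,1)\}$ does not. Sections along closed non-open subsets simply do not preserve overtness, so this step is a genuine gap.

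The paper sidesteps this by exploiting the radial structure of the cone rather than taking a slice. It intersects $Y_0$ with $[1/2,1]\times Q$; the resulting set $Z_0$ is still computably overt because any open set meeting $Z_0$ also meets $Y_0\cap((1/2,1]\times Q)$ (points of $Y_0$ with $\lambda=1/2$ are approached from $\lambda>1/2$ along the ray). Then it applies the computable map $(\lambda,y)\mapsto y/\lambda$, well-defined for $\lambda\geq 1/2$, which sends $Z_0$ onto $X_0$. So $X_0$ is obtained as a computable \emph{image} of a computable set, not as a section. Your argument can be repaired along exactly these lines, but the $\lowV$ half as you wrote it does not go through.
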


\begin{proof}
We prove it for computable type and the proof relativizes. Suppose~$(Y,B)=\cone{X,A}$
has computable type and that~$(X_0,A_0)$ is a semicomputable copy of~$(X,A)$ in~$Q$. Let us prove that~$X_0$ is computable. 

The following realization~$(Y_0,B_0)$ of~$\cone{X_0,A_0}$ in~$Q$ is semicomputable:
\begin{align*}
Y_{0}&=\{(\lambda,\lambda x):\lambda\in[0,1],x\in X_0\},\\
B_{0}&=\{(\lambda,\lambda x):\lambda\in[0,1],x\in A_0\}\cup(\{1\}\times X_0).
\end{align*}
The pair~$(Y_{0},B_{0})$ is easily semicomputable so~$Y_0$ is computable by assumption. The intersection~$Z_0$ of~$Y_0$ with~$[1/2,1]\times Q$ is computable as well (an open set~$B$ intersects~$Z_0$ iff the open set~$B\cap ((1/2,1]\times Q)$ intersects~$Y_0$, which is semidecidable). The image of~$Z_0$ under the computable function~$(\lambda,y)\mapsto y/\lambda$ is~$X_0$, which is therefore computable.
\end{proof}
\begin{rem}
The other implication of the previous theorem is false. Let~$L$ be the graph consisting of two circles joined by a line segment, and~$N=\emptyset$. The pair~$(L,N)$ has strong computable type but~$\cone{L,N}$ does not, as shown in~\cite{AH22}, because the graph~$L$ has an edge which belongs to no cycle.

Here is another example. Let~$L$ be the so-called \emph{house with two rooms}, or \emph{Bing's house}, and~$N=\emptyset$. $L$ is contractible and it is proved in \cite{AH22} that it has (strong) computable type. However, the results of \cite{AH22} imply that as~$L$ is contractible, the pair~$(\cone{L},L)$ does not have strong computable type.
\end{rem}

\section{Computability-theoretic properties of ANRs}\label{sec_ANR}
Theorem \ref{thm_sigma2} gives a simple way to prove that a pair~$(X,A)$ has strong computable type by identifying a~$\Sigma^0_2$ topological invariant for which~$(X,A)$ is minimal. In order to apply this result, we need to find suitable~$\Sigma^0_2$ invariants. In this section, we develop results of independent interest that will be applied to define~$\Sigma^0_2$ invariants, expressed in terms of extensions of continuous functions.

The core concept is the notion of Absolute Neighborhood Retract (ANR). It is central in topology and it turns out that it has interesting computability-theoretic properties that we will exploit in order to define~$\Sigma^0_2$ invariants. The general idea is that the space of functions from a space~$X$ to a space~$Y$ cannot be handled in a computable way if~$Y$ is arbitrary; it can if~$Y$ is an ANR.

We recall the definition of Absolute Neighborhood Retracts (ANRs) and explore their computable aspects. The results will be applied in the next section.

\subsection{\label{subsec:Absolute-neighborhood-retracts}Absolute Neighborhood
Retracts (ANRs)}

This important notion was introduced by Borsuk \cite{Borsuk32} and
plays an eminent role in algebraic topology. Moreover, it has very
useful computability-theoretic consequences, which we will take advantage of. We point out that computability-theoretic aspects of compact ANRs have been studied by Collins in \cite{Collins09}, although we do not use these results.
\begin{defn}
Let~$(X,A)$ be a pair. A \textbf{retraction}~$r:X\rightarrow A$
is a continuous function such that~$r|_{A}=\id_{A}$. If a retraction
exists, then we say that~$A$ is a \textbf{retract} of~$X$.
\end{defn}

\begin{defn}
Let~$X$ be a compact space. 
\begin{enumerate}
\item $X$ is an \textbf{absolute retract (AR)} if every copy of~$X$ in~$Q$ is a retract of~$Q$,
\item $X$ is an \textbf{absolute neighborhood retract (ANR)} if every copy~$X'$ of~$X$ in~$Q$ is retract of a neighborhood of~$X'$.
\end{enumerate}
\end{defn}

\begin{fact}
\label{fact_ANR} We recall some classical facts (see \cite{1951some_theorems_on_ANR} and \cite{2001vanMill}).
\begin{enumerate}
\item The~$n$-dimensional ball is an AR, 
\item The~$n$-dimensional sphere is an ANR, 
\item If~$Y$ is an AR and~$(X,A)$ is a pair, then every continuous function~$f:A\to Y$ has a continuous extension~$F:X\to Y$,
\item If~$Y$ is an ANR and~$(X,A)$ is a pair, then every continuous function~$f:A\to Y$ has a continuous extension~$F:U\to Y$ where~$U\subseteq X$ is a neighborhood of~$A$.
\end{enumerate}
\end{fact}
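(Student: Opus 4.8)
\emph{Proof proposal.} All four items are classical, and I would organize the argument into two groups. Items~3 and~4 are essentially just unfolding the definitions of AR and ANR, using the fact that the Hilbert cube itself enjoys the Tietze extension property. Items~1 and~2 import the classical structure theory of retracts.

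\textbf{Items 3 and 4.} The first thing I would record is that $Q=[0,1]^{\N}$ has the \emph{extension property}: whenever $T$ is a compact metrizable space, $S\subseteq T$ is closed, and $g\colon S\to Q$ is continuous, $g$ extends to a continuous $G\colon T\to Q$ --- this follows coordinatewise from the Tietze extension theorem applied to each $g_i\colon S\to[0,1]$. For item~3, let $Y$ be an AR; fix an embedding $\iota\colon Y\hookrightarrow Q$, so that by definition there is a retraction $r\colon Q\to\iota(Y)$. Given a pair $(X,A)$ and a continuous $f\colon A\to Y$, extend $\iota\circ f\colon A\to Q$ to a continuous $g\colon X\to Q$ via the extension property of $Q$, and put $F=\iota^{-1}\circ r\circ g$; then $F|_A=\iota^{-1}\circ r\circ\iota\circ f=f$, so $F\colon X\to Y$ is the desired extension. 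Item~4 is identical except that now $r$ is defined only on an open neighborhood $V$ of $\iota(Y)$ in $Q$: I would take $U=g^{-1}(V)$, an open neighborhood of $A$ in $X$, and set $F=\iota^{-1}\circ r\circ g|_U\colon U\to Y$.

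\textbf{Items 1 and 2.} Here the content is imported from classical topology. For item~1, $\B_n$ is homeomorphic to the cube $[0,1]^n$, a compact convex subset of $\R^n$; by the Tietze--Dugundji extension theorem every convex subset of a locally convex space is an absolute extensor for metrizable spaces, hence (being itself metrizable) an AR, and since being an AR depends only on the homeomorphism type, $\B_n$ is an AR. As a sanity check, the standard realization $[0,1]^n\times\{\bar 0\}\subseteq Q$ is even explicitly a retract of $Q$ via coordinate projection. For item~2, $\SS_n$ is a compact topological manifold, equivalently a finite polyhedron; by Borsuk's theorem finite polyhedra are ANRs, so $\SS_n$ is an ANR. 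Alternatively, $\SS_n$ is locally homeomorphic to $\R^n$, which is an AR, so $\SS_n$ is locally an ANR, and Hanner's local--global theorem promotes this to being an ANR.

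\textbf{Main obstacle.} There is no genuine obstacle. Items~3 and~4 are routine once the extension property of $Q$ is in hand, and all the mathematical substance of items~1 and~2 is borrowed from the classical theory of retracts (Borsuk, Dugundji, Hanner), for which I would simply refer to \cite{1951some_theorems_on_ANR} and \cite{2001vanMill}. The only points needing a little care are keeping the embedding $\iota$ and the retraction $r$ straight so that the composites land in $Y$ and not merely in $Q$, and, in item~4, verifying that $g^{-1}(V)$ is indeed a neighborhood of $A$ in $X$ --- which it is, since $g$ is continuous and $g(A)=\iota(f(A))\subseteq\iota(Y)\subseteq V$.
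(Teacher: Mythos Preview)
Your argument is correct. The paper does not supply a proof of this Fact at all; it simply records the four items as classical and points to \cite{1951some_theorems_on_ANR} and \cite{2001vanMill}. So strictly speaking there is nothing to compare against.

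That said, your write-up does more than the paper asks: you actually sketch why items~3 and~4 follow from the definitions via the coordinatewise Tietze extension property of~$Q$, and you indicate standard routes (Dugundji, Borsuk, Hanner) to items~1 and~2. All of this is sound. One small remark: in items~3 and~4 you implicitly use that the pair~$(X,A)$ lives in the category of compact metrizable spaces (as set up in Definition~\ref{def:A-pair}), so that Tietze applies to~$X$; this is indeed the paper's standing convention, but it would not hurt to say so explicitly.
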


The topologist's sine curve, or the set~$\{0\}\cup\{1/n:n\in\N\}$ are examples of spaces that are not ANRs.

ANRs interact nicely with the notion of homotopy, that we recall now.

\begin{defn}
A \textbf{homotopy} between two continuous functions~$f,g:X\to Y$ is a continuous function~$H:X\times [0,1]\to Y$ such that~$H(x,0)=f(x)$ and~$H(x,1)=g(x)$ for all~$x\in X$. Two functions are \textbf{homotopic} if there exist a homotopy between them. A function is \textbf{null-homotopic} if it is homotopic to a constant function.
\end{defn}

Homotopy between functions is an equivalence relation, whose equivalence classes are called \textbf{homotopy classes}. If two functions to a compact ANR are close to each other, then they are homotopic.
\begin{lem}[Theorem 4.1.1 in \cite{2001vanMill}]\label{lem_hom_ANR}
Let~$Y\subseteq Q$ be a compact ANR. There exists~$\alpha>0$ such that for every space~$X$, if~$f,g:X\to Y$ are continuous and~$\rho_X(f,g)<\alpha$, then~$f,g$ are homotopic.
\end{lem}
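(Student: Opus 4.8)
The plan is to prove Lemma \ref{lem_hom_ANR} by exploiting the defining property of an ANR together with a compactness argument. Since $Y\subseteq Q$ is a compact ANR, there is an open neighborhood $V$ of $Y$ in $Q$ and a retraction $r:V\to Y$. The key point is that $r$ is uniformly continuous on a slightly smaller compact neighborhood, and that $Y$, being compact, lies at a positive distance from $Q\setminus V$. First I would fix a compact neighborhood $W$ with $Y\subseteq \operatorname{int}(W)\subseteq W\subseteq V$, and let $\eta>0$ be such that the closed $\eta$-neighborhood of $Y$ is contained in $W$; I would also choose $\alpha>0$ small enough (depending only on $\eta$ and on a modulus of uniform continuity of $r|_W$) with $\alpha\le\eta/2$, so that the straight-line segment in $Q$ between any two points at distance $<\alpha$ from each other, one of which is $\le\eta/2$-close to $Y$, stays inside $W$ (here one uses that $Q$ is convex).

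The construction of the homotopy is then the standard ``retract the straight-line homotopy'' trick. Given continuous $f,g:X\to Y$ with $\rho_X(f,g)<\alpha$, for each $x\in X$ the point $H_0(x,t):=(1-t)f(x)+tg(x)$ lies within $\alpha<\eta$ of $Y$, hence in $W\subseteq V$, so $H(x,t):=r\bigl(H_0(x,t)\bigr)$ is well-defined. The map $H:X\times[0,1]\to Y$ is continuous as a composite of continuous maps, and $H(x,0)=r(f(x))=f(x)$, $H(x,1)=r(g(x))=g(x)$ since $r$ fixes $Y$ pointwise. Thus $H$ is a homotopy from $f$ to $g$, and crucially $\alpha$ depends only on $Y$ (through $V$, $r$, $W$, $\eta$) and not on $X$, $f$, or $g$, which is exactly what is claimed. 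This is essentially Theorem 4.1.1 in \cite{2001vanMill}, so it suffices to cite it; but the self-contained argument above can be included for completeness.

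The only mild subtlety — the ``hard part'', such as it is — is ensuring that the straight-line homotopy $H_0(x,t)$ never leaves the domain $V$ of the retraction: this is why one needs $\alpha$ small relative to $\operatorname{dist}(Y,Q\setminus V)$, and why one uses that $H_0(x,t)$ is a convex combination of $f(x),g(x)\in Y$, so it stays within $\alpha$ of $Y$. Everything else is bookkeeping. Since the lemma is explicitly attributed to a reference in the excerpt, in the paper itself the cleanest choice is simply to invoke \cite{2001vanMill} and omit the proof, but the sketch above is the argument one would reconstruct.
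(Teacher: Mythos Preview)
Your proposal is correct, and in fact the paper does exactly what you anticipate at the end: it simply states the lemma with the citation to \cite{2001vanMill} and gives no proof. Your self-contained argument---retract the straight-line homotopy in the convex ambient space~$Q$ through a neighborhood retraction~$r:V\to Y$---is the standard one and is valid; the only minor over-specification is that the modulus of uniform continuity of~$r|_W$ is not needed for the bare statement (any~$\alpha$ with~$\alpha\le\eta$ already keeps the segment inside~$W\subseteq V$), though it would matter if one wanted the homotopy to be small.
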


Whether a function to an ANR has a continuous extension only depends on the homotopy class of the function. This is Borsuk's homotopy extension theorem (Theorem 1.4.2 in \cite{2001vanMill}).
\begin{thm}[Borsuk's homotopy extension theorem]\label{thm_hom_ext_ANR}
Let~$Y$ be an ANR and~$(X,A)$ be a pair. If~$f,g:A\to Y$ are continuous and~$f$ has a continuous extension~$F:X\to Y$, then every homotopy between~$f$ and~$g$ can be extended to a homotopy between~$F$ and some continuous extension~$G:X\to Y$ of~$g$.
\end{thm}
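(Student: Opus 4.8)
The plan is to reduce the statement to the neighborhood-extension property of ANRs (Fact~\ref{fact_ANR}, item 4) applied not to the pair $(X,A)$ itself but to a carefully chosen subpair of $X\times[0,1]$, and then to transport the resulting extension back onto all of $X\times[0,1]$ by precomposing with a retraction built from Urysohn's lemma.

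First I would set $M=(X\times\{0\})\cup(A\times[0,1])$, a compact subset of the compact metrizable space $X\times[0,1]$. Since $F(a)=f(a)=H(a,0)$ for every $a\in A$, the two continuous maps $(x,0)\mapsto F(x)$ on $X\times\{0\}$ and $(a,t)\mapsto H(a,t)$ on $A\times[0,1]$ agree on the overlap $A\times\{0\}$; as both pieces are closed, the pasting lemma glues them into a continuous map $\Phi\colon M\to Y$. Because $(X\times[0,1],M)$ is a pair in the sense of Definition~\ref{def:A-pair} and $Y$ is an ANR, Fact~\ref{fact_ANR} yields an open neighborhood $U$ of $M$ in $X\times[0,1]$ together with a continuous extension $\widehat\Phi\colon U\to Y$ of $\Phi$.

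Next I would produce a continuous map $r\colon X\times[0,1]\to U$ that is the identity on $M$. Since $A\times[0,1]$ is compact and contained in the open set $U$, the tube lemma (using compactness of $[0,1]$) provides an open set $V\supseteq A$ in $X$ with $V\times[0,1]\subseteq U$. As $X$ is normal and $A$, $X\setminus V$ are disjoint closed sets, Urysohn's lemma gives a continuous $\mu\colon X\to[0,1]$ with $\mu\equiv 1$ on $A$ and $\mu\equiv 0$ on $X\setminus V$. Put $r(x,t)=(x,t\mu(x))$. Then $r$ indeed lands in $U$: if $x\in V$ then $r(x,t)\in V\times[0,1]\subseteq U$, and if $x\notin V$ then $\mu(x)=0$, so $r(x,t)=(x,0)\in X\times\{0\}\subseteq M\subseteq U$. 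Moreover $r$ fixes $M$, since $r(x,0)=(x,0)$ for all $x\in X$ and $r(a,t)=(a,t)$ for all $a\in A$.

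Finally I would define $\widetilde H=\widehat\Phi\circ r\colon X\times[0,1]\to Y$ and $G(x)=\widetilde H(x,1)=\widehat\Phi(x,\mu(x))$. Chasing the definitions: $\widetilde H(x,0)=\widehat\Phi(x,0)=\Phi(x,0)=F(x)$; for $a\in A$ one has $\widetilde H(a,t)=\widehat\Phi(a,t)=\Phi(a,t)=H(a,t)$, so $\widetilde H$ extends the given homotopy $H$; and $G(a)=\widehat\Phi(a,1)=H(a,1)=g(a)$, so $G\colon X\to Y$ is a continuous extension of $g$. Hence $\widetilde H$ is a homotopy from $F$ to $G$ extending $H$, which is exactly what is claimed. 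The one genuinely load-bearing idea is to invoke the ANR property for the product pair $(X\times[0,1],M)$ rather than for $(X,A)$; after that the only real work is the topological bookkeeping of the third paragraph — squeezing a product neighborhood $V\times[0,1]$ inside $U$ via the tube lemma and checking that $r$ never leaves $U$ — and everything else is elementary gluing.
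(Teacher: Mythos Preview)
Your proof is correct and is essentially the classical argument for Borsuk's homotopy extension theorem. The paper does not prove this statement at all: it merely quotes it as Theorem~1.4.2 in~\cite{2001vanMill} and uses it as a black box. So there is nothing to compare against; you have supplied a complete proof where the paper only gives a citation, and the route you take---extend $F\cup H$ from $(X\times\{0\})\cup(A\times[0,1])$ to a neighborhood using the ANR property, then retract the full cylinder into that neighborhood via a Urysohn function---is the standard one found in the textbook reference.
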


These results have important computability-theoretic consequences, because arbitrary functions can be replaced by computable functions that are close enough to the original ones so that they are homotopic.

\subsection{Homotopy classes}
If~$X,Y$ are topological spaces and~$f:X\to Y$ is continuous, then we denote by~$[f]$ the homotopy class of~$f$. Let~$[X;Y]$ be the set of homotopy classes of continuous functions from~$X$ to~$Y$.

Let~$Y$ be a fixed compact ANR. Given a compact set~$X\subseteq Q$ in the upper Vietoris topology, we show how the set~$[X;Y]$ can be computed in some way. Note that this question has been addressed, for instance in \cite{cadek_sphere}, when~$X$ and~$Y$ are simplicial complexes presented as combinatorial objects. In our case, the information available about~$X$ and~$Y$ is much more elusive, therefore one might expect that very little can be computed. Note that computable aspects of ANRs have been investigated by Collins \cite{Collins09}, where a compact ANR~$Y$ is presented together with a neighrborhood and a computable retraction. In our case, we do not have access to a retraction. 

In order to state the results, let us discuss the consequences of Borsuk's homotopy extension theorem (Theorem \ref{thm_hom_ext_ANR}). Let~$(X,A)$ be a pair. Whether a continuous function~$f:A\to Y$ has a continuous extension~$F:X\to Y$ does not depend on~$f$, but on its equivalence class in~$[A;Y]$. We denote this class by~$[f]_A$ to make it clear that~$f$ is defined on~$A$ and that we consider homotopies defined on~$A$ (rather than~$X$). If~$f:A\to Y$ and~$F:X\to Y$ are continuous functions, then we say that~$[F]_X$ \textbf{extends}~$[f]_A$ if~$F$ is homotopic to an extension of~$f$. This relation is well-defined, i.e.~does not depend on the representatives of the equivalence classes, because if~$F$ extends~$f$,~$[g]_A=[f]_A$ and~$[G]_X=[F]_X$, then~$G$ is homotopic to an extension of~$g$ by Borsuk's homotopy extension theorem. In particular,~$[F]_X$ extends~$[f]_A$ if and only if~$[F|_A]_A=[f]_A$.

A \textbf{numbering} of a set~$S$ is a surjective partial function~$\nu_S:\subseteq\N\to S$.
\begin{thm}\label{thm_numbering}
Let~$Y$ be a fixed compact ANR. To any compact set~$X\subseteq Q$ we can associate a numbering~$\nu_{[X;Y]}$ of~$[X;Y]$ such that:
\begin{enumerate}
\item $\dom(\nu_{[X;Y]})$ is c.e.~relative to~$X$,
\item Equality~$\{(i,j)\in\dom(\nu_{[X;Y]})^2:\nu_{[X;Y]}(i)=\nu_{[X;Y]}(j)\}$ is c.e.~relative to~$X$,
\item For any pair~$(X,A)$, the extension relation
\begin{equation*}
\{(i,j)\in\dom(\nu_{[X;Y]})\times \dom(\nu_{[A;Y]}):\nu_{[X;Y]}(i)\text{ extends }\nu_{[A;Y]}(j)\}
\end{equation*}
 is c.e.~relative to~$X$ and~$A$,
\item For any pair~$(X,A)$, the extensibility predicate
\begin{equation*}
\{i\in \dom(\nu_{[A;Y]}):\nu_{[A;Y]}(i)\text{ has a continuous extension to }X\}
\end{equation*}
is c.e.~relative to~$X$ and~$A$,
\end{enumerate}
where~$X$ and~$A$ are given as elements of~$(\K(Q),\upV)$.
\end{thm}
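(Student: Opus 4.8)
\textbf{Proof proposal for Theorem \ref{thm_numbering}.}

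The plan is to build the numbering $\nu_{[X;Y]}$ directly from computable approximations of continuous functions. The key point is that, by Lemma \ref{lem_hom_ANR}, there is a constant $\alpha>0$ such that any two continuous maps $X\to Y$ that are $\alpha$-close are homotopic; hence every homotopy class of continuous functions $X\to Y$ already contains a computable function. More precisely, first I would fix a computable dense sequence $(\psi_k)_{k\in\N}$ in $\con(Q)$ (as in Lemma \ref{lem_dense}, composing with a retraction of a neighborhood of $Y$ onto $Y$ when needed), and declare $\nu_{[X;Y]}(k)=[\psi_k|_X]$ whenever $\psi_k(X)\subseteq Y$, i.e.~$k\in\dom(\nu_{[X;Y]})$ precisely when $\psi_k(X)\subseteq Y$. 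Since $\psi_k$ is computable and $X$ is given in $\upV$, the condition $\psi_k(X)\subseteq Y$ is semidecidable relative to $X$ (push $X$ forward by the computable map $\psi_k$ to get $\psi_k(X)$ in $\upV$, then $\psi_k(X)\subseteq Y$ reduces to covering $\psi_k(X)$ by basic open sets avoiding the effectively closed set $Y$ — wait, more carefully: $\psi_k(X)\subseteq Y$ iff $\psi_k(X)\subseteq U$ for basic $U\supseteq Y$, and since $Y$ is a fixed compact set one enumerates such $U$; this is c.e.~relative to $X$). Surjectivity of $\nu_{[X;Y]}$ follows from density of $(\psi_k)$ together with Lemma \ref{lem_hom_ANR}: any continuous $f:X\to Y$ is $\alpha$-close to some $\psi_k$ (after composing with the retraction) which then also maps $X$ into $Y$ and is homotopic to $f$.

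For item 2, equality of classes: $[\psi_i|_X]=[\psi_j|_X]$ holds iff $\psi_i|_X$ and $\psi_j|_X$ are homotopic. I would semidecide this by searching for a ``chain'' of functions $\psi_{i}=\psi_{k_0},\psi_{k_1},\ldots,\psi_{k_m}=\psi_j$ from the dense sequence, each mapping $X$ into $Y$ and each consecutive pair $\alpha$-close on $X$; by Lemma \ref{lem_hom_ANR} such a chain witnesses homotopy, and conversely if $\psi_i|_X$ and $\psi_j|_X$ are homotopic via $H:X\times[0,1]\to Y$, then by uniform continuity of $H$ and density one can interpolate finitely many computable functions forming such a chain (here one uses that $X\times[0,1]$ is compact). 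Each condition in the search ($\psi_{k_\ell}(X)\subseteq Y$ and $\rho_X(\psi_{k_\ell},\psi_{k_{\ell+1}})<\alpha$) is c.e.~relative to $X$, so the whole search is c.e.~relative to $X$. For item 3, $[\psi_i|_X]$ extends $[\psi_j|_A]$ iff $[\psi_i|_A]=[\psi_j|_A]$ (using the characterization recalled before the theorem: $[F]_X$ extends $[f]_A$ iff $[F|_A]_A=[f]_A$), which is exactly the equality predicate of item 2 applied to the space $A$ with the functions restricted to $A$; this is c.e.~relative to $A$, hence relative to $X$ and $A$. For item 4, $\nu_{[A;Y]}(j)=[\psi_j|_A]$ has an extension to $X$ iff there exists $i\in\dom(\nu_{[X;Y]})$ such that $[\psi_i|_X]$ extends $[\psi_j|_A]$, which by items 1 and 3 is a projection of a c.e.~(relative to $X$ and $A$) predicate along an existential quantifier over $i\in\N$, hence c.e.~relative to $X$ and $A$.

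The main obstacle I anticipate is the converse direction in the homotopy argument for item 2 (and implicitly item 3): given that $\psi_i|_X$ and $\psi_j|_X$ are genuinely homotopic, one must produce a \emph{finite} chain of \emph{dense-sequence} functions witnessing this through $\alpha$-small steps, all of whose images lie in $Y$. The subtlety is that a generic homotopy $H$ is a map into $Y$, but the intermediate functions obtained by discretizing the time parameter are approximations that may stray slightly outside $Y$; this is handled using Fact \ref{fact_ANR}(4) / the retraction of a neighborhood $U$ of $Y$ onto $Y$ — one works $\alpha/3$-close in $U$, then post-composes with the retraction to land back in $Y$, and chooses $\alpha$ small enough (relative to the modulus of the retraction and the constant of Lemma \ref{lem_hom_ANR}) that the resulting chain still consists of genuinely $\alpha$-close maps $X\to Y$. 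A secondary point to be careful about is that the numbering $\nu_{[X;Y]}$ and its properties must be uniform in $X$ (and, for items 3 and 4, jointly uniform in $X$ and $A$ with $A$ a subset of $X$), but since all the semidecision procedures above only use $X$ and $A$ through their $\upV$-names and the fixed data of $Y$, uniformity is automatic.
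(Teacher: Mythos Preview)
There is a genuine gap in your definition of the domain. You set $k\in\dom(\nu_{[X;Y]})$ iff $\psi_k(X)\subseteq Y$ and argue this is c.e.~relative to $X$ because ``$\psi_k(X)\subseteq Y$ iff $\psi_k(X)\subseteq U$ for basic $U\supseteq Y$''. That equivalence only holds if one quantifies over \emph{all} basic open $U\supseteq Y$, which makes the condition $\Pi^0_1$, not $\Sigma^0_1$: containment of a compact set in a \emph{closed} set is a closed predicate in $\upV$, not an open one. Moreover, since the theorem makes no computability assumption on $Y$, you cannot even enumerate the basic open sets containing $Y$. The same problem recurs in your surjectivity argument and in your chain search for item~(2), where you require the intermediate functions to map $X$ into $Y$.

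The fix, which you allude to with ``composing with a retraction'' but do not implement, is exactly what the paper does: fix once and for all a single effective open set $U\supseteq Y$ (a finite union of rational balls) together with a retraction $r:U\to Y$ and constants $\mu,\alpha$ as in Lemma~\ref{lem_ANR}; then set $\dom(\nu_{[X;Y]})=\{i:f_i(X)\subseteq U\}$, which is genuinely c.e., and $\nu_{[X;Y]}(i)=[r\circ f_i|_X]$. Your chain argument for item~(2) can be repaired in this setting, using condition~(iii) of Lemma~\ref{lem_ANR} to ensure that $\mu$-closeness of the $f_{k_\ell}$ on $X$ yields $\alpha$-closeness of the $r\circ f_{k_\ell}$, but the constants need more care than you indicate (in particular at the two endpoints $f_i,f_j$, which are only $\mu$-close to $r\circ f_i,r\circ f_j$ by condition~(ii)).

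Once the domain is corrected, your overall route differs from the paper's: you prove (2) directly via a chain search and then derive (3) and (4) from it, whereas the paper proves (4) first by a one-step search (exists $j$ with $f_j(X)\subseteq U$ and $\rho_A(f_j,f_i)<\mu$), then obtains (2) by reducing homotopy on $X$ to extensibility for the pair $(\cyl{X},\partial\cyl{X})$, and finally (3) from (2). Both orderings are valid; your chain approach is more elementary, while the paper's cylinder reduction avoids discretizing the homotopy altogether.
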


Note that the result holds with no computability assumption about~$Y$. We now proceed with the proof of this result.

\begin{lem}\label{lem_ANR}
If~$Y\subseteq Q$ is a compact ANR, then there exist an effective open set~$U$ containing~$Y$, a continuous retraction~$r:U\to Y$ and~$\mu,\alpha>0$ such that:
\begin{enumerate}[(i)]
\item Functions to~$Y$ that are~$\alpha$-close are homotopic,
\item For~$x\in U$,~$d(r(x),x)<\mu$,
\item For~$x,y\in U$,~$d(x,y)<\mu$ implies~$d(r(x),r(y))<\alpha$.
\end{enumerate}
\end{lem}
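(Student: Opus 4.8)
The plan is to unpack the definition of ANR and record the quantitative data that falls out of it, then feed this data into Lemma \ref{lem_hom_ANR} to obtain the homotopy constant~$\alpha$. First I would invoke the defining property: since~$Y\subseteq Q$ is a compact ANR, it is a retract of some open neighborhood~$V\supseteq Y$, so there is a continuous retraction~$r_0:V\to Y$. Because~$Q$ is effectively compact and~$Y$ is compact, I can shrink~$V$ to an effective open set~$U$ with~$Y\subseteq U\subseteq \overline{U}\subseteq V$ (e.g.\ take a c.e.\ union of rational balls covering~$Y$ whose closures stay inside~$V$; compactness of~$Y$ guarantees finitely many suffice at each scale, and one obtains an effective open~$U$). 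Restricting, $r:=r_0|_U:U\to Y$ is still a continuous retraction. Note no computability of~$r$ or~$Y$ is claimed — only that~$U$ is effectively open — which is all the statement asks.

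Next I would fix the constant~$\alpha>0$ supplied by Lemma \ref{lem_hom_ANR} applied to the compact ANR~$Y$: any two continuous maps into~$Y$ that are~$\alpha$-close are homotopic. This is exactly clause~(i). For clause~(ii), observe that~$d(r(x),x)\to 0$ as~$x\to Y$ (since~$r$ fixes~$Y$ pointwise and~$r$ is continuous), so the function~$x\mapsto d(r(x),x)$ is continuous on~$U$ and vanishes on~$Y$; I want a uniform bound~$\mu$ on this quantity on a neighborhood of~$Y$. The subtlety is that the bound must hold on the \emph{same} set~$U$ for which~$r$ is the retraction, so I would first choose~$\mu$ small and then, if necessary, shrink~$U$ once more to the effective open set~$\{x:d(r(x),x)<\mu\}\cap U$ — still effective open, still containing~$Y$, and~$r$ restricted to it is still a retraction. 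For clause~(iii), $r$ is continuous on the compact set~$\overline{U}$ (or I can arrange~$\overline{U}$ compact by the shrinking above), hence uniformly continuous there, so there is~$\delta>0$ with~$d(x,y)<\delta \Rightarrow d(r(x),r(y))<\alpha$; I then replace~$\mu$ by~$\min(\mu,\delta)$.

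The one genuinely delicate point — and the step I would treat most carefully — is the bookkeeping of the \emph{order} in which~$U$, $r$, $\mu$, $\alpha$ are chosen, since the three clauses couple~$U$ and~$\mu$ together: $\mu$ depends on the modulus of continuity of~$r$, which depends on~$U$, but the final~$U$ is obtained by shrinking using~$\mu$. The clean way to resolve this is: (1) get~$V$, $r_0$, and an effective open~$U_0$ with~$\overline{U_0}\subseteq V$ compact; (2) fix~$\alpha$ from Lemma \ref{lem_hom_ANR}; (3) use uniform continuity of~$r_0$ on~$\overline{U_0}$ to get~$\delta_0$ with~$d(x,y)<\delta_0\Rightarrow d(r_0(x),r_0(y))<\alpha$, and use continuity of~$x\mapsto d(r_0(x),x)$ together with its vanishing on~$Y$ plus compactness of~$Y$ to get~$\delta_1>0$ such that this quantity is~$<\min(\delta_0,\alpha)$ on the~$\delta_1$-neighborhood of~$Y$ intersected with~$U_0$; (4) set~$\mu=\min(\delta_0,\delta_1)$ and let~$U$ be the (effective open) set of~$x\in U_0$ within distance~$\mu$ of~$Y$ — or simply~$\{x\in U_0 : d(r_0(x),x)<\mu\}$, which is effective open as~$r_0$ is continuous. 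Then~$r:=r_0|_U$ and this~$U$, $\mu$, $\alpha$ satisfy (i)--(iii) by construction. Everything else is routine verification of these inequalities.
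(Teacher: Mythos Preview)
Your approach is essentially the same as the paper's: obtain~$\alpha$ from Lemma~\ref{lem_hom_ANR}, take a retraction on a neighborhood from the ANR definition, use uniform continuity on a compact intermediate neighborhood to get the modulus~$\mu$ for clause~(iii), then shrink once more to enforce clause~(ii), and finally pass to a finite union of rational balls for effectiveness. One small slip: in your step~(4) you call $\{x\in U_0 : d(r_0(x),x)<\mu\}$ effective open ``as~$r_0$ is continuous'', but mere continuity of~$r_0$ (with no computability assumption on~$r_0$ or~$Y$) does not make this set \emph{effectively} open; the correct justification is the one you already gave in your first paragraph --- replace the open set by a finite union of rational balls covering the compact~$Y$, exactly as the paper does.
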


\begin{proof}
The number~$\alpha$ exists by Lemma \ref{lem_hom_ANR}. As~$Y$ is an ANR, there exists an open set~$W$ containing~$Y$ and a retraction~$r:W\to Y$. Let~$V$ be an open set such that~$Y\subseteq V\subseteq \overline{V}\subseteq W$. As~$\overline{V}$ is compact,~$r$ is uniformly continuous on~$\overline{V}$ so there exists~$\mu>0$ satisfying (iii) for~$x,y\in \overline{V}$. As~$r|_Y=\id_Y$ and~$Y$ is compact, if~$x$ is sufficiently close to~$Y$,~$d(r(x),x)<\mu$. Therefore we can choose~$U\subseteq V$ satisfying (ii). By compactness of~$Y$,~$U$ can be replaced by a finite union of rational balls.
\end{proof}

\begin{proof}[Proof of Theorem \ref{thm_numbering}]
Let~$(f_i)_{i\in\N}$ be a dense computable sequence of functions~$f_i:Q\to Q$. Let~$Y\subseteq Q$ be a compact ANR and let~$U,r,\alpha,\mu$ be provided by Lemma \ref{lem_ANR}. For~$X\subseteq Q$, we define a numbering~$\nu_{[X;Y]}$ of~$[X;Y]$ and prove (1), (4), (2) and (3) in this order. Its domain is
\begin{equation*}
\dom(\nu_{[X;Y]})=\{i\in\N:f_i(X)\subseteq U\},
\end{equation*}
which is clearly c.e.~relative to~$X$, showing (1). For~$i\in\dom(\nu_{[X;Y]})$, we then define~$\nu_{[X;Y]}(i)$ as the equivalence class of~$r\circ f_i|_X:X\to Y$.

We first show that~$\nu_{[X;Y]}$ is surjective. For a continuous function~$f:X\to Y$, let~$\tilde{f}:Q\to Q$ be a continuous extension of~$f$. If~$f_i$ is sufficiently close to~$\tilde{f}$, then~$f_i(X)\subseteq U$ (hence~$i\in\dom(\nu_{[X;Y]})$) and~$r\circ f_i|_X$ is~$\alpha$-close to~$f$. Therefore,~$r\circ f_i|_X$ is homotopic to~$f$ and~$\nu_{[X;Y]}(i)=[f]$.

Let now~$(X,A)$ be a pair in~$Q$ and let~$\nu_{[X;Y]}$ and~$\nu_{[A;Y]}$ be the corresponding numberings. We show condition (4).
\begin{claim}\label{claim_extension}
The set
\begin{equation*}
\{i\in\dom(\nu_{[X;Y]}):\nu_{[A;Y]}(i)\text{ has a continuous extension to }X\}
\end{equation*}
is c.e.~relative to~$X$ and~$A$.
\end{claim}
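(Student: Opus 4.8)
The plan is to observe that the set displayed in the claim is actually the \emph{whole} of $\dom(\nu_{[X;Y]})$, and then to use this observation to obtain the extensibility predicate of condition~(4) of Theorem~\ref{thm_numbering}. First I would note that, since $A\subseteq X$, we have $f_i(A)\subseteq f_i(X)$ for every $i$, so $\dom(\nu_{[X;Y]})\subseteq\dom(\nu_{[A;Y]})$ and $\nu_{[A;Y]}(i)$ is defined whenever $i\in\dom(\nu_{[X;Y]})$. Fix such an $i$, so that $f_i(X)\subseteq U$, with $U,r$ from Lemma~\ref{lem_ANR}. Then $r\circ f_i|_X\colon X\to Y$ is a continuous map whose restriction to $A$ is precisely the representative $r\circ f_i|_A$ of $\nu_{[A;Y]}(i)$; hence $\nu_{[A;Y]}(i)$ has a continuous extension to $X$. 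Thus the set in the claim equals $\dom(\nu_{[X;Y]})$, which is c.e.\ relative to $X$ — a fortiori relative to $X$ and $A$ — by condition~(1).

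The remaining work, towards condition~(4), is to show that a class $c\in[A;Y]$ has a continuous extension to $X$ if and only if $c=\nu_{[A;Y]}(i)$ for some $i\in\dom(\nu_{[X;Y]})$. One direction is the paragraph above. For the converse, given a representative $g\colon A\to Y$ of $c$ with a continuous extension $F\colon X\to Y$, I would take a continuous extension $\tilde F\colon Q\to Q$ of $F$ and choose, by density of $(f_i)$, an index $i$ with $f_i$ so close to $\tilde F$ that $f_i(X)\subseteq U$ and $\rho_X(f_i,F)<\mu$. Clause~(iii) of Lemma~\ref{lem_ANR} then gives $\rho_X(r\circ f_i,F)<\alpha$ (using $r|_Y=\id_Y$, so $r\circ F=F$), whence by clause~(i) the maps $r\circ f_i|_X$ and $F$ are homotopic, hence so are $r\circ f_i|_A$ and $g$, i.e.\ $\nu_{[A;Y]}(i)=c$. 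It follows that for $j\in\dom(\nu_{[A;Y]})$ the class $\nu_{[A;Y]}(j)$ is extensible to $X$ iff there is $i\in\dom(\nu_{[X;Y]})$ with $\nu_{[A;Y]}(i)=\nu_{[A;Y]}(j)$, which is c.e.\ relative to $X$ and $A$ once one combines condition~(1) with a c.e.\ presentation (relative to $A$) of equality on $[A;Y]$.

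The one step that requires a little care — the ``hard part'', to the extent there is one — is the simultaneous approximation in the second paragraph: forcing both $f_i(X)\subseteq U$ and $\rho_X(f_i,F)<\mu$. This is exactly the device already used for the surjectivity of $\nu_{[X;Y]}$ earlier in the proof: since $F(X)\subseteq Y\subseteq U$ with $U$ open and $F(X)$ compact, a sufficiently small uniform neighbourhood of $\tilde F$ maps $X$ into $U$; shrinking it below $\mu$ and applying clause~(iii) of Lemma~\ref{lem_ANR} controls $\rho_X(r\circ f_i,F)$. Everything else is routine bookkeeping, and this claim is among the easier components of Theorem~\ref{thm_numbering}.
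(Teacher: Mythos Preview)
You correctly spotted that the claim as literally printed is trivial: if~$i\in\dom(\nu_{[X;Y]})$ then~$r\circ f_i|_X$ extends~$r\circ f_i|_A$, so the displayed set is all of~$\dom(\nu_{[X;Y]})$. This is a typo in the paper --- the intended domain is~$\dom(\nu_{[A;Y]})$, as in condition~(4), and the paper's own proof of the claim makes sense only under that reading.

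Your route to the intended statement, however, introduces a circularity relative to the paper's logical order. You reduce extensibility of~$\nu_{[A;Y]}(j)$ to the existence of~$i\in\dom(\nu_{[X;Y]})$ with~$\nu_{[A;Y]}(i)=\nu_{[A;Y]}(j)$, and then appeal to a c.e.\ presentation of equality on~$[A;Y]$, i.e.\ condition~(2). But in the paper, condition~(2) is \emph{derived from} this very claim (applied to the pair~$(\cyl{X},\partial\cyl{X})$), so you cannot invoke it here. The paper avoids this by giving a \emph{direct} c.e.\ characterization that bypasses equality entirely:
\[
r\circ f_i|_A\text{ extends to }X\iff \exists j\in\dom(\nu_{[X;Y]}),\ \rho_A(f_j,f_i)<\mu.
\]
The right-hand side is manifestly c.e.\ relative to~$X$ and~$A$, with no appeal to~(2). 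Your argument for the backward direction (approximate an extension~$F$ by some~$f_j$ with~$f_j(X)\subseteq U$) is essentially the paper's; what you are missing is that the \emph{forward} direction needs only~$\rho_A(f_j,f_i)<\mu$ together with clauses~(iii),~(i) of Lemma~\ref{lem_ANR} and Borsuk's homotopy extension theorem --- no equality predicate required. Replacing your final condition~$\nu_{[A;Y]}(i)=\nu_{[A;Y]}(j)$ by the cruder metric condition~$\rho_A(f_j,f_i)<\mu$ breaks the circularity and gives the paper's self-contained proof.
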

\begin{proof}[Proof of the claim]
We show that~$r\circ f_i|_A:A\to Y$ has a continuous extension to~$X$ iff there exists~$j\in\dom(\nu_{[X;Y]})$ such that~$\rho_A(f_j,f_i)<\mu$. It will imply the result as this condition is c.e.~relative to~$X$ and~$A$.

If there exists such a~$j$, then~$\rho_A(r\circ f_j,r\circ f_i)<\alpha$ by Lemma \ref{lem_ANR} (iii) so~$r\circ f_j|_A$ and~$r\circ f_i|_A$ are homotopic by Lemma \ref{lem_ANR} (i). As~$r\circ f_j|_A$ obviously has an extension~$r\circ f_j|_X$, so does~$r\circ f_i|_A$ by Borsuk's homotopy extension theorem (Theorem \ref{thm_hom_ext_ANR}).

Conversely, assume that~$r\circ f_i|_A$ has an extension~$f:X\to Y$. One has~$\rho_A(f,f_i)=\rho_A(r\circ f_i,f_i)<\mu$ by Lemma \ref{lem_ANR} (ii) so if~$f_j$ is sufficiently close to~$f$ on~$X$, then~$f_j(X)\subseteq U$ and~$\rho_A(f_j,f_i)<\mu$.
\end{proof}

We now show (2), i.e.~that equality is c.e. 
Let~$\cyl{X}=X\times [0,1]\subseteq Q$ be the cylinder of~$X$ and~$\partial \cyl{X}=X\times \{0,1\}$ its boundary. These sets are semicomputable relative to~$X$. A pair~$(f,g)$ of functions from~$X$ to~$Y$ is nothing else than a function~$h:\partial \cyl{X}\to Y$, defined as~$h(x,0)=f(x)$ and~$h(x,1)=g(x)$. This correspondence is moreover effective, in the sense that there exists a computable function~$\varphi:\dom(\nu_{[X;Y]})^2\to\dom(\nu_{[\partial\cyl{X};Y]})$ such that if~$i,j$ are indices of the homotopy classes of~$f$ and~$g$ respectively, then~$\varphi(i,j)$ is an index of the homotopy class of~$h$. Note that~$f,g$ are homotopic iff~$h$ has a continuous extension to~$\cyl{X}$. Therefore, for~$i,j\in\dom(\nu_{[X;Y]})$, one has~$\nu_{[X;Y]}(i)=\nu_{[X;Y]}(j)$ iff~$\nu_{[\partial\cyl{X};Y]}(\varphi(i,j))$ has an extension to~$\cyl{X}$, which is c.e.~relative to~$X$ by Claim \ref{claim_extension} applied to the pair~$(\cyl{X},\partial\cyl{X})$.

We finally show (3), i.e.~the extension relation is c.e. Note that~$\dom(\nu_{[X;Y]})\subseteq\dom(\nu_{[A;Y]})$ and for~$i\in\dom(\nu_{[X;Y]})$,~$\nu_{[A;Y]}(i)$ is the equivalence class of the restrictions to~$A$ of functions in~$\nu_{[X;Y]}(i)$. For~$i\in\dom(\nu_{[X;Y]})$ and~$j\in\dom(\nu_{[A;Y]})$,~$\nu_{[X;Y]}(i)$ extends~$\nu_{[A;Y]}(j)$ iff~$\nu_{[A;Y]}(i)=\nu_{[A;Y]}(j)$, which is c.e.
\end{proof}

\subsection{Families of \texorpdfstring{$\Sigma^0_2$}{Sigma02} invariants}

The previous results enable us to define families of~$\Sigma^0_2$ invariants. We will study their properties and use particular instances in the next sections.

\subsubsection{Extension of functions}

To each compact ANR~$Y$ we associate a topological invariant~$\E(Y)$. Ultimately, we will focus on~$Y=\SS_n$, the~$n$-dimensional sphere.

\begin{defn}[The invariant $\E(Y)$]\label{def_EY}
Let~$Y$ be a topological space. A pair~$(X,A)$ belongs to~$\E(Y)$ if there exists a continuous function~$f:A\rightarrow Y$ that cannot be extended to a continuous function~$g:X\rightarrow Y$.
\end{defn}

Note that this definition is only interesting when~$A$ is non-empty, otherwise it is never satisfied.

From this definition, the obvious upper bound on the descriptive complexity of~$\E(Y)$ is~$\SSigma^1_2$. However when~$Y$ is a compact ANR, we show that the complexity drops down to~$\Sigma^0_2$, which is much lower and is an effective class, even if there is no computability assumption on~$Y$.

\begin{cor}\label{cor_EANR_sigma}
If~$Y$ is a compact ANR, then~$\E(Y)$
is a~$\Sigma_{2}^{0}$ invariant in~$\upVV$.
\end{cor}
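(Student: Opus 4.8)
The plan is to apply Theorem \ref{thm_numbering} directly: fix the compact ANR $Y$ and consider the numberings $\nu_{[X;Y]}$ and $\nu_{[A;Y]}$ attached to a pair $(X,A)\subseteq Q$. By definition, $(X,A)\in\E(Y)$ iff some continuous $f:A\to Y$ has no continuous extension to $X$, which by Theorem \ref{thm_numbering} is equivalent to saying that there exists $i\in\dom(\nu_{[A;Y]})$ such that $\nu_{[A;Y]}(i)$ has no continuous extension to $X$. First I would argue that $\E(Y)$ is a topological invariant: this is immediate since the existence of a non-extendable map $A\to Y$ is preserved under homeomorphisms of pairs. So the whole content is the complexity bound.

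Next I would unwind the quantifiers to read off the $\Sigma^0_2$ complexity. Condition (1) of Theorem \ref{thm_numbering} says $\dom(\nu_{[A;Y]})$ is c.e.\ relative to $A$, hence it is a $\Sigma^0_1$ condition on $A$ in the topology $\upV$; more precisely, by the explicit description in the proof ($i\in\dom(\nu_{[A;Y]})\iff f_i(A)\subseteq U$ for a fixed effective open $U\supseteq Y$), the set $\{(A,i):i\in\dom(\nu_{[A;Y]})\}$ is effectively open in $\upV\times\N$. Condition (4) says that the extensibility predicate $\{i\in\dom(\nu_{[A;Y]}):\nu_{[A;Y]}(i)\text{ extends to }X\}$ is c.e.\ relative to $(X,A)$; again from the proof (existence of $j\in\dom(\nu_{[X;Y]})$ with $\rho_A(f_j,f_i)<\mu$) the corresponding subset of $\K^2(Q)\times\N$ is effectively open in $\upVV\times\N$. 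Therefore
\begin{equation*}
(X,A)\notin\E(Y)\iff \forall i\in\N,\ \big(i\notin\dom(\nu_{[A;Y]})\big)\ \text{or}\ \big(\nu_{[A;Y]}(i)\text{ extends to }X\big),
\end{equation*}
which expresses the complement of $\E(Y)$ as a countable intersection of sets, each of which is of the form (complement of an effectively open set in $\upV$) union (effectively open set in $\upVV$), i.e.\ a $\Sigma^0_2$ set in $\upVV$. Dually, $\E(Y)$ is a countable union of sets of the form (effectively open) $\setminus$ (effectively open) in $\upVV$, which is precisely the definition of a $\Sigma^0_2$ set.

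Concretely, I would write $\E(Y)=\bigcup_{i\in\N}(D_i\setminus E_i)$ where $D_i=\{(X,A):i\in\dom(\nu_{[A;Y]})\}$ is effectively open (using $f_i(A)\subseteq U$, and noting $A$ is recovered as the second coordinate) and $E_i=\{(X,A):\nu_{[A;Y]}(i)\text{ extends to }X\}\cap D_i$ is effectively open in $\upVV$ by condition (4) of Theorem \ref{thm_numbering}. The only mild subtlety — and the one point I would be careful about — is making sure that all the c.e.-relative-to-$(X,A)$ statements of Theorem \ref{thm_numbering} are genuinely \emph{uniform} in the pair, so that the witnessing c.e.\ sets assemble into honest effectively open subsets of $\upVV\times\N$; this is exactly what the proof of Theorem \ref{thm_numbering} provides (the reductions there are by a single machine with oracle $X$ and $A$ given in $\upV$), so no real obstacle remains. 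The conclusion that $\E(Y)$ is $\Sigma^0_2$ in $\upVV$ then follows, and invariance was already observed.
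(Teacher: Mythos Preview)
Your proposal is correct and is essentially the same argument as the paper's: both apply Theorem \ref{thm_numbering} to rewrite $(X,A)\in\E(Y)$ as $\exists i\in\dom(\nu_{[A;Y]})$ such that $\nu_{[A;Y]}(i)$ does not extend to~$X$, and read off the $\Sigma^0_2$ complexity from the fact that $\dom(\nu_{[A;Y]})$ and the extensibility predicate are effectively open in~$\upVV$. The only cosmetic difference is that the paper invokes item (3) of Theorem \ref{thm_numbering} and unfolds the inner universal quantifier over~$j\in\dom(\nu_{[X;Y]})$, while you invoke item (4) directly; since (4) is exactly the existential projection of (3), the two formulations are the same, and your explicit decomposition $\E(Y)=\bigcup_i(D_i\setminus E_i)$ with $D_i,E_i$ effectively open is just a spelled-out version of what the paper leaves implicit.
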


\begin{proof}
It is a simple application of Theorem \ref{thm_numbering}. Indeed, 
\begin{align*}
(X,A)\in\E(Y)&\iff \exists f:A\to Y\text{ having no extension to }X,\\
&\iff \exists i\in\dom(\nu_{[A;Y]}),\forall j\in\dom(\nu_{[X;Y]}),\nu_{[X;Y]}(j)\text{ does not extend }\nu_{[A;Y]}(i)
\end{align*}
which is a~$\Sigma^0_2$ condition.
\end{proof}

Note that although~$\E(Y)$ is a countable union of differences of closed sets, it is never a countable union of closed sets in~$\upVV$ unless it is empty. Indeed, if~$\E(Y)$ is non-empty then it is not an upper set: take~$(X,A)\in\E(Y)$, and observe that~$(X,X)\notin \E(Y)$.

\subsubsection{Null-homotopy of functions}
The previous results provide another family of~$\Sigma^0_2$ invariants.

\begin{defn}[The invariant $\H(Y)$]\label{def_HY}
Let~$Y$ be a topological space. A space~$X$ belongs to~$\H(Y)$ if there exists a continuous function~$f:X\to Y$ which is not null-homotopic, i.e.~not homotopic to a constant function.
\end{defn}

\begin{cor}\label{cor_HY_Sigma2}
If~$Y$ is a compact ANR, then $\H(Y)$ is a~$\Sigma^0_2$ invariant in~$\upVV$.
\end{cor}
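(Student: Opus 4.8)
The plan is to mimic the proof of Corollary \ref{cor_EANR_sigma}, since null-homotopy is a special case of extensibility: a continuous function $f:X\to Y$ is null-homotopic if and only if, viewing $X$ as a subspace of its cone $\cone{X}$, the function $f$ extends to a continuous function on $\cone{X}$. Indeed, a null-homotopy $H:X\times[0,1]\to Y$ with $H(\cdot,1)=f$ and $H(\cdot,0)$ constant factors through the quotient $\cone{X}=(X\times[0,1])/(X\times\{0\})$, and conversely any continuous extension $G:\cone{X}\to Y$ of $f$ gives a null-homotopy by composing with the quotient map. So one would like to write $\H(Y)=\{X:(\cone{X},X)\in\E(Y)\}$ and invoke Corollary \ref{cor_EANR_sigma}.

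The main obstacle is uniformity and effectivity: Corollary \ref{cor_EANR_sigma} says $\E(Y)$ is $\Sigma^0_2$ in $\upVV$, so I need $X\mapsto(\cone{X},X)$ to be a computable map from $(\K(Q),\upV)$ to $(\K^2(Q),\upVV)$ in order to pull the $\Sigma^0_2$ condition back to a $\Sigma^0_2$ condition on $X$. Using the concrete realization from Definition \ref{def_cone}, namely $\cone{X}\cong\{(\lambda,\lambda x):\lambda\in[0,1],x\in X\}\subseteq[0,1]\times Q\cong Q$ and the copy of $X$ sitting inside it as $\{1\}\times X$, this map is indeed computable in the upper Vietoris topology: from a finite cover of $X$ one computes a finite cover of $\{(\lambda,\lambda x):\lambda\in[0,1],x\in X\}$ using compactness of $[0,1]$, and similarly for the $\{1\}\times X$ component. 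This is the same kind of argument already used in the cone constructions earlier in the paper.

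Concretely, the steps I would carry out are: (i) recall the homeomorphism correspondence between null-homotopies of $f:X\to Y$ and continuous extensions of $f$ along $X\hookrightarrow\cone{X}$, so that $X\in\H(Y)\iff(\cone{X},X)\in\E(Y)$; (ii) observe that the assignment $X\mapsto(\cone{X},X)$, realized via the explicit embedding of $\cone{X}$ in $Q$, is a computable map $(\K(Q),\upV)\to(\K^2(Q),\upVV)$; (iii) apply Corollary \ref{cor_EANR_sigma} to conclude that $(\cone{X},X)\in\E(Y)$ is a $\Sigma^0_2$ condition on $X\in(\K(Q),\upV)$, hence so is $X\in\H(Y)$; (iv) note $\H(Y)$ is a topological invariant because homotopy type is a homeomorphism invariant (or because $\E(Y)$ is an invariant and $\cone{\cdot}$ commutes with homeomorphism). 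Alternatively, and perhaps more cleanly, one can redo the proof of Corollary \ref{cor_EANR_sigma} directly with Theorem \ref{thm_numbering}: $X\in\H(Y)\iff\exists i\in\dom(\nu_{[X;Y]})$ such that $\nu_{[X;Y]}(i)\neq\nu_{[X;Y]}(i_0)$ where $i_0$ is a (computable) index for a constant map, and since $\dom(\nu_{[X;Y]})$ is c.e.\ relative to $X$ and equality is c.e.\ relative to $X$ by items (1) and (2) of Theorem \ref{thm_numbering}, this is a $\Sigma^0_2$ condition — though one must be slightly careful that a constant function into $Y$ lies in the range of $\nu_{[X;Y]}$, which follows from surjectivity of the numbering, and that one can compute such an index, which follows from picking any rational point near $Y$ and retracting. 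I expect the cone-reduction route to be the shortest; the only real work is verifying the computability of $X\mapsto(\cone{X},X)$ in the upper Vietoris topology, which is routine given the explicit realization.
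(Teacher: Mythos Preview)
Your proposal is correct, and in fact you sketch both viable routes. The paper takes your \emph{alternative} route: it applies Theorem~\ref{thm_numbering} directly, observing that $X\in\H(Y)$ iff $[X;Y]$ is non-trivial iff $\exists i\in\dom(\nu_{[X;Y]})$ with $\nu_{[X;Y]}(i)\neq\nu_{[X;Y]}(i_0)$, where $i_0$ is a fixed index of a constant function $f_{i_0}:Q\to Q$ with value in the neighborhood $U$ of $Y$ (so that $i_0\in\dom(\nu_{[X;Y]})$ for every $X$). This is a $\Sigma^0_2$ condition by items (1) and (2) of Theorem~\ref{thm_numbering}.

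Your preferred cone-reduction route --- pulling back $\E(Y)$ along the computable map $X\mapsto(\cone{X},X)$ --- is also correct and is precisely how the paper handles the \emph{pair} version $\H_n$ a few pages later (see Proposition~\ref{prop_hn_cone} and the corollary following it). So both approaches appear in the paper, just for different statements; for Corollary~\ref{cor_HY_Sigma2} itself the direct numbering argument is shorter since Theorem~\ref{thm_numbering} already gives c.e.\ equality of homotopy classes, avoiding the detour through $\E(Y)$ and the verification that the cone map is $\upV$-computable.
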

\begin{proof}
One has~$X\in\H(Y)$ iff~$[X;Y]$ is non-trivial iff~$\exists i,j\in\dom(\nu_{[X;Y]}), \nu_{[X;Y]}(i)\neq \nu_{[X;Y]}(j)$, which is a~$\Sigma^0_2$ condition. Slightly differently, we fix some~$i_0$ such that~$f_{i_0}:Q\to Q$ is constant with value in~$U$ (implying~$i_0\in\dom(\nu_{[X;Y]})$ for any~$X$), so that~$[X;Y]$ is non-trivial iff~$\exists i\in\dom(\nu_{[X;Y]}),\nu_{[X;Y]}(i)\neq \nu_{[X;Y]}(i_0)$.
\end{proof}

Again,~$\H(Y)$ is not a countable union of closed sets in~$\upV$ unless it is empty, because it is not an upper set in that case: the maximal set~$Q$ is not in~$\H(Y)$ because it is contractible.

The invariant~$\H(Y)$ only applies to single sets. We will define a version for pairs, with~$Y=\SS_n$.

\section{Detailed study of some \texorpdfstring{$\Sigma^0_2$}{Sigma02} invariants}\label{sec_invariants}

Theorem~\ref{thm_sigma2} provides a template to prove that a pair has strong computable type, by identifying a~$\Sigma^0_2$ invariant for which the pair is minimal. One of our goals is to revisit previous results by finding corresponding~$\Sigma^0_2$ invariants when possible, hopefully giving more insight on the problem, providing unifying arguments and sometimes giving new examples for free. 
 We now study in details the~$\Sigma^0_2$ invariants defined in the previous section.

The study of the descriptive complexity of topological invariants is particularly interesting because it lies at the interaction between two branches of topology, namely point-set topology and algebraic topology. Descriptive complexity belongs to point-set topology, because it is about expressing sets of points in terms of open sets; algebraic topology provides topological invariants having low descriptive complexity.

\subsection{Properties of \texorpdfstring{$\E(Y)$}{E(Y)}}
We study the invariant~$\E(Y)$ from Definition \ref{def_EY}, and develop a few techniques to establish whether a pair satisfies this invariant.

First, if~$A$ is a retract of~$X$, then~$(X,A)\notin \E(Y)$. Indeed, every~$f:A\to Y$ has an extension~$f\circ r:X\to Y$, where~$r:X\to A$ is a retraction.

The next result identifies a relation between pairs that preserves the invariant~$\E(Y)$.
\begin{prop}\label{prop_image}
Let~$\phi:(X',A')\to (X,A)$ such that~$\phi|_{A'}:A'\to A$ is a homeomorphism. If~$(X',A')\in\E(Y)$ then~$(X,A)\in\E(Y)$.
\end{prop}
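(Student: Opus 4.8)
The plan is to prove the contrapositive of the key implication: assuming $(X,A)\notin\E(Y)$, I will show that $(X',A')\notin\E(Y)$ as well. So suppose every continuous $f:A\to Y$ extends to $X$, and let $g:A'\to Y$ be an arbitrary continuous function; the goal is to extend $g$ to all of $X'$.

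The idea is to push $g$ forward along $\phi$ using the hypothesis that $\phi|_{A'}$ is a homeomorphism. First I would form $g\circ(\phi|_{A'})^{-1}:A\to Y$, which is continuous since $(\phi|_{A'})^{-1}:A\to A'$ is continuous. By the assumption that $(X,A)\notin\E(Y)$, this function has a continuous extension $G:X\to Y$. Then I would pull back: consider $G\circ\phi:X'\to Y$, which is continuous. It remains to check that $G\circ\phi$ restricts to $g$ on $A'$. For $a'\in A'$, we have $\phi(a')\in A$ and $\phi(a')=(\phi|_{A'})(a')$, so $G(\phi(a'))=\bigl(g\circ(\phi|_{A'})^{-1}\bigr)(\phi(a'))=g\bigl((\phi|_{A'})^{-1}((\phi|_{A'})(a'))\bigr)=g(a')$. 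Hence $G\circ\phi$ is a continuous extension of $g$, so $g$ is extensible. Since $g$ was arbitrary, $(X',A')\notin\E(Y)$.

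I do not anticipate a serious obstacle here; the only point requiring a little care is bookkeeping about which maps are restrictions of which, in particular that $\phi(A')\subseteq A$ (part of $\phi$ being a map of pairs) so that $G$ is defined on $\phi(A')$ and that $\phi|_{A'}$ being a homeomorphism $A'\to A$ (not merely onto its image) is exactly what lets the inverse $(\phi|_{A'})^{-1}$ be defined on all of $A$. This is the crucial hypothesis: without surjectivity of $\phi|_{A'}$ onto $A$ the composite $g\circ(\phi|_{A'})^{-1}$ would only be partially defined and the argument would collapse. The whole proof is then a two-line diagram chase once these domains are set up correctly.
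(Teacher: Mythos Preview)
Your proof is correct and follows exactly the same approach as the paper: prove the contrapositive by pushing the given function on~$A'$ through~$(\phi|_{A'})^{-1}$ to~$A$, extending to~$X$, and then precomposing with~$\phi$. The only difference is notation (the paper calls the given function~$f$ and the pushed-forward one~$g$, while you do the reverse).
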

\begin{proof}
We assume that~$(X,A)\notin\E(Y)$ and show that~$(X',A')\notin\E(Y)$. Let~$f:A'\to Y$. The function~$g=f\circ (\phi|_{A'})^{-1}:A \to Y$ has a continuous extension~$G:X\to Y$. The continuous function~$F=G\circ \phi:X'\to Y$ then extends~$f$. Indeed,~$F|_{A'}=G\circ \phi|_{A'}=g\circ \phi|_{A'}=f\circ (\phi|_{A'})^{-1}\circ \phi|_{A'}=f$.
\end{proof}

\paragraph{Converging sequences}
Now we give results showing how~$\E(Y)$ behaves w.r.t.~converging sequences. First, if a sequence of pairs~$(X_i,A_i)$ converges to a pair~$(X,A)$ in the Vietoris topology (i.e., if~$X_i$ converges to~$X$ and~$A_i$ converges to~$A$), then whether all~$(X_i,A_i)$ are (resp.~are not) in~$\E(Y)$ does not imply that~$(X,A)$ is (resp.~is not) in~$\E(Y)$. In other words,~$\E(Y)$ is neither closed nor open in the Vietoris topology. 

Therefore in order to obtain results, we need more complicated assumptions on the way a sequence of pairs converges to a pair.

The first result can be used to prove that a pair is not in~$\E(Y)$ by approximating it with pairs that are not in~$\E(Y)$ in some way. It is taken from the definition of pseudo-cubes in \cite{2020pseudocubes}.
\begin{prop}\label{prop_approx_neg}Let~$(X,A)$ be a pair and~$(X_{i},A_{i})_{i\in\mathbb{N}}$
be a sequence of pairs contained in~$X$ such that
\begin{enumerate}
\item For every~$i$,~$X_{i}\setminus A_{i}$ is open in~$X$ and is contained in~$(X\setminus A)$,
\item For every~$\epsilon>0$, there exists~$i$ such that~$X\setminus(X_{i}\setminus A_{i})\subseteq\Ne{\epsilon}{A}$.
\end{enumerate}
If~$(X_i,A_i)\notin E(Y)$ for all~$i$, then~$(X,A)\notin\E(Y)$.
\end{prop}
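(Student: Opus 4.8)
The plan is to prove the contrapositive: assuming every~$(X_i,A_i)\notin\E(Y)$, I want to extend an arbitrary continuous~$f:A\to Y$ to a continuous function on all of~$X$. First I would use the fact that~$Y$ is a compact ANR (this is needed for the standard extension machinery, and the statement implicitly assumes it since~$\E(Y)$ is only interesting in that setting), so that~$f$ extends to some continuous~$F_0:U\to Y$ where~$U\supseteq A$ is open in~$X$ (Fact \ref{fact_ANR}(4)). By compactness of~$X\setminus U$ and continuity, there is~$\epsilon>0$ with~$\Ne{\epsilon}{A}\subseteq U$. Now apply hypothesis 2 to this~$\epsilon$: there is an index~$i$ such that~$X\setminus(X_i\setminus A_i)\subseteq\Ne{\epsilon}{A}\subseteq U$. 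Equivalently,~$X=U\cup(X_i\setminus A_i)$, and~$X\setminus(X_i\setminus A_i)$ is a closed subset of~$U$ on which~$F_0$ is already defined.

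The key step is to exploit~$(X_i,A_i)\notin\E(Y)$. The function~$F_0$ is defined on~$X\setminus(X_i\setminus A_i)$, which contains~$A_i$ (since by hypothesis 1, $X_i\setminus A_i\subseteq X\setminus A$, so~$A_i\subseteq X\setminus(X_i\setminus A_i)$... wait, I need~$A_i$ to be in the complement of~$X_i\setminus A_i$, which holds because~$A_i$ and~$X_i\setminus A_i$ are disjoint). So~$F_0$ restricts to a continuous function~$A_i\to Y$. Since~$(X_i,A_i)\notin\E(Y)$, this restriction extends to a continuous~$g_i:X_i\to Y$. Now I want to glue~$F_0$ (on the closed set~$C:=X\setminus(X_i\setminus A_i)$) with~$g_i$ (on the closed set~$X_i$): these two sets cover~$X$, and their intersection is~$X_i\cap C=X_i\setminus(X_i\setminus A_i)=A_i$, on which both functions agree by construction. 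By the pasting lemma for closed sets, the glued map~$F:X\to Y$ is continuous, and it extends~$f$ since~$A\subseteq C$ and~$F|_C=F_0$ extends~$f$. Hence~$(X,A)\notin\E(Y)$.

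The main obstacle — or rather the point requiring care — is verifying the set-theoretic bookkeeping: that~$X_i$ is closed in~$X$ (it is compact, being a pair contained in~$X$), that~$C=X\setminus(X_i\setminus A_i)$ is genuinely closed (equivalently, that~$X_i\setminus A_i$ is open in~$X$, which is exactly hypothesis 1), that~$C$ and~$X_i$ cover~$X$ (immediate once~$X\setminus(X_i\setminus A_i)\subseteq U$ — actually~$C\cup X_i\supseteq C\cup(X_i\setminus A_i)$ and the latter is all of~$X$ trivially, so this is automatic regardless of~$\epsilon$), and that~$C\cap X_i=A_i$ so the two pieces agree on the overlap. The only place the ANR hypothesis and hypothesis 2 genuinely enter is in producing the initial neighborhood extension~$F_0:U\to Y$ and in selecting~$i$ so that the complement of~$X_i\setminus A_i$ lands inside~$U$; everything else is the pasting lemma. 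I would also remark that hypothesis 1's requirement~$X_i\setminus A_i\subseteq X\setminus A$ guarantees~$A\subseteq C$, so the final~$F$ really does extend the original~$f$ and not merely some function agreeing with it off~$A$.
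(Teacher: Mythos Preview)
Your proof is correct and follows essentially the same approach as the paper: extend~$f$ to a neighborhood of~$A$ using the ANR property, choose~$i$ via hypothesis~(2) so that the complement of~$X_i\setminus A_i$ lies in that neighborhood, extend over~$X_i$ using~$(X_i,A_i)\notin\E(Y)$, and glue by the pasting lemma. Your write-up is in fact more careful than the paper's in verifying the set-theoretic details (closedness of the two pieces, that their intersection is exactly~$A_i$, and that~$A\subseteq C$).
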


\begin{proof}
We assume that~$(X_i,A_i)\notin \E(Y)$ for all~$i$ and show that~$(X,A)\notin\E(Y)$.
 
Let~$f:A\rightarrow Y$ be continuous.~$Y$ is an ANR, so there exists some continuous extension~$\tilde{f}$ of~$f$ to~$\Ne{\epsilon}{A}$, for some~$\epsilon>0$ by Fact~\ref{fact_ANR} (4). Let~$i$ be such that~$X\setminus(X_i\setminus A_{i})\subseteq\Ne{\epsilon}{A}$ by (2), so~$\tilde{f}|_{A_{i}}$ can be defined and has a continuous extension~$F_{i}:X_i\to Y$ by assumption.

We define a continuous extension~$F:X\to Y$ of~$f$, by defining~$F=F_i$ on~$X_i$ and~$F=\tilde{f}$ on~$X\setminus (X_i\setminus A_i)$. Note that~$X_i$ and~$X\setminus(X_{i}\setminus A_{i})$ are closed and their intersection is~$A_{i}$, where these two functions agree. Therefore,~$F$ is well-defined and continuous and extends~$f$. As~$f$ was arbitrary, we have proved that~$(X,A)\notin \E(Y)$.
\end{proof}

The second result can be used to prove that a pair is in~$\E(Y)$ by approximating it with pairs that are in~$\E(Y)$, in a particular way.
\begin{defn}
Let~$Y$ be an ANR and~$X_i$ converge to~$X$ in the upper Vietoris topology. We say that functions~$f_i:X_i\to Y$ are \textbf{asymptotically homotopic} to~$f:X\to Y$ if for some extension~$\tilde{f}:U\to Y$ of~$f$ to a neighborhood~$U$ of~$X$,~$f_i$ is homotopic to~$\tilde{f}|_{X_i}$ for sufficiently large~$i$. 
\end{defn}

There are usually many ways of extending~$f$ to a neighborhood of~$X$. However, any two of them are homotopic on a smaller neighborhood, which implies that the definition does not depend on the choice of an extension of~$f$, and is intrinsic to the functions~$f_i,f$.

\begin{prop}
Whether~$f_i:X_i\to Y$ are asymptotically homotopic to~$f:X\to Y$ does not depend on the choice of a neighborhood~$U$ of~$X$ and an extension~$\tilde{f}:U\to Y$.
\end{prop}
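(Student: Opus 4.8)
The plan is to show that if $\tilde{f}:U\to Y$ and $\tilde{g}:V\to Y$ are two extensions of $f:X\to Y$ to neighborhoods $U,V$ of $X$, then for sufficiently large $i$ the restrictions $\tilde{f}|_{X_i}$ and $\tilde{g}|_{X_i}$ are homotopic; this immediately implies that the property "$f_i$ homotopic to $\tilde{f}|_{X_i}$ for large $i$" is insensitive to the choice of extension. The key topological input is that $\tilde f$ and $\tilde g$ agree on $X$, hence are homotopic on a (possibly smaller) common neighborhood of $X$, and that $X_i\to X$ in the upper Vietoris topology forces $X_i$ to eventually lie inside any neighborhood of $X$.

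First I would fix a common neighborhood. Since $\tilde f$ and $\tilde g$ are continuous on $U\cap V$ and agree on the compact set $X$, and $Y$ is a compact ANR, Lemma \ref{lem_hom_ANR} gives $\alpha>0$ such that any two $\alpha$-close maps into $Y$ are homotopic. By uniform continuity of $\tilde f$ and $\tilde g$ near $X$ and the fact that they agree on $X$, there is an open neighborhood $W$ of $X$ with $W\subseteq U\cap V$ on which $\rho_W(\tilde f,\tilde g)<\alpha$; hence $\tilde f|_W$ and $\tilde g|_W$ are homotopic via some homotopy $H:W\times[0,1]\to Y$.

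Next I would use the convergence $X_i\to X$ in $\upV$: by definition of the upper Vietoris topology, $\{K\in\K(Q):K\subseteq W\}$ is an open neighborhood of $X$, so $X_i\subseteq W$ for all sufficiently large $i$. For such $i$, restricting $H$ to $X_i\times[0,1]$ yields a homotopy between $\tilde f|_{X_i}$ and $\tilde g|_{X_i}$. Consequently, for large $i$, $f_i$ is homotopic to $\tilde f|_{X_i}$ if and only if it is homotopic to $\tilde g|_{X_i}$ (homotopy being an equivalence relation), which is exactly the claimed independence.

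The main obstacle — really the only subtle point — is producing the common neighborhood $W$ on which the two extensions are $\alpha$-close: one must argue that agreement on the compact set $X$ together with continuity propagates to a genuine open neighborhood, using compactness of $X$ to pass from pointwise agreement to a uniform $\alpha$-bound. Everything else (invoking Lemma \ref{lem_hom_ANR}, extracting the homotopy, and using the definition of $\upV$ to trap the $X_i$ inside $W$) is routine.
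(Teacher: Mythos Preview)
Your proposal is correct and follows essentially the same approach as the paper: both argue that two extensions of $f$ agree on the compact set $X$, hence are $\alpha$-close (and therefore homotopic via Lemma~\ref{lem_hom_ANR}) on some neighborhood $W$ of $X$, and then use upper Vietoris convergence to trap $X_i$ inside $W$ for large $i$. The paper phrases the existence of $W$ slightly more tersely (taking $W=\Ne{\epsilon}{X}$ for small enough $\epsilon$), but your more explicit use of continuity and compactness to produce $W$ is exactly the underlying justification.
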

\begin{proof}
Assume that for some~$U$ and~$\tilde{f}:U\to Y$ and sufficiently large~$i$,~$f_i$ is homotopic to~$\tilde{f}|_{X_i}$. Let~$V$ be a neighborhood of~$X$ and~$g:V\to Y$ a continuous extension of~$f$. As~$\tilde{f}$ and~$g$ coincide on~$X$, there exists~$\epsilon>0$ such that their restrictions~$\tilde{f}|_{\Ne{\epsilon}{X}}$ and~$g|_{\Ne\epsilon{X}}$ are as close as needed so that they are homotopic. For sufficiently large~$i$,~$f_i$ is homotopic to~$\tilde{f}|_{X_i}$ by assumption and~$X_i$ is contained in~$\Ne{\epsilon}{X}$, so~$\tilde{f}|_{X_i}$ is homotopic to~$g|_{X_i}$. Therefore,~$f_i$ is homotopic to~$g|_{X_i}$.
\end{proof}

\begin{prop}\label{prop_approx_pos}[Asymptotic homotopy preserves continuous extensibility]
Let~$Y$ be an ANR,~$(X_i,A_i)_{i\in\N}$ and~$(X,A)$ be pairs in~$Q$ such that~$(X_i,A_i)$ converge to~$(X,A)$ in the upper Vietoris topology. Assume that~$f_i:A_i\to Y$ are asymptotically homotopic to~$f:A\to Y$. If~$f$ has a continuous extension on~$X$, then for sufficiently large~$i$,~$f_i$ has a continuous extension on~$X_i$. 
\end{prop}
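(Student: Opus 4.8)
The plan is to reduce the statement to Borsuk's homotopy extension theorem (Theorem~\ref{thm_hom_ext_ANR}), once we produce, for all large~$i$, a continuous function on~$X_i$ that extends a function homotopic to~$f_i$.

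First I would upgrade the given extension~$F:X\to Y$ of~$f$ to an extension defined on a full open neighborhood of~$X$ in~$Q$. Using that~$Y$ is an ANR, pick an open set~$W\supseteq Y$ together with a retraction~$r:W\to Y$; extend~$F$ to~$\hat{F}:Q\to Q$ by the Tietze extension theorem; then~$U:=\hat{F}^{-1}(W)$ is an open neighborhood of~$X$ and~$\tilde{F}:=r\circ\hat{F}|_{U}:U\to Y$ is a continuous extension of~$F$, hence of~$f$ (it agrees with~$F$ on~$X$ because~$\hat{F}|_X=F$ takes values in~$Y$ and~$r|_Y=\id_Y$).

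Next, since~$X_i$ converges to~$X$ in the upper Vietoris topology and~$U$ is an open neighborhood of~$X$, we have~$X_i\subseteq U$ for all sufficiently large~$i$; for such~$i$, the restriction~$\tilde{F}|_{X_i}:X_i\to Y$ is a continuous extension of~$\tilde{F}|_{A_i}:A_i\to Y$. On the other hand,~$\tilde{F}$ restricts to an extension of~$f$ on a neighborhood of~$A$, so by the preceding proposition (the asymptotic-homotopy notion does not depend on the chosen extension), the hypothesis that~$f_i$ is asymptotically homotopic to~$f$ yields that~$f_i$ is homotopic to~$\tilde{F}|_{A_i}$ for all sufficiently large~$i$. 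Taking~$i$ large enough that both conditions hold, I would apply Borsuk's homotopy extension theorem to the pair~$(X_i,A_i)$, with the function~$\tilde{F}|_{A_i}$, its extension~$\tilde{F}|_{X_i}$, and the homotopy from~$\tilde{F}|_{A_i}$ to~$f_i$, obtaining a continuous extension of~$f_i$ to~$X_i$.

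I do not expect a genuine obstacle here; the only points needing care are (a) verifying that the ANR-based neighborhood extension really lands in~$Y$ and agrees with~$F$ on~$X$, which works precisely because~$r$ fixes~$Y$ pointwise, and (b) aligning the two ``sufficiently large~$i$'' thresholds—one coming from upper Vietoris convergence ($X_i\subseteq U$), the other from the asymptotic homotopy—which is handled by taking their maximum. The degenerate case~$A=\emptyset$ is trivial, since then~$A_i=\emptyset$ for large~$i$.
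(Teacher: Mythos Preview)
Your argument is correct and follows essentially the same route as the paper's proof: extend~$F$ to a neighborhood~$U$ of~$X$ using the ANR property, use upper Vietoris convergence to get~$X_i\subseteq U$ eventually, invoke the asymptotic-homotopy definition to make~$f_i$ homotopic to~$\tilde{F}|_{A_i}$, and conclude via Borsuk's homotopy extension theorem. You are somewhat more explicit about the construction of~$\tilde{F}$ and the application of Borsuk's theorem, but the structure is identical.
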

\begin{proof}
Let~$F:X\to Y$ be a continuous extension of~$f$. It has a continuous extension~$\tilde{F}:U\to Y$ for some neighborhood~$U$ of~$X$. Note that~$U$ is a neighborhood of~$A$ and~$\tilde{F}$ is an extension of~$f$. As~$f_i$ is asymptotically homotopic to~$f$,~$f_i$ is homotopic to~$\tilde{F}|_{A_i}$ for large~$i$. The function~$\tilde{F}|_{A_i}$ has a continuous extension to~$X_i$, namely~$\tilde{F}|_{X_i}$, so~$f_i$ also has a continuous extension as well.
\end{proof}

In particular, when all the involved pairs have the same second component, we obtain the following simple result.
\begin{cor}\label{cor_approx_sameA}
Let~$Y$ be an ANR. Let~$(X_i,A_i)_{i\in\N}$  and~$(X,A)$ be pairs in~$Q$ such that~$A_i=A$ for all~$i$ and~$X_i$ converge to~$X$ in the upper Vietoris topology. If~$f:A\to Y$ has no continuous extension to~$X_i$ for any~$i\in\N$, then~$f$ has no continuous extension to~$X$.
\end{cor}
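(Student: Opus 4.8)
The plan is to prove the contrapositive: assuming $f$ has a continuous extension to $X$, I will exhibit a continuous extension of $f$ to some $X_i$. The whole argument rests on the neighborhood extension property of ANRs together with the meaning of convergence in the upper Vietoris topology.

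First, let $F:X\to Y$ be a continuous extension of $f$. Since $Y$ is a compact ANR, Fact~\ref{fact_ANR}~(4) applied to the pair $(Q,X)$ provides an open neighborhood $U$ of $X$ in $Q$ and a continuous extension $\tilde F:U\to Y$ of $F$ (so $\tilde F$ also extends $f$, as $A\subseteq X$). Next, I unwind the hypothesis $X_i\to X$ in~$\upV$: the set $\{K\in\K(Q):K\subseteq U\}$ is a basic $\upV$-neighborhood of $X$, hence $X_i\subseteq U$ for all sufficiently large~$i$. Fix such an~$i$. Then $\tilde F|_{X_i}:X_i\to Y$ is continuous and, since $A\subseteq X\subseteq U$, it restricts on $A$ to $\tilde F|_A=f$. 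Thus $\tilde F|_{X_i}$ is a continuous extension of $f$ to $X_i$, contradicting the hypothesis. Therefore $f$ has no continuous extension to $X$.

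An equivalent route is to invoke Proposition~\ref{prop_approx_pos} directly by taking $f_i=f$ for every $i$. Since $A_i=A$ for all $i$, the constant sequence $A_i$ converges to $A$ in the upper Vietoris topology, and for any neighborhood extension $\tilde f$ of $f$ one has $\tilde f|_{A_i}=f=f_i$, so $(f_i)$ is (trivially) asymptotically homotopic to $f$. If $f$ had a continuous extension to $X$, Proposition~\ref{prop_approx_pos} would yield a continuous extension of $f_i=f$ to $X_i$ for large $i$, against the hypothesis. Either way, I do not anticipate a genuine obstacle here; the only points requiring a moment's care are the correct application of Fact~\ref{fact_ANR}~(4) (to the ambient pair $(Q,X)$ rather than to $(X,A)$) and the translation of $\upV$-convergence into the statement ``$X_i\subseteq U$ eventually''.
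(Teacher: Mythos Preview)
Your proposal is correct. The paper gives no separate proof for this corollary, so its implicit argument is exactly your second route: apply Proposition~\ref{prop_approx_pos} with~$f_i=f$, which is trivially asymptotically homotopic to~$f$ since~$A_i=A$. Your first, direct argument simply unpacks that proposition in this special case and is equally valid.
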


\subsection{The invariant \texorpdfstring{$\E_n$}{En}}

For~$n\in\N$, the~$n$-dimensional sphere~$\SS_n$ is an ANR. We define~$\E_n=\E(\SS_n)$. This invariant will be used to cover several examples.
\begin{defn}[The invariant $\E_n$]
For~$n\in\N$, a pair~$(X,A)$ is in~$\E_n$ if there exists a continuous function~$f:A\to \SS_n$ that has no continuous extension~$F:X\to\SS_n$.
\end{defn}

Corollary \ref{cor_EANR_sigma} implies that~$\E_n$ is a~$\Sigma^0_2$ invariant, therefore every~$\E_n$-minimal pair has strong computable type, by applying Theorem~\ref{thm_sigma2}.

The simplest example of a pair which is~$\E_n$-minimal is given by the~$n+1$-ball and its bounding~$n$-sphere. We first explain why is it in~$\E_n$. More generally, whether a pair~$(\B_{m+1},\SS_{m})$ satisfies~$\E_n$ depends on the~$m$th homotopy group of the~$n$-sphere.

\begin{prop}\label{prop_ball_sphere_en}
The pair~$(\B_{m+1},\SS_{m})$ is in~$\E_n$ if and only if the homotopy group~$\pi_{m}(\SS_{n})$ is non-trivial. 

In particular,~$(\B_{n+1},\SS_{n})$ is in~$\E_n$ and~$(\B_{m+1},\SS_{m})$ is not in~$\E_n$ for~$m<n$.
\end{prop}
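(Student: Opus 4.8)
The plan is to reduce the statement to the classical identification of $\B_{m+1}$ with the cone over $\SS_m$, which converts extendability across $\B_{m+1}$ into null-homotopy on $\SS_m$.

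First I would record the elementary reduction: for any topological space $Z$, a continuous map $f:\SS_m\to Z$ extends to a continuous map $F:\B_{m+1}\to Z$ if and only if $f$ is null-homotopic. Identifying $\B_{m+1}$ with $\cone{\SS_m}=(\SS_m\times[0,1])/(\SS_m\times\{0\})$, with $\SS_m$ sitting as $\SS_m\times\{1\}$, a continuous extension of $f$ to $\B_{m+1}$ is exactly a continuous map $\SS_m\times[0,1]\to Z$ that is constant on $\SS_m\times\{0\}$ and equals $f$ on $\SS_m\times\{1\}$, i.e.\ a homotopy from a constant map to $f$; conversely, any such homotopy descends to the cone. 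Taking $Z=\SS_n$, this gives that $(\B_{m+1},\SS_m)\in\E_n$ if and only if some continuous $f:\SS_m\to\SS_n$ is not null-homotopic, that is, iff the set of homotopy classes $[\SS_m;\SS_n]$ has more than one element.

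Next I would pass from ``$[\SS_m;\SS_n]$ non-trivial'' to ``$\pi_m(\SS_n)$ non-trivial''. For $n\geq 1$ the sphere $\SS_n$ is path-connected, so every continuous map $\SS_m\to\SS_n$ is freely homotopic to a based map, and a based map that is freely null-homotopic is based null-homotopic; hence $[\SS_m;\SS_n]$ is a singleton iff every based map $\SS_m\to\SS_n$ is based null-homotopic, i.e.\ iff $\pi_m(\SS_n)=0$. For $n=0$ one argues directly: if $m\geq 1$ then $\SS_m$ is connected, so every map $\SS_m\to\SS_0$ is constant and also $\pi_m(\SS_0)=0$, so both sides of the equivalence fail; if $m=0$ then $\id_{\SS_0}$ has no continuous extension to $\B_1=[0,1]$ (a connected space cannot surject continuously onto $\SS_0$), and $\pi_0(\SS_0)$ is non-trivial, so both sides hold.

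Finally, the ``in particular'' assertions follow from the standard homotopy groups of spheres: $\pi_n(\SS_n)\cong\Z$ is non-trivial since the identity map has degree $1$ and is therefore not null-homotopic, whence $(\B_{n+1},\SS_n)\in\E_n$; and $\pi_m(\SS_n)=0$ for $m<n$ (by cellular, or simplicial, approximation every map $\SS_m\to\SS_n$ is homotopic to a non-surjective one, which factors up to homotopy through the contractible space obtained by removing a point from $\SS_n$, hence is null-homotopic), whence $(\B_{m+1},\SS_m)\notin\E_n$ for $m<n$. I do not anticipate a genuine obstacle: the content is classical, and the only points requiring care are keeping the free-versus-based homotopy distinction straight and disposing of the low-dimensional edge cases $n\in\{0,1\}$.
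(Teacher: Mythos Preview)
Your proposal is correct and follows the same approach as the paper: the key idea is the identification of $\B_{m+1}$ with $\cone{\SS_m}$, so that extending $f:\SS_m\to\SS_n$ to $\B_{m+1}$ is precisely giving a null-homotopy of $f$. The paper's proof is a single sentence stating exactly this; your version is more thorough in that you explicitly address the passage from free homotopy classes $[\SS_m;\SS_n]$ to the based group $\pi_m(\SS_n)$, dispose of the $n=0$ edge case, and justify the ``in particular'' claims via $\pi_n(\SS_n)\cong\Z$ and cellular approximation, all of which the paper leaves implicit.
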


\begin{proof}
As~$\B_{m+1}$ is the cone of~$\SS_m$, a continuous extension of~$f:\SS_m\to\SS_n$ to~$\B_{m+1}$ is nothing else than a null-homotopy of~$f$.
\end{proof}

\subsubsection{Relationship with dimension}
The dimension of a space has a strong impact on the possibility of extending functions to a sphere. First, if~$X$ has dimension at most~$n$, then any continuous function from a closed subset to~$\SS_n$ extends continuously to~$X$.

\begin{thm}[Theorem 3.6.3 in \cite{2001vanMill}]\label{thm_en_dimension}
Let~$n\in\N$. If~$(X,A)$ is a pair with~$\dim(X\setminus A)\leq n$, then~$(X,A)\notin\E_n$.

In particular, if~$\dim(X)\leq n$ then~$(X,A)\notin \E_n$.
\end{thm}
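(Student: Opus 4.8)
The plan is to establish the relative statement, the ``in particular'' clause being the special case $A=\emptyset$. So I must show: if $\dim(X\setminus A)\le n$, then every continuous $f\colon A\to\SS_n$ admits a continuous extension $F\colon X\to\SS_n$. The first step is to reduce to the absolute case $\dim X\le n$ by exploiting that $\SS_n$ is a compact ANR (Fact~\ref{fact_ANR}). By the ANR property, $f$ extends to a continuous $\tilde f\colon U\to\SS_n$ on some open neighbourhood $U\supseteq A$ in $X$. Choose an open set $V$ with $A\subseteq V\subseteq\overline V\subseteq U$ and put $C=X\setminus V$. Then $C$ is closed in $X$ and contained in $X\setminus A$; since $X$ is compact metrizable, covering dimension is monotone on subspaces, so $\dim C\le\dim(X\setminus A)\le n$. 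Moreover the closed subset $\partial V:=\overline V\setminus V=C\cap\overline V$ of $C$ carries the restriction $\tilde f|_{\partial V}\colon\partial V\to\SS_n$.

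The core of the argument is then the classical Alexandroff characterisation of covering dimension: for a compact metrizable space $Z$, one has $\dim Z\le n$ if and only if every continuous map from a closed subset of $Z$ into $\SS_n$ extends continuously over $Z$. Applying the forward direction to $Z=C$, to the closed subset $\partial V$, and to the map $\tilde f|_{\partial V}$ produces a continuous extension $F_C\colon C\to\SS_n$. Since $C\cup\overline V=X$, $C\cap\overline V=\partial V$, and $F_C$ agrees with $\tilde f$ on $\partial V$ (both being closed sets), the pasting lemma yields a continuous map $F\colon X\to\SS_n$ with $F|_C=F_C$ and $F|_{\overline V}=\tilde f|_{\overline V}$. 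As $A\subseteq V\subseteq\overline V$, we get $F|_A=\tilde f|_A=f$, so $(X,A)\notin\E_n$.

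It remains to justify the Alexandroff characterisation — this is exactly Theorem~3.6.3 of \cite{2001vanMill}, and it is the step that carries the real topological content. For completeness I would recall the proof of the direction we use: given a closed $A'\subseteq Z$ and $g\colon A'\to\SS_n\subseteq\R^{n+1}$, extend $g$ to $G\colon Z\to\R^{n+1}$ by the Tietze extension theorem; by continuity $G$ has norm bounded away from $0$ on a closed neighbourhood $N$ of $A'$, so $G^{-1}(0)\subseteq Z\setminus N$. The key lemma is that a continuous map of an at most $n$-dimensional space into $\R^{n+1}$ can be approximated, rel a closed set on which it already avoids $0$, by a map missing $0$ altogether; this is a general-position fact, obtained by factoring the map up to a small error through a map into the nerve of a fine finite open cover — an at most $n$-dimensional polyhedron — whose image is pushed off $0$ by a generic small translation of $\R^{n+1}$. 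Applying this lemma to $G$ over a closed set complementary to $N$, regluing with $G$ over $N$, and composing the resulting zero-free map $Z\to\R^{n+1}\setminus\{0\}$ with the radial retraction onto $\SS_n$ (which fixes $\SS_n$ and hence does not alter the values on $A'$) gives the desired extension. The main obstacle is precisely this general-position lemma; the rest is the ANR reduction, monotonicity of dimension, and two applications of the pasting lemma.
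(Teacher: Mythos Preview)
The paper does not give its own proof of this theorem; it simply quotes the statement from van Mill and uses it as a black box. Your argument is the standard one and is correct: extend over a neighbourhood of~$A$ using that~$\SS_n$ is an ANR, apply the Alexandroff extension characterisation of covering dimension on the closed complement~$C=X\setminus V$ (using the subset theorem for separable metric spaces to get~$\dim C\le\dim(X\setminus A)\le n$), and glue. Your sketch of the Alexandroff direction via Tietze, nerve approximation, and general position off~$0$ in~$\R^{n+1}$ is also the classical route.

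One small slip: the ``in particular'' clause is \emph{not} the special case~$A=\emptyset$ --- that case is vacuous, since there is nothing to extend. Rather, the clause asserts that if~$\dim X\le n$ then~$(X,A)\notin\E_n$ for \emph{every} closed~$A\subseteq X$, and this follows from the main statement via the subset theorem:~$\dim(X\setminus A)\le\dim X\le n$.
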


In some cases, if a pair is not in~$\E_n$ then no subpair is in~$\E_n$.
\begin{lem}\label{lem_en_inclusion}
Let~$(X,A)$ be a pair with~$\dim(A)\leq n$. If~$(X,A)\notin \E_n$, then for every~$(Y,B)\subseteq (X,A)$ one has~$(Y,B)\notin \E_n$.
\end{lem}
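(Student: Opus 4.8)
The plan is to chain together two successive extension steps, using the dimension hypothesis only in the first one. Fix a subpair~$(Y,B)\subseteq (X,A)$, so that~$B\subseteq Y\subseteq X$ and~$B\subseteq A$, and fix an arbitrary continuous function~$f:B\to\SS_n$. I want to produce a continuous extension of~$f$ to~$Y$.

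First I would extend~$f$ across~$A$. Since~$B$ is a closed subset of~$A$ and dimension is monotone under passing to subspaces, one has~$\dim(A\setminus B)\leq \dim(A)\leq n$. Applying Theorem~\ref{thm_en_dimension} to the pair~$(A,B)$ yields~$(A,B)\notin\E_n$, so~$f$ admits a continuous extension~$g:A\to\SS_n$. Next, since by hypothesis~$(X,A)\notin\E_n$, the function~$g$ admits a continuous extension~$G:X\to\SS_n$. Finally, because~$B\subseteq Y\subseteq X$, the restriction~$G|_Y:Y\to\SS_n$ is continuous and satisfies~$G|_B=g|_B=f$, so~$G|_Y$ is a continuous extension of~$f$ to~$Y$. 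As~$f$ was arbitrary, this shows~$(Y,B)\notin\E_n$.

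There is no real obstacle here; the only point requiring a moment's care is that the dimension hypothesis~$\dim(A)\leq n$ must be transported to~$\dim(A\setminus B)\leq n$ so that Theorem~\ref{thm_en_dimension} is applicable to the intermediate pair~$(A,B)$, and this is immediate from monotonicity of covering dimension on subspaces of separable metrizable spaces.
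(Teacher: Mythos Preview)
Your proof is correct and follows essentially the same approach as the paper's: extend~$f$ from~$B$ to~$A$ using the dimension hypothesis via Theorem~\ref{thm_en_dimension}, then from~$A$ to~$X$ using~$(X,A)\notin\E_n$, and finally restrict to~$Y$. The only cosmetic difference is that the paper invokes the ``in particular'' clause of Theorem~\ref{thm_en_dimension} directly from~$\dim(A)\leq n$, so the detour through~$\dim(A\setminus B)$ and monotonicity is unnecessary.
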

\begin{proof}
As~$\dim(A)\leq n$, one has~$(A,B)\notin \E_n$. Therefore,~$(X,A)\notin\E_n$ implies~$(Y,B)\notin\E_n$: every function~$f$ on~$B$ has an extension to~$A$ and then to~$X$, whose restriction to~$Y$ is an extension of~$f$.
\end{proof}

This result gives a simple way to prove~$\E_n$-minimality in certain circumstances.
\begin{cor}\label{cor_minimality}
Let~$(X,A)$ be a pair in~$\E_n$, where~$A$ has empty interior and~$\dim(A)\leq n$. If for every~$A\subseteq Y\subsetneq X$ one has~$(Y,A)\notin\E_n$, then~$(X,A)$ is~$\E_n$-minimal.
\end{cor}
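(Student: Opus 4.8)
The plan is to apply Lemma \ref{lem_cofinal}, using the fact that $\E_n$ is an upper set in $\upVV$ (it is $\Sigma^0_2$, but more to the point it is easily seen to be an upper set directly: enlarging $X$ while keeping $A$ fixed only makes extension harder). Since $A$ has empty interior in $X$, one of the cofinal families listed after Lemma \ref{lem_cofinal} applies: the family $\F=\{(Y,A):A\subseteq Y\subsetneq X\}$ is cofinal among the proper subpairs of $(X,A)$. Indeed, given any proper subpair $(Y',B')\subsetneq(X,A)$, we have $Y'\subsetneq X$, and because $A$ has empty interior the set $Y'\cup A$ is still a \emph{proper} closed subset of $X$ (its complement is a nonempty open set, as $X\setminus(Y'\cup A)$ is open and nonempty — here one uses that $X\setminus Y'$ is open nonempty and $A$ has empty interior, so $X\setminus Y'$ is not contained in $A$); thus $(Y',B')\subseteq (Y'\cup A, A)\in\F$.

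First I would record the two elementary observations: $(X,A)\in\E_n$ by hypothesis, and $\E_n$ is an upper set. Then I would invoke cofinality of $\F$ as above. Finally, the hypothesis of the corollary is precisely that no member of $\F$ lies in $\E_n$: for every $Y$ with $A\subseteq Y\subsetneq X$ we are given $(Y,A)\notin\E_n$. By Lemma \ref{lem_cofinal}, $(X,A)$ is therefore $\E_n$-minimal.

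The condition $\dim(A)\le n$ is not used in this argument at the level of the corollary statement itself — it is only needed to make the hypothesis ``$(Y,A)\notin\E_n$ for all $A\subseteq Y\subsetneq X$'' verifiable in practice via Lemma \ref{lem_en_inclusion} (which is how the corollary is meant to be applied). So there is no real obstacle here; the only point requiring a moment's care is the empty-interior argument showing that $Y'\cup A$ is a proper subset of $X$, i.e.\ that enlarging a proper subset by $A$ cannot fill up all of $X$ when $A$ has empty interior.
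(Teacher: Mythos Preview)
Your argument has a genuine gap: you assert that~$\E_n$ is an upper set, but it is not. Your parenthetical justification (``enlarging~$X$ while keeping~$A$ fixed only makes extension harder'') establishes monotonicity in the first coordinate only. Being an upper set in~$\K^2(Q)$ requires that enlarging \emph{both} coordinates preserves membership, and enlarging~$A$ can destroy it: for instance~$(\B_{n+1},\SS_n)\in\E_n$ but~$(\B_{n+1},\B_{n+1})\notin\E_n$, since any function on~$\B_{n+1}$ trivially extends to itself. The paper notes this explicitly right after the corollary, and this is precisely why Lemma~\ref{lem_cofinal} does \emph{not} apply here.

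Consequently, your claim that the hypothesis~$\dim(A)\le n$ is unused is wrong: it is essential. The paper's proof replaces the (false) upper-set property by Lemma~\ref{lem_en_inclusion}, which says that if~$\dim(A)\le n$ and~$(Y\cup A,A)\notin\E_n$, then every subpair~$(Y,B)\subseteq(Y\cup A,A)$ is also outside~$\E_n$. That lemma genuinely needs the dimension bound (its proof extends a function from~$B$ to~$A$ using~$\dim(A)\le n$). Without it the corollary is false: take~$n=2$ and the pair~$(\B_4,\SS_3)$. Then~$\SS_3$ has empty interior, and for every~$\SS_3\subseteq Y\subsetneq\B_4$ the sphere~$\SS_3$ is a retract of~$Y$, so~$(Y,\SS_3)\notin\E_2$; yet~$(\B_4,\SS_3)$ is \emph{not}~$\E_2$-minimal, since the proper subpair~$(\B_3,\SS_2)$ is in~$\E_2$. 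Your cofinality step is fine; the error is solely in invoking Lemma~\ref{lem_cofinal} for a property that is not an upper set.
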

\begin{proof}
Let~$(Y,B)$ be a proper subpair of~$(X,A)$. As~$A$ has empty interior,~$Y\cup A$ is a proper subset of~$X$, so~$(Y\cup A,A)\notin \E_n$ by assumption. Therefore,~$(Y,B)\notin \E_n$ by Lemma \ref{lem_en_inclusion}.
\end{proof}

The statement is reminiscent of the discussion about minimality at the end of Section \ref{sec_first_char}. However,~$\E_n$ is not an upper set (indeed,~$(\B_{n+1},\SS_n)\in\E_n$ but~$(\B_{n+1},\B_{n+1})\notin\E_n$) so Lemma \ref{lem_cofinal} does not apply. Indeed, the dimension assumption cannot be dropped in general. The pair~$(\B_4,\SS_3)$ is in~$\E_2$ as witnessed by the Hopf map, but it is not~$\E_2$-minimal, as~$(\B_3,\SS_2)\subsetneq (\B_4,\SS_3)$ is also in~$\E_2$. However, if~$\SS_3\subseteq Y\subsetneq\B_4$ then~$\SS_3$ is a retract of~$Y$ so~$(Y,\SS_3)\notin\ E_2$.

We now apply this technique.
\begin{prop}\label{prop_En_ball}
For every~$n\in\N$, the pair~$(\B_{n+1},\SS_{n})$ is~$\E_n$-minimal.
\end{prop}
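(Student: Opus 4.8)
The plan is to apply Corollary \ref{cor_minimality} with $X=\B_{n+1}$ and $A=\SS_n$, so there are essentially two things to do: verify the hypotheses of that corollary, and then carry out the one non-formal step it reduces to, namely showing that $(Y,\SS_n)\notin\E_n$ for every $Y$ with $\SS_n\subseteq Y\subsetneq\B_{n+1}$.

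First the hypotheses of Corollary \ref{cor_minimality}. That $(\B_{n+1},\SS_n)\in\E_n$ is Proposition \ref{prop_ball_sphere_en} (the identity map $\SS_n\to\SS_n$ is not null-homotopic, hence, $\B_{n+1}$ being the cone of $\SS_n$, has no continuous extension to $\B_{n+1}$). That $\SS_n$ has empty interior in $\B_{n+1}$ is immediate, and $\dim(\SS_n)=n\leq n$. So it only remains to treat the subpairs $(Y,\SS_n)$ with $\SS_n\subseteq Y\subsetneq\B_{n+1}$.

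Fix such a $Y$ and pick a point $x\in\B_{n+1}\setminus Y$; since $\SS_n\subseteq Y$, this $x$ lies in the open ball $\B_{n+1}\setminus\SS_n$. Radial projection away from $x$, sending each $y\neq x$ to the unique point of $\SS_n$ on the ray from $x$ through $y$, is a continuous retraction $r:\B_{n+1}\setminus\{x\}\to\SS_n$ (for $n=0$ this is the map sending $[-1,x)$ to $-1$ and $(x,1]$ to $1$). As $Y\subseteq\B_{n+1}\setminus\{x\}$, the restriction $r|_Y:Y\to\SS_n$ is a retraction, so $\SS_n$ is a retract of $Y$. Recall from the beginning of Section \ref{sec_invariants} that if $B$ is a retract of $Z$ then $(Z,B)\notin\E(Y')$ for any space $Y'$; hence $(Y,\SS_n)\notin\E_n$. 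Corollary \ref{cor_minimality} now yields that $(\B_{n+1},\SS_n)$ is $\E_n$-minimal.

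I do not expect any real obstacle: each step is either a routine verification or the standard radial retraction argument, and the only mild subtlety is making sure the radial projection is well defined and continuous on the punctured ball (and handling the degenerate case $n=0$ separately, as indicated above).
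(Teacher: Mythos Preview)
Your proof is correct and follows exactly the same route as the paper: invoke Proposition~\ref{prop_ball_sphere_en} for~$(\B_{n+1},\SS_n)\in\E_n$, observe that any proper~$Y\supseteq\SS_n$ retracts to~$\SS_n$ (the paper leaves the radial projection implicit), and conclude via Corollary~\ref{cor_minimality}. The only difference is that you spell out the retraction and the degenerate case~$n=0$ explicitly.
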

\begin{proof}
By Proposition \ref{prop_ball_sphere_en},~$(\B_{n+1},\SS_n)$ is in~$\E_n$. If~$\SS_n\subseteq Y\subsetneq \B_{n+1}$, then~$Y$ retracts to~$\SS_n$ so~$(Y,\SS_n)\notin \E_n$. Therefore by Corollary \ref{cor_minimality}, no proper subpair~$(Y,B)$ of~$(\B_{n+1},\SS_n)$ is in~$\E_n$. As a result,~$(\B_{n+1},\SS_n)$ is~$\E_n$-minimal.
\end{proof}

Therefore, our results give an alternative proof that the pair~$(\B_{n+1},\SS_n)$ has (strong) computable type, which was shown by Miller \cite{Miller02}. We will give more examples of~$\E_n$-minimal pairs in the next sections.

\subsection{The invariant \texorpdfstring{$\H_n$}{Hn}}

It was shown in~\cite{Ilja13,2018manifolds} that closed manifolds and compact manifolds with boundary have computable type. In this section, we identify a~$\Sigma^0_2$ invariant for which manifolds are minimal.

In the~$n$-sphere~$\SS_n$, let~$p$ be an arbitrary point. We recall that two continuous functions of pairs~$f,g:(X,A)\to (\SS_n,p)$ are \textbf{homotopic relative to~$A$} if there exists a homotopy between~$f$ and~$g$ that is constant on~$A$, i.e.~a continuous function~$H:X\times [0,1]\to\SS_n$ such that~$H(x,0)=f(x)$,~$H(x,1)=g(x)$ for all~$x\in X$, and~$H(a,t)=p$ for all~$t\in [0,1]$ and~$a\in A$. A continuous function~$f:(X,A)\to (\SS_n,p)$ is \textbf{null-homotopic relative to~$A$} if~$f$ is homotopic, relative to~$A$, to the constant function~$p$.

\begin{defn}[The invariant $\H_n$]
A pair~$(X,A)$ is in~$\H_{n}$ if there exists a continuous function
of pairs~$f:(X,A)\rightarrow(\SS_{n},p)$ which is not null-homotopic
relative to~$A$.

A space~$X$ is in~$\H_{n}$ if the pair~$(X,\emptyset)$ is in~$\H_{n}$, i.e.~if there exists a continuous function~$f:X\to\SS_n$ which is not null-homotopic.
\end{defn}

Now that for single spaces,~$\H_n$ is precisely~$\H(\SS_n)$ from Definition \ref{def_HY}.

We first prove that~$\H_{n}$ is a~$\Sigma^0_2$ invariant, by reducing it to~$\E_n$. We recall from Definition \ref{def_cone} that for a space~$Y$,~$\cone{Y}$ is the quotient of~$Y\times [0,1]$ obtained by identifying all the pairs~$(y,0)$. A function~$f:Y\to Z$ is null-homotopic if and only if it can be extended to a continuous function~$\tilde{f}:\cone{Y}\to Z$, where~$Y$ is embedded in~$\cone{Y}$ as~$Y\times \{1\}$. A similar equivalence holds for pairs, as follows.

\begin{prop}\label{prop_hn_cone}
For a pair~$(X,A)$, one has
\begin{equation*}
(X,A)\in \H_{n}\iff \cone{X,A}\in\E_n.
\end{equation*}

Moreover, if~$\cone{X,A}$ is~$\E_n$-minimal, then~$(X,A)$ is~$\H_{n}$-minimal.
\end{prop}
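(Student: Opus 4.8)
The plan is to prove the displayed equivalence in two directions and then deduce the minimality claim from it. Throughout I will use the realization $\cone X \cong (X\times[0,1])/(X\times\{0\})$, writing $[(x,t)]$ for points of the cone and $*$ for the apex, so that $\cone{X,A} = (\cone X,\ B)$ with $B := (X\times\{1\})\cup\cone A$; note $*\in\cone A$. For the direction $(X,A)\in\H_n \Rightarrow \cone{X,A}\in\E_n$ I would produce an explicit witness. Let $f:(X,A)\to(\SS_n,p)$ be continuous and not null-homotopic relative to $A$. Define $g:B\to\SS_n$ by $g([(x,1)])=f(x)$ and $g([(a,t)])=p$; this is well defined and continuous by the pasting lemma, since the two closed pieces $X\times\{1\}$ and $\cone A$ agree on $A\times\{1\}$ (where $f\equiv p$). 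If $g$ had a continuous extension $G:\cone X\to\SS_n$, then $\phi(x,t):=G([(x,t)])$ would be a homotopy with $\phi(\cdot,1)=f$, $\phi(\cdot,0)\equiv p$ (because $G(*)=g(*)=p$) and $\phi(a,t)\equiv p$ for $a\in A$; reversing the time parameter gives a null-homotopy of $f$ relative to $A$, a contradiction. Hence $g$ witnesses $\cone{X,A}\in\E_n$.

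For the converse I would argue the contrapositive: assuming $(X,A)\notin\H_n$, show that every continuous $g:B\to\SS_n$ extends to $\cone X$. The point is that $g$ need not send $A$ to $p$, so I would first normalize $g$ using Borsuk's homotopy extension theorem (Theorem~\ref{thm_hom_ext_ANR}), which applies since $\SS_n$ is an ANR. As $\cone A$ is contractible, $g|_{\cone A}$ is null-homotopic, hence homotopic to the constant map $c_p$ (using path-connectedness of $\SS_n$ for $n\ge 1$; the case $n=0$ is handled by the analogous, simpler remark that an $\SS_0$-valued map on the connected space $\cone A$ is constant, and by homogeneity of $\SS_0$). Applying Theorem~\ref{thm_hom_ext_ANR} to the pair $(B,\cone A)$ yields $g'\simeq g$ with $g'|_{\cone A}=c_p$. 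Then $f:=g'|_{X\times\{1\}}$ satisfies $f|_A=c_p$, i.e.\ $f:(X,A)\to(\SS_n,p)$, so by hypothesis $f$ is null-homotopic relative to $A$ via some $H$ with $H(\cdot,0)=f$, $H(\cdot,1)\equiv p$, $H(a,t)\equiv p$. The map $[(x,t)]\mapsto H(x,1-t)$ is then a well-defined continuous extension of $g'$ to $\cone X$ (constant $c_p$ on $X\times\{0\}$, equal to $f$ on $X\times\{1\}$, equal to $c_p$ on $\cone A$). A final application of Theorem~\ref{thm_hom_ext_ANR} to $(\cone X,B)$ transports this extension of $g'$ to an extension of $g$. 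Hence $\cone{X,A}\notin\E_n$.

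For the minimality statement, suppose $\cone{X,A}$ is $\E_n$-minimal. Then $\cone{X,A}\in\E_n$, so $(X,A)\in\H_n$ by the equivalence. Let $(Y,B')\subsetneq(X,A)$ be a proper subpair. In the standard realization of cones inside $Q$ (namely $\cone X\cong\{(\lambda,\lambda x):\lambda\in[0,1],x\in X\}$), one checks that $\cone{Y,B'}$ is a proper subpair of $\cone{X,A}$: $\cone Y\subseteq\cone X$ and $Y\cup\cone{B'}\subseteq X\cup\cone A$, while if $x\in X\setminus Y$ then $(1,x)\in\cone X\setminus\cone Y$. By $\E_n$-minimality, $\cone{Y,B'}\notin\E_n$, and the equivalence applied to the pair $(Y,B')$ gives $(Y,B')\notin\H_n$. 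Since $\H_n$ is a topological invariant and $\P$-minimality is copy-independent, this shows $(X,A)$ is $\H_n$-minimal.

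I expect the main obstacle to be the converse direction: a witness $g:B\to\SS_n$ for $\cone{X,A}\in\E_n$ may behave arbitrarily on $\cone A$ and in particular need not be basepointed, so it cannot be fed directly into the definition of $\H_n$. Reconciling this with the basepointed definition is precisely what the two applications of Borsuk's homotopy extension theorem are for, and in carrying it out one must be careful about the orientation of the null-homotopy and about the degenerate case $n=0$, where $\SS_n$ fails to be path-connected.
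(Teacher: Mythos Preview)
Your proof is correct and follows essentially the same approach as the paper: both directions hinge on the observation that a null-homotopy of $f:(X,A)\to(\SS_n,p)$ relative to $A$ is precisely an extension to $\cone X$ of the canonical map $B\to\SS_n$ that equals $f$ on $X$ and $p$ on $\cone A$, and the converse is handled by first normalizing an arbitrary witness on $B$ to be constant on $\cone A$ using contractibility of $\cone A$ together with Borsuk's homotopy extension theorem. Your version is slightly more explicit than the paper's in two respects---you spell out the second application of the homotopy extension theorem (transporting the extension from $g'$ back to $g$), and you address the degenerate case $n=0$ separately---but the underlying argument and the minimality deduction are the same.
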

\begin{proof}
Let~$(C,D)=\cone{X,A}=(\cone{X},X\cup \cone{A})$. A function~$f:(X,A)\to(\SS_n,p)$ has a canonical extension~$F:D\to\SS_n$ with constant value~$p$ on~$\cone{A}$. Moreover, a null-homotopy of~$f$ relative to~$A$ is nothing else than a continuous extension of~$F$ to~$C$.

Assume that~$(X,A)\in\H_{n}$ and let~$f:(X,A)\to(\SS_n,p)$ be a continuous function which is not null-homotopic relative to~$A$. By the previous observation, the function~$F:D\to\SS_n$ has no continuous extension to~$C$, therefore~$\cone{X,A}\in\E_n$.

Conversely, assume that~$\cone{X,A}\in\E_n$ and let~$G:D\to\SS_n$ have no continuous extension to~$C$. We cannot directly apply the preliminary observation to~$G$ because it is not necessarily constant on~$\cone{A}$. However,~$G$ is homotopic to a function~$F:D\to\SS_n$ which is constant on~$\cone{A}$. Indeed,~$\cone{A}$ is contractible so the restriction of~$G$ to~$\cone{A}$ is null-homotopic. As~$\SS_n$ is an ANR, the homotopy on~$\cone{A}$ can be extended to a homotopy between~$G:D\to\SS_n$ and a function~$F:D\to\SS_n$ which is constant on~$\cone{A}$ (Theorem \ref{thm_hom_ext_ANR}). Let~$p$ be the constant value. As~$G$ has no extension to~$C$, neither does~$F$. We can now apply the preliminary observation: the restriction~$f:X\to\SS_n$ of~$F$ to~$X$ is not null-homotopic relative to~$A$, therefore~$(X,A)\in\H_{n}$.

Assume that~$\cone{X,A}$ is~$\E_{n}$-minimal. If~$(X',A')$ is a proper subpair of~$(X,A)$, then~$\cone{X',A'}$ is a proper subpair of~$\cone{X,A}$ so it does not satisfy~$\E_n$, therefore~$(X',A')\notin\H_{n}$.
\end{proof}

We will see below (Proposition \ref{prop_minimal}) that under additional assumptions, the implication about minimality can be turned into an equivalence.

This characterization allows one to derive results about~$\H_n$ from results about~$\E_n$. We start with the descriptive complexity of~$\H_n$. We already saw in Corollary \ref{cor_HY_Sigma2} that the invariant~$\H(Y)$ of \emph{single} spaces is~$\Sigma^0_2$ in~$\upV$ when~$Y$ is a compact ANR, and a similar result holds for the invariant~$\H_n$ of \emph{pairs}.
\begin{cor}
$\H_{n}$ is a~$\Sigma^0_2$ invariant in~$\upVV$.
\end{cor}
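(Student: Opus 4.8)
The plan is to reduce the descriptive complexity of $\H_n$ to that of $\E_n$ via the cone construction, exactly as the characterization $(X,A)\in\H_n\iff\cone{X,A}\in\E_n$ from Proposition~\ref{prop_hn_cone} suggests. Since $\E_n=\E(\SS_n)$ is already known to be $\Sigma^0_2$ in $\upVV$ by Corollary~\ref{cor_EANR_sigma} (as $\SS_n$ is a compact ANR), and since being a topological invariant is immediate from the definition of $\H_n$ (it is phrased purely in terms of continuous functions of pairs), the only real content is to check that the map taking a realization of $(X,A)$ to a realization of $\cone{X,A}$ is computable in the upper Vietoris topology, so that a $\Sigma^0_2$ condition on the latter pulls back to a $\Sigma^0_2$ condition on the former.

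First I would fix the explicit realization of the cone pair described just before Definition~\ref{def_cone}: if $(X,A)\subseteq Q$, then inside $[0,1]\times Q\cong Q$ we set
\begin{align*}
\cone{X}&\cong\{(\lambda,\lambda x):\lambda\in[0,1],\,x\in X\},\\
X\cup\cone{A}&\cong\{(\lambda,\lambda x):\lambda\in[0,1],\,x\in A\}\cup(\{1\}\times X).
\end{align*}
Next I would verify that the operation $(K,L)\mapsto(\cone{K},K\cup\cone{L})$ is a computable map from $(\K^2(Q),\upVV)$ to $(\K^2(Q),\upVV)$: given a finite union of rational balls $V$ in $[0,1]\times Q$, the condition $\{(\lambda,\lambda x):\lambda\in[0,1],x\in K\}\subseteq V$ is equivalent to $\{(\lambda,\lambda x):x\in K\}\subseteq V$ for all $\lambda$ in a finite rational net of $[0,1]$, which after projecting along each scaling factor becomes a condition of the form $K\subseteq U$ for finitely many open $U$; this is semidecidable from $K$ in $\upV$. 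The term $\{1\}\times X$ contributes another condition of the same form, and intersection with $[0,1]\times Q$ causes no trouble. Hence the preimage of a basic $\upVV$-open set of $\K^2(Q)$ under the cone map is effectively open, so the map is computable.

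Finally, combining this with Proposition~\ref{prop_hn_cone} and Corollary~\ref{cor_EANR_sigma}: write $\E_n=\bigcup_{m}(\P_m\setminus\Q_m)$ with $\P_m,\Q_m$ uniformly $\Pi^0_1$ in $\upVV$; then $\H_n$ is the preimage of $\E_n$ under the computable cone map, hence equals $\bigcup_m(\P'_m\setminus\Q'_m)$ where $\P'_m,\Q'_m$ are the (uniformly $\Pi^0_1$) preimages of $\P_m,\Q_m$, so $\H_n$ is $\Sigma^0_2$ in $\upVV$. That it is a topological invariant follows since $\cone{X,A}$ depends only on the homeomorphism type of $(X,A)$ and $\E_n$ is an invariant. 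The step I expect to require the most care is the verification that the cone realization is computable in $\upVV$ uniformly — in particular making the ``finite rational net in $\lambda$'' argument rigorous, using compactness of $[0,1]$ and of $K$ so that covering the scaled copies by $V$ reduces to finitely many inclusion checks; once that is in place the rest is a routine closure property of $\Sigma^0_2$ sets under computable preimages.
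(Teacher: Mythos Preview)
Your proposal is correct and follows essentially the same route as the paper: define the explicit realization of $\cone{X,A}$ in $[0,1]\times Q$, observe that the resulting map $(X,A)\mapsto(C,D)$ is computable in $\upVV$, and pull back the $\Sigma^0_2$ set $\E_n$ through it using Proposition~\ref{prop_hn_cone}. The paper is terser (it simply asserts the cone map is ``easily computable''), while you spell out the compactness argument for the $\lambda$-parameter; your caution about making the finite-net step rigorous is warranted but the cleanest way to handle it is to note that $\cone{X}$ is the image of the effectively compact set $[0,1]\times X$ under the computable map $(\lambda,x)\mapsto(\lambda,\lambda x)$, hence semicomputable relative to $X$.
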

\begin{proof}
Given a pair~$(X,A)\subseteq Q$, a copy~$(C,D)$ of~$\cone{X,A}$ can be defined as follows:
\begin{align*}
C&=\{(t,tx):t\in[0,1],x\in X\},\\
D&=\{(1,x):x\in X\}\cup \{(t,tx):t\in[0,1],x\in A\}.
\end{align*}
The map~$\phi$ sending~$(X,A)\in (\K^2(Q),\upVV)$ to~$(C,D)\in(\K^2(Q),\upVV)$ is easily computable, and~$\H_{n}$ is the preimage of~$\E_n$ by~$\phi$ by Proposition \ref{prop_hn_cone}. As~$\E_n$ is~$\Sigma^0_2$, so is~$\H_{n}$.
\end{proof}

\subsubsection{Relationship with dimension}
As~$\H_n$ can be expressed using~$\E_n$, the relations between~$\E_n$ and the dimension of the space can be similarly formulated for~$\H_n$. Theorem \ref{thm_en_dimension} and Lemma \ref{lem_en_inclusion} have the following consequences.
\begin{prop}\label{prop_hn_dimension}
Let~$(X,A)$ be a pair with~$\dim(X)\leq n-1$. One has~$(X,A)\notin\H_n$.
\end{prop}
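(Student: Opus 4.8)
The plan is to reduce the statement to the invariant $\E_n$ via Proposition \ref{prop_hn_cone}, which gives $(X,A)\in\H_n\iff\cone{X,A}\in\E_n$. Thus it suffices to prove $\cone{X,A}\notin\E_n$, and for that I would invoke the dimension criterion Theorem \ref{thm_en_dimension}: it is enough to check that $\dim(\cone X\setminus(X\cup\cone A))\le n$, where $\cone{X,A}=(\cone X,\,X\cup\cone A)$.

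First I would make the cone and the relevant stratum explicit, using the realization $\cone X\cong\{(\lambda,\lambda x):\lambda\in[0,1],\,x\in X\}$ with $X$ embedded as $X\times\{1\}$ and $\cone A\cong\{(\lambda,\lambda a):\lambda\in[0,1],\,a\in A\}$. The cone point $(0,0)$ lies in $\cone A$; every point $(1,x)$ lies in the copy of $X$; and for $\lambda\in(0,1)$ the point $(\lambda,\lambda x)$ belongs to $D:=X\cup\cone A$ precisely when $x\in A$. Hence $\cone X\setminus(X\cup\cone A)=\{(\lambda,\lambda x):\lambda\in(0,1),\,x\in X\setminus A\}$, which is homeomorphic to $(X\setminus A)\times(0,1)$ via $(\lambda,\lambda x)\mapsto(x,\lambda)$.

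Next I would bound the dimension: using the product inequality for covering dimension of separable metrizable spaces (as in \cite{2001vanMill}), $\dim\big((X\setminus A)\times(0,1)\big)\le\dim(X\setminus A)+1\le\dim(X)+1\le n$. Theorem \ref{thm_en_dimension} then yields $\cone{X,A}\notin\E_n$, and Proposition \ref{prop_hn_cone} concludes $(X,A)\notin\H_n$.

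I do not expect a genuine obstacle here: the proof is an immediate application of results already established. The only points deserving care are the explicit identification of the ``interior'' stratum of the cone pair as $(X\setminus A)\times(0,1)$, and the invocation of the product inequality for covering dimension.
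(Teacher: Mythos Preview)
Your proof is correct and follows the same route as the paper: reduce to $\E_n$ via Proposition \ref{prop_hn_cone} and then apply the dimension criterion Theorem \ref{thm_en_dimension}. The paper is slightly more direct, invoking the ``in particular'' clause of Theorem \ref{thm_en_dimension} after the one-line bound $\dim(\cone X)\le\dim X+1\le n$, whereas you bound $\dim(\cone X\setminus D)$ instead; note only that your claim ``the cone point lies in $\cone A$'' tacitly assumes $A\neq\emptyset$, but this does not affect the argument since a single extra point cannot raise the dimension.
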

\begin{proof}
One has~$\dim(\cone{X})\leq n$ so~$\cone{X,A}\notin \E_n$ by Theorem \ref{thm_en_dimension}, therefore~$(X,A)\notin \H_n$.
\end{proof}

\begin{lem}\label{lem_hn_inclusion}
Let~$(X,A)$ be a pair satisfying~$\dim(X)\leq n$ and~$\dim(A)\leq n-1$. If~$(X,A)\notin \H_n$, then for every pair~$(Y,B)\subseteq (X,A)$ one has~$(Y,B)\notin \H_n$.
\end{lem}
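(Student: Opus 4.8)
The plan is to transfer the statement through the cone construction and then apply Lemma~\ref{lem_en_inclusion}. By Proposition~\ref{prop_hn_cone}, the hypothesis $(X,A)\notin\H_n$ is equivalent to $\cone{X,A}\notin\E_n$, and likewise $(Y,B)\notin\H_n$ is equivalent to $\cone{Y,B}\notin\E_n$. Realizing $\cone{X,A}$ in $Q$ in the standard way and $\cone{Y,B}$ as the natural subpair determined by $Y\subseteq X$ and $B\subseteq A$, one has $\cone{Y,B}\subseteq\cone{X,A}$. Writing $\cone{X,A}=(\cone X,\,X\cup\cone A)$, it therefore suffices to show that $\cone{X,A}\notin\E_n$ forces every subpair of it, in particular $\cone{Y,B}$, to lie outside $\E_n$; and this is precisely what Lemma~\ref{lem_en_inclusion} delivers when applied to $\cone{X,A}$, provided its second component $X\cup\cone A$ has dimension at most~$n$.

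To verify that dimension bound I would invoke two classical facts from dimension theory. First, $X$ and $\cone A$ are closed subsets of $\cone X$, with $\dim X\le n$ by hypothesis and $\dim\cone A\le\dim A+1\le n$, using that the cone of a compact metric space raises the dimension by at most one together with $\dim A\le n-1$ (the degenerate case $A=\emptyset$ is harmless since then $\dim\cone\emptyset=-1$). Second, by the sum theorem for covering dimension applied to the finite closed cover $\{X,\cone A\}$ of $X\cup\cone A$, one obtains $\dim(X\cup\cone A)\le\max(\dim X,\dim\cone A)\le n$.

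Assembling the pieces: assuming $(X,A)\notin\H_n$ gives $\cone{X,A}\notin\E_n$; since $\dim(X\cup\cone A)\le n$, Lemma~\ref{lem_en_inclusion} yields $(Z,C)\notin\E_n$ for every $(Z,C)\subseteq\cone{X,A}$, in particular $\cone{Y,B}\notin\E_n$; hence $(Y,B)\notin\H_n$ by Proposition~\ref{prop_hn_cone}. The only genuinely non-routine ingredient is the dimension estimate for $X\cup\cone A$, but both of its inputs are standard results in dimension theory, so I do not anticipate a real obstacle; the remainder is a direct unwinding of the cone correspondence already established in Proposition~\ref{prop_hn_cone}.
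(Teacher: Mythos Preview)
Your proof is correct and follows essentially the same route as the paper's own argument: reduce to cones via Proposition~\ref{prop_hn_cone}, then apply Lemma~\ref{lem_en_inclusion} after checking that the second component $D=X\cup\cone{A}$ has dimension at most~$n$. The paper's proof simply asserts $\dim(D)\le n$ without elaboration, whereas you spell out the two dimension-theory ingredients (cone raises dimension by at most one, and the finite closed sum theorem); this is a welcome clarification but not a different approach.
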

\begin{proof}
Let~$(C,D)=\cone{X,A}$ and~$(E,F)=\cone{Y,B}\subseteq (C,D)$. Reformulating~$\H_n$ in terms of~$\E_n$ thanks to Proposition \ref{prop_hn_cone}, we want to prove that~$(C,D)\notin \E_n$ implies~$(E,F)\notin \E_n$.

It is implied by Lemma \ref{lem_en_inclusion}, which can be applied because~$D=X\cup\cone{A}$ so~$\dim(D)\leq n$.
\end{proof}

Under the same assumptions, we can improve Proposition \ref{prop_hn_cone} to obtain an equivalence.
\begin{prop}\label{prop_minimal}
Let~$(X,A)$ be a pair where~$A$ has empty interior,~$\dim(X)\leq n$ and~$\dim(A)\leq n-1$. One has
\begin{equation*}
(X,A)\text{ is }\H_n\text{-minimal}\iff \cone{X,A}\text{ is }\E_n\text{-minimal}.
\end{equation*}
\end{prop}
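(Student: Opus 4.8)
The plan is to prove the two directions separately, in both cases transferring minimality back and forth between $(X,A)$ and $\cone{X,A}$ via the correspondence established in Proposition \ref{prop_hn_cone} together with the dimension hypotheses. Write $(C,D)=\cone{X,A}=(\cone{X},X\cup\cone{A})$, and recall from Definition \ref{def_cone} the canonical realization in $Q\cong[0,1]\times Q$. Note that the dimension assumptions give $\dim(\cone{X})\leq n+1$, $\dim(D)=\dim(X\cup\cone{A})\leq n$, and $\dim(X)\leq n$; moreover if $A$ has empty interior in $X$ then $D=X\cup\cone{A}$ has empty interior in $C=\cone{X}$ (a point of $X=X\times\{1\}$ has neighborhoods meeting the open cone, and a point of $\cone{A}$ away from the apex has neighborhoods meeting $\cone{X}\setminus D$; the apex is a limit of the latter).

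For the direction ``$\cone{X,A}$ is $\E_n$-minimal $\Rightarrow$ $(X,A)$ is $\H_n$-minimal'', this is exactly the last sentence of Proposition \ref{prop_hn_cone}, which already holds with no dimension hypothesis, so there is nothing further to do. The substance of the present proposition is the converse. So assume $(X,A)$ is $\H_n$-minimal; then $(X,A)\in\H_n$, hence $(C,D)\in\E_n$ by Proposition \ref{prop_hn_cone}. It remains to show that every proper subpair $(C',D')\subsetneq(C,D)$ fails $\E_n$. The strategy is to use Corollary \ref{cor_minimality}: since $D$ has empty interior in $C$ and $\dim(D)\leq n$, it suffices to show that for every $D\subseteq C'\subsetneq C$ one has $(C',D)\notin\E_n$. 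The difficulty is that such a $C'$ need not itself be a cone, so we cannot directly apply Proposition \ref{prop_hn_cone} to it. Here is where the dimension bound $\dim(X)\le n$, hence $\dim(C'\setminus D)\le n+1$ is not enough by itself; instead I would argue as follows. Given a continuous $f:D\to\SS_n$, first note that, as in the proof of Proposition \ref{prop_hn_cone}, $f$ is homotopic to a map $F:D\to\SS_n$ that is constant equal to some $p$ on $\cone{A}$, because $\cone{A}$ is contractible and $\SS_n$ is an ANR (Theorem \ref{thm_hom_ext_ANR}); by Borsuk's homotopy extension theorem it is enough to extend $F$ over $C'$. Now $F$ restricted to $X=X\times\{1\}\subseteq D$ is a map $X\to\SS_n$; since $\dim(X)\leq n$, Theorem \ref{thm_en_dimension} applied to the pair $(X,\emptyset)$ — more precisely, the full strength of that result for pairs — does not immediately finish, so instead the key point is that $F|_X$ must be null-homotopic relative to $A$. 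This follows precisely from $\H_n$-minimality of $(X,A)$: indeed $(X,A)$ minimal in $\H_n$ and $(X,A)\in\H_n$, combined with $\dim(X)\le n$, $\dim(A)\le n-1$ and Lemma \ref{lem_hn_inclusion}, forces that no proper subpair of $(X,A)$ lies in $\H_n$, but we need the dual statement — that $F|_X:(X,A)\to(\SS_n,p)$ is itself null-homotopic rel $A$. This is not automatic, so I expect the main obstacle to be exactly here, and the correct line is: because $C'$ is a proper subpair, $C'$ misses some point of the open cone, i.e.\ there is $x_0\in X$ and $t_0\in(0,1)$ with $(t_0,t_0x_0)\notin C'$; the ``ray'' structure of the cone lets one retract a neighborhood in $C'$ of $D$ onto $D$ (using that the segment from the apex through $x_0$ is punctured), deformation-retracting $C'$ onto a proper subpair $D\cup\cone{A}$-type subset, or more carefully onto $C''=\cone{X'}\cup\cdots$ built from $X'\subsetneq X$; I would make this precise by projecting along rays and noting that $C'\cap([0,t_0']\times Q)$ deformation retracts to the apex while the outer part retracts toward $D$.

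Concretely, then, I would carry out the converse as follows. Fix $D\subseteq C'\subsetneq C$. Since $C'\neq C$ and $C'$ is closed, there is a ray-parameter $t_0\in(0,1)$ and a point $x_0\in X$ with $(t_0,t_0x_0)\notin C'$; by compactness a whole small ``cone wedge'' around that ray and at radii near $t_0$ is missing. Then $C'$ deformation retracts (rel $D$) onto a set of the form $\cone{X'}\cup D$ where $X'=\{x\in X:(t,tx)\in C'\text{ for all }t\in[0,1]\}\subsetneq X$: push each point $(t,tx)\in C'$ radially — if the full ray over $x$ lies in $C'$ leave it, otherwise slide toward $D=X\times\{1\}$ past the puncture. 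This gives $(C',D)\subseteq (\cone{X'}\cup D,D)$ up to the homotopy equivalence, and the latter pair is, after collapsing, $\cone{X',A}$ with an extra copy of $X$ — at which point I would invoke that $(X',A)$ is a proper subpair of $(X,A)$, hence not in $\H_n$ by minimality, hence $\cone{X',A}\notin\E_n$ by Proposition \ref{prop_hn_cone}, hence (using that extensibility is a homotopy-invariant property, Theorem \ref{thm_hom_ext_ANR}, and that $X$ retracts appropriately, with $\dim X\le n$ so Theorem \ref{thm_en_dimension} handles the glued-on copy of $X$) $(C',D)\notin\E_n$. Applying Corollary \ref{cor_minimality} to $(C,D)$ — legitimate since $D$ has empty interior in $C$ and $\dim(D)\le n$ — concludes that $(C,D)=\cone{X,A}$ is $\E_n$-minimal. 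The one genuinely delicate point, which I would spell out carefully, is the radial deformation retraction of the proper subpair $C'$ onto a cone-like subpair rel $D$; everything else is an assembly of Proposition \ref{prop_hn_cone}, Corollary \ref{cor_minimality}, Lemma \ref{lem_hn_inclusion}, Theorem \ref{thm_en_dimension} and Borsuk's theorem.
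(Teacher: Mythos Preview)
Your setup is right: the backward direction is Proposition \ref{prop_hn_cone}, and for the forward direction you correctly invoke Corollary \ref{cor_minimality} (legitimate since $\dim(D)\le n$ and $D$ has empty interior in $C$), reducing to showing $(C',D)\notin\E_n$ for every compact $D\subseteq C'\subsetneq C$. The problem is your central geometric step. You propose to deformation retract $C'$ rel $D$ onto $\cone{X'}\cup D$, where $X'=\{x\in X:(t,tx)\in C'\text{ for all }t\}$, by ``sliding past the puncture'' along rays. This retraction does not exist in general. Take $X=[0,1]$, $A=\{0,1\}$, $n=1$, and let $C'$ be the filled triangle $\cone{X}$ with two small open disks removed, both over the same interval of $x$-values but at two different heights. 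Then $C'$ has the homotopy type of a wedge of two circles, whereas $\cone{X'}\cup D$ is the boundary triangle together with two lateral filled sub-triangles, which deformation retracts onto a single circle. These are not homotopy equivalent rel $D$, so no such retraction can exist. The issue is precisely that a single ray may meet the complement of $C'$ in several components, and there is no way to slide consistently past all of them.

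The paper sidesteps this by not trying to retract $C'$ at all. Instead it picks a single product box $U\times J$ (with $U\subseteq X\setminus A$ open, $J=(a,b)\subseteq(0,1)$) disjoint from $C'$, and extends $f:D\to\SS_n$ over the explicit larger set $\cone{X}\setminus(U\times J)\supseteq C'$, built from three pieces: first over $\cone{X\setminus U}$, using that $(X\setminus U,A)\notin\H_n$ by minimality and hence $\cone{X\setminus U,A}\notin\E_n$; then over the bottom cone $X\times[0,a]/(X\times\{0\})$ and over the top cylinder $X\times[b,1]$, using the cone/cylinder form of Borsuk's homotopy extension theorem (Lemma \ref{lem_cyl_cone}). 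In each of these last two steps the map is already defined on a copy of $X\cup\cyl{X\setminus U}$ (resp.\ $\cone{X\setminus U}$), which is exactly the hypothesis of that lemma. So the missing idea in your argument is this three-piece decomposition of $\cone{X}\setminus(U\times J)$ together with Lemma \ref{lem_cyl_cone}; once you have those, no deformation retraction of $C'$ is needed.
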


Before proving this result, we need to reformulate Borsuk's homotopy extension theorem (Theorem \ref{thm_hom_ext_ANR}). The cylinder of a space~$X$ is~$\cyl{X}=X\times [0,1]$. A homotopy~$H:X\times [0,1]\to Y$ is nothing else than a function from~$\cyl{X}$ to~$Y$, and a null-homotopy is a function from~$\cone{X}$ to~$Y$. Therefore, Borsuk's homotopy extension theorem can be reformulated as follows.
\begin{lem}\label{lem_cyl_cone}
Let~$(X,A)$ be a pair and~$Y$ an ANR. Every continuous function~$f:X\cup \cyl{A}\to Y$ has a continuous extension to~$\cyl{X}$. Every continuous function~$g:\cone{A}\to Y$ has a continuous extension to~$\cone{X}$.
\end{lem}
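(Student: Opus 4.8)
The plan is to read both statements as repackagings of Borsuk's homotopy extension theorem (Theorem~\ref{thm_hom_ext_ANR}). Throughout I identify~$X$ with one end, say~$X\times\{0\}$, of the cylinder~$\cyl{X}=X\times[0,1]$ (the other choice is symmetric via~$t\mapsto 1-t$), so that~$X\cup\cyl{A}=(X\times\{0\})\cup(A\times[0,1])$ is a closed subset of~$\cyl{X}$, the two pieces meeting along the closed set~$(X\cap A)\times\{0\}=A\times\{0\}$.

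For the first statement, given a continuous~$f:X\cup\cyl{A}\to Y$, I would split it into its restriction~$f_0:=f|_{X\times\{0\}}:X\to Y$ and the homotopy~$H:=f|_{A\times[0,1]}:A\times[0,1]\to Y$. Since~$H(\cdot,0)=f_0|_A$, the map~$f_0$ is a continuous extension to~$X$ of the function~$f_0|_A:A\to Y$, and~$H$ is a homotopy between~$f_0|_A$ and~$H(\cdot,1)$. Theorem~\ref{thm_hom_ext_ANR}, applied to the pair~$(X,A)$ with these data, produces an extension~$\tilde{H}:\cyl{X}\to Y$ of~$H$ that is a homotopy between~$f_0$ and some continuous extension of~$H(\cdot,1)$; in particular~$\tilde{H}(\cdot,0)=f_0$. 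Then~$\tilde{H}$ agrees with~$f$ on~$X\times\{0\}$ and on~$A\times[0,1]$, hence is the desired extension of~$f$ to~$\cyl{X}$.

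For the second statement I would deduce it from the first. Write~$\cone{X}=\cyl{X}/(X\times\{0\})$, let~$q:\cyl{X}\to\cone{X}$ be the quotient map and~$\ast=q(X\times\{0\})$ the cone point; then~$q$ carries~$A\times[0,1]$ onto the standard copy of~$\cone{A}$ inside~$\cone{X}$, via the quotient map~$q_A:A\times[0,1]\to\cone{A}$. Given~$g:\cone{A}\to Y$, define~$\Psi:X\cup\cyl{A}\to Y$ to be the constant~$g(\ast)$ on~$X\times\{0\}$ and~$g\circ q_A$ on~$A\times[0,1]$; the two definitions agree on~$A\times\{0\}$ (both equal~$g(\ast)$, since~$q_A$ sends~$A\times\{0\}$ to the cone point of~$\cone{A}$), so~$\Psi$ is continuous by the pasting lemma. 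By the first statement~$\Psi$ extends to a continuous~$\tilde{\Psi}:\cyl{X}\to Y$. Since~$\tilde{\Psi}$ restricts on~$X\times\{0\}$ to the constant~$g(\ast)$, it factors through~$q$ as a continuous map~$\bar{g}:\cone{X}\to Y$, and on~$\cone{A}$ one has~$\bar{g}\circ q_A=\tilde{\Psi}|_{A\times[0,1]}=g\circ q_A$, so~$\bar{g}$ extends~$g$.

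There is essentially no obstacle here: the argument is bookkeeping with the pasting lemma for continuous functions on closed sets, together with a single invocation of Theorem~\ref{thm_hom_ext_ANR}. The only point requiring care is keeping track of the identifications — which end of the cylinder plays the role of~$X$, and the fact that a function on~$\cyl{X}$ that is constant on~$X\times\{0\}$ descends to~$\cone{X}$ — and checking that the pasted functions genuinely agree on the overlaps.
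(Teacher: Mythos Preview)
Your proof is correct and follows exactly the paper's approach: the first statement is unpacked as a direct application of Borsuk's homotopy extension theorem, and the second is derived from the first by lifting~$g$ to a function on~$X\cup\cyl{A}$ that is constant on~$X\times\{0\}$, extending, and observing that the extension remains constant on~$X\times\{0\}$ so descends to~$\cone{X}$. The paper's proof is more terse but the content is identical.
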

\begin{proof}
The first statement is Borsuk's homotopy extension theorem. The second one is a particular case for functions that are constant on~$X$. More precisely,~$\cone{A}$ is the quotient of~$X\cup\cyl{A}$ by identifying all the points of~$X$. A function~$g:\cone{A}\to Y$ induces a function~$f:X\cup\cyl{A}\to Y$ which is constant on~$X$, by composing~$g$ with the quotient map. $f$ has a continuous extension~$F:\cyl{X}\to Y$, which is constant on~$X$, and therefore induces a continuous function~$G:\cone{X}\to Y$ extending~$g$. 
\end{proof}

\begin{proof}[Proof of Proposition \ref{prop_minimal}]
Assume that~$(X,A)$ is~$\H_n$-minimal. We need to show that no proper subpair~$(Y,B)$ of~$\cone{X,A}$ is in~$\E_n$. Using Corollary \ref{cor_minimality}, it is sufficient to show the result when~$B=X\cup \cone{A}$. Let then~$Y$ be a proper subset of~$\cone{X}$ containing~$X\cup \cone{A}$. There exists a non-empty open set~$U\subseteq X\setminus A$ and an interval~$J=(a,b)\subseteq [0,1]$ such that~$U\times J$ is disjoint from~$Y$ (we think of~$U\times J$ as embedded in~$\cone{X}$ in the obvious way). Let~$f:X\cup\cone{A}\to \SS_n$. We show how to extend~$f$ to~$\cone{X}\setminus (J\times U)$, giving an extension of~$f$ to~$Y$ by restriction. Figure \ref{fig_extensions} may help visualize the argument.

As~$(X,A)$ is~$\H_n$-minimal by assumption,~$(X\setminus U,A)\notin\H_n$ so~$\cone{X\setminus U,A}\notin\E_n$. Therefore, the restriction of~$f$ to~$(X\setminus U)\cup \cone{A}$ has a continuous extension to~$\cone{X\setminus U}$. So~$f$ has a continuous extension~$g:X\cup \cone{X\setminus U}\to\SS_n$.

Applying Lemma \ref{lem_cyl_cone}, we can extend~$g$ to the quotient of~$X\times [0,a]$ which is homeomorphic to~$\cone{X}$ and to~$\cyl{X}:=X\times [b,1]$. So we obtain a continuous extension~$h$ of~$f$ to the union of~$\cone{X\setminus U}$ with the quotient of~$X\times ([0,a]\cup[b,1])$. That union is precisely~$\cone{X}\setminus (J\times U)$.

\begin{figure}[h]
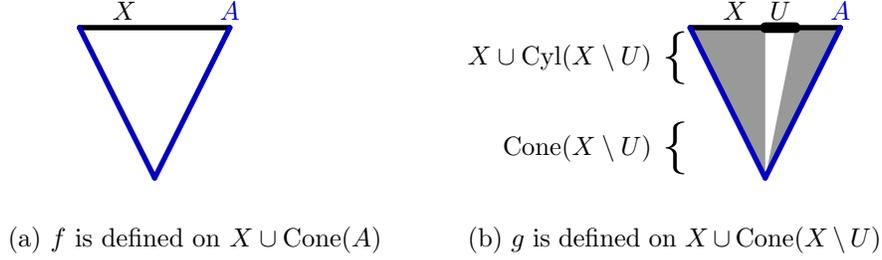
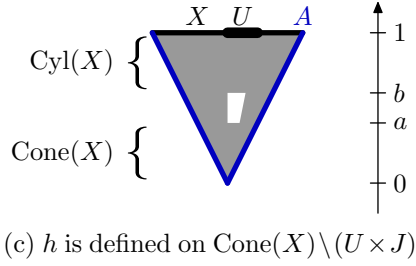

\centering
\subcaptionbox{$f$ is defined on~$X\cup \cone{A}$}[5cm]{\includegraphics{image-1}}\hspace{1cm}
\subcaptionbox{$g$ is defined on~$X\cup \cone{X\setminus U}$}[5.5cm]{\includegraphics{image-2}}
\vspace{1cm}

\subcaptionbox{$h$ is defined on~$\cone{X}\setminus (U\times J)$}[5.5cm]{\includegraphics{image-3}}
\caption{Consecutive extensions of~$f$}\label{fig_extensions}
\end{figure}
\end{proof}

Finally, from the point of view of~$\H_n$, a pair~$(X,A)$ can be replaced by the single space~$X/A$, which is the quotient of~$X$ by~$A$.
\begin{prop}\label{prop_quotient}
For every pair~$(X,A)$,~$(X,A)\in\H_{n}\iff X/A\in\H_{n}$
\end{prop}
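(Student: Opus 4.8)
The plan is to establish a bijection between the relevant spaces of continuous functions that identifies homotopy classes on both sides, using the fact that $\SS_n$ is an ANR. The key observation is that a continuous function $f:(X,A)\to(\SS_n,p)$ is the same data as a continuous function $\bar f:X/A\to\SS_n$ sending the collapsed point $[A]$ to $p$: the universal property of the quotient gives a bijection between continuous maps $X/A\to\SS_n$ carrying $[A]$ to $p$ and continuous maps $X\to\SS_n$ that are constant equal to $p$ on $A$. So first I would set up this correspondence carefully, using $\cone{X,A}$ as the bridge if convenient, but more directly just invoking the quotient map $q:X\to X/A$.

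Next I would argue that the correspondence descends to homotopy classes. For the direction $(X,A)\in\H_n\Rightarrow X/A\in\H_n$: if $f:(X,A)\to(\SS_n,p)$ is not null-homotopic relative to $A$, then the induced $\bar f:X/A\to\SS_n$ cannot be null-homotopic, because a null-homotopy $H:X/A\times[0,1]\to\SS_n$ would pull back along $q\times\id$ to a homotopy from $f$ to a constant, and since $q$ collapses $A$ to a point this homotopy is automatically constant on $A$ — hence a null-homotopy relative to $A$, a contradiction. For the converse $X/A\in\H_n\Rightarrow(X,A)\in\H_n$: given $g:X/A\to\SS_n$ not null-homotopic, one may first homotope $g$ so that $g([A])=p$ (this uses only that $\SS_n$ is path-connected for $n\geq 1$, and is trivial to arrange; for $n=0$ one checks the statement directly or notes both sides are empty unless $X/A$ is disconnected in the right way). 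Then $g\circ q:X\to\SS_n$ is constant $p$ on $A$, defining $f:(X,A)\to(\SS_n,p)$, and if $f$ were null-homotopic relative to $A$ the homotopy would factor through $q\times\id$ (being constant on $A$) to give a null-homotopy of $g$, again a contradiction.

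The one point requiring care — and the main obstacle — is the interplay between \emph{free} homotopies and homotopies \emph{relative to $A$}, i.e.\ making sure the factoring-through-the-quotient argument is valid in both directions and that the normalization $g([A])=p$ can be arranged without affecting the (non-)triviality of the homotopy class. Here I would lean on Borsuk's homotopy extension theorem (Theorem~\ref{thm_hom_ext_ANR}) together with the contractibility of a point: a homotopy moving $g([A])$ to $p$ extends to a homotopy of $g$, and $\SS_n$ being an ANR guarantees this extension stays inside $\SS_n$. An alternative, cleaner route is to compose everything through Proposition~\ref{prop_hn_cone}: one has $(X,A)\in\H_n\iff\cone{X,A}\in\E_n$, and $\cone{X,A}=(\cone{X},X\cup\cone{A})$, while $\cone{X/A}$ collapses $\cone{A}$; since $\cone{A}$ is contractible and a retract-friendly subspace, a homotopy-equivalence-type argument shows $\cone{X,A}\in\E_n\iff(\cone{X/A},[A_{\text{cone}}])\in\E_n\iff X/A\in\H_n$. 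I would present whichever of these is shortest, most likely the direct quotient argument with the ANR normalization step spelled out, reserving the $n=0$ case for a one-line remark.
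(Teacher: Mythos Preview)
Your overall structure --- reduce to the pointed space $(X/A,[A])$ via the quotient map $q$ --- matches the paper's, and your argument for the direction $X/A\in\H_n\Rightarrow(X,A)\in\H_n$ is correct: a null-homotopy of $f$ relative to~$A$ is genuinely constant on $A\times[0,1]$ and therefore factors through $(X/A)\times[0,1]$.

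The gap is in the other direction. You write that a free null-homotopy $H:(X/A)\times[0,1]\to\SS_n$ of~$\bar f$ pulls back to a homotopy that is ``automatically constant on~$A$ --- hence a null-homotopy relative to~$A$''. That is not right: the pullback sends $A\times\{t\}$ to the single point $H([A],t)$, but this point moves with~$t$, so the homotopy is \emph{not} relative to~$A$ in the required sense ($H(a,t)=p$ for all $a\in A$ and all~$t$). You correctly flag the free-versus-relative issue as ``the main obstacle'', but the fix you propose --- Borsuk's homotopy extension theorem plus ``contractibility of a point'' --- only handles the normalisation $g([A])=p$, not this obstacle.

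What is actually needed is the statement: a based map $(Y,q)\to(\SS_n,p)$ that is freely null-homotopic is null-homotopic rel~$q$. This genuinely uses special features of~$\SS_n$ and does \emph{not} follow from the ANR property alone. The paper splits into cases. For $n\geq 2$ one uses that $\SS_n$ is simply-connected: after normalising $H_1\equiv p$, the track $t\mapsto H(q,t)$ is a loop at~$p$; simple-connectivity lets one contract it rel endpoints, and then Borsuk's theorem promotes this to a homotopy between the restriction of~$H$ and the desired boundary data on $(\{q\}\times[0,1])\cup(Y\times\{0,1\})$, yielding the relative null-homotopy. For $n=1$, $\SS_1$ is not simply-connected, so that argument fails; instead one uses the group structure of $\SS_1\cong\R/\Z$ and sets $H'_t(y)=H_t(y)-H_t(q)$. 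Your proposal never invokes simple-connectivity or the group structure, so as written it would not close for either case; the alternative route through $\cone{X,A}$ and~$\E_n$ does not sidestep the issue, since the same free/relative comparison reappears there.
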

\begin{proof}
Let~$p_A\in X/A$ be the equivalence class of~$A$. First observe that~$(X,A)$ can equivalently be replaced by~$(X/A,p_A)$, i.e.~$(X,A)\in\H_{n}\iff (X/A,p_A)\in\H_{n}$. Indeed, there is a one-to-one correspondence between continuous functions~$f:(X,A)\to (\SS_n,p)$ and continuous functions~$g:(X/A,p_A)\to (\SS_n,p)$, and a one-to-one correspondence between null-homotopies of~$f$ relative to~$A$ and null-homotopies of~$g$ relative to~$p_A$.

Now, for a pair~$(Y,q)$, where~$q$ is a point of~$Y$, we show that a function~$g:(Y,q)\to (\SS_n,p)$ is null-homotopic if and only if it is null-homotopic relative to~$q$. Assume that~$g$ is null-homotopic, i.e.~homotopic to a constant function. We can assume w.l.o.g.~that the constant function is~$p$, because all constant functions are homotopic (indeed, the sphere is path-connected). Let~$H_t$ be a homotopy from~$H_0=g$ to~$H_1=p$. Our goal is to define another such homotopy~$H'$ which is constantly~$p$ on~$q$.

We give a different argument for~$n=1$ and~$n\geq 2$.

Let~$n=1$. There exists a continuous function~$\phi:\SS_1\times\SS_1\to\SS_1$ such that~$\phi(x,p)=x$ and~$\phi(x,x)=p$ for all~$x\in\SS_1$. Indeed, identifying~$\SS_1$ with~$\R/\Z$ and~$p$ with the equivalence class of~$0$, let~$\phi(x,y)=x-y\mod 1$. We can then define~$H'_t(y)=\phi(H_t(y),H_t(q))$, it is easy to check that it satisfies the required assumptions.

Let now~$n\geq 2$ (we do not know whether a similar map~$\phi:\SS_n\times\SS_n\to\SS_n$ can be defined but we provide another argument, which cannot be applied to~$n=1$). We want to define a continuous function~$H':Y\times [0,1]\to\SS_n$ extending a certain function~$h':(\{q\}\times [0,1])\cup(Y\times \{0,1\})\to\SS_n$. Let~$h$ be the restriction of~$H$ to that set. We show that~$h'$ is homotopic to~$h$. As~$h$ has a continuous extension, it will imply that~$h'$ has a continuous extension, because~$\SS_n$ is an ANR.

On~$\{q\}\times [0,1]$,~$h$ and~$h'$ define two loops on~$\SS_n$ ($h'$ defines the constant loop). As~$\SS_n$ is simply-connected (indeed,~$n\geq 2$), these two loops are homotopic relative to~$\{q\}\times \{0,1\}$. As~$h$ and~$h'$ coincide on~$Y\times \{0,1\}$, they are homotopic.
\end{proof}

We conclude this section with two simple examples of pairs and sets that are~$\H_n$-minimal.
\begin{prop}
The pair~$(\B_n,\SS_{n-1})$ and the space~$\SS_n$ are~$\H_{n}$-minimal.
\end{prop}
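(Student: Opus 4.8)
The plan is to reduce both assertions to the $\E_n$-minimality of the pair $(\B_{n+1},\SS_n)$, which is Proposition~\ref{prop_En_ball}, via the cone construction and the reduction of $\H_n$ to $\E_n$ recorded in Proposition~\ref{prop_hn_cone}. Recall that Proposition~\ref{prop_hn_cone} states $(X,A)\in\H_n\iff\cone{X,A}\in\E_n$ and, moreover, that $(X,A)$ is $\H_n$-minimal whenever $\cone{X,A}$ is $\E_n$-minimal. Since $\E_n$ and $\H_n$ are topological invariants, being $\E_n$-minimal or $\H_n$-minimal is preserved under homeomorphism, so it suffices to check that the two cone pairs $\cone{\SS_n,\emptyset}$ and $\cone{\B_n,\SS_{n-1}}$ are each homeomorphic, as pairs, to $(\B_{n+1},\SS_n)$; the $\H_n$-minimality of $\SS_n$ and of $(\B_n,\SS_{n-1})$ then follows at once (and their membership in $\H_n$ is subsumed, since $\E_n$-minimality of $(\B_{n+1},\SS_n)$ in particular gives $(\B_{n+1},\SS_n)\in\E_n$).

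For $\SS_n$, viewed as the pair $(\SS_n,\emptyset)$, Definition~\ref{def_cone} gives $\cone{\SS_n,\emptyset}=(\cone{\SS_n},\SS_n\cup\cone{\emptyset})=(\cone{\SS_n},\SS_n)$, with $\SS_n$ sitting as $\SS_n\times\{1\}$. Using the realization $\cone{\SS_n}\cong\{(\lambda,\lambda x):\lambda\in[0,1],\,x\in\SS_n\}\subseteq[0,1]\times\R^{n+1}$ described after Definition~\ref{def_cone}, the map $(\lambda,\lambda x)\mapsto\lambda x$ is a homeomorphism onto $\B_{n+1}$ (its inverse is $y\mapsto(\lVert y\rVert,y)$) sending $\SS_n\times\{1\}$ onto the boundary sphere $\SS_n$; hence $\cone{\SS_n,\emptyset}\cong(\B_{n+1},\SS_n)$. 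For $(\B_n,\SS_{n-1})$, I would use the same realization: $\cone{\B_n}$ becomes the solid cone $C=\{(\lambda,y)\in[0,1]\times\R^{n}:\lVert y\rVert\le\lambda\}\cong\B_{n+1}$, and inside it the top face $\B_n\times\{1\}=\{1\}\times\B_n$ together with the lateral part $\cone{\SS_{n-1}}=\{(\lambda,y):\lVert y\rVert=\lambda\}$ make up exactly the topological boundary of $C$, i.e.\ a copy of $\SS_n$. Therefore $\cone{\B_n,\SS_{n-1}}=(\cone{\B_n},\B_n\cup\cone{\SS_{n-1}})\cong(\B_{n+1},\SS_n)$ as well, and Proposition~\ref{prop_hn_cone} finishes the argument.

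The only step needing any care is the last homeomorphism: verifying that gluing the top disk $\B_n$ to the lateral surface $\cone{\SS_{n-1}}$ inside the solid cone $\cone{\B_n}$ produces precisely the full boundary sphere of $\cone{\B_n}\cong\B_{n+1}$. This is a routine but slightly fiddly check with the explicit cone realization, and is the main (minor) obstacle; everything else is an immediate consequence of Propositions~\ref{prop_En_ball} and~\ref{prop_hn_cone}.
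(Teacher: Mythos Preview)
Your proposal is correct and follows essentially the same approach as the paper: apply Proposition~\ref{prop_hn_cone} to reduce $\H_n$-minimality to $\E_n$-minimality of the cone pair, identify both $\cone{\SS_n,\emptyset}$ and $\cone{\B_n,\SS_{n-1}}$ with $(\B_{n+1},\SS_n)$, and invoke Proposition~\ref{prop_En_ball}. The paper's proof is a one-line version of exactly this, so your more detailed verification of the two cone identifications is in line with (and a faithful expansion of) the paper's argument.
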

\begin{proof}
It is a direct application of Proposition \ref{prop_hn_cone}: one has~$\cone{\B_n,\SS_{n-1}}=\cone{\SS_n,\emptyset}=(\B_{n+1},\SS_n)$ which is~$\E_n$-minimal by Proposition \ref{prop_En_ball}.
\end{proof}
We will see in the Section \ref{sec_manifold} that, more generally, compact~$n$-dimensional manifolds with or without boundary are all~$\H_n$-minimal.

\subsection{Connections with homology and cohomology}
We end this section with a characterization of~$\E_n$ and~$\H_n$ in terms of homology and cohomology groups for certain spaces. Our goal here is not to give a complete presentation, but to mention intimate connections with algebraic topology, that can be used to study strong computable type. It will be applied to manifolds.

We do not define homology and cohomology groups, but just recall the notations and some classical results. We refer to standard textbooks on algebraic topology for complete expositions of the concepts \cite{HurWal41, Bredon93,Hatcher02}. This section is not essential and may be skipped by the reader who is unfamiliar with algebraic topology.

If~$X$ is a topological space,~$G$ an abelian group and~$n\in\N$, one can define the \textbf{homology groups}~$H_n(X;G)$ and the \textbf{cohomology groups}~$H^n(X;G)$, which are abelian groups. They are topological invariants that, informally, detect the~$n$-dimensional holes of the space~$X$. For instance, when~$X$ is the~$m$-dimensional sphere~$\SS_m$, one has
\begin{equation*}
H_n(\SS_m;G)\cong H^n(\SS_m;G)\cong \begin{cases}
G&\text{if }n=m\text{ or }n=0,\\
0&\text{otherwise},
\end{cases}
\end{equation*}
where~$0$ is the trivial group with one element. 

These groups are also called the \textbf{singular} homology and cohomology groups, to distinguish them from the groups from other homology and cohomology theories, such as the \textbf{\v{C}ech} homology and cohomology groups, denoted by~$\cH_n(X;G)$ and~$\cH^n(X;G)$ respectively. However, these different theories are equivalent for ANRs.
\begin{thm}[Theorem 1 in \cite{Mardesic58}]
If~$X$ is an ANR, then for all~$G$ and~$n$,
\begin{align*}
H_n(X;G)&\cong \cH_n(X;G),\\
H^n(X;G)&\cong \cH^n(X;G).
\end{align*}
\end{thm}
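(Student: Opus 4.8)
This is a classical theorem of Mardešić; since it is invoked rather than proved here, I only sketch the standard argument, and for definiteness I treat the case of a \emph{compact} ANR~$X$ (which is all that is needed in the sequel), the general case reducing to it via the fact that every ANR has the homotopy type of a CW complex. The plan is to compare the two theories through a system of neighborhoods in the Hilbert cube. First I would embed~$X$ in~$Q$ and write~$X=\bigcap_{n}K_{n}$ for a decreasing sequence of compact neighborhoods of~$X$. The Čech theory is \emph{continuous}: $\cH_{q}(X;G)\cong\varprojlim_{n}\cH_{q}(K_{n};G)$ and $\cH^{q}(X;G)\cong\varinjlim_{n}\cH^{q}(K_{n};G)$, naturally in~$G$ and~$X$; this is a defining feature of the Čech construction and holds for arbitrary (compact) spaces, ANR or not. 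Singular homology, by contrast, does not in general take a decreasing intersection to the inverse limit of the pieces — and this is exactly the gap the ANR hypothesis must close.

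Next I would use standard retract theory (e.g.\ Hu, \emph{Theory of Retracts}) to choose the neighborhoods well: because~$X$ is an ANR, it is a retract of some neighborhood~$U\subseteq Q$, and it admits a cofinal family of compact neighborhoods~$K$, with~$X\subseteq K\subseteq U$, onto which~$X$ is a deformation retract; in particular each inclusion~$X\hookrightarrow K$ is a homotopy equivalence, and by the two-out-of-three property so is each bonding inclusion~$K_{n+1}\hookrightarrow K_{n}$ inside such a family. Taking the~$K_{n}$ from this family, the inverse system~$(H_{q}(K_{n};G))_{n}$ and the direct system~$(H^{q}(K_{n};G))_{n}$ of \emph{singular} (co)homology groups become systems of isomorphisms, so their limits are simply~$H_{q}(X;G)$ and~$H^{q}(X;G)$; in particular the homology inverse system is Mittag--Leffler and no~$\lim^{1}$ term appears. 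It then remains only to identify~$\cH_{*}(K)$ with~$H_{*}(K)$ and~$\cH^{*}(K)$ with~$H^{*}(K)$, naturally, for the neighborhoods~$K$ themselves: for cohomology this is the comparison of Čech and singular cohomology on a paracompact, locally contractible space (both compute sheaf cohomology with constant coefficients there); for homology one reduces to a finite simplicial complex — either by West's theorem that a compact ANR has the homotopy type of a finite polyhedron, together with the homotopy invariance of Čech homology, or, when~$\dim X<\infty$, by taking the~$K_{n}$ to be polyhedral from the start — and on a finite complex the nerve of the open-star cover of a triangulation is the complex itself, so the Čech groups are literally the simplicial ones. Chaining the isomorphisms~$\cH_{*}(X)=\varprojlim\cH_{*}(K_{n})\cong\varprojlim H_{*}(K_{n})=H_{*}(X)$, and dually in cohomology, and checking naturality in~$G$ and~$X$ (automatic from functoriality of all constructions involved), gives both displayed isomorphisms.

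The main obstacle is the homology half. Čech cohomology is a direct limit and behaves tamely, and the cohomology statement is in essence just the paracompact--locally-contractible comparison above. Čech homology is an inverse limit, it fails the exactness axiom, and it genuinely differs from singular homology for badly-behaved compacta — the Warsaw circle and~$\{0\}\cup\{1/n:n\in\N\}$ are the standard cautionary examples, where Čech cohomology already detects a nonzero class that singular cohomology misses. What rescues the argument is precisely that the neighborhood system of an ANR in~$Q$ is \emph{stable}, i.e.\ cofinally a system of homotopy equivalences, so that no inverse-limit obstruction survives. Producing that cofinal family of neighborhoods deformation-retracting onto~$X$ is the one step where the ANR hypothesis is truly indispensable, and it is the technical heart of the proof.
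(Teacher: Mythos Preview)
The paper does not prove this statement: it is quoted as Theorem~1 of~\cite{Mardesic58} and used as a black box, so there is no ``paper's own proof'' to compare against. Your sketch is a reasonable outline of the classical argument and you correctly identify the one genuine difficulty, namely that \v{C}ech homology is an inverse limit and need not satisfy exactness, so the comparison cannot be obtained from the Eilenberg--Steenrod axioms alone.

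Two remarks on the details. First, the claim that~$X$ is a deformation retract of \emph{each} small compact neighborhood~$K$ is slightly stronger than what comes for free: the straight-line homotopy from~$\id$ to the retraction~$r$ lands in~$Q$, and the tube-lemma step gives you a homotopy~$K_{n+1}\times I\to K_n$, not~$K_n\times I\to K_n$. What you actually obtain is that the bonding inclusions~$K_{n+1}\hookrightarrow K_n$ factor through~$X$ up to homotopy, so the pro-system~$(H_q(K_n))_n$ is pro-isomorphic to the constant system~$H_q(X)$. That is enough for the limit computation and the vanishing of~$\lim^1$, and is what the argument really uses.

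Second, there is a mild circularity risk in the step ``identify~$\cH_*(K_n)$ with~$H_*(K_n)$'': arbitrary compact neighborhoods of~$X$ in~$Q$ need not be ANRs or polyhedra, so you cannot invoke the very theorem you are proving, and you cannot take the~$K_n$ polyhedral inside the infinite-dimensional~$Q$. The clean way out, which you also mention, is to bypass the neighborhood system entirely: every ANR has the homotopy type of a CW complex (Milnor, and for compact ANRs a finite one by West), both \v{C}ech and singular theories are homotopy invariant, and on simplicial complexes the \v{C}ech groups are the simplicial ones. That route is self-contained and is closer to how the result is usually cited today; your inverse-limit argument is closer in spirit to Marde\v{s}i\'c's original 1958 proof, which predates both Milnor's and West's theorems.
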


The result applies to compact manifolds with and without boundary, which are ANRs (Corollary A.9 in \cite{Hatcher02}).

\paragraph{Partial characterization of \texorpdfstring{$\E_n$}{En}}
For compact spaces of dimension at most~$n+1$,~$\E_n$ can be characterized in terms of \v{C}ech homology and cohomology. For a pair~$(X,A)$, let~$i:A\to X$ be the inclusion map. Like any continuous map, it induces natural homomorphisms on the homology and cohomology groups, written~$i_*:\cH_n(A;G)\to\cH_n(X;G)$ and~$i^*:\cH^n(X;G)\to\cH^n(A;G)$. It uses the abelian groups~$(\Z,+)$ and~$(\R/\Z,+)$.

\begin{thm}[Corollaries VIII.2 and VIII.3 (p.~149) in \cite{HurWal41}]
Let~$(X,A)$ be a compact pair with~$\dim(X)\leq n+1$. One has
\begin{align*}
(X,A)\in \E_n&\iff i_*:\cH_n(A;\R/\Z)\to \cH_n(X;\R/\Z)\text{ is not injective}\\
&\iff i^*:\cH^n(X;\Z)\to \cH^n(A;\Z)\text{ is not surjective}.
\end{align*}
\end{thm}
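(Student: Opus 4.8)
The plan is to prove the two displayed bi-implications by establishing the cohomological one directly and then deducing the homological one from it via Pontryagin duality. One must work with Čech (co)homology throughout, since $X$ is only a compact metrizable space of dimension~$\le n+1$ and carries no preferred CW structure; I also assume $n\ge 1$, the case $n=0$ being an elementary statement about clopen subsets. Recall that $i\colon A\hookrightarrow X$ induces $i^*\colon\cH^n(X;\Z)\to\cH^n(A;\Z)$ and $i_*\colon\cH_n(A;\R/\Z)\to\cH_n(X;\R/\Z)$.

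\emph{Step 1: the cohomological characterization, $(X,A)\in\E_n\iff i^*$ is not surjective.} The input is obstruction theory carried out along the map $q\colon\SS_n\to K(\Z,n)$ classifying the fundamental class~$\iota_n$, whose homotopy fibre~$F$ is $n$-connected. Two facts are needed, both consequences of the dimension hypothesis combined with the vanishing $\cH^{\ge n+2}(Z;G)=0$ for any compact~$Z$ with $\dim Z\le n+1$ (a cofinal family of finite open covers has nerves of dimension~$\le n+1$): (i) for $\dim Z\le n+1$ the characteristic-class map $[Z;\SS_n]\to\cH^n(Z;\Z)$, $g\mapsto g^*\iota_n$, is surjective --- a class $\alpha$ is represented by a map $Z\to K(\Z,n)$ (using $\cH^n(Z;\Z)\cong[Z;K(\Z,n)]$, valid for paracompact~$Z$), and it lifts through~$q$ since the obstructions lie in $\cH^{k+1}(Z;\pi_kF)$, which vanish both for $k\le n$ (connectivity of~$F$) and for $k\ge n+1$ (dimension); (ii) a continuous $f\colon A\to\SS_n$ extends over~$X$ iff $f^*\iota_n\in\operatorname{im}(i^*)$. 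The forward implication of (ii) is immediate. For the converse, suppose $\delta(f^*\iota_n)=0$, where $\delta\colon\cH^n(A;\Z)\to\cH^{n+1}(X,A;\Z)$ is the connecting homomorphism of the pair; then $q\circ f\colon A\to K(\Z,n)$, which represents $f^*\iota_n$, extends over~$X$ to some $\bar F\colon X\to K(\Z,n)$ (its sole obstruction to extension is $\delta(f^*\iota_n)$, by exactness of the pair sequence), and $\bar F$ lifts through~$q$ to $F\colon X\to\SS_n$ agreeing with~$f$ on~$A$, because the obstructions to this relative lifting–extension problem lie in $\cH^{k+1}(X,A;\pi_kF)$ and vanish for every~$k$ by the same connectivity-and-dimension argument. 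Granting (i) and (ii), a diagram chase finishes Step~1: $(X,A)\notin\E_n$ means every $f\colon A\to\SS_n$ extends over~$X$, i.e.\ by (ii) the image of the characteristic-class map $[A;\SS_n]\to\cH^n(A;\Z)$ is contained in $\operatorname{im}(i^*)$, which by (i) forces $\operatorname{im}(i^*)=\cH^n(A;\Z)$; so $(X,A)\in\E_n$ iff $i^*$ is not surjective.

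\emph{Step 2: the homological characterization.} It remains to identify ``$i^*$ not surjective'' with ``$i_*$ not injective''. For a compact space~$Z$ there is a natural isomorphism $\cH_n(Z;\R/\Z)\cong\operatorname{Hom}(\cH^n(Z;\Z),\R/\Z)$, a form of Pontryagin duality for Čech theory: Čech cohomology is the colimit of the cohomologies of the nerves~$N_\alpha$ and Čech homology the limit of their homologies; $\operatorname{Hom}(-,\R/\Z)$ converts colimits to limits; and for each finite complex~$N_\alpha$ the universal coefficient theorem gives $H_n(N_\alpha;\R/\Z)\cong\operatorname{Hom}(H^n(N_\alpha;\Z),\R/\Z)$ because $\R/\Z$ is divisible, hence an injective $\Z$-module, so $\operatorname{Ext}(-,\R/\Z)=0$. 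Under this identification $i_*$ becomes $\operatorname{Hom}(i^*,\R/\Z)$, i.e.\ precomposition with~$i^*$. Since $\R/\Z$ is an injective cogenerator of the category of abelian groups, a homomorphism $\varphi\colon M\to N$ is surjective iff $\operatorname{Hom}(\varphi,\R/\Z)$ is injective (its kernel is $\operatorname{Hom}(\operatorname{coker}\varphi,\R/\Z)$, trivial exactly when $\operatorname{coker}\varphi=0$). Applying this to $\varphi=i^*$ yields: $i^*$ is surjective iff $i_*$ is injective, which closes the chain of equivalences.

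\emph{Main obstacle.} The serious part is making the obstruction-theoretic facts (i) and (ii) rigorous over an arbitrary compact metrizable~$X$ of dimension~$\le n+1$: one must run skeletal obstruction theory for a lifting–extension problem along a fibration when the base has no CW structure, with all obstruction classes living in Čech cohomology and the connecting homomorphism of the pair entering functorially. The standard device is to replace~$X$ by the nerves of a cofinal system of finite open covers --- compact polyhedra of dimension~$\le n+1$, where classical obstruction theory applies --- use that maps from~$X$ into the fixed complexes $\SS_n$ and $K(\Z,n)$ factor through finite stages, and pass to the colimit in cohomology. This bookkeeping is precisely the content of Chapter~VIII of~\cite{HurWal41}, where essentially all the work sits, so in practice one would simply invoke that reference. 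Everything downstream --- the diagram chase in Step~1, the duality in Step~2, and the sanity check that for $(\B_{n+1},\SS_n)\in\E_n$ one indeed has $i^*\colon 0\to\Z$ non-surjective --- is routine.
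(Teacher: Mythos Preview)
The paper does not supply a proof of this theorem; it is quoted verbatim as Corollaries VIII.2 and VIII.3 from Hurewicz--Wallman, so there is nothing to compare against. Your sketch is therefore a standalone reconstruction of a classical result, and as such it is sound: the cohomological equivalence via the fibration $\SS_n\to K(\Z,n)$ with $n$-connected fibre, together with the vanishing of the relative obstruction groups $\cH^{k+1}(X,A;\pi_k F)$ forced by $\dim X\le n+1$, is exactly the right mechanism, and the passage to the homological statement by Pontryagin duality is correct (your justification is slightly misphrased---the point is not so much that $\mathrm{Ext}(-,\R/\Z)=0$ gives a split short exact sequence, but that injectivity of $\R/\Z$ makes $\mathrm{Hom}(-,\R/\Z)$ exact, so it commutes with taking (co)homology of the cochain complex, and $\mathrm{Hom}(\mathrm{Hom}(C_n,\Z),\R/\Z)\cong C_n\otimes\R/\Z$ naturally for free finitely generated $C_n$).

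You are honest about where the real work lies: transporting obstruction theory to an arbitrary compact metrizable $X$ via nerves of finite covers is precisely Chapter~VIII of \cite{HurWal41}, so your argument is ultimately circular in the sense that it invokes the very reference the theorem is quoted from. That is not a mathematical gap---the outline is correct and could be filled in independently---but it is worth being aware that the modern obstruction-theoretic language you use (Postnikov-style lifting through $K(\Z,n)$) postdates Hurewicz--Wallman; their original argument is phrased in terms of Hopf's classification theorem and the bridge-and-barrier machinery of essential cycles, which amounts to the same thing but looks rather different on the page.
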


The equivalence does not hold in general. For instance, Hopf's fibration is a continuous function~$f:\SS_3\to\SS_2$ which is not null-homotopic, hence does not extend to~$\B_4$, implying that~$(\B_4,\SS_3)\in\E_2$. However, the second homology and cohomology groups of both~$\B_4$ and~$\SS_3$ are trivial, so the only homomorphism between them is bijective.

\paragraph{Partial characterization of \texorpdfstring{$\H_n$}{Hn}}
The invariant~$\H_n$ also admits a characterization in terms of \v{C}ech homology and cohomology, for compact spaces of dimension at most~$n$.
\begin{thm}[Corollary 4 (p.~150) in \cite{HurWal41}]\label{thm_hn_homology}
Let~$X$ be a compact space with~$\dim(X)\leq n$. One has
\begin{align*}
X\in\H_n&\iff \cH_n(X;\R/\Z)\neq 0\\
&\iff \cH^n(X;\Z)\neq 0.
\end{align*}
\end{thm}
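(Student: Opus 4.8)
The plan is to derive this from two facts that are already available: the single-space case of Proposition~\ref{prop_hn_cone}, namely $X\in\H_n\iff(\cone{X},X)\in\E_n$ (applied to the pair $(X,\emptyset)$, so that $\cone{X,\emptyset}=(\cone{X},X)$), and the \v{C}ech homology/cohomology characterization of $\E_n$ for pairs of dimension at most $n+1$ (Corollaries VIII.2 and VIII.3 of \cite{HurWal41}, quoted just above). Beyond these, the only inputs are two standard facts: the cone of a finite-dimensional compact metrizable space raises the covering dimension by at most one, and a cone is contractible so its \v{C}ech homology and cohomology agree with those of a point. Conceptually this is nothing but the classical Hopf classification theorem expressed through obstruction theory on the cone; the point of routing it through $\E_n$ is precisely to reuse the characterization already established.

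Here is how I would carry it out. Fix a compact metrizable $X$ with $\dim(X)\le n$ and first assume $n\ge 1$. Proposition~\ref{prop_hn_cone} gives $X\in\H_n\iff(\cone{X},X)\in\E_n$, where $X$ sits inside $\cone{X}$ as $X\times\{1\}$ and $i:X\hookrightarrow\cone{X}$ denotes the inclusion. Since $\dim(\cone{X})\le\dim(X)+1\le n+1$, the pair $(\cone{X},X)$ meets the dimension hypothesis of the $\E_n$ characterization, so
\begin{align*}
(\cone{X},X)\in\E_n&\iff i_*:\cH_n(X;\R/\Z)\to\cH_n(\cone{X};\R/\Z)\text{ is not injective}\\
&\iff i^*:\cH^n(\cone{X};\Z)\to\cH^n(X;\Z)\text{ is not surjective}.
\end{align*}
Because $\cone{X}$ is contractible and $n\ge 1$, one has $\cH_n(\cone{X};\R/\Z)=0$ and $\cH^n(\cone{X};\Z)=0$. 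Hence $i_*$ fails to be injective precisely when $\cH_n(X;\R/\Z)\ne 0$, and $i^*$, being the zero map with target $\cH^n(X;\Z)$, fails to be surjective precisely when $\cH^n(X;\Z)\ne 0$. Chaining these equivalences yields the statement for $n\ge 1$. The case $n=0$ is elementary and handled directly: a continuous map $X\to\SS_0$ is a clopen partition of $X$, which is null-homotopic iff it is constant, so $X\in\H_0$ iff $X$ is disconnected, matching the $n=0$ statement read with reduced coefficients.

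I expect no conceptual obstacle, since the argument is essentially a substitution into an already-proved characterization. The points needing a little care are bookkeeping ones: confirming that $\dim(\cone{X})\le\dim(X)+1$ for the compact metrizable spaces in play (a standard dimension-theory fact, e.g.\ in \cite{HurWal41}), and making sure the version of \v{C}ech homology $\cH_n(-;\R/\Z)$ appearing in the $\E_n$ characterization is the homotopy-invariant one, so that $\cH_n(\cone{X};\R/\Z)=0$ is legitimate. Everything else reduces to the contractibility of the cone.
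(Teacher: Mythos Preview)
The paper does not give its own proof of this statement: it is quoted verbatim as Corollary~4 (p.~150) of \cite{HurWal41}, with no argument supplied. So there is nothing to compare against in that sense.

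Your derivation is nonetheless correct and worth recording. It reduces this corollary to the preceding one (the $\E_n$ characterization, Corollaries~VIII.2--VIII.3 of \cite{HurWal41}) via Proposition~\ref{prop_hn_cone}, together with two standard facts: $\dim(\cone X)\le\dim(X)+1$ for compact metrizable~$X$, and the homotopy invariance of \v{C}ech theory applied to the contractible space~$\cone X$. All steps are sound for $n\ge 1$. This is exactly the kind of dependency the paper exploits in the other direction (using $\E_n$ to analyze $\H_n$), so your argument fits the narrative well even though the paper itself simply cites the result.

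One cautionary remark on your $n=0$ paragraph: as stated in the paper the groups are unreduced, and for any non-empty compact~$X$ one has $\cH_0(X;\R/\Z)\neq 0$ and $\cH^0(X;\Z)\neq 0$, while a one-point space is certainly not in~$\H_0$. So the literal statement fails at $n=0$ unless one reads reduced groups, as you note. This is an artefact of the citation rather than a flaw in your argument; the paper only ever invokes the theorem for $n\ge 1$ (e.g.\ in the proof of Theorem~\ref{thm_manifold_hn}), so the issue is cosmetic.
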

Again, the equivalence does not hold in general. Hopf's fibration implies that~$\SS_3\in\H_2$, but~$\cH_2(\SS_3;\R/\Z)\cong \cH^2(\SS_3;\Z)\cong 0$.


Lupini, Melnikov and Nies \cite{LMN23}, and Melnikov and Downey \cite{DM23} with a different proof, proved that if~$X\subseteq Q$ is given in the Vietoris topology~$\V$, then its \v{C}ech cohomology groups~$\check{H}^n(X;\Z)$ can be computably presented (relative to~$X$ and uniformly): one can enumerate a list of generators and the words, i.e.~finite combinations of generators and their inverses, that are equal to the~$0$ element of the group. Their result implies that the non-triviality of a group~$\check{H}^n(X;\Z)$ is a~$\Sigma^0_2$ invariant in the topology~$\V$: the group is non-trivial iff there exists a word which does not equal~$0$, which is a~$\Sigma^0_2$ formula. It seems that their proof even shows that it is a~$\Sigma^0_2$ invariant in the topology~$\upV$. However Proposition \ref{prop not depend} implies that being~$\Sigma^0_2$ in~$\V$ is enough for the purpose of proving strong computable type. More precisely,
\begin{cor}
Let~$X$ be a compact space. If~$X$ is minimal such that~$\check{H}^n(X;\Z)\ncong 0$, then~$X$ has strong computable type.
\end{cor}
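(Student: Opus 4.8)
The plan is to deduce the statement from Theorem~\ref{thm_sigma2} together with Proposition~\ref{prop not depend}, treating the single space $X$ as the pair $(X,\emptyset)$; this is legitimate since strong computable type of a space is, by definition, strong computable type of the pair $(X,\emptyset)$ (as noted after Definition~\ref{def_ct}).

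First I would fix the property of pairs $\P\subseteq\K(Q)^2$ defined by $(Z,C)\in\P$ iff $\check{H}^n(Z;\Z)\ncong 0$, ignoring the second component. Since \v{C}ech cohomology groups are topological invariants, homeomorphic copies of $Z$ have isomorphic $\check{H}^n$, so $\P$ is a topological invariant of pairs. Next comes the descriptive-complexity input, which is the one cited result being used: by Lupini--Melnikov--Nies and Downey--Melnikov, from a presentation of $Z$ in the topology $\V$ one can computably present the group $\check{H}^n(Z;\Z)$, uniformly in $Z$, meaning one can enumerate generators and the c.e.\ set of words equal to the $0$ element. Hence ``$\check{H}^n(Z;\Z)\ncong 0$'' is equivalent to ``there exists a word which is not equal to $0$'', a $\Sigma^0_2$ condition in $\V$. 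Since $\P$ depends only on the first coordinate, through $\V$, it follows that $\P$ is $\Sigma^0_2$ in $\VV$.

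Then I would apply Proposition~\ref{prop not depend}(2): because $\P$ is $\Sigma^0_2$ in $\VV$, the invariant $\ur\P$ is $\Sigma^0_2$ in $\upVV$ and can be written as $\bigcup_n\P_n$ with the $\P_n$ uniformly $\Pi^0_1$ in $\upVV$, and, crucially, $\ur\P$ has the same minimal elements as $\P$. The hypothesis that $X$ is minimal with $\check{H}^n(X;\Z)\ncong 0$ is exactly the statement that $(X,\emptyset)$ is $\P$-minimal: its proper subpairs are precisely the pairs $(Y,\emptyset)$ with $Y\subsetneq X$, and for those $\check{H}^n(Y;\Z)\cong 0$. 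Hence $(X,\emptyset)$ is $\ur\P$-minimal, so Theorem~\ref{thm_sigma2} applied to the $\Sigma^0_2$ invariant $\ur\P$ in $\upVV$ yields that $(X,\emptyset)$, i.e.\ $X$, has strong computable type.

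The only genuine content is the cited computable presentability of $\check{H}^n(X;\Z)$ from $X\in(\K(Q),\V)$ and the observation that it makes non-triviality a $\Sigma^0_2$ predicate; everything else is bookkeeping about invariants, about using Proposition~\ref{prop not depend} to pass from $\V$ to $\upV$ (so that Theorem~\ref{thm_sigma2} applies), and about matching the single-space minimality hypothesis with $\P$-minimality of $(X,\emptyset)$. I do not expect any real obstacle beyond stating these matchings carefully.
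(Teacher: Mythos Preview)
Your proposal is correct and follows essentially the same approach as the paper: use the cited result that non-triviality of $\check{H}^n(X;\Z)$ is $\Sigma^0_2$ in $\V$, invoke Proposition~\ref{prop not depend} to pass to a $\Sigma^0_2$ invariant in $\upV$ (resp.\ $\upVV$) with the same minimal elements, and conclude via Theorem~\ref{thm_sigma2}. The only difference is cosmetic: the paper phrases everything for single sets in $\K(Q)$ while you explicitly wrap the argument in the pair $(X,\emptyset)$, which is harmless since the machinery is stated for pairs anyway.
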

\begin{proof}
The invariant~$\I_n=\{X\in\K(Q):\check{H}^n(X;\Z)\ncong 0\}$ is~$\Sigma^0_2$ in the topology~$\V$ by \cite{LMN23} and \cite{DM23}. Therefore,~$\ur{\I_n}$ is~$\Sigma^0_2$ in the topology~$\upV$ and has the same minimal elements as~$\I_n$ (Proposition \ref{prop not depend}), so we can apply Theorem \ref{thm_sigma2}.
\end{proof}

We also mention a stronger connection between maps to the sphere and cohomology. Theorem \ref{thm_hn_homology} is actually a particular case of Hopf's classification theorem, which states that for a compact space of dimension at most~$n$, there is a one-to-one correspondence between homotopy classes of maps to the~$n$-sphere and elements of the~$n$th \v{C}ech cohomology group. It can be found as 
Theorem 2.2 (p.~17) in \cite{Alexandroff47}, reformulated in \cite{Hu48}.
\begin{thm}[Hopf classification theorem]
Let~$X$ be a compact space of dimension~$\leq n$. There is a bijection between~$[X;\SS_n]$ and~$\cH^n(K;\Z)$.
\end{thm}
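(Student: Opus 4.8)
The plan is to prove the bijection $[X;\SS_n]\cong\cH^n(X;\Z)$ in two stages: first for finite simplicial complexes of dimension $\leq n$ via classical obstruction theory, and then for an arbitrary compact $X$ with $\dim(X)\leq n$ by approximating $X$ with the nerves of its finite open covers and passing to a direct limit. For the first stage, fix a finite complex $K$ with $\dim(K)\leq n$. Since $\SS_n$ is $(n-1)$-connected, every continuous map from the $(n-1)$-skeleton $K^{(n-1)}$ into $\SS_n$ extends over all of $K$, and it does so essentially uniquely: the set of extensions up to homotopy rel $K^{(n-1)}$ is a torsor over $Z^n(K;\pi_n(\SS_n))$, but because $\dim(K)\leq n$ there are no $(n+1)$-cells, so there is no further obstruction and every difference cochain is automatically a cocycle. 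Dividing by the homotopies of the map on $K^{(n-1)}$ (which, again by $(n-1)$-connectedness of $\SS_n$, act transitively on that skeleton and produce exactly the coboundaries) identifies $[K;\SS_n]$ with $H^n(K;\Z)$, and the identification is realized concretely as $[f]\mapsto f^*(\iota)$ for a fixed generator $\iota$ of $\cH^n(\SS_n;\Z)$, which is the form required to match Theorem~\ref{thm_hn_homology}. Since singular, simplicial and \v{C}ech cohomology all agree on finite complexes, this gives $[K;\SS_n]\cong\cH^n(K;\Z)$, naturally with respect to simplicial maps.

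For the second stage I would use that a compact space $X$ with covering dimension $\leq n$ is the inverse limit of a cofinal system of nerves $N_\alpha$ of finite open covers, where the refinement maps $N_\beta\to N_\alpha$ can be taken simplicial of dimension $\leq n$ and are well-defined up to homotopy. By definition $\cH^n(X;\Z)=\varinjlim_\alpha H^n(N_\alpha;\Z)$. On the homotopy side I would establish $[X;\SS_n]=\varinjlim_\alpha[N_\alpha;\SS_n]$: given $f:X\to\SS_n$, extend it to a neighborhood $U$ of (a copy of) $X$ using that $\SS_n$ is an ANR (Fact~\ref{fact_ANR}(4)), then for a sufficiently fine cover the canonical map $\kappa_\alpha:X\to|N_\alpha|$ and a vertex-by-vertex extension of $f$ over the simplices of $N_\alpha$ produce $g_\alpha:|N_\alpha|\to\SS_n$ with $g_\alpha\circ\kappa_\alpha$ as close as we like to $f$, hence homotopic to it by Lemma~\ref{lem_hom_ANR}; and two maps factoring through $N_\alpha$ that agree on $X$ become homotopic after refining, by the same nerve argument applied to $X\times[0,1]$. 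Taking the direct limit of the bijections from the first stage and using their naturality under the bonding maps yields $[X;\SS_n]\cong\varinjlim_\alpha H^n(N_\alpha;\Z)=\cH^n(X;\Z)$.

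The main obstacle is the second stage, specifically the identification $[X;\SS_n]=\varinjlim_\alpha[N_\alpha;\SS_n]$ for an arbitrary compact metric (or compact Hausdorff) space: one must make precise the nerve approximation of maps and of homotopies, with enough control on meshes that no homotopy class is lost and none is spuriously created in the limit, and this uses the ANR property of $\SS_n$ in an essential way (to push maps and homotopies off $X$ into a polyhedral neighborhood and to conclude that nearby maps are homotopic). The obstruction-theoretic first stage is standard but still requires the dimension hypothesis $\dim(X)\leq n$ to be invoked twice — once to guarantee that maps on the $(n-1)$-skeleton extend with no higher obstruction, and once to guarantee that the difference cochain is a cocycle — and care is needed to check that the resulting bijection is natural for simplicial maps, which is exactly what makes passing to the direct limit legitimate. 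As a remark, for the CW case one could alternatively argue via the $(n+1)$-connected map $\SS_n\to K(\Z,n)$ and the representability $[X;K(\Z,n)]\cong H^n(X;\Z)$, but the nerve/\v{C}ech route above is what handles the general compact space directly.
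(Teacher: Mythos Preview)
The paper does not prove this theorem: it is stated as a classical result and attributed to Alexandroff (Theorem~2.2, p.~17 in \cite{Alexandroff47}) and Hu \cite{Hu48}, with no argument given. So there is no ``paper's own proof'' to compare against; the statement functions purely as background, alongside Theorem~\ref{thm_hn_homology}, to motivate the connection between~$\H_n$ and \v{C}ech cohomology.

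Your sketch is the standard route to Hopf's theorem and is broadly sound in outline: obstruction theory on finite complexes of dimension~$\leq n$ (using that~$\SS_n$ is~$(n-1)$-connected and~$\pi_n(\SS_n)\cong\Z$), followed by nerve approximation and a direct-limit argument for arbitrary compact spaces. The places you flag as delicate are indeed the real work, in particular the identification~$[X;\SS_n]\cong\varinjlim_\alpha[N_\alpha;\SS_n]$; this is where the dimension hypothesis is used again (to replace the nerves by complexes of dimension~$\leq n$) and where the ANR property of~$\SS_n$ enters via Lemma~\ref{lem_hom_ANR}. One small point: in the first stage you should be a bit more careful with the claim that homotopies on~$K^{(n-1)}$ ``act transitively'' and produce exactly the coboundaries; what is actually true is that any two maps~$K^{(n-1)}\to\SS_n$ are homotopic (since~$\SS_n$ is~$(n-1)$-connected), and the set of extensions of a fixed map on~$K^{(n-1)}$ to~$K$, modulo homotopy rel~$K^{(n-1)}$, is a~$Z^n(K;\Z)$-torsor, with change of the homotopy on~$K^{(n-1)}$ accounting for coboundaries. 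As written, your sentence conflates these two steps, but the conclusion is correct.
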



\section{Examples}\label{sec_examples}

In this section, we illustrate our framework by revisiting previous results stating that a pair has computable type, and showing that it can be explained by their minimality w.r.t.~some suitable~$\Sigma^0_2$ invariant. The advantages of this approach are multiple. A single~$\Sigma^0_2$ invariant can be used for many different sets or pairs, therefore capturing in a unified way the computability-theoretic part of the argument, leaving a purely topological analysis to each specific set or pair, which can often leverage the well-developed field of topology. It makes the study of (strong) computable type more explicitly related to topology.

In particular, most of the arguments in this section are purely topological.

\subsection{Examples of \texorpdfstring{$\E_n$}{En}-minimal pairs}

In this section we show how~$\E_n$-minimality covers many of the known examples of pairs having computable type. We also give a new example to illustrate its scope.

\subsubsection{Chainable continuum between two points}

We revisit a result due to Iljazovi{\'c} in \cite{Ilja09}. Let us first recall the definitions. A continuum is a connected compact metric space.


\begin{defn}\label{def_chain}
Let~$X$ be a metric space. A finite sequence~$\mathcal{C}$ of
nonempty open subsets~$C_{0},\ldots,C_{m}$ of~$X$ is said to be
a \textbf{chain} if~$C_{i}\cap C_{j}\neq\emptyset\Leftrightarrow|i-j|\leq1$,~$\forall i,j\in\{0,\ldots,m\}$.
A chain~$\mathcal{C}=\{C_{0},\ldots,C_{m}\}$ \textbf{covers}~$X$ if~$X\subseteq\bigcup_{i}C_{i}$;
for~$\epsilon>0$, it is an~$\epsilon$\textbf{-chain} if~$\max_{i}(\diam{C_{i}})<\epsilon$.

For~$a,b\in X$,~$X$ is \textbf{chainable from~$a$ to~$b$} if for every~$\epsilon>0$, there exists an~$\epsilon$-chain~$\mathcal{C}=\{C_{0},\ldots,C_{m}\}$ in~$X$ which covers~$X$ and such that~$a\in C_{0}$ and~$b\in C_{m}$.
\end{defn}

Iljazovi\'c proved in \cite{Ilja09} that if~$X$ is chainable from~$a$ to~$b$, then the pair~$(X,\{a,b\})$ has computable type. This result and its proof can be reformulated as the~$\E_0$-minimality of the pair.
\begin{prop}
If~$X$ is compact connected and is chainable from~$a$ to~$b$, then the pair~$(X,\{a,b\})$ is~$E_{0}$-minimal.
\end{prop}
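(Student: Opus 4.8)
The plan is to deduce this from Corollary~\ref{cor_minimality} with~$n=0$; the real content is a direct manipulation of covering $\epsilon$-chains, and it is essentially a repackaging of Iljazovi\'c's original argument. Throughout I fix a metric~$d$ on~$X$ witnessing chainability and I take~$X$ to be non-degenerate, so that~$a\neq b$ (if~$a=b$ then~$a\in C_0\cap C_m$ forces every covering $\epsilon$-chain to have length at most~$1$, whence~$\diam{X}<2\epsilon$ for all~$\epsilon>0$, so~$X$ is a point). Recall that a continuous map from a space into~$\SS_0=\{-1,1\}$ is the same thing as a decomposition of the space into two clopen pieces. First I would verify~$(X,\{a,b\})\in\E_0$: since~$X$ is connected, the only continuous maps~$X\to\SS_0$ are the two constants, so the map~$f\colon\{a,b\}\to\SS_0$ with~$f(a)\neq f(b)$ has no continuous extension to~$X$. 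Then I would check the hypotheses of Corollary~\ref{cor_minimality}: the set~$A=\{a,b\}$ has empty interior in~$X$ (a connected space with more than one point has no isolated points, so no finite subset is open) and~$\dim(A)=0=n$. It therefore suffices to prove that~$(Y,\{a,b\})\notin\E_0$ for every~$Y$ with~$\{a,b\}\subseteq Y\subsetneq X$.

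The heart of the proof is this last point. Fix such a~$Y$; being compact it is closed in~$X$, so~$X\setminus Y$ is non-empty and open, and I can pick~$p\in X\setminus Y$ together with~$\epsilon>0$ such that~$B(p,\epsilon)\subseteq X\setminus Y$. Using chainability I obtain a covering $\epsilon$-chain~$\mathcal{C}=\{C_0,\ldots,C_m\}$ in~$X$ with~$a\in C_0$ and~$b\in C_m$, and I let~$k$ be an index with~$p\in C_k$. Since~$\diam{C_k}<\epsilon$, we have~$C_k\subseteq B(p,\epsilon)$, hence~$C_k\cap Y=\emptyset$; as~$a,b\in Y$ this forces~$0<k<m$. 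Setting~$U=\bigcup_{i<k}C_i$ and~$V=\bigcup_{i>k}C_i$, both sets are open, and they are disjoint, because~$C_i\cap C_j\neq\emptyset$ with~$i<k<j$ would give~$|i-j|\le 1$ by the chain condition, contradicting~$j-i\ge 2$. Since~$X=U\cup C_k\cup V$ and~$C_k\cap Y=\emptyset$, we get~$Y=(Y\cap U)\sqcup(Y\cap V)$, a partition of~$Y$ into two clopen subsets with~$a\in C_0\subseteq U$ and~$b\in C_m\subseteq V$.

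It then only remains to observe that every~$f\colon\{a,b\}\to\SS_0$ extends continuously to~$Y$: if~$f(a)=f(b)$ take the corresponding constant map, and if~$f(a)\neq f(b)$ take the map equal to~$f(a)$ on~$Y\cap U$ and to~$f(b)$ on~$Y\cap V$, which is continuous because~$Y\cap U$ and~$Y\cap V$ are clopen in~$Y$. Hence~$(Y,\{a,b\})\notin\E_0$, and Corollary~\ref{cor_minimality} then yields that~$(X,\{a,b\})$ is~$\E_0$-minimal. I do not anticipate any genuine difficulty here; the one step requiring care is ensuring that the cut index~$k$ lies strictly between~$0$ and~$m$, so that the two tails of the chain actually separate~$a$ from~$b$ inside~$Y$, and this is precisely where the hypothesis~$Y\subsetneq X$ enters.
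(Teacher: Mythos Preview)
Your proof is correct and follows essentially the same route as the paper: both arguments show~$(X,\{a,b\})\in\E_0$ by connectedness of~$X$, and both use an~$\epsilon$-chain to split~$X\setminus C_k$ into two open pieces separating~$a$ from~$b$, thereby extending any~$f$ on~$\{a,b\}$ over any proper closed subset containing~$a,b$. Your version is a bit more explicit in invoking Corollary~\ref{cor_minimality} and in handling the degenerate case~$a=b$; the paper simply notes at the end that every proper subset is contained in some~$X\setminus B(x,\epsilon)$. One small remark: the claim ``a connected space with more than one point has no isolated points'' needs $T_1$ (which you have, since~$X$ is metric), so you might say ``connected metric space'' there.
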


\begin{proof}
Take~$\SS_{0}=\{a,b\}$ and~$f=\id_{\SS_{0}}:\SS_0\to\SS_0$. As~$X$ is connected, the function~$f$ has no continuous extension~$g:X\rightarrow\SS_{0}$, because~$g^{-1}(a)$ and~$g^{-1}(b)$ would disconnect~$X$. Hence,~$(X,\{a,b\})$ is in~$E_{0}$.

Let~$x\in X\setminus \{a,b\}$ and~$\epsilon>0$ be such that~$B(x,\epsilon)\subseteq X\setminus \{a,b\}$. Let~$f:\{a,b\}\to\SS_0$. The points~$a,b$ must be disconnected in~$X\setminus B(x,\epsilon)$. Indeed, let~$\mathcal{C}=\{C_{0},\ldots,C_{m}\}$ be an~$\epsilon$-chain covering~$X$ with~$a\in C_{0}$ and~$b\in C_{m}$. Let~$k$ be such that~$x\in C_k$. As~$\diam{C_k}<\epsilon$,~$a\notin C_k,b\notin C_k$ and~$C_k\subseteq B(x,\epsilon)$. Therefore~$X\setminus B(x,\epsilon)$ is covered by the two disjoint open sets~$U_a:=\bigcup_{i<k}C_i$ and~$U_b:=\bigcup_{i>k}C_i$, containing~$a$ and~$b$ respectively.

The disconnection can be turned into a continuous extension~$g:X\setminus B(x,\epsilon)\to \SS_0$, sending~$U_a$ to~$f(a)$ and~$U_b$ to~$f(b)$. Therefore,~$(X\setminus B(x,\epsilon),\{a,b\})\notin \E_0$. It implies that~$(X,\{a,b\})$ is~$\E_0$-minimal because any proper subset of~$X$ is contained in such a~$X\setminus B(x,\epsilon)$.
\end{proof}

We will cover other examples from \cite{Ilja09} in Section \ref{sec_circ_chain}, using another~$\Sigma^0_2$ invariant.
%
\subsubsection{Pseudo \texorpdfstring{$n$}{n}-cubes}

In \cite{2020pseudocubes}, the authors introduce the notion of a pseudo-$n$-cube, which is a pair~$(X,A)$ which can somehow be approximated by~$n$-dimensional cubes and their boundaries. They prove that pseudo-$n$-cubes have computable type.

We show how the result can be proved using the invariant~$\E_{n-1}$, by showing how the~$\E_{n-1}$-minimality of the~$n$-dimensional cube implies the~$\E_{n-1}$-minimality of pseudo-$n$-cubes.

We first recall the definition of pseudo-$n$-cubes from \cite{2020pseudocubes}. Fix some~$n\in\N^{*}$ and let~$S^0_{1},\ldots,S^0_{n}$, $S^1_1,\ldots,S^1_n$ be the faces of the~$n$-cube~$I^{n}$ defined by
\begin{align*}
S^0_{i} & =\{(x_{1},\ldots,x_{n})\in I^{n}:x_{i}=0\},\\
S^1_{i} & =\{(x_{1},\ldots,x_{n})\in I^{n}:x_{i}=1\},
\end{align*}
where~$i\in\{1,\ldots,n\}$. Let~$S=\bigcup_{i}S^0_{i}\cup S^1_i$ be the boundary of~$I^n$. 

The Hausdorff distance between two non-empty compact subsets~$A,B$ of a metric space~$(X,d)$ is
\begin{equation*}
\d H{A}{B}=\max\left(\max_{x\in A}\min_{y\in B}d(x,y),\max_{y\in B}\min_{x\in A}d(x,y)\right).
\end{equation*}

\begin{defn}
A compact pair~$(X,A)$ in~$\R^{n}$ is called a \textbf{pseudo-$n$-cube}
with respect to some finite sequence of compact subsets~$A^0_{i}$ and~$A^1_i$,~$1\leq i\leq n$,
if~$A=\bigcup_{i}A^0_{i}\cup A^1_i$,~$A^0_{i}\cap A^1_{i}=\emptyset$ and for
each~$\epsilon>0$ there exists a continuous injection~$f:I^{n}\rightarrow\R^{n}$
such that
\begin{enumerate}
\item $f(I^{n})\subseteq X$,
\item $f(\int{\R^{n}}{I^{n}})\cap A=\emptyset$,
\item $X\setminus f(\int{\R^{n}}{I^{n}})\subseteq\Ne{\epsilon}{f(S)}$,
\item $\d H{f(S^0_{i})}{A^0_{i}}<\epsilon$ and $\d H{f(S^1_{i})}{A^1_{i}}<\epsilon$.
\end{enumerate}
\end{defn}

We now show how the proof that pseudo-$n$-cubes have computable type can be simplified by showing that they are~$\E_{n-1}$-minimal. 
\begin{prop}
Every pseudo-$n$-cube is~$\E_{n-1}$-minimal.
\end{prop}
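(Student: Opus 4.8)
The plan is to reduce the $\E_{n-1}$-minimality of a pseudo-$n$-cube $(X,A)$ to the $\E_{n-1}$-minimality of the genuine pair $(I^n,S)$, which we may take for granted (it is the case $n \mapsto n$ of Proposition~\ref{prop_En_ball}, since $(I^n,S)$ is homeomorphic to $(\B_n,\SS_{n-1})$). There are two things to establish: first that $(X,A)\in\E_{n-1}$, and second that no proper subpair of $(X,A)$ is in $\E_{n-1}$.

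For membership in $\E_{n-1}$, I would use the approximation built into the definition together with Corollary~\ref{cor_approx_sameA} (asymptotic homotopy, or rather its simple consequence for a fixed second component). Fix a small $\epsilon$ and let $f:I^n\to\R^n$ be a continuous injection as in the definition. The idea is that $f(S)$ is a copy of $\SS_{n-1}$ sitting inside $A$, and the ``identity-like'' map $f|_S : S \to f(S)$ (composed with a retraction/homeomorphism of $f(S)$ onto $\SS_{n-1}$) has no extension to $f(I^n)$ because $(I^n,S)\in\E_{n-1}$. The subtlety is passing from $f(S)$ to all of $A$: one uses condition (4), that $f(S^\delta_i)$ is Hausdorff-close to $A^\delta_i$, to see that as $\epsilon\to 0$ the copies $f(S)$ converge to $A$ in the Hausdorff metric while $f(I^n)$ converges to (a subset containing, by (3), a set Hausdorff-close to) $X$; then a limiting argument in the spirit of Proposition~\ref{prop_image} and the convergence results of Section~\ref{sec_invariants} shows that $A$ itself carries a map to $\SS_{n-1}$ with no extension to $X$. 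Concretely I expect one fixes a map $g:A\to\SS_{n-1}$ that is ``degree one'' on each pair of opposite faces and argues that an extension to $X$ would, by restriction and pullback along $f$, yield an extension of $\id_S$ to $I^n$, a contradiction with $(I^n,S)\in\E_{n-1}$.

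For minimality, the natural route is Corollary~\ref{cor_minimality}: $A$ has empty interior in $X$ (this should follow from condition (2), that the image of the interior of $I^n$ misses $A$, together with (3)), and $\dim(A)\le n-1$ since $A$ is a finite union of the $(n-1)$-dimensional pieces approximated by faces — more carefully, $A$ is a subset of $\R^n$ that is nowhere dense, hence has dimension $\le n-1$. So it suffices to show that for every $Y$ with $A\subseteq Y\subsetneq X$, one has $(Y,A)\notin\E_{n-1}$. Given such a $Y$, pick a point $x\in X\setminus Y$ and a ball $B(x,\rho)$ disjoint from $Y$. Choose $\epsilon\ll\rho$ and take the corresponding injection $f:I^n\to\R^n$. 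By condition (3), $X\setminus f(\operatorname{int} I^n)\subseteq\Ne{\epsilon}{f(S)}$, so the point $x$ — being at distance $>\rho$ from $Y\supseteq A$ and hence (up to choosing $\epsilon$ small) from $f(S)$ — must lie in $f(\operatorname{int} I^n)$. Then I would use the $\E_{n-1}$-minimality of $(I^n,S)$: $f^{-1}(x)$ is an interior point of $I^n$, so $(I^n\setminus B',S)\notin\E_{n-1}$ for a small ball $B'$ around $f^{-1}(x)$, meaning every $h:S\to\SS_{n-1}$ extends over $I^n\setminus B'$; transporting this extension forward along $f$ and patching it, via Proposition~\ref{prop_approx_neg} or a direct gluing along $f(S)$ with the hypothesis on $X\setminus f(\operatorname{int} I^n)$, yields a continuous extension to $Y$ of any given $g:A\to\SS_{n-1}$.

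**Main obstacle.** The delicate point is the gluing/patching step in both halves: the pseudo-$n$-cube is only approximated by $f(I^n)$, not equal to it, so one never has an exact homeomorphism between $(X,A)$ and $(I^n,S)$ — one must work with a sequence $f_k$ (for $\epsilon_k\to 0$) and control the homotopy type of the restricted maps in the limit. This is exactly what Propositions~\ref{prop_approx_neg} and~\ref{prop_approx_pos} (asymptotic homotopy) are designed for, and I expect the real work to be verifying their hypotheses: that $X\setminus(X_i\setminus A_i)$ (with $X_i=f_k(I^n)$, $A_i=f_k(S)$) shrinks into $\Ne{\epsilon}{A}$ as required by condition (2) of Proposition~\ref{prop_approx_neg}, and that the faces match up in the Hausdorff metric so that a chosen map on $A$ is asymptotically homotopic to its face-wise counterparts on $f_k(S)$. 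Once the framework of Section~\ref{sec_invariants} is invoked correctly, the topological core reduces to the single fact that $(I^n,S)$ is $\E_{n-1}$-minimal, so the proof should be short.
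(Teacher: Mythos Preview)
Your plan is essentially the paper's: reduce both membership and minimality to the $\E_{n-1}$-minimality of $(I^n,S)$ via Propositions~\ref{prop_approx_pos} and~\ref{prop_approx_neg}, then invoke Corollary~\ref{cor_minimality}. Two points deserve sharpening. First, Corollary~\ref{cor_approx_sameA} is the wrong tool to cite up front, since the second components $f_k(S)$ genuinely vary with $k$; you need the full Proposition~\ref{prop_approx_pos}, as you recognize in your ``Main obstacle'' paragraph. Second, the phrase ``degree one on each pair of opposite faces'' is too vague to carry the argument: the sets $A^b_i$ are not spheres, so degree is not defined. The paper builds $h:A\to S$ explicitly so that $h(A^b_i)\subseteq S^b_i$ for all $i,b$, by first sending each intersection $A^{b_1}_1\cap\cdots\cap A^{b_n}_n$ to the vertex $(b_1,\ldots,b_n)$ and then extending via Tietze through intersections of fewer and fewer faces. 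This face-preservation is exactly what is needed: once $h$ is extended to a neighborhood $\tilde h$ of $A$, condition~(4) forces $\tilde h(f(S^b_i))$ to stay near $S^b_i$ and hence disjoint from $S^{1-b}_i$, which is what makes $\tilde h|_{f(S)}$ homotopic to $\id_S$ and the asymptotic-homotopy hypothesis go through. For minimality your sketch is right; note that the paper uses invariance of domain to check that $f(\mathrm{int}\,I^n)=X_i\setminus A_i$ is open in $X$, which is hypothesis~(1) of Proposition~\ref{prop_approx_neg}.
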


\begin{proof}
A pseudo-cube is approximated by cubes. We use Propositions \ref{prop_approx_pos} and \ref{prop_approx_neg} to derive~$\E_{n-1}$-minimality of pseudo-$n$-cubes from~$\E_{n-1}$-minimality of~$n$-cubes (Proposition \ref{prop_En_ball}).

We recall that the pair~$(I^n,S)\cong (\B_n,\SS_{n-1})$ is in~$\E_{n-1}$ because the identity on~$S$ has no continuous extension to~$I^n$. A similar function~$h:A\to S$ can be defined that has no extension to~$X$. The function~$h$ satisfies~$h(A^0_i)\subseteq S^0_i$ and~$h(A^1_i)\subseteq S^1_i$ and is built as follows. On an intersection~$A^{b_1}_1\cap\ldots \cap A^{b_n}_n$ (where~$b_i\in\{0,1\}$),~$h$ sends every point to the vertex~$(b_1,\ldots,b_n)$ of~$I^n$. We then progressively extend~$h$, sending intersections of~$n-1$ sets to segments, intersections of~$n-2$ sets to squares, then cubes, etc.~and finally sending single sets~$A^0_i$ and~$A^1_i$ to the~$n-1$-cubes which are the faces~$S^0_i$ and~$S^1_i$ of~$I^n$. All the extensions can be done using the Tietze extension theorem.

Once~$h:A\to S$ is defined, it has a continuous extension~$\tilde{h}$ to a neighborhood of~$A$ because~$S$ is an ANR. For sufficiently small~$\epsilon$ and a cube~$(I^n,S)$ embedded in~$X$ as in the definition,~$\tilde{h}$ is defined on~$S$ and sends~$S^b_i$ close to~$S^b_i$, in particular disjoint from~$S^{1-b}_i$. Thus,~$\tilde{h}|_S:S\to S$ is homotopic to~$\id_S$. In other words, the functions~$\id_S$ on the copies of~$(I^n,S)$ are asymptotically homotopic to~$h:A\to S$. As~$\id_S$ has no continuous extension to~$I^n$,~$h$ has no continuous extension to~$X$ by Proposition \ref{prop_approx_pos}. As a result,~$(X,A)\in \E_{n-1}$.

We now prove minimality. By definition of pseudo-$n$-cubes, for each~$\epsilon$ there exists a copy~$(X_i,A_i)$ of the~$n$-cube in~$X$ satisfying the conditions of Proposition \ref{prop_approx_neg}: 
\begin{itemize}
\item $X_i\setminus A_i$ is open in~$X$ (indeed, it is the image by a continuous injection map of an open subset of~$\R^n$ in~$X\subseteq\R^n$, so it is open by invariance of domain) and contained in~$X\setminus A$,
\item $X\setminus (X_i\setminus A_i)\subseteq\Ne{\epsilon}{A}$.
\end{itemize}
If~$U\subseteq X\setminus A$ is an open set intersecting~$X$, then the same conditions are satisfied by the pairs~$(X_i\setminus U,A_i\setminus U)$ and~$(X\setminus U,A)$. For sufficiently large~$i$,~$U$ intersects~$X_i$ so~$(X_i\setminus U,A_i\setminus U)\notin \E_{n-1}$ by~$\E_{n-1}$-minimality of~$(X_i,A_i)$. We can apply Proposition \ref{prop_approx_neg}, which implies that~$(X\setminus U,A)\notin \E_{n-1}$. Therefore,~$(X,A)$ is~$\E_{n-1}$-minimal (we again use Corollary \ref{cor_minimality} which can be applied, as~$A\subseteq\R^n$ has empty interior, therefore has dimension at most~$n-1$ by Theorem 3.7.1 in \cite{2001vanMill}).
\end{proof}
%

\subsubsection{\label{par:More-examples}A new example}

In this section we illustrate how~$\E_n$-minimality can be used to obtain new examples of spaces having computable type property, with little effort. We present an example, but many variations are possible.
\begin{defn}
The \textbf{topologist\textquoteright s sine curve}~$T$, or \textbf{Warsaw
sine curve}, is defined by
\begin{equation*}
T=\left\{ \left(x,\sin \tfrac{1}{x}\right):0<x\leq 1\right\} \cup\left\{ (0,y): -1\leq y\leq 1\right\} .
\end{equation*}

We define the \textbf{Warsaw saucer}~$S$ by
\begin{equation*}
S =\left\{(x,y,z)\in\R^{3}:0<x^{2}+y^{2}\leq 1\text{ and }z=\sin\tfrac{1}{\sqrt{x^{2}+y^{2}}}\right\}\cup \{(0,0,z):-1\leq z\leq1\}.
\end{equation*}

We define its boundary~$\partial S$ as its bounding circle
\begin{equation*}
\partial S=\left\{ (x,y,\sin(1))\in\R^{3}:x^{2}+y^{2}=1\right\} \cong\SS_{1}.
\end{equation*}
\end{defn}

It is obtained from the closed topologist's sine curve, making it rotate around the vertical segment, see Figure \ref{fig_surface}.
\begin{figure}[h]
\subcaptionbox{ The Warsaw saucer~$S$}{\includegraphics[scale=0.25]{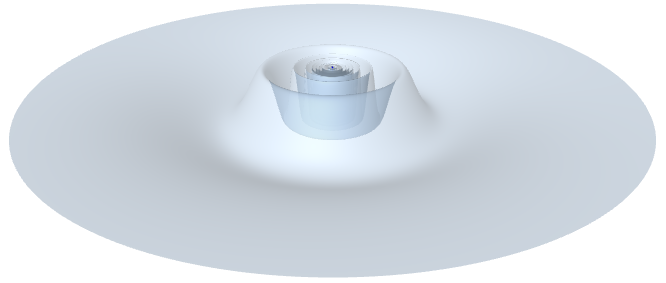}}\hspace{1cm}
\subcaptionbox{A half-cut of~$S$}{\includegraphics[scale=0.25]{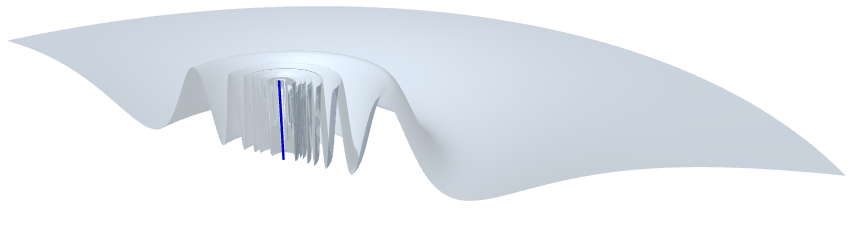}}
\caption{}
\label{fig_surface}
\end{figure}

The set~$S$ is compact and connected but neither locally connected
nor path-connected. It is neither a manifold, nor a simplicial complex, nor a pseudo-$n$-cube so none of the previous results applies to~$(S,\partial S)$. However, it can easily be proved to have strong computable type, because it is closely related to the pair~$(\B_2,\SS_1)$ and is therefore~$\E_1$-minimal.
\begin{prop}
The pair~$(S,\partial S)$ is~$\E_1$-minimal and hence has strong computable
type.
\end{prop}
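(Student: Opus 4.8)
The plan is to show that $(S, \partial S)$ is $\E_1$-minimal by comparing it directly with the pair $(\B_2, \SS_1)$, which is known to be $\E_1$-minimal by Proposition \ref{prop_En_ball}. There is a natural continuous surjection $\phi : S \to \B_2$ given by radial projection onto the horizontal disk, i.e., $(x,y,z) \mapsto (x,y)$ (suitably identifying $\B_2$ with the unit disk in $\R^2$), which restricts to a homeomorphism from $\partial S$ onto $\SS_1$. First I would use Proposition \ref{prop_image} with this $\phi$: since $(\B_2, \SS_1) \in \E_1$ and $\phi|_{\partial S}$ is a homeomorphism onto $\SS_1$, we get $(S, \partial S) \in \E_1$ immediately.

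For minimality, I would invoke Corollary \ref{cor_minimality}. Its hypotheses are satisfied: $\partial S$ is a circle, so $\dim(\partial S) = 1 \leq 1$, and $\partial S$ has empty interior in $S$ (it is a single copy of $\SS_1$ sitting on the boundary of a $2$-dimensional set). So it suffices to prove that for every proper subset $Y$ with $\partial S \subseteq Y \subsetneq S$, one has $(Y, \partial S) \notin \E_1$, i.e., every continuous $f : \partial S \to \SS_1$ extends continuously to $Y$. Pick a point $s \in S \setminus (\partial S \cup \{(0,0,z): -1\le z\le 1\})$ that is not in $Y$; such a point exists whenever $Y$ misses a point off the vertical segment, and one can reduce to this case because removing a small neighborhood always removes such a point. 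Near $s$, the set $S$ looks locally like an open $2$-disk (it is a $2$-manifold point away from the limit segment), and $Y$ misses an open set around $s$. The key topological fact is then that $S$ minus a suitable open disk around $s$ deformation retracts onto $\partial S$: concretely, the radial-projection picture shows that $S \setminus U$, for $U$ a small disk around $s$, retracts onto its outer boundary circle $\partial S$, because the "hole" can be pushed out to the rim along the graph portion (the vertical limit segment, being contractible and attached along a point, causes no obstruction). Given such a retraction $r : Y \to \partial S$, any $f : \partial S \to \SS_1$ extends as $f \circ r$, so $(Y, \partial S) \notin \E_1$.

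To be careful about reducing to the case that the missing point is off the vertical segment: if $\partial S \subseteq Y \subsetneq S$, choose any point $q \in S \setminus Y$. If $q$ is not on the vertical segment $\{(0,0,z):-1\le z\le 1\}$, we are done. If $q$ is on that segment, note that $Y$, being a proper subset, and the graph part of $S$ being dense near the segment, we can instead find a point $q' \in S \setminus Y$ off the segment: since $Y$ is closed (it is compact) and $q \notin Y$, some neighborhood of $q$ in $S$ is disjoint from $Y$, and such a neighborhood contains points of the graph portion $\{(x,y,\sin(1/\sqrt{x^2+y^2}))\}$ which are not on the segment. Then $(Y,\partial S) \subseteq (S \setminus B(q',\epsilon), \partial S)$ for suitable small $\epsilon$, and it suffices to handle $Y = S \setminus B(q',\epsilon)$.

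The main obstacle I anticipate is making the retraction $r : S \setminus B(s,\epsilon) \to \partial S$ rigorous, since $S$ is badly behaved (not locally connected, not path-connected — indeed the vertical limit segment is in a different path-component from the graph portion). The point is that the path-pathology is concentrated on the vertical segment, which must be retained in any such $Y$ containing $\partial S$ only if $\partial S$ forces it — but actually $\partial S$ lies entirely in the graph portion, so we should retract the graph portion $\{(x,y,\sin(1/\sqrt{x^2+y^2})):0<x^2+y^2\le 1\} \setminus B(s,\epsilon)$, which is homeomorphic to an annulus, onto its outer circle $\partial S$, and send the entire vertical segment (which, together with nearby graph spirals, sits in the closure) to a single point of $\partial S$ — this last map is continuous precisely because the graph spirals accumulating on the segment get mapped, via the annulus retraction, arbitrarily close to that chosen point. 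I would verify continuity of this combined retraction at the limit segment using the explicit parametrization. Once this is established, $\E_1$-minimality follows from Corollary \ref{cor_minimality}, and strong computable type follows from Corollary \ref{cor_EANR_sigma} and Theorem \ref{thm_sigma2}.
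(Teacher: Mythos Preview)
Your argument for $(S,\partial S)\in\E_1$ applies Proposition \ref{prop_image} in the wrong direction. That proposition says: given $\phi:(X',A')\to(X,A)$ with $\phi|_{A'}$ a homeomorphism, if $(X',A')\in\E(Y)$ then $(X,A)\in\E(Y)$. With your $\phi:S\to\B_2$, the hypothesis would be $(S,\partial S)\in\E_1$, which is what you are trying to prove; knowing $(\B_2,\SS_1)\in\E_1$ gives nothing here. Nor can you reverse the map: there is no continuous $\psi:(\B_2,\SS_1)\to(S,\partial S)$ with $\psi|_{\SS_1}$ a homeomorphism, since $\B_2$ is path-connected while $\partial S$ and the vertical segment lie in different path-components of $S$, forcing $\psi(\B_2)$ into the graph portion $G\cong\B_2\setminus\{0\}$, which would yield a retraction $\B_2\to\SS_1$. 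The paper instead approximates $S$ by genuine disks $X_i$ sharing the same boundary $\partial S$ and applies Corollary \ref{cor_approx_sameA}: since $\id_{\SS_1}$ extends to no $X_i$, it extends to no $S$.

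Your minimality argument also has a gap. The graph portion minus a small ball $B(s,\epsilon)$ is homeomorphic to a punctured disk minus a disk, not to an annulus; and the natural ``annulus retraction'' (radial projection $x\mapsto x/|x|$) sends small circles $C_\rho$ near the origin onto \emph{all} of $\SS_1$, so your proposed extension to the vertical segment by a constant is not continuous. Ironically, your projection $\pi:(x,y,z)\mapsto(x,y)$ is exactly the right tool, but for this half of the proof: for any compact $Y$ with $\partial S\subseteq Y\subsetneq S$, the density of the graph portion forces $Y$ to miss a graph point, so $\pi(Y)\subsetneq\B_2$; then $(\pi(Y),\SS_1)\notin\E_1$ by $\E_1$-minimality of $(\B_2,\SS_1)$, and now Proposition \ref{prop_image} applied to $\pi|_Y:(Y,\partial S)\to(\pi(Y),\SS_1)$ in the correct direction gives $(Y,\partial S)\notin\E_1$. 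This is precisely the paper's route.
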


\begin{proof}
We show the~$\E_1$-minimality of~$(\B_2,\SS_1)$ implies the~$\E_1$-minimality of~$(S,\partial S)$.

The surface~$S$ can be approximated by copies of the disk~$\B_2$ that share the same boundary~$\SS_1$. Indeed, let
\begin{equation*}
X_i=\left\{(x,y,z):\frac{1}{i\pi}\leq \sqrt{x^2+y^2}\leq 1\text{ and }z=\sin\tfrac{1}{\sqrt{x^2+y^2}}\right\}\cup\left\{(x,y,0):\sqrt{x^2+y^2}\leq \frac{1}{i\pi}\right\}.
\end{equation*}

The sets~$X_i$ converge to~$S$, and they all have the same boundary~$\SS_1$. We know that~$(X_i,\SS_1)=(\B_2,\SS_1)\in\E_1$, moreover the same function, which is the identity~$\id:\SS_1\to\SS_1$, has no continuous extension to~$X_i$. Therefore, this function has no continuous extension to~$S$ by Corollary \ref{cor_approx_sameA}.

We now prove~$\E_1$-minimality. The projection~$\pi$ on the horizontal plane sends~$S$ to~$\B_2$ and is the identity on~$\partial S=\SS_1$. If~$X$ is a proper compact subset of~$S$ containing~$\partial S$, then~$\pi(X)$ is a proper compact subset of~$\B_2$. As~$(\B_2,\SS_1)$ is~$\E_1$-minimal,~$(\pi(X),\SS_1)\notin\E_1$ so~$(X,\partial S)\notin\E_1$ by Proposition \ref{prop_image} (more directly, every~$f:\SS_1\to\SS_1$ has a continuous extension~$F:\pi(X)\to\SS_1$, inducing a continuous extension~$F'=F\circ \pi:X\to \partial S$ of~$f$ to~$X$). As a result,~$(S,\partial S)$ is~$\E_1$-minimal.  
\end{proof}

\subsection{Compact \texorpdfstring{$n$}{n}-manifolds are \texorpdfstring{$\H_n$}{Hn}-minimal}\label{sec_manifold}
It was proved in \cite{Ilja13} and \cite{2018manifolds} that compact manifolds with or without boundary have computable type, as pairs and single spaces respectively. In this section, we show how this result can be derived from classical results in algebraic topology implying that $n$-dimensional manifolds are~$H_n$-minimal, and applying Theorem \ref{thm_sigma2}. We first recall the definition of a manifold.

\begin{defn}
A \textbf{manifold} of dimension~$n$, or more concisely an~\textbf{$n$-manifold},
is a Hausdorff space~$M$ in which each point has an open neighborhood homeomorphic to~$\R^{n}$.

An~\textbf{$n$-manifold with boundary} is a Hausdorff space~$M$
in which each point has an open neighborhood homeomorphic either to~$\R^{n}$
or to the half-space~$\R_{+}^{n}=\{(x_{1},\ldots,x_{n})\in\R^{n}:x_{n}\geq0\}$ by a homeomorphism which maps~$x$ to a point~$(x_1,\ldots,x_{n-1},0)$.
The set of points which have an open neighborhood homeomorphic to
the half-space is called the \textbf{boundary} of~$M$ and is denoted by~$\partial M$. 
\end{defn}

The next result seems to be folklore but is not stated in any standard textbook on algebraic topology. However, it can be derived from classical results about homology of manifolds. 

\begin{thm}\label{thm_manifold_hn}
Every connected compact~$n$-manifold~$M$ is~$\H_{n}$-minimal. Every connected compact~$n$-manifold with boundary~$(M,\partial M)$ is~$\H_n$-minimal.
\end{thm}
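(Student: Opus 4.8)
The plan is to prove the two statements in tandem, since by Proposition~\ref{prop_hn_cone} both reduce to showing that an associated pair is $\E_n$-minimal, and the cone constructions for a closed manifold and for a manifold with boundary fit the same pattern. Concretely, for a closed connected $n$-manifold $M$ we have $\cone{M,\emptyset}=(\cone{M},M)$, and for a connected compact $n$-manifold with boundary $(M,\partial M)$ we have $\cone{M,\partial M}=(\cone{M},M\cup\cone{\partial M})$. In both cases the ambient space has dimension $n+1$ (manifolds are at most $n$-dimensional, cones add one), so the partial characterization of $\E_n$ in terms of \v{C}ech (co)homology (Corollaries VIII.2 and VIII.3 of \cite{HurWal41}, quoted above) applies: the cone pair lies in $\E_n$ iff the inclusion-induced map $i^*:\cH^n(\cone{M};\Z)\to\cH^n(M\cup\cone{\partial M};\Z)$ (resp. the boundary situation) fails to be surjective. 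Since manifolds are ANRs, \v{C}ech and singular cohomology agree (the theorem of Marde\v{s}i\'c quoted above), so all computations can be done with ordinary singular cohomology, where the long exact sequence of a pair, excision, and Poincaré–Lefschetz duality are available.

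First I would establish membership $\cone{M,A}\in\E_n$ (where $A=\emptyset$ or $A=\partial M$). The key point is that $\cone{M}$ is contractible, so $\cH^n(\cone{M};\Z)=0$ for $n\ge 1$; meanwhile the subspace $M\cup\cone{A}$ deformation retracts onto $M/A$ (collapse the cone on $A$), and one computes $\cH^n(M/A;\Z)$. For a closed connected $n$-manifold, $M/A=M$ and $H^n(M;\Z)\cong\Z$ (orientable case) or $\cong\Z/2\Z$ (non-orientable case), by Poincaré duality with the appropriate coefficients — in either case nonzero. For a compact connected $n$-manifold with boundary, $M/\partial M$ has $H^n(M,\partial M;\Z)\cong H_0(M;\tilde\Z)$ by Lefschetz duality, again nonzero. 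So $i^*$ maps a nonzero group into the zero group and cannot be surjective, giving $\cone{M,A}\in\E_n$, i.e. $(M,\partial M)\in\H_n$ (resp. $M\in\H_n$).

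Next, minimality. By Proposition~\ref{prop_minimal} (whose dimension hypotheses $\dim M\le n$, $\dim\partial M\le n-1$ are met, the latter since $\partial M$ is an $(n-1)$-manifold) and the fact that $\partial M$ has empty interior in $M$, it suffices to show $\cone{M,A}$ is $\E_n$-minimal; and by Corollary~\ref{cor_minimality} together with the cofinality remarks at the end of Section~\ref{sec_first_char}, it is enough to show that for every nonempty open $U\subseteq M\setminus A$ the pair $(\cone{M\setminus U},(M\setminus U)\cup\cone{A})$ is \emph{not} in $\E_n$ — equivalently, by the $(n+1)$-dimensional characterization, that $i^*:\cH^n(\cone{M\setminus U};\Z)\to\cH^n((M\setminus U)/A;\Z)$ is surjective. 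Here is where I expect the main obstacle and the main work. The target reduces (via the deformation retraction above) to $H^n$ of $(M\setminus U)/A$, while the source is $H^n$ of the contractible space $\cone{M\setminus U}$, which is $0$. So the claim becomes: \emph{$H^n((M\setminus U)/A;\Z)=0$ whenever $U$ is a nonempty open subset of the connected manifold $M$ (with $A=\emptyset$), and the analogous statement for manifolds with boundary.} For the closed case this is exactly the classical fact that a connected $n$-manifold with a nonempty open set removed is ``open at infinity'' and has vanishing top cohomology: $H^n(M\setminus U;\Z)=0$ because $M\setminus U$ is a (possibly non-compact, or compact-with-boundary) $n$-manifold that is not a closed manifold — one proves $H^n_c$ or $H^n$ vanishes using Poincaré–Lefschetz duality $H^n(M\setminus \bar V)\cong H_0^{lf}$-type arguments, or more elementarily by excising and using that a connected $n$-manifold that contains a collar or a puncture has $H_n=0$ with any coefficients (every class is ``pushed to the boundary/end''). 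One then has to pass from $M\setminus U$ to the quotient $(M\setminus U)/A$ using the cofiber exact sequence $\cdots\to H^n((M\setminus U)/A)\to H^n(M\setminus U)\to H^n(A)\to\cdots$ and the dimension bound $\dim A\le n-1$ which forces $H^n(A;\Z)=0$ as well as $H^{n-1}$ considerations; care is needed because $A$ meets $M\setminus U$ in $A$ itself (we removed $U$ only from $M\setminus A$). For the manifold-with-boundary case the same scheme works, replacing ``closed manifold'' arguments by Lefschetz duality for $(M\setminus U,\partial M\setminus U)$ and noting that deleting an open set from the interior creates a new boundary component that kills the top relative cohomology.

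The hardest step is thus the vanishing $H^n((M\setminus U)/A;\Z)=0$ for \emph{arbitrary} nonempty open $U$ — in particular $M\setminus U$ need not be a manifold with nice boundary (its frontier can be wild), so I cannot simply invoke Lefschetz duality. The robust way around this is to use \v{C}ech cohomology directly and continuity: write $M\setminus U=\bigcap_k M\setminus U_k$ with $U_k$ open, $\overline{U_k}\subseteq U$, $\bigcup_k U_k=U$, where each $M\setminus U_k$ \emph{is} a compact manifold with boundary that is not closed (it has nonempty boundary coming from $\partial U_k$), hence has $\cH^n((M\setminus U_k)/A;\Z)=0$ by Lefschetz duality as above; then \v{C}ech cohomology's continuity under inverse limits of compacta, $\cH^n(M\setminus U;\Z)=\varinjlim_k\cH^n(M\setminus U_k;\Z)$, gives the result — and this is precisely the point where using \v{C}ech rather than singular theory, legitimized by Marde\v{s}i\'c's theorem and the fact that $M\setminus U_k$ and their quotients are ANRs, is essential. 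Assembling: membership plus this minimality computation give $\cone{M,A}$ is $\E_n$-minimal, hence by Propositions~\ref{prop_hn_cone} and~\ref{prop_minimal} $(M,\partial M)$ (resp. $M$) is $\H_n$-minimal, which by Theorem~\ref{thm_sigma2} applied to the $\Sigma^0_2$ invariant $\H_n$ recovers and strengthens the computable-type results of \cite{Ilja13,2018manifolds}.
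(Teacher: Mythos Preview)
Your detour through $\E_n$-minimality of the cone creates a genuine gap. You invoke Proposition~\ref{prop_minimal} to reduce $\H_n$-minimality of $(M,\partial M)$ to $\E_n$-minimality of $\cone{M,\partial M}$, and then claim via Corollary~\ref{cor_minimality} and cofinality that it is enough to check the pairs $(\cone{M\setminus U},(M\setminus U)\cup\cone{\partial M})$. But these are \emph{not} a cofinal family of proper subpairs of $\cone{M,\partial M}=(\cone{M},M\cup\cone{\partial M})$: their first components do not even contain $M$, and a typical proper subset of $\cone{M}$ such as $\cone{M}\setminus(U\times J)$ for an open interval $J\subset(0,1)$ is not contained in any $\cone{M\setminus U'}$. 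Handling those subsets is precisely the content of the proof of Proposition~\ref{prop_minimal} in the paper, and that proof already \emph{assumes} $\H_n$-minimality of $(M,\partial M)$ --- so your route is circular as written. What you are really computing (the vanishing of $\cH^n$ of $(M\setminus U)/\partial M$) is the statement $(M\setminus U,\partial M)\notin\H_n$, which is the direct $\H_n$-minimality argument; the cone plays no useful role.

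The paper avoids both this circularity and your ``hardest step'' about arbitrary open $U$ by working with $\H_n$ directly and using two simplifications you overlooked. First, Lemma~\ref{lem_hn_inclusion} (which holds under exactly your dimension hypotheses) says that once $(M\setminus B,\partial M)\notin\H_n$ for a single open set $B$, \emph{every} subpair of $(M\setminus B,\partial M)$ is out of $\H_n$; so it suffices to take $B$ a small Euclidean ball, and there is no need for \v{C}ech continuity or inverse limits over wild $U$. Second, the paper uses homology with $G=\R/\Z$ coefficients (via Theorem~\ref{thm_hn_homology}), which contains an element of order $2$ and hence makes Bredon's Corollary~VI.7.12 apply uniformly: for a connected $n$-manifold $N$, $H_n(N;G)\ne 0$ iff $N$ is compact, with no orientable/non-orientable split. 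Since $M\setminus B$ deformation retracts to $M\setminus\{x\}$, a non-compact connected manifold, one line gives $H_n(M\setminus B;G)=0$, hence $M\setminus B\notin\H_n$. The boundary case is handled the same way in the appendix using relative homology $H_n(M,\partial M;G)$ and a collar, again with $G=\R/\Z$. Your membership argument via $i^*$ on cohomology of the cone is correct, but the paper's route through Theorem~\ref{thm_hn_homology} is shorter and sidesteps the case analysis.
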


We give here the proof for manifolds without boundary, and put the proof for manifolds with boundary in the appendix (it is similar but requires more technical results about pairs).

\begin{proof}
The result is a consequence of Theorem \ref{thm_hn_homology}, which reduces~$\H_n$ to homology groups, and of classical results about homology groups of manifolds. 

Let us use the abelian group~$G=\R/\Z$. As it contains an element of order~$2$, namely~$1/2$, Corollary 7.12 in \cite{Bredon93} implies that if~$M$ is a connected $n$-manifold, then~$H_n(M;G)\neq 0$ if and only if~$M$ is compact. Therefore, if~$M$ is a compact connected~$n$-manifold and~$x\in M$, then
\begin{align*}
H_n(M;G)&\ncong 0,\\
H_n(M\setminus \{x\};G)&\cong0,
\end{align*}
because~$M\setminus\{x\}$ is a non-compact connected manifold.

If~$x\in M$ and~$B$ is an open Euclidean ball around~$x$, then~$M\setminus \{x\}$ \emph{deformation retracts} to~$M\setminus B$, which means that there is a retraction~$r:M\setminus\{x\}\to M\setminus B$ such that if~$i:M\setminus B\to M\setminus\{x\}$ is the inclusion map, then~$i\circ r$ is homotopic to~$\id_{M\setminus \{x\}}$. It is a classical result that deformation retractions preserve homology groups, so~$H_n(M\setminus B)\cong H_n(M\setminus \{x\})\cong 0$.

Therefore, Theorem \ref{thm_hn_homology} implies that~$M\in\H_n$ and~$M\setminus B\notin\H_n$. It implies that~$M$ is~$\H_n$-minimal by Lemma \ref{lem_hn_inclusion}: if~$X$ is a proper subset of~$M$, then~$X\subseteq M\setminus B$ for some~$B$, so~$X\notin \H_n$.
\end{proof}

Theorem \ref{thm_manifold_hn} implies the result in \cite{Ilja13} that connected compact manifold with or without boundary have (strong) computable type. However the proof is inherently different and has new implications. For instance, it implies that~$\SCT_{M,\partial M}\leq_W\C_\N$ by Theorem \ref{thm_choice}. Note that a disconnected compact manifold also has strong computable type because it is a finite union of connected compact manifolds, but it is not~$\H_n$-minimal because every connected component is in~$\H_n$.

Moreover, Theorem \ref{thm_manifold_hn} immediately implies that other pairs have strong computable type, as follows.

\begin{cor}
If~$M$ is a compact connected~$n$-manifold with possibly empty boundary~$\partial M$, then the pair
\begin{equation*}
\cone{M,\partial M}=(\cone{M},M\cup\cone{\partial M})
\end{equation*}
is~$\E_n$-minimal hence has strong computable type.
\end{cor}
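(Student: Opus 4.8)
The plan is to combine the $\H_n$-minimality of compact manifolds (Theorem \ref{thm_manifold_hn}) with the equivalence between $\H_n$-minimality of a pair and $\E_n$-minimality of its cone (Proposition \ref{prop_minimal}), and then to invoke Theorem \ref{thm_sigma2} via the fact that $\E_n$ is a $\Sigma^0_2$ invariant.

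First I would record the dimension-theoretic facts about manifolds that the argument needs. A compact $n$-manifold $M$, with possibly empty boundary, satisfies $\dim(M)\leq n$; its boundary $\partial M$ is a compact $(n-1)$-manifold, so $\dim(\partial M)\leq n-1$; and $\partial M$ is closed with empty interior in $M$, since by invariance of domain no point can have neighborhoods homeomorphic both to $\R^n$ and to $\R^n_+$ (in the sense of the defining homeomorphism). When $\partial M=\emptyset$ all of these hold trivially, with the convention that $\dim\emptyset\leq n-1$, and in that case $\cone{M,\partial M}=(\cone M,M)$.

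Next, Theorem \ref{thm_manifold_hn} gives that $(M,\partial M)$ is $\H_n$-minimal, both in the closed case and in the case with non-empty boundary. The hypotheses of Proposition \ref{prop_minimal} — that $\partial M$ has empty interior in $M$, that $\dim(M)\leq n$, and that $\dim(\partial M)\leq n-1$ — are precisely the facts just recorded, so Proposition \ref{prop_minimal} applies and yields that $\cone{M,\partial M}=(\cone M,M\cup\cone{\partial M})$ is $\E_n$-minimal. Finally, since $\SS_n$ is a compact ANR, $\E_n=\E(\SS_n)$ is a $\Sigma^0_2$ invariant in $\upVV$ by Corollary \ref{cor_EANR_sigma}, so Theorem \ref{thm_sigma2} shows that the $\E_n$-minimal pair $\cone{M,\partial M}$ has strong computable type.

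The argument is essentially a bookkeeping exercise assembling earlier results, and there is no genuine obstacle; the only step needing a little care is verifying that Proposition \ref{prop_minimal} really applies, i.e.\ the empty-interior and dimension hypotheses on $(M,\partial M)$, together with a one-line treatment of the degenerate case $\partial M=\emptyset$.
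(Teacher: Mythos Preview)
Your proof is correct and follows essentially the same approach as the paper: verify the dimension and empty-interior hypotheses for $(M,\partial M)$, apply Theorem \ref{thm_manifold_hn} to get $\H_n$-minimality, then Proposition \ref{prop_minimal} to deduce $\E_n$-minimality of the cone, and finally invoke the $\Sigma^0_2$ status of $\E_n$ together with Theorem \ref{thm_sigma2}. You are slightly more explicit than the paper in spelling out the degenerate case $\partial M=\emptyset$ and the final appeal to Corollary \ref{cor_EANR_sigma} and Theorem \ref{thm_sigma2}, but the argument is the same.
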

\begin{proof}
All the assumptions of Proposition \ref{prop_minimal} are met:~$\partial M$ has empty interior in~$M$,~$\dim(M)=n$ and~$\dim(\partial M)=n-1$. As~$(M,\partial M)$ is~$\H_n$-minimal by Theorem \ref{thm_manifold_hn},~$\cone{M,\partial M}$ is~$\E_n$-minimal.
\end{proof}
The cone of a manifold~$M$ is a manifold only when~$M$ is a sphere of a ball, so this result is indeed new.


\subsection{Circularly chainable continuum which are not chainable}\label{sec_circ_chain}

In~\cite{Ilja09}, it is proved that every compact metrizable space which is circularly chainable but not chainable has computable type. The simplest examples of such sets are given by the circle and the Warsaw circle. We briefly show how the proof of this result can be reformulated in our framework by finding a suitable~$\Sigma^0_2$ invariant.

We already saw the notion of chain in Definition \ref{def_chain}. We recall other related notions, taken from~\cite{Ilja09}. Let~$(X,d)$ be a metric space. A finite sequence~$C=(C_{0},\ldots,C_{n})$ of non-empty open subsets of~$X$ is called:
\begin{itemize}
\item A \textbf{chain} if for all~$i,j\in\{0,\ldots,n\}$,~$C_{i}\cap C_{j}\neq\emptyset\iff|i-j|\leq1$,
\item A \textbf{circular chain} if for all~$i,j\in\{0,\ldots,n\}$,~$C_{i}\cap C_{j}\neq\emptyset\iff (|i-j|\leq1$ or~$\{i,j\}=\{0,n\})$. 
\item A \textbf{quasi-chain} if for all~$i,j\in\{0,\ldots,n\}$,~$|i-j|>1\implies C_{i}\cap C_{j}=\emptyset$.
\end{itemize}
Its mesh is defined by~$\M C=\max_{0\leq i\leq n}(\diam {C_i})$. It covers a set~$S\subseteq X$ if~$S\subseteq\bigcup_{i}C_{i}$. 

A set~$S\subseteq X$ is \textbf{chainable} (resp.~\textbf{circularly chainable}, \textbf{quasi-chainable}) if for every~$\epsilon>0$ there exists a chain (resp.~circular chain, quasi-chain) of mesh~$<\epsilon$ covering~$S$.

If~$S$ is connected, then~$S$ is chainable if and only if~$S$ is quasi-chainable (Lemma 30 in \cite{Ilja09}).

\begin{prop}
Not being quasi-chainable is a~$\Sigma_{2}^{0}$ invariant in~$\upV$. If a compact space is circularly chainable but not chainable, then it is minimal satisfying this invariant and hence it has strong computable type.
\end{prop}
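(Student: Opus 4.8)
The plan is to split the statement into two parts: (1) the descriptive complexity claim, that the set of compact subsets of $Q$ which are \emph{not} quasi-chainable is $\Sigma^0_2$ in $\upV$; and (2) the minimality claim, that a circularly chainable but not chainable continuum is minimal for this invariant, from which strong computable type follows by Theorem~\ref{thm_sigma2}. For part (1), I would first observe that ``$X$ admits a quasi-chain of mesh $<\epsilon$ covering $X$'' is, for each fixed rational $\epsilon>0$, an open condition on $X$ in $\upV$: a quasi-chain covering $X$ is a finite list of basic open sets (finite unions of rational balls) of diameter $<\epsilon$ satisfying the finitely-checkable incidence constraint $|i-j|>1 \implies C_i\cap C_j=\emptyset$, together with $X\subseteq\bigcup_i C_i$, and the latter is exactly an $\upV$-basic-open condition. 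So the set $Q_\epsilon$ of $X$ admitting such a quasi-chain is effectively open in $\upV$, uniformly in $\epsilon$. Being quasi-chainable means $X\in\bigcap_{\epsilon>0}Q_\epsilon=\bigcap_{k}Q_{2^{-k}}$, a $\Pi^0_2$ condition, so its complement (not being quasi-chainable) is $\Sigma^0_2$ in $\upV$. Invariance is immediate since quasi-chainability is a topological (indeed metric, but mesh-independent for the property) property — one should note here, as the excerpt does implicitly, that whether a space is quasi-chainable depends only on its homeomorphism type, so the property descends to a topological invariant on $\K(Q)$.

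For part (2), let $X$ be circularly chainable but not chainable. Since $X$ is connected and not chainable, by Lemma~30 of~\cite{Ilja09} (quoted in the excerpt) $X$ is not quasi-chainable, so $X$ lies in the invariant. For minimality I must show every proper subset $X'\subsetneq X$ \emph{is} quasi-chainable. This is the topological heart of the argument, and it is where I would follow the structure of Iljazovi\'c's original proof: pick a point $x\in X\setminus X'$ and a small ball $B$ around $x$ disjoint from $X'$; it suffices to show $X\setminus B$ (hence any subset, by heredity of quasi-chainability under... no, not heredity — rather, it suffices to cover the cofinal family $\{X\setminus B\}$) is quasi-chainable. Given $\epsilon>0$, take a circular chain $\mathcal C=(C_0,\dots,C_n)$ of mesh $<\min(\epsilon,\operatorname{diam}(B)/3)$ covering $X$. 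Some link $C_k$ contains $x$; since its diameter is small, $C_k\subseteq B$, so $C_k$ is disjoint from $X\setminus B$. Deleting $C_k$ from the circular chain and re-indexing turns the cyclic incidence pattern into a linear one: the sequence $(C_{k+1},\dots,C_n,C_0,\dots,C_{k-1})$ is a quasi-chain (the only incidences of distance $>1$ in the original circular chain were $\{0,n\}$, and that pair is now adjacent or separated by $C_k$'s removal — one checks the quasi-chain condition $|i-j|>1\implies C_i\cap C_j=\emptyset$ holds), of mesh $<\epsilon$, covering $X\setminus B$. Hence $X\setminus B$ is quasi-chainable, so it does not lie in the invariant; since the family $\{X\setminus B : B \text{ a small ball disjoint from some point}\}$ is cofinal among proper subsets and the invariant is an upper set (it is $\Pi^0_1$... wait — \emph{not} being quasi-chainable: is it an upper set? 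A subset of a quasi-chainable set need not be quasi-chainable, so quasi-chainability is not a lower set and its complement is not an upper set in general).

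I therefore should \emph{not} invoke Lemma~\ref{lem_cofinal} naively; instead I argue directly as in Theorem~\ref{thm_minimal}: any proper subpair $(X',\emptyset)\subsetneq(X,\emptyset)$ is contained in some $X\setminus\overline B$ with $\overline B$ a closed rational ball around a point $x\in X\setminus X'$, and quasi-chainability \emph{is} inherited by closed subsets that are themselves covered — actually the cleanest route: I show directly that every proper \emph{subset} $X'\subsetneq X$ is quasi-chainable, by running the deletion argument with a circular chain of mesh small enough that \emph{some} link avoids $X'$ entirely. Concretely, pick $x\in X\setminus X'$ and $\delta>0$ with $d(x,X')>\delta$; for $\epsilon>0$ take a circular chain of mesh $<\min(\epsilon,\delta)$ covering $X$, find the link $C_k\ni x$ (which is then disjoint from $X'$ since its diameter is $<\delta$), delete it to get a quasi-chain of mesh $<\epsilon$ covering $X\setminus C_k\supseteq X'$, and restrict each link to obtain a quasi-chain covering $X'$. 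Thus $X'$ is quasi-chainable, so $X'$ is not in the invariant, giving minimality. The main obstacle is precisely this verification that removing one link of a circular chain yields a genuine quasi-chain (not merely a chain) covering the relevant set — one must be careful that "quasi-chain" only forbids non-adjacent links from meeting, which is exactly what survives the surgery — together with checking one can pass from covering $X\setminus C_k$ to covering the smaller set $X'$ by intersecting links with any open neighborhood, keeping mesh control. Once minimality is established, Theorem~\ref{thm_sigma2} applied to the $\Sigma^0_2$ invariant from part (1) (or its $\upVV$-version via Proposition~\ref{prop not depend}, working with $A=\emptyset$) yields strong computable type.
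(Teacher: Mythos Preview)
Your overall approach matches the paper's: show quasi-chainability is $\Pi^0_2$ in $\upV$ (so its complement is $\Sigma^0_2$), then show that a circularly chainable non-chainable continuum is minimal by deleting one link of a small-mesh circular chain. Part (2) of your argument is essentially identical to the paper's proof: rotate so the removed link is $C_n$, and then $(C_0,\ldots,C_{n-1})$ is already a quasi-chain (in fact a chain) covering the proper subset. Your detour about upper sets is unnecessary --- you end up, correctly, proving directly that every proper compact subset is quasi-chainable, which is exactly what the paper does.

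There is, however, a genuine technical gap in your Part (1). You assert that the incidence constraint ``$|i-j|>1\implies C_i\cap C_j=\emptyset$'' is ``finitely checkable'', and from this conclude that $Q_\epsilon$ is effectively open. But for finite unions of open rational balls, disjointness is a $\Pi^0_1$ condition (two open balls $B(a,r)$, $B(b,s)$ are disjoint iff $d(a,b)\geq r+s$), not a c.e.\ one; similarly the diameter condition is not obviously c.e. So the family of valid quasi-chains is not obviously c.e., and your argument that $Q_\epsilon$ is \emph{effectively} open does not go through as stated. The paper handles this with a standard compactness argument: one restricts to those lists of rational open sets $(C_0,\ldots,C_n)$ for which the corresponding \emph{closed} balls satisfy $\overline{C_i}\cap\overline{C_j}=\emptyset$ for $|i-j|>1$ and $\diam{\overline{C_i}}<\epsilon$. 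These conditions are c.e.\ because finite unions of closed rational balls are effectively compact; and by compactness of $S$, any quasi-chain covering $S$ can be shrunk to one satisfying these stronger conditions while still covering $S$. With this fix the remainder of your argument works and coincides with the paper's.
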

\begin{proof}
If~$U\subseteq Q$ is a finite union of rational open balls, then we write~$\overline{U}$ for the corresponding union of closed balls. By a standard compactness argument, a compact set~$S\subseteq Q$ is quasi-chainable if for every rational~$\epsilon>0$ there exists a quasi-chain~$C=(C_0,\ldots,C_n)$ of mesh~$<\epsilon$ covering~$S$, with the following additional properties:~$\overline{C_i}\cap \overline{C_j}=\emptyset$ for~$|i-j|>1$, and~$\diam{\overline{C_i}}<\epsilon$. Finite unions of rational closed balls are effectively compact, so these additional properties are c.e., and whether~$C$ covers~$S$ is effectively open in the upper Vietoris topology. Therefore, being quasi-chainable is a~$\Pi^0_2$ invariant.

If~$S$ is not chainable then~$S$ is not quasi-chainable. If in addition~$S$ is circularly chainable, then every proper compact subset of~$S$ is quasi-chainable. Indeed, let~$T\subsetneq S$ be compact,~$x\in S\setminus T$ and~$\epsilon<d_Q(x,T)$, and let~$C=(C_0,\ldots,C_n)$ be a circular chain of mesh~$<\epsilon$ covering~$S$. We can assume that~$x\in C_n$, applying a circular permutation if needed. As~$\diam{C_n}<\epsilon<d_Q(x,T)$, the set~$C_n$ is disjoint from~$T$. As a result,~$(C_0,\ldots,C_{n-1})$ is a quasi-chain covering~$T$.
\end{proof}

It can be shown that if a compact space is in~$\H_1$, then it is not quasi-chainable. We do not know whether the converse implication holds, in particular whether spaces that are circularly chainable but not chainable are~$\H_1$-minimal.

\subsection{A space that properly contains copies of itself}\label{sec_infinite}

Theorem \ref{thm_sigma2} states in particular that if a compact space~$X$ is minimal for some~$\Sigma^0_2$ topological invariant then~$X$ has strong computable type. We show here that the converse implication does not hold. We actually prove more: we build a compact space~$X$ that has strong computable type and contains a proper subset~$X'$ that is homeomorphic to~$X$. It implies that~$X$ cannot be minimal for \emph{any} topological invariant, because any invariant satisfied by~$X$ is also satisfied by~$X'$. Moreover, the space~$X$ is to our knowledge the first example of an infinite-dimensional space having strong computable type.

The space~$X$ is built as follows. Let~$f:\SS_1\to Q$ be a so-called ``space-filling curve'', i.e.~a surjective continuous function (such a function from~$[0,1]$ to~$Q$ exists by the Hahn-Mazurkiewicz theorem, see Theorem 6.3.14 in \cite{Eng89}, and can be extended to the circle by joining the two endpoints). Let~$X=Q\cup_f \B_2$ be the adjunction space, which is the quotient of the disjoint union~$Q\sqcup\B_2$ by the equivalence relation generated by~$x\sim f(x)$ for~$x\in \SS_1=\partial \B_2$.

Note that~$X$ contains a proper subset which is homeomorphic to~$X$: indeed,~$X$ properly contains~$Q$, which contains a copy of~$X$.
\begin{prop}
If~$f:\SS_1\to Q$ is surjective continuous, then the space~$Q\cup_f\B_2$ has strong computable type.
\end{prop}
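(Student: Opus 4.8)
The plan is to invoke Theorem~\ref{thm_minimal}: since here the ``pair'' is a single space (so $A=\emptyset$ and $\upVV$ is just $\upV$), it suffices to attach to every copy $Y$ of $X:=Q\cup_f\B_2$ in $Q$ a property $\P_Y\subseteq\K(Q)$ which is $\Pi^0_1$ in $\upV$ and for which $Y$ is $\P_Y$-minimal. The first step is a structural description of a copy. Fixing a homeomorphism $g\colon\B_2\to Y$, put $D_0=g(\mathrm{int}\,\B_2)$ and $Q_0=g(\SS_1)$. Then $D_0$ is an open subset of $Y$ homeomorphic to $\R^2$, $Q_0$ is a copy of the Hilbert cube and is closed in $Y$, and $g$ restricted to $\SS_1$ is, up to the identification $Q_0\cong Q$, the space-filling curve $f$; in particular $f$ is \emph{surjective}, so $\overline{D_0}=Y$ and $Q_0=\overline{D_0}\setminus D_0$. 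The decisive consequence is:
\begin{equation*}
\text{every proper closed subset }Z\subsetneq Y\text{ misses some point of }D_0,
\end{equation*}
since $Z\supseteq D_0$ would force $Z\supseteq\overline{D_0}=Y$. Thus, for $Y$ to be $\P_Y$-minimal it is enough that $\P_Y$ contains $Y$ but no closed subset of $Y$ that omits a point of $D_0$.

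The second step is the geometric input, which is essentially the mechanism behind the $\E_1$-minimality of $(\B_2,\SS_1)$ (Proposition~\ref{prop_En_ball}): a small perturbation cannot push a $2$-disk off its centre without leaving the plane of the disk, because $\mathrm{id}_{\SS_1}$ is not null-homotopic. Concretely, if $P\subseteq Q$ is a coordinate $2$-plane section, $q\in P$, $\rho>0$, and $h\colon Q\to Q$ is an $\epsilon$-function with $\epsilon<\rho$, then composing with the (nonexpansive) projection $Q\to P$ shows that $h$ on the round disk $\Delta\subseteq P$ of radius $\rho$ about $q$ is $\epsilon$-close to the inclusion, so $h|_{\partial\Delta}$ has winding number $1$ about $q$ in $P$ and $h(\Delta)$ meets the closed $\epsilon$-ball about $q$. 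Transporting this through a chart $D_0\cong\R^2$, I would produce an $\epsilon_Y>0$ and a countable family of compact $2$-disks contained in $D_0$, exhausting $D_0$, that are round in coordinate planes of $Q$ and sit at positive distance from $Q_0$; for such a family, any $\epsilon_Y$-deformation $h(Y)$ still meets a controlled ball around the centre of each disk, so no $\upV$-limit of $\epsilon_Y$-deformations of $Y$ can be a closed subset of $Y$ missing a point of $D_0$. Taking $\P_Y$ to be the $\upV$-closure of the set of $\epsilon_Y$-deformations of $Y$ then gives (modulo the effectivity discussed below) a $\Pi^0_1$-in-$\upV$ property for which $Y$ is minimal, and Theorem~\ref{thm_minimal} concludes.

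The main obstacle is effectivity and uniformity: a priori $\P_Y$ is only $\Pi^0_1$ relative to an oracle computing $Y$, which by Corollary~\ref{cor_sct_relative} would merely yield strong computable type relative to some oracle. To obtain it outright one argues exactly as in Theorem~\ref{thm_sigma2_equiv}: take $f$ a \emph{computable} space-filling curve, realise a computable copy $X_0$ of $X$ in $Q$, let $\P$ be the $\upV$-closure of the set of $\epsilon$-deformations of $X_0$ --- which, since $X_0$ is computable and $\{h:\rho(h,\mathrm{id})\le\epsilon'\}$ is effectively compact, is effectively compact in $\V$, hence, after replacing it by its upward closure and using Proposition~\ref{prop not depend}, a genuine $\Pi^0_1$ property in $\upV$ --- verify via the geometric step that $X_0$ is $\P$-minimal and that all sufficiently small deformations of $X_0$ lie in $\P$, and feed $(\P,X_0,\epsilon)$ into the Vaught-transform construction of Theorem~\ref{thm_sigma2_equiv} to get a $\Sigma^0_2$ invariant $\bigcup_n\P_n$ with the $\P_n$ uniformly $\Pi^0_1$ and every copy of $X$ being $\P_n$-minimal for some $n$. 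The delicate point inside the geometric step --- where the hypothesis that $f$ is space-filling, not merely dense-image, is used a second time --- is that near the frontier $Q_0$ the Hilbert-cube part provides no rigidity at all, so the test disks must be kept small and deep inside $D_0$ while still ensuring that the resulting minimality excludes \emph{every} proper closed subset of the copy; this last point again rests on $\overline{D_0}=Y$. Checking these two things is the technical heart of the argument; the topology it needs is no more than the non-triviality of $\mathrm{id}_{\SS_1}$ in $[\SS_1;\SS_1]$.
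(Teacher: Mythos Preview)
Your approach diverges substantially from the paper's, and it has two genuine gaps.

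\textbf{Gap 1: the geometric step.} You assert that inside the open-disk part $D_0$ of an arbitrary copy $Y\subseteq Q$ one can find compact $2$-disks that are ``round in coordinate planes of $Q$''. There is no reason this should hold: $D_0$ is an arbitrary topological embedding of $\R^2$ into $Q$, and nothing forces it to contain flat pieces lying in coordinate $2$-planes. Your winding-number argument works only because the coordinate projection $Q\to P$ is nonexpansive; for an arbitrarily embedded $2$-disk there is no such $1$-Lipschitz retraction, so the conclusion that $\epsilon_Y$-deformations of $Y$ cannot $\upV$-converge to a proper subset is unproven. (That conclusion is in fact true, by Corollary~\ref{cor_necessary} once the proposition is known, but that is of course circular.)

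\textbf{Gap 2: the effectivity fix.} To upgrade from Corollary~\ref{cor_sct_relative} to genuine strong computable type you feed the data $(\P,X_0,\epsilon)$ into Claim~\ref{claim3} of Theorem~\ref{thm_sigma2_equiv}. That claim requires $\P$ to be $\Pi^0_1$ outright, not merely relative to an oracle. Your $\P$ is the ($\upV$-upward closure of the) set of $\epsilon$-deformations of a copy $X_0$, and to make this $\Pi^0_1$ you ``take $f$ a computable space-filling curve''. But the proposition is stated for \emph{every} surjective continuous $f:\SS_1\to Q$; different $f$ may give non-homeomorphic adjunction spaces $Q\cup_f\B_2$, and you give no argument that they are all homeomorphic. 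For non-computable $f$ there is no reason a computable copy exists, your $\P$ is only $\Pi^0_1$ relative to an oracle, and the machinery of Theorem~\ref{thm_sigma2_equiv} delivers only strong computable type relative to that oracle.

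\textbf{What the paper does instead.} The paper bypasses $\epsilon$-deformations and Theorem~\ref{thm_minimal} entirely. It observes that the quotient map $q:X\to X/Q\cong\SS_2$ is a homotopy equivalence (since $(X,Q)$ has the homotopy extension property and $Q$ is contractible), hence not null-homotopic; and that for any $x$ in the open disk, $X\setminus\{x\}$ deformation retracts to $Q$ and is therefore contractible, so $q|_Z$ is null-homotopic for every proper closed $Z\subsetneq X$. Given a semicomputable copy $X_0$ and a homeomorphism $\phi:X_0\to X$, one fixes the single natural number $i_1$ indexing the homotopy class of $q\circ\phi$ in the numbering of Theorem~\ref{thm_numbering}; then a basic open $U$ meets $X_0$ iff $\nu_{[X_0\setminus U;\SS_2]}(i_1)$ is the trivial class, which is c.e.\ in $X_0\setminus U$ by Theorem~\ref{thm_numbering}. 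The only non-uniform input is the integer $i_1$, no computability of $f$ is needed, and the geometric content is just that $\id_{\SS_2}$ is not null-homotopic.
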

\begin{proof}
Although~$X$ cannot be~$\H_2$-minimal, we show that there is a continuous function~$q:X\to\SS_2$ which is not null-homotopic (i.e.~$X\in\H_2$) and such that for any compact subset~$Y\subsetneq X$, the restriction~$q|_Y:Y\to\SS_2$ is null-homotopic.

The condition that~$f$ is surjective implies that~$Q$ has empty interior in~$X$: every~$y\in Q$ is the image by~$f$ of some~$x\in\partial \B_2$, which is a limit of a sequence~$x_n\in\B_2\setminus\partial\B_2$; therefore in~$X$,~$y$ is the limit of~$x_n\in X\setminus Q$.

The quotient space~$X/Q$ is homeomorphic to~$\B_2/\partial \SS_2$, which is homeomorphic to~$\SS_2$, so we can type the quotient map~$q:X\to X/Q$ as~$q:X\to \SS_2$. 
We claim that~$q$ is not null-homotopic.

Proposition 0.17 in \cite{Hatcher02} states that if a pair~$(X,A)$ satisfies the homotopy extension property and~$A$ is contractible, then the quotient map~$q:X\to X/A$ is a homotopy equivalence. The pair~$(X,Q)$ satisfies the homotopy extension property because~$X$ is obtained by attaching a disk to~$Q$ along its boundary (see the proof of Proposition 0.16 in \cite{Hatcher02}). The space~$Q$ is contractible, so the quotient map~$q:X\to X/Q\cong \SS_2$ is a homotopy equivalence, i.e.~there exists a function~$p:\SS_2\to X$ such that~$p\circ q$ is homotopic to~$\id_X$ and~$q\circ p$ is homotopic to~$\id_{\SS_2}$. It implies that~$q$ is not null-homotopic: otherwise the function~$\id_{\SS_2}$, which is homotopic to~$q\circ p$, would be null-homotopic as well, i.e.~$\SS_2$ would be contractible.

We now show that if~$Y$ is a compact proper subset of~$X$, then the restriction~$q|_Y:Y\to\SS_2$ is null-homotopic. As~$Q$ has empty interior in~$X$, the interior of~$\B_2$ is dense in~$X$ so it contains a point~$x\in X\setminus Y$. The space~$X\setminus \{x\}$ is contractible, because it deformation retracts to~$Q$ which is contractible. Therefore, the restriction of~$q$ to~$X\setminus \{x\}$ is null-homotopic, so its restriction to~$Y\subseteq X\setminus\{x\}$ is null-homotopic, by restriction of the homotopy.

As a result, for a compact set~$Y\subseteq X$, one has~$Y\neq X$ if and only if the restriction of~$q$ to~$Y$ is null-homotopic.

We now prove that~$X$ has computable type. Let~$X_0\subseteq Q$ be a semicomputable copy of~$X$, and~$\phi:X_0\to X$ a homeomorphism. The previous discussion implies that for an open set~$U\subseteq Q$, $U$ intersects~$X_0$ if and only if the restriction of~$q\circ \phi$ to~$X_0\setminus U$ is null-homotopic.

We now use the properties of the numberings~$\nu_{[X;Y]}$ from Theorem \ref{thm_numbering}. Let~$i_0$ be an index of a constant computable function~$f_{i_0}:Q\to\SS_2$. Note that~$i_0\in\dom(\nu_{[Z;\SS_2]})$ for all~$Z\subseteq Q$. Let~$i_1$ be such that~$\nu_{[X_0;\SS_2]}(i_1)$ is the homotopy class of~$q\circ\phi$. The definition of~$\nu_{[X;Y]}$ implies that for any open set~$U\subseteq Q$,~$i_1\in\dom(\nu_{[X_0\setminus U;\SS_2]})$ and~$\nu_{[X_0\setminus U;\SS_2]}(i_1)$ is the homotopy class of the restriction of~$q$ to~$X_0\setminus U$.

We saw that a basic open set~$U\subseteq Q$ intersects~$X_0$ if and only if~$q|_{X_0\setminus U}$ is null-homotopic, which is equivalent to~$\nu_{[X_0\setminus U;\SS_2]}(i_1)=\nu_{[X_0\setminus U;\SS_2]}(i_0)$. As~$X_0$ is semicomputable, so is~$X_0\setminus U$, therefore this predicate is c.e.~and~$X_0$ is computable, which shows that~$X_0$ has computable type.

The argument holds relative to any oracle, i.e.~if~$X_0$ is semicomputable relative to an oracle~$O$, then~$X_0$ is computable relative to~$O$. Therefore,~$X$ has strong computable type.
\end{proof}

\section{Open questions}
Our study leaves several questions open.

We have revisited several results from the literature on computable type, identifying a~$\Sigma^0_2$ invariant for which the space or the pair is minimal. We do not know whether the results in \cite{AH22} on finite simplicial complexes can be revisited in a similar way.
\begin{question}
\label{question_invariant} If a finite simplicial pair~$(X,A)$ has strong computable type, is it minimal satisfying some~$\Sigma^0_2$ invariant?
\end{question}

The example from Section \ref{sec_infinite} is a space that has strong computable type and properly contains copies of itself. The argument strongly relies on the fact that it contains the Hilbert cube, which is infinite-dimensional.
\begin{question}
\label{question_contain}Is there a finite-dimensional space that has strong computable type and properly contains copies of itself?
\end{question}

Theorem~\ref{thm_choice} assumes that the pair is minimal for some~$\Sigma^0_2$ invariant. Can this assumption be dropped?
\begin{question}\label{que: choice}Is it always true that if~$(X,A)$ has strong computable type, then~$\SCT_{(X,A)}\leq^t_W\C_\N$?
\end{question}


\begin{question}
\label{que: finitely many}Is there a compact space having (strong) computable type and infinitely many connected components?
\end{question}

We also mention a surprisingly difficult question, already raised in \cite{CelarI21}.

\begin{question}
\label{que: product}
If two pairs~$(X_{1},A_{1})$ and~$(X_{2},A_{2})$ have (strong) computable type, does~$(X_{1},A_{1})\times(X_{2},A_{2})=(X_1\times X_2,X_1\times A_2\cup A_1\times X_2)$ have (strong) computable type?
\end{question}

\paragraph{Acknowledgements.} We thank Emmanuel Jeandel for interesting discussions on the topic, as well as the anonymous referees for their careful reading and helpful comments.



\bibliographystyle{alpha}           
\bibliography{bibliojuly2023}        
\appendix
\section{Compact \texorpdfstring{$n$}{n}-manifolds are \texorpdfstring{$\H_n$}{Hn}-minimal}

\subsection{Background}
We gather some results from Bredon \cite{Bredon93} and Hatcher \cite{Hatcher02}. We do not recall the definitions of homology groups, but recall results from algebraic topology and then show how they imply Theorem \ref{thm_manifold_hn}.

If~$X$ is a topological space,~$A\subseteq X$ is a subset and~$G$ is an abelian group, then for each~$n\in\N$ one can define the~$n$th \textbf{homology group}~$H_n(X,A;G)$, which is an abelian group. When~$A=\emptyset$, we write~$H_n(X;G)$ for~$H_n(X,A;G)$. When~$G=\Z$, one denotes~$H_n(X,A;\Z)$ by~$H_n(X,A)$.

Homology of manifolds is very well understood. We will mainly use the fact that homology can detect compactness of manifolds, as follows.

\begin{thm}[Corollary VI.7.12 (p.~346) in \cite{Bredon93}]\label{thm_homology_manifold}
Let~$M$ be a connected~$n$-manifold and assume that~$G$ contains an element of order~$2$. One has
\begin{equation*}
H_n(M)\ncong 0\iff M\text{ is compact.}
\end{equation*}
\end{thm}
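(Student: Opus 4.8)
The plan is to prove the biconditional by explicitly computing the top homology group $H_n(M;G)$ of a connected $n$-manifold through the classical orientation-bundle machinery, the hypothesis that $G$ has an element of order $2$ being precisely what forces non-triviality in \emph{both} the orientable and the non-orientable case. First I would record the local computation: for each point $x\in M$, excision together with the local Euclidean structure gives
\begin{equation*}
H_n(M,M\setminus\{x\};G)\cong H_n(\R^n,\R^n\setminus\{0\};G)\cong G .
\end{equation*}
These local groups assemble into a covering space $p\colon M_G\to M$ with discrete fibre $G$ (the $G$-orientation bundle), which is the trivial bundle $M\times G$ when $M$ is orientable and the bundle twisted by the monodromy $g\mapsto -g$ otherwise.

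The technical heart, which I would isolate as a lemma and which is exactly Bredon's machinery (equivalently Hatcher's Lemma~3.27 in \cite{Hatcher02}), is the following: for every compact $A\subseteq M$, restriction to the local groups induces an isomorphism $H_n(M,M\setminus A;G)\cong\Gamma(A,M_G)$ onto the continuous sections of $M_G$ over $A$, and moreover $H_i(M,M\setminus A;G)=0$ for $i>n$. I would prove this by a Mayer--Vietoris induction over a finite cover of $A$ by Euclidean charts, establishing it first for convex compact subsets of a single chart and then passing to a direct limit to treat an arbitrary compact $A$. I expect this lemma to be the main obstacle, since it carries the entire weight of the ``homology of manifolds'' apparatus; everything else is formal once it is available.

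With the lemma in hand the two directions are short. If $M$ is compact I take $A=M$, so that $M\setminus A=\emptyset$ and $H_n(M;G)\cong\Gamma(M,M_G)$. Since $M$ is connected, a global section is determined by a single fibre value invariant under the monodromy, giving $\Gamma(M,M_G)\cong G$ in the orientable case and $\Gamma(M,M_G)\cong\{g\in G:2g=0\}$ in the non-orientable case; because $G$ has an element of order $2$ both groups are non-trivial, so $H_n(M;G)\ncong 0$. If $M$ is non-compact I would instead show $H_n(M;G)=0$: the support argument attaches to any class $\alpha$ a section $x\mapsto\alpha_x$ of $M_G$ that vanishes outside some compact set, and rigidity of sections of a covering over a connected base (their agreement locus is clopen) forces such a section to vanish identically once $M$ is non-compact; writing $M$ as the increasing union of the interiors of compact connected manifolds-with-boundary and feeding this back through the isomorphism of the lemma then upgrades the vanishing of the section to $\alpha=0$. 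Combining the two cases yields $H_n(M;G)\ncong 0\iff M$ is compact.
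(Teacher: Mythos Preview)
The paper does not actually prove this theorem: it is quoted as a background result from Bredon, and the only remark the paper makes is that it is the particular case of the next cited theorem (Theorem~\ref{thm_homology_pair}) obtained by taking $A=M$. Your proposal is a correct outline of the standard proof of both of these results via the $G$-orientation bundle and the sections lemma, which is precisely the machinery developed in the cited references (Bredon VI.7 and Hatcher \S3.3); in effect you have sketched the argument that the paper imports wholesale from Bredon rather than reproving.
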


The next result is a generalization to pairs.
\begin{thm}[Corollary VI.7.11 (p.~346) in \cite{Bredon93}]\label{thm_homology_pair}
Let~$M$ be an~$n$-manifold and assume that~$G$ contains an element of order~$2$. If~$A\subseteq M$ is closed and connected, then
\begin{equation*}
H_n(M,M\setminus A;G)\ncong 0\iff A\text{ is compact.}
\end{equation*}
\end{thm}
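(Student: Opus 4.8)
The plan is to compute the group $H_n(M,M\setminus A;G)$ in terms of the \emph{orientation sheaf} of $M$ and then read off the compactness criterion from elementary properties of its sections. First I would set up the relevant local system. Since $M$ is an $n$-manifold without boundary, excision together with the local model $\mathbb{R}^n$ gives, for each $x\in M$, a canonical isomorphism $H_n(M,M\setminus\{x\};G)\cong\tilde{H}_{n-1}(\SS_{n-1};G)\cong G$. As $x$ varies, these groups assemble into a locally constant sheaf $\mathcal{O}_G$ on $M$ with stalk $G$, whose monodromy around a loop is multiplication by $+1$ or $-1$ according to whether the loop preserves or reverses local orientation; this is exactly the local system underlying the orientation character $\theta\colon\pi_1(M)\to\{\pm1\}$ tensored with $G$.

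The technical heart, and the main obstacle, is the structural isomorphism
\begin{equation*}
H_n(M,M\setminus A;G)\;\cong\;\Gamma_c(A;\mathcal{O}_G),
\end{equation*}
where $\Gamma_c$ denotes continuous sections of $\mathcal{O}_G$ over the closed set $A$ having compact support, together with the vanishing $H_i(M,M\setminus A;G)=0$ for $i>n$. I would establish this by the usual bootstrap from the local case: verify it directly when $A$ is a point or a compact subset of a single chart (by the local computation above), propagate it to finite unions of such pieces by a Mayer--Vietoris argument on the pairs $(M,M\setminus A')$, extend to arbitrary compact $A$ by a limit over finite approximations, and finally pass to general closed $A$ by expressing $H_n(M,M\setminus A;G)$ as a colimit over its compact subsets, which is precisely what produces the compact-support condition. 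This is the classical ``fundamental theorem on the homology of manifolds''; it is genuinely laborious and is where all the real work lies, but it needs no hypothesis on $G$ beyond its being abelian.

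Granting the isomorphism, the equivalence follows from two observations about sections of the locally constant sheaf $\mathcal{O}_G$ restricted to the connected set $A$. On the one hand, such a section is a flat section of a local system over a connected space, hence determined by its value $g\in G$ at a single point, and it exists precisely when $g$ is invariant under the monodromy: the invariant elements form $G$ if the restricted character $\theta|_A$ is trivial, and form the $2$-torsion subgroup $\{g\in G:2g=0\}$ if $\theta|_A$ is nontrivial, since in that case some loop acts by $g\mapsto -g$. Because $G$ contains an element of order $2$, in either case there is a \emph{nonzero} global section of $\mathcal{O}_G$ over $A$. On the other hand, the support of any section of a locally constant sheaf is clopen (a nonzero value persists on a trivializing neighborhood, and so does a zero value), so on the connected set $A$ every section has support $\emptyset$ or $A$.

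Combining these: if $A$ is compact then $\Gamma_c(A;\mathcal{O}_G)=\Gamma(A;\mathcal{O}_G)$ contains the nonzero section just produced, so $H_n(M,M\setminus A;G)\ncong 0$. Conversely, if $A$ is closed, connected and \emph{non}-compact, then any compactly supported section has support a proper clopen subset of $A$, hence the empty set, so $\Gamma_c(A;\mathcal{O}_G)=0$ and $H_n(M,M\setminus A;G)\cong 0$. This yields the stated equivalence, and specializing to $A=M$ recovers the absolute statement of Theorem \ref{thm_homology_manifold}.
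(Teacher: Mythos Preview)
The paper does not give its own proof of this statement: it is quoted as Corollary VI.7.11 from Bredon's textbook and used as a black box in the appendix. So there is nothing to compare your argument against in the paper itself.

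That said, your outline is correct and is essentially the argument Bredon gives. The identification $H_n(M,M\setminus A;G)\cong\Gamma_c(A;\mathcal{O}_G)$ is his Lemma/Theorem VI.7.10, proved exactly by the bootstrap you describe (local computation, Mayer--Vietoris for finite unions, colimits for general closed $A$), and the deduction of the compactness criterion from the clopen-support observation together with the $2$-torsion hypothesis is precisely how his Corollary VI.7.11 is obtained. Your remark that the hard work lies entirely in the structural isomorphism, with the compactness equivalence then being a short sheaf-theoretic consequence, accurately reflects the shape of the textbook proof.
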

Note that Theorem \ref{thm_homology_manifold} is just a particular case of Theorem \ref{thm_homology_pair}, with~$A=M$.

\subsection{Proof of Theorem \ref{thm_manifold_hn}}
These results imply that if~$(M,\partial M)$ is a compact connected~$n$-manifold with (possibly empty) boundary, then removing a point makes the $n$th homology group trivial, because compactness is lost. The argument is very classical but we did not find this particular statement in any reference textbook, so we include a proof. For~$x\in M$, we write~$M_x=M\setminus\{x\}$.

\begin{thm}\label{thm_homology_manifold_boundary}
Let~$(M,\partial M)$ be a compact connected~$n$-manifold with (possibly empty) boundary. If~$G$ contains an element of order~$2$, then~$H_n(M,\partial M;G)\ncong 0$. If~$x\in M\setminus \partial M$ then~$H_n(M_x,\partial M;G)\cong 0$.
\end{thm}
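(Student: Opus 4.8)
The plan is to reduce the statement about a manifold with boundary to the known results about manifolds \emph{without} boundary (Theorems~\ref{thm_homology_manifold} and~\ref{thm_homology_pair}) by passing to the \emph{interior} and using the collar neighborhood theorem. Write $\mathring{M}=M\setminus\partial M$, which is a connected $n$-manifold without boundary. The first step is to invoke the collar neighborhood theorem: $\partial M$ has an open neighborhood in $M$ homeomorphic to $\partial M\times[0,1)$, with $\partial M$ corresponding to $\partial M\times\{0\}$. This gives a deformation retraction of $M$ onto $\mathring{M}$ in a controlled way, and more importantly shows that the inclusion $(\mathring{M},\emptyset)\hookrightarrow (M,\partial M)$ induces, via excision, an isomorphism on the relevant homology. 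Precisely, excising a slightly shrunk collar (or rather, using that $M/\partial M$ is homotopy equivalent to the one-point compactification of $\mathring M$) one gets $H_n(M,\partial M;G)\cong \check{H}_n^{\mathrm{lf}}(\mathring M;G)$ — but it is cleaner to argue as follows instead.

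The cleaner route: for the first claim, use the long exact sequence of the pair $(M,\partial M)$ together with the fact that $M$ is a compact connected $n$-manifold with nonempty boundary, hence $H_n(M;G)\cong 0$ (a compact manifold with boundary has no top homology; this is itself a consequence of Theorem~\ref{thm_homology_manifold} applied to the non-compact manifold obtained by a small puncture, or of the standard fact that $M$ deformation retracts onto a complex of dimension $<n$... actually the quickest justification is via $\mathring M$ being non-compact so $H_n(\mathring M;G)=0$ by Theorem~\ref{thm_homology_manifold}, combined with $H_n(M;G)\cong H_n(\mathring M;G)$ from the collar retraction). Then the long exact sequence segment
\begin{equation*}
H_n(M;G)\to H_n(M,\partial M;G)\to H_{n-1}(\partial M;G)
\end{equation*}
forces $H_n(M,\partial M;G)$ to inject into $H_{n-1}(\partial M;G)$, which is not enough; so instead I would run the argument through Theorem~\ref{thm_homology_pair} directly. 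Identify a collar $\partial M\times[0,1)\subseteq M$ and let $N=\partial M\times(0,1)$, a manifold without boundary. Excision removes $\partial M$ and gives $H_n(M,\partial M;G)\cong H_n(\mathring M, N;G)$ up to the homotopy equivalences from the collar. Now $\mathring M$ is an $n$-manifold (no boundary), $\mathring M\setminus N$... this still is not literally the hypothesis of Theorem~\ref{thm_homology_pair}. The honest and safest approach is the \emph{double}: let $DM=M\cup_{\partial M}M$ be the double of $M$, a compact connected $n$-manifold without boundary; then $M$ embeds in $DM$ as a codimension-$0$ submanifold with $DM\setminus M\simeq \mathring M$, and by excision $H_n(M,\partial M;G)\cong H_n(DM,DM\setminus M;G)$, where $\overline{M}$ inside $DM$ is closed, compact and connected, so Theorem~\ref{thm_homology_pair} (with the manifold $DM$ and the compact connected closed set $A=M$) yields $H_n(M,\partial M;G)\cong H_n(DM,DM\setminus M;G)\ncong 0$. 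That settles the first statement.

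For the second statement, $M_x=M\setminus\{x\}$ with $x\in\mathring M$ is a connected $n$-manifold with boundary $\partial M$, but it is \emph{non-compact}. Repeat the doubling argument: the double $D(M_x)$ contains $M_x$ as a closed... no, $M_x$ is not closed in its double since it is not compact, so I must instead argue that $H_n(M_x,\partial M;G)\cong H_n(M,\partial M\cup \bar B;G)$ for a small closed ball $\bar B$ around $x$ (by the collar/retraction from $M_x$ onto $M\setminus \mathring B$ fixing $\partial M$), and then use the long exact sequence of the triple $(M,\partial M\cup \bar B,\partial M)$ or compare with $H_n$ of the non-compact manifold $\mathring M\setminus\{x\}$ obtained via excision on the double of $M$. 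Concretely: in $DM$, $H_n(DM, DM\setminus M;G)\cong H_n(M,\partial M;G)\ncong 0$, while $DM\setminus\{x\}$ is a non-compact connected $n$-manifold so Theorem~\ref{thm_homology_manifold} gives $H_n(DM\setminus\{x\};G)=0$, and excision identifies $H_n(M_x,\partial M;G)$ with $H_n(DM\setminus\{x\}, (DM\setminus\{x\})\setminus M_x;G)$; since $M_x$ is \emph{not} closed in $DM\setminus\{x\}$ (its closure adds the boundary-copy point), this needs care — one should instead take $A$ to be the \emph{closure} $\overline{M_x}=M$ inside $DM\setminus\{x\}$ minus an appropriate piece, or simply puncture the double at a point of $\partial M\times\{$second copy$\}$ to keep $M$ itself punctured.

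I expect the main obstacle to be precisely this bookkeeping: ensuring that after excision the set playing the role of ``$A$'' in Theorem~\ref{thm_homology_pair} is genuinely closed and connected, and that puncturing is done at an interior point of the ambient \emph{boundaryless} manifold so that non-compactness is correctly propagated. The robust fix is: choose a point $x'$ in the second copy of $M$ inside $DM$, with $x'\in\mathring M$; then $A:=M$ (the first copy) is closed and connected in the non-compact manifold $W:=DM\setminus\{x'\}$, so Theorem~\ref{thm_homology_pair} applied to $W$ and $A$ gives $H_n(W,W\setminus A;G)\ncong 0$ — but this recovers the \emph{first} claim again (as it must, since $W\setminus A\simeq \mathring M\setminus\{x'\}\simeq\mathring M$ still, being a manifold minus a point is still homotopy-connected enough). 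For the second claim one punctures $A$ itself: set $W=DM$, $A=M\setminus\{x\}$ with $x\in\mathring M$; $A$ is connected but not compact, and $\overline A=M$ unless $\partial M=\emptyset$; when $\partial M\ne\emptyset$ the closure of $A$ in $DM$ is $M$, not $A$, so Theorem~\ref{thm_homology_pair} does not directly apply. I would therefore complete the argument via the long exact sequence of the triple $(M,M\setminus \mathring B,\partial M)$ where $B$ is a small open ball around $x$: using $M_x\simeq M\setminus\mathring B$ rel $\partial M$, plus $H_n(M\setminus\overline B,\partial M\cup S;G)\cong H_n(\mathring M\setminus\overline B)=0$ (non-compact, via Theorem~\ref{thm_homology_manifold} and the doubling of $M\setminus\mathring B$), and excision/homotopy to transfer back. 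Either way, the conceptual content is entirely contained in Theorems~\ref{thm_homology_manifold} and~\ref{thm_homology_pair} plus the collar neighborhood theorem; the work is purely in arranging excision and homotopy equivalences so the hypotheses match, and that is where I would spend the care in the full write-up.
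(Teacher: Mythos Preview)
Your instinct to use the collar was right, and you abandoned it too early. The paper stays with that route throughout. After choosing a collar $\partial M\times[0,1)\subseteq M$ avoiding $x$, deformation retraction plus excision give
\[
H_n(M,\partial M;G)\cong H_n\bigl(\mathring M,\; \partial M\times(0,1);G\bigr),
\qquad
H_n(M_x,\partial M;G)\cong H_n\bigl(\mathring M\setminus\{x\},\; \partial M\times(0,1);G\bigr).
\]
Now set $A=\mathring M\setminus\bigl(\partial M\times(0,1)\bigr)$. This \emph{is} closed in the boundaryless manifold $\mathring M$ (it equals $M$ minus the open collar, hence is closed already in $M$), connected, and compact, so Theorem~\ref{thm_homology_pair} applies directly and yields $\ncong 0$. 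Replacing $A$ by $A\setminus\{x\}$, which is closed in $\mathring M\setminus\{x\}$, connected, and non-compact, the same theorem gives $\cong 0$. Your remark that ``this still is not literally the hypothesis of Theorem~\ref{thm_homology_pair}'' was the misstep: it is literally the hypothesis, once you name the closed set $A=\mathring M\setminus N$ rather than stare at its open complement $N$.

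Your doubling route is a legitimate alternative and handles the first claim cleanly. For the second claim, the bookkeeping you were fighting has a one-line fix you missed: remove $x$ from the double as well. In $W:=DM\setminus\{x\}$ the set $M\setminus\{x\}$ \emph{is} closed (its closure in $DM$ is $M$, so its closure in $W$ is $M\setminus\{x\}$), connected, and non-compact; Theorem~\ref{thm_homology_pair} then gives $H_n\bigl(W,\,W\setminus(M\setminus\{x\});G\bigr)\cong 0$, and excising the interior of the second copy identifies this with $H_n(M_x,\partial M;G)$. So both approaches go through; the paper's collar argument is a bit shorter because one excision setup handles both claims simultaneously, whereas the double forces you to rerun excision after the puncture.
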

\begin{proof}
If~$\partial M$ is empty, then it is a direct consequence of Theorem \ref{thm_homology_manifold}, because~$M_x$ is a non-compact connected~$n$-manifold. Let us now consider the case when~$\partial M$ is non-empty.

In~$M$,~$\partial M$ has a \emph{collar neighborhood}, i.e.~an open neighborhood that is homeomorphic to~$\partial M\times [0,1)$, in which~$\partial M$ is identified with~$\partial M \times \{0\}$ (Proposition 3.42 in \cite{Hatcher02}). Moreover we can assume that this collar neighborhood does not contain~$x$ (otherwise take the subset that is sent to~$\partial M\times [0,\epsilon)$ by the homeomorphism, for sufficiently small~$\epsilon>0$). For simplicity of notation, we identify the collar neighborhood with~$\partial M\times [0,1)$. As the collar neighborhood deformation retracts to~$\partial M$, one has for any abelian group~$G$,
\begin{align*}
H_n(M_x,\partial M;G)&\cong H_n(M_x,\partial M\times [0,1);G)\\
&\cong H_n(M_x\setminus \partial M,\partial M\times (0,1);G)
\end{align*}
where the last isomorphism exists by excision (it is a classical derivation that can be found for instance in \cite{Bredon93}, at the beginning of section VI.9). Similarly,~$H_n(M,\partial M;G)\cong H_n(M\setminus \partial M,\partial M\times (0,1);G)$. We apply Theorem \ref{thm_homology_pair} to the manifolds~$N=M\setminus \partial M$ and~$N_x=M_x\setminus \partial M$ and their subsets~$A=N\setminus (\partial M\times (0,1))$ and~$A_x=N_x\setminus (\partial M\times (0,1))$. $A$ is compact but~$A_x$ is not because~$x$ has been removed, so by Theorem \ref{thm_homology_pair},
\begin{align*}
H_n(M\setminus \partial M,\partial M\times (0,1);G)&=H_n(N,N\setminus A)\ncong 0.\\
H_n(M_x\setminus \partial M,\partial M\times (0,1);G)&=H_n(N_x,N_x\setminus A_x)\cong 0.
\end{align*}
As a result,~$H_n(M,\partial M;G)\ncong 0$ and~$H_n(M_x,\partial M;G)\cong 0$.
\end{proof}

We can now prove that compact connected~$n$-manifolds with boundary are~$\H_n$-minimal, using the same argument as for manifolds without boundary.

If~$x\in M\setminus\partial M$ and~$B\subseteq M\setminus \partial M$ is an open Euclidean ball around~$x$, then~$M_x$ \emph{deformation retracts} to~$M\setminus B$, which means that there is a retraction~$r:M_x\to M\setminus B$ such that if~$i:M\setminus B\to M_x$ is the inclusion map, then~$r\circ i$ is homotopic to~$\id_{M_x}$, and the homotopy is constant on~$M\setminus B$. It is a classical result that deformation retractions preserve homology groups (Proposition 2.19 in \cite{Hatcher02}), so~$H_n(M\setminus B,\partial M;G)\cong H_n(M_x,\partial M;G)\cong 0$.

Theorem \ref{thm_hn_homology} relates~$\H_n$ with homology groups, but is only for single sets. However, pairs can be reduced to single sets as follows. One the one hand, relative homology groups of pairs satisfying the homotopy extension property are isomorphic to homology groups of their quotients, so:
\begin{align*}
H_n(M/\partial M;G)&\cong H_n(M,\partial M;G)\ncong 0,\\
H_n(M_x/\partial M;G)&\cong H_n(M_x,\partial M;G)\cong 0.
\end{align*}
On the other hand, a pair~$(X,A)$ is in~$\H_n$ iff~$X/A$ is in~$\H_n$ (Proposition \ref{prop_quotient}).

Therefore, Theorem \ref{thm_hn_homology} implies that~$(M,\partial M)\in\H_n$ and~$(M\setminus B,\partial M)\notin\H_n$. It implies that~$(M,\partial M)$ is~$\H_n$-minimal by Lemma \ref{lem_hn_inclusion}: if~$(X,A)$ is a proper subspair of~$M$, then~$(X,A)\subseteq (M\setminus B,\partial M)$ for some~$B\subseteq M\setminus \partial M$, so~$(X,A)\notin \H_n$.

\end{document}